\DeclareRobustCommand{\cev}[1]{%
  \mathpalette\do@cev{#1}%
}
\newcommand{\do@cev}[2]{%
  \fix@cev{#1}{+}%
  \reflectbox{$\m@th#1\vec{\reflectbox{$\fix@cev{#1}{-}\m@th#1#2\fix@cev{#1}{+}$}}$}%
  \fix@cev{#1}{-}%
}
\newcommand{\fix@cev}[2]{%
  \ifx#1\displaystyle
    \mkern#23mu
  \else
    \ifx#1\textstyle
      \mkern#23mu
    \else
      \ifx#1\scriptstyle
        \mkern#22mu
      \else
        \mkern#22mu
      \fi
    \fi
  \fi
}
\numberwithin{equation}{section}
\theoremstyle{plain}
\newtheorem{theorem}{Theorem}[section]
\newtheorem{corollary}[theorem]{Corollary}
\newtheorem{conjecture}[theorem]{Conjecture}
\newtheorem{lemma}[theorem]{Lemma}
\newtheorem{proposition}[theorem]{Proposition}
\theoremstyle{definition}
\newtheorem{definition}[theorem]{Definition}
\newtheorem{question}[theorem]{Question}
\theoremstyle{remark}
\newtheorem{remark}[theorem]{Remark}
\newcommand{\N}{\mathbb{N}}
\newcommand{\Z}{\mathbb{Z}}
\newcommand{\R}{\mathbb{R}}
\newcommand{\C}{\mathbb{C}}
\newcommand{\CP}{{\mathbb{C}\mathrm{P}}}
\newcommand{\RP}{{\mathbb{R}\mathrm{P}}}
\newcommand{\calL}{\mathcal{L}}
\newcommand{\hC}{\hat{\C}}
\newcommand{\Ms}{\mathsf{M}}
\newcommand{\M}{\mathscr{M}}
\newcommand{\vphi}{\varphi}
\newcommand{\az}[2]{{A_{#1}\kern-0.1em\left[#2\right]}}
\newcommand{\RE}{\mathrm{Re}}
\newcommand{\IM}{\mathrm{Im}}
\newcommand{\dd}{{\mathrm{d}}}
\DeclareMathOperator{\cro}{cr} 
\tikzstyle{bvert}=[draw,circle,fill=black,minimum size=5pt,inner sep=0pt]
\tikzstyle{blvert}=[draw,circle,fill=blue,draw=blue,minimum size=5pt,inner sep=0pt]
\tikzstyle{wvert}=[draw,circle,fill=white,minimum size=5pt,inner sep=0pt]
\tikzstyle{blwvert}=[draw,circle,draw=blue,fill=white,minimum size=5pt,inner sep=0pt]
\tikzset{circle through 3 points/.style n args={3}{%
insert path={let    \p1=($(#1)!0.5!(#2)$),
                    \p2=($(#1)!0.5!(#3)$),
                    \p3=($(#1)!0.5!(#2)!1!-90:(#2)$),
                    \p4=($(#1)!0.5!(#3)!1!90:(#3)$),
                    \p5=(intersection of \p1--\p3 and \p2--\p4)
                    in
                 node at (\p5) [draw,circle through= {(#1)}]{}}
}}
\newcommand*{\Scale}[2][4]{\scalebox{#1}{$#2$}}%
\title{The Schwarzian octahedron recurrence (dSKP equation) II: geometric systems}
\author{Niklas Christoph Affolter
     \thanks{TU Berlin, Institute of Mathematics, Strasse des 17. Juni 136, 10623 Berlin, Germany.
      Départment de mathématiques, ENS, Université PSL, 45 rue d'Ulm, 75005 Paris, France. \textit{E-mail address}: \texttt{affolter~at~posteo.net}} ,
      Béatrice de Tilière
      \thanks{PSL University-Dauphine, CNRS, UMR 7534, CEREMADE, 75016 Paris, France. \textit{E-mail address}: \texttt{detiliere at ceremade.dauphine.fr}} ,
      Paul Melotti
      \thanks{Université Paris-Saclay, CNRS, Laboratoire de mathématiques d’Orsay, 91405, Orsay, France.
                \textit{E-mail address}: \texttt{melotti at posteo.net}}
}
\date{March 6, 2024} 
\begin{document}

\maketitle

\begin{abstract}
We consider nine geometric systems: Miquel dynamics, P-nets, integrable cross-ratio maps, discrete holomorphic functions, orthogonal circle patterns, polygon recutting, circle intersection dynamics, (corrugated) pentagram maps and the short diagonal hyperplane map. Using a unified framework, for each system we prove an explicit expression for the solution as a function of the initial data; more precisely, we show that the solution is equal to the ratio of two partition functions of an oriented dimer model on an Aztec diamond whose face weights are constructed from the initial data. Then, we study the Devron property~\cite{gdevron}, which states the following: if the system starts from initial data that is singular for the backwards dynamics, this singularity is expected to reoccur after a finite number of steps of the forwards dynamics. Again, using a unified framework, we prove this Devron property for all of the above geometric systems, for different kinds of singular initial data. In doing so, we obtain new singularity results and also known ones~\cite{gdevron,yao}. Our general method consists in proving that these nine geometric systems are all related to the Schwarzian octahedron recurrence (dSKP equation), and then to rely on the companion paper~\cite{paper1}, where we study this recurrence in general, prove explicit expressions and singularity results.
\end{abstract}

\textbf{Keywords:} discrete differential geometry, projective geometry, dynamical systems, discrete integrable systems, dSKP equation, pentagram map, dimer model.

\textbf{MSC classes:} 37K10, 39A36, 51A05, 82B20.

\newpage
\setcounter{tocdepth}{1}
\tableofcontents

\section{Introduction}\label{sec:introduction}

The dSKP equation is a relation on six variables that arises in the study of the Krichever-Novikov equation \cite[Equation (30)]{dndskp}, and as a
discretization of the Schwarzian Kadomtsev-Petviashvili
hierarchy \cite{BK1,BK2}, hence its name. Note that it can be traced back to \cite[Equation (4)]{ncwqdskp} as a special case when  $p,q,r = 0$, $\alpha = -\beta = \varepsilon^{-1}$ in the limit $\varepsilon \rightarrow 0$.
In this paper, this relation is embedded in the \emph{octahedral-tetrahedral lattice $\calL$} defined by:
\begin{align*}
  \calL = \left\{p=(i,j,k) \in \Z^3 : i+j+k \in 2\Z \right\}.
\end{align*}
Consider a function $x:\calL \to \hat{\C}$, or more generally from a subset of
$\calL$ to $\hat{\C}$. We say that $x$ \emph{satisfies
the dSKP recurrence}, or \emph{Schwarzian octahedron recurrence}, if
\begin{align}\label{eq:dskp_x_intro}
  \frac{(x_{-e_3}-x_{e_2})(x_{-e_1}-x_{e_3})(x_{-e_2}-x_{e_1})}{
  (x_{e_2}-x_{-e_1})(x_{e_3}-x_{-e_2})(x_{e_1}-x_{-e_3})} = -1,
\end{align}
where $(e_1,e_2,e_3)$ is the canonical basis of $\Z^3$, $x_q(p) :=
x(p+q)$ for every $q\in(\pm e_i)_{i=1}^3$, and the relation is evaluated
at any $p \in \Z^3\setminus\calL$ such that all terms are defined; see Figure~\ref{fig:octa_evol} where $p=(1,1,1)$. The target space $\hat{\C}$ is an \emph{affine chart of} $\CP^1$.

Suppose that we are given \emph{initial data} $\left( a_{i,j} \right)_{i,j\in \Z^2}$.
One starts with values $x(i,j,[i+j]_2) = a_{i,j}$, where $[n]_p\in\{0,\dots,p-1\}$ denotes the value of $n$ modulo $p$. Then, the
dSKP recurrence allows one to propagate this initial data in the positive $k$ direction, to get any value $x(i,j,k)$ with $(i,j,k) \in \calL$
and $k>1$ (or symmetrically in the negative direction to get $k<0$) see Figure~\ref{fig:octa_evol}.
The resulting values of $x$ on $\calL$, which we simply call the \emph{solution}, is a rational function in the
initial data $a$. One of the main contributions of the companion paper~\cite{paper1}
is an explicit combinatorial expression of this rational function as the ratio of two partition functions of an associated oriented dimer model. Other main contributions consist in the study of singular initial conditions. These results, specified to the
cases needed in the current paper, are recalled in Section~\ref{sec:prereq}.

\begin{figure}[tb]
\centering
\begin{tikzpicture}[x  = {(-0.95cm,-0.0cm)}, y  = {(0.1659cm,-0.30882cm)}, z  = {(-0cm,1cm)}, scale=1.9]
\draw [fill opacity=0.8,fill=white!90!black] (5.5,5.5,-1) -- (0.5,5.5,-1) -- (0.5,0.5,-1) -- (5.5,0.5,-1) -- cycle;
\foreach \i in {1,...,6}
{
  \draw[dotted] (6.5-\i,5.5,-1) -- (6.5-\i,0.5,-1);
  \draw[dotted] (5.5,6.5-\i,-1) -- (0.5,6.5-\i,-1);
}
\foreach \i in {0,...,5}
{
  \foreach \j in {0,...,5}
  {
    \pgfmathparse{Mod(\i+\j,2)==0?1:0}
    \ifnum\pgfmathresult>0
      \draw (5.55-\i,5.5-\j,-1.09) node {$\Scale[1]{a_{\i,\j}}$};
      \node[draw,diamond,aspect=1,color=red,fill,inner sep=1.3pt] at (5.5-\i,5.5-\j,-1) {} ;
    \fi
  }
}
\coordinate (a01) at (5.5,4.5,0);
\coordinate (a10) at (4.5,5.5,0);
\coordinate (a21) at (3.5,4.5,0);
\coordinate (a12) at (4.5,3.5,0);
\coordinate (a11) at (4.5,4.5,-1);
\coordinate (b11) at (4.5,4.5,1);
\draw (a11) -- (a01) -- (a10) -- (a21) -- cycle ;
\draw (a11) -- (a10) ;
\draw [fill opacity=0.7,fill=white!70!black] (a01) -- (a10) -- (a11) -- cycle;
\draw [fill opacity=0.7,fill=white!60!black] (a10) -- (a21) -- (a11) -- cycle;
\draw [fill opacity=0.8,fill=white!90!black] (5.5,5.5,0) -- (0.5,5.5,0) -- (0.5,0.5,0) -- (5.5,0.5,0) -- cycle;
\draw [dashed] (a01) -- (a12) -- (a21) ;
\draw [dashed] (a11) -- (a12) ;
\foreach \i in {1,...,6}
{
  \draw[dotted] (6.5-\i,5.5,0) -- (6.5-\i,0.5,0);
  \draw[dotted] (5.5,6.5-\i,0) -- (0.5,6.5-\i,0);
}
\foreach \i in {0,...,5}
{
  \foreach \j in {0,...,5}
  {
    \pgfmathparse{Mod(\i+\j,2)==0?0:1}
    \ifnum\pgfmathresult>0
      \draw (5.55-\i,5.5-\j,-0.09) node {$\Scale[1]{a_{\i,\j}}$};      
      \node[draw,diamond,aspect=1,color=blue,fill,inner sep=1.3pt] at (5.5-\i,5.5-\j,0) {} ;
    \fi
  }
}
\draw [dashed] (a12) -- (b11) ;
\draw (b11) -- (a01) ;
\draw (b11) -- (a10) ;
\draw (b11) -- (a21) ;
\draw [fill opacity=0.7,fill=white!80!black] (a10) -- (b11) -- (a21) -- cycle;
\draw [fill opacity=0.7,fill=white!85!black] (a01) -- (a10) -- (b11) -- cycle;
\draw [->] (6,5.5,-1) -- (0,5.5,-1);
\draw (0,5.5,-1) node [right] {$i$};
\draw [->] (5.5,6,-1) -- (5.5,0,-1);
\draw (5.5,0,-1) node [left] {$j$};
\draw [->] (5.5,5.5,-1.2) -- (5.5,5.5,2);
\draw (5.5,5.5,2) node [above] {$k$};
\end{tikzpicture}
\caption{The layers $k=0,1$ in $\calL$ (red, blue diamonds respectively). The \emph{initial data} $(a_{i,j})$ are the values of $x$ on these layers. An octahedral cell of $\calL$ is shown. The dSKP recurrence gives the value of $x$ at the top vertex of the octahedron, here $(1,1,2)$, in terms of the initial data. This can be repeated to get $x$ on the whole lattice $\calL$, provided no singularity occurs.}
\label{fig:octa_evol}
\end{figure}
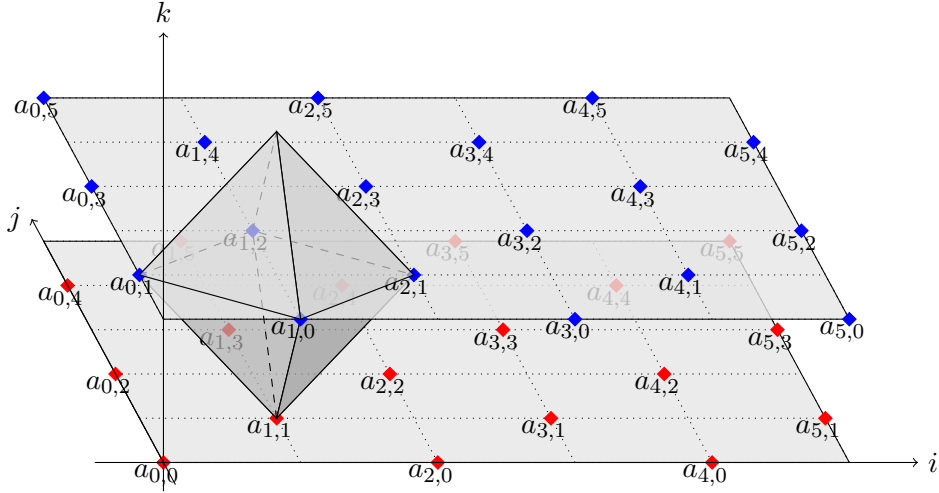

The purpose of this paper is to use the general framework of the companion paper \cite{paper1} to study discrete geometric systems. The systems considered here come in three groups. The first group consists solely of \emph{Miquel dynamics} (Section~\ref{sec:miquel}), it is the only example we consider that has two-dimensional initial data. The second group consists of \emph{integrable cross-ratio maps} (Section~\ref{sec:Backlund_pairs}) and special cases thereof, which are \emph{discrete holomorphic functions} (Section~\ref{sec:dhol}), \emph{polygon recutting} (Section~\ref{sec:recut}) and \emph{circle intersection dynamics} (Section \ref{sec:cid}). As a special case of discrete holomorphic functions we also consider \emph{orthogonal circle patterns} (Section~\ref{sec:orthogonal_circle_patterns}), also known as \emph{Schramm circle packings}. The third group consists of the \emph{corrugated pentagram map} for \emph{$N$-corrugated polygons} (Section~\ref{sec:corrugated}), which for $N=2$ is simply called the \emph{pentagram map} (Section \ref{sec:pent}) as well as the \emph{short diagonal hyperplane map} (Section~\ref{sec:hyppent}). A special role is played by \emph{P-nets} (Section~\ref{sec:pnets}), which arise both as a restriction of discrete holomorphic functions but also correspond to the case $N=1$ of the corrugated pentagram map.

For each system, we prove an explicit expression for the solution $x$ as a
function of the initial data, that is, we express $x$ as a ratio of
  partition functions of oriented dimers on a specific graph. Finding the explicit solution to any one of these systems is a result in its own right. Thus it is all the more practical that we can obtain \emph{all} solutions to all the systems we present here from one underlying result.
Let us mention that in the special case of Schramm's circle packings some \emph{specific} solutions were studied, see for example \cite{abschramm} and references therein. 
%
As an example, let us state our result for P-nets, which were introduced in the study of discrete isothermic nets~\cite[Section 6.2]{bpdiscsurfaces} and appear in various geometric contexts, see Section~\ref{sec:pnets}. A P-net is a map $p:\Z^2 \rightarrow \hat{\C}$ such that, for all $(i,j)\in\Z^2$,
\begin{align}
	\frac{1}{p_{i+1,j}-p_{i,j}} - \frac{1}{p_{i,j+1}-p_{i,j}} + \frac{1}{p_{i-1,j}-p_{i,j}} - \frac{1}{p_{i,j-1}-p_{i,j}} = 0.
\end{align}
We may view this as an evolution equation on $\Z^2$, with initial data $p_0:=(p_{i,0})_{i\in\Z}$ and $p_1:=(p_{i,1})_{i\in\Z}$, propagating in the $j$ direction to get values of $p$ on the whole lattice $\Z^2$ in terms of $p_0,p_1$. Note that for P-nets, the initial data is one-dimensional, while the initial data of dSKP is two-dimensional. Our first contribution is to show that the P-net evolution is in fact a special case of the dSKP evolution, for which the initial data $(a_{i,j})$ is set to certain values depending on $p_0,p_1$ and has an extra periodicity. As a consequence of Lemma~\ref{lem:pnetdskp} we have the following special solution of the dSKP recurrence (note that it does not depend on $j$):
\begin{theorem}
	The function $x(i,j,k)=p_{i,k}$ satisfies the dSKP recurrence.
\end{theorem}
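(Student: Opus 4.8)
The plan is to verify the dSKP relation~\eqref{eq:dskp_x_intro} directly, at an arbitrary point, using only the defining equation of a P-net. Fix $(i,j,k)\in\Z^3\setminus\calL$, so that $i+j+k$ is odd and the six neighbours $(i,j,k)\pm e_\ell$ all lie in $\calL$, hence the relation is well posed. Since $x(i,j,k)=p_{i,k}$ does not depend on the middle coordinate, the two neighbours in the $\pm e_2$-direction carry the \emph{same} value $p_{i,k}$, while the remaining four carry $p_{i+1,k},\,p_{i-1,k},\,p_{i,k+1},\,p_{i,k-1}$. Thus the six-point octahedron relation degenerates to a relation on the five points $p_{i,k},\,p_{i\pm 1,k},\,p_{i,k\pm 1}$ — exactly the five points entering the P-net equation centred at $(i,k)$.

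Next I would set $A=p_{i+1,k}-p_{i,k}$, $B=p_{i,k+1}-p_{i,k}$, $C=p_{i-1,k}-p_{i,k}$, $D=p_{i,k-1}-p_{i,k}$, and substitute the six values above into~\eqref{eq:dskp_x_intro}: for the numerator one gets $x_{-e_3}-x_{e_2}=D$, $x_{-e_1}-x_{e_3}=C-B$, $x_{-e_2}-x_{e_1}=-A$, and for the denominator $x_{e_2}-x_{-e_1}=-C$, $x_{e_3}-x_{-e_2}=B$, $x_{e_1}-x_{-e_3}=A-D$. The left-hand side of~\eqref{eq:dskp_x_intro} therefore equals $\dfrac{AD(B-C)}{BC(D-A)}$, so that the dSKP relation is equivalent to $AD(B-C)=BC(A-D)$, i.e., after expanding ($ABD+BCD=ABC+ACD$) and dividing by $ABCD$, to $\tfrac1A+\tfrac1C=\tfrac1B+\tfrac1D$. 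But the P-net equation at $(i,k)$ reads precisely $\tfrac1A-\tfrac1B+\tfrac1C-\tfrac1D=0$, which is the same identity. Hence~\eqref{eq:dskp_x_intro} holds at $(i,j,k)$, and since this point was arbitrary, $x$ satisfies the dSKP recurrence.

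There is no real obstacle here; the only points needing a word of care are that the computation takes place in an affine chart of $\CP^1$, so one should either work under the standing hypothesis ``all terms are defined'' of~\eqref{eq:dskp_x_intro} (all values finite and all denominators nonzero) or appeal to the Möbius-covariance of both the multi-ratio and of the P-net equation to reduce to that case. I would also remark that this theorem is really just the pointwise shadow of Lemma~\ref{lem:pnetdskp}, which already identifies the P-net evolution with a dSKP evolution on suitably periodic initial data; from that lemma the statement follows with no computation at all. I would keep the short direct verification as the main argument and mention the reduction to Lemma~\ref{lem:pnetdskp} as an alternative.
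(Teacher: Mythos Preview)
Your proof is correct and is essentially the paper's approach: the paper deduces the statement directly from Lemma~\ref{lem:pnetdskp} (the multi-ratio reformulation of the P-net equation), and your direct computation---reducing the six-term dSKP multi-ratio for $x(i,j,k)=p_{i,k}$ to the identity $\tfrac1A-\tfrac1B+\tfrac1C-\tfrac1D=0$---is precisely the content of that lemma. You even identify this yourself in your final paragraph.
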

\begin{figure}[tb]
	\centering
	\includegraphics[width=6cm]{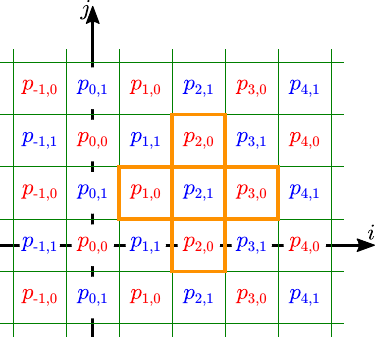}
	\caption{Special initial data for the dSKP evolution, seen from above, that induce the P-net evolution. The orange subgraph is the \emph{Aztec diamond} $A_2[p_{2,1}]$ and corresponds to the computation of
		$p_{2,3}$. Note that faces of the Aztec diamond correspond to vertices of $\calL$ at height $0,1$. Variables in red are those of $p_0$ and are all
		set to $0$ in singular P-nets, see Theorems~\ref{theo:pnetsingularity}~and~\ref{theo:pnetpremature} in the body of the paper.}
	\label{fig:pnet_ic}
\end{figure}
As a result, if one starts the dSKP evolution with initial data $a_{i,j}:=p_{i,[i+j]_2}$, see Figure~\ref{fig:pnet_ic}, then the corresponding dSKP solution contains the values of the P-net evolution. Combining this with general explicit expressions from \cite{paper1}, we get that $p_{i,k}$ can be written combinatorially as a ratio of two partition functions of oriented dimers, with weights that depend on the initial data. This is Theorem~\ref{theo:explpnet} in the body of the paper:
\begin{theorem} \label{theo:pnet_explicit_intro}
	Let $p:\Z^2 \rightarrow \hC$ be a P-net, and consider the graph $\Z^2$ with face-weights $(a_{i,j})_{(i,j)\in\Z^2}$ given by
	\begin{equation*}
		a_{i,j} = p_{i,[i+j]_2}.
	\end{equation*}
	Then, for all $i\in\Z, k\geq 1$, we have
	\begin{align*}
		p_{i,k}=  Y(\az{k-1}{p_{i,[k]_2}},a),
	\end{align*}
	where generically $A_k[a_{i,j}]$ denotes the Aztec diamond of size $k$ centered at a face with weight $a_{i,j}$; $Y(A_k[a_{i,j}],a)$ is the corresponding ratio function of oriented dimers
\end{theorem}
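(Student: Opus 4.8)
The plan is to derive this as a corollary of two facts that are already available: first, the identification of the P-net evolution as a special case of the dSKP recurrence (the theorem just stated, itself a consequence of Lemma~\ref{lem:pnetdskp}); and second, the general combinatorial expression for dSKP solutions in terms of oriented dimers proved in the companion paper~\cite{paper1}. So the real content lies in those two inputs, and the work here is to splice them together and correctly read off the relevant Aztec diamond.

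First I would run the dSKP evolution from the initial data $a_{i,j}=p_{i,[i+j]_2}$ placed on the layers $k=0,1$ of $\calL$. By the theorem above, the function $x(i,j,k)=p_{i,k}$ is a solution of the dSKP recurrence on all of $\calL$, and on the initial layers it restores $x(i,j,[i+j]_2)=p_{i,[i+j]_2}=a_{i,j}$; since the forward dSKP evolution from given initial data is a deterministic rational map, $x$ is exactly the dSKP solution associated with $a$. In particular $x$ does not depend on $j$. Next I would invoke the main explicit-solution result of~\cite{paper1}: for a vertex at height $k$ above the two initial layers, the value of the dSKP solution equals $Y(A_{k-1}[a_{i,j}],a)$, the ratio of the two oriented-dimer partition functions attached to the Aztec diamond of size $k-1$ sitting below $(i,j,k)$, carrying the face weights inherited from $a$. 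The shift from $k$ to $k-1$ is because the layer $k=1$ already holds initial data, so a single octahedral step (an Aztec diamond of size $1$) produces height $2$; this is exactly the situation depicted in Figure~\ref{fig:pnet_ic}, where $A_2[p_{2,1}]$ computes $p_{2,3}$. The base case $k=1$ is the convention that $A_0[p_{i,1}]$ reduces to the single center weight $p_{i,1}$, which is indeed initial data.

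It then remains to identify the center weight. Since $(i,j,k)\in\calL$ we have $i+j+k\in2\Z$, hence $[i+j]_2=[k]_2$, so that $a_{i,j}=p_{i,[i+j]_2}=p_{i,[k]_2}$. Chaining the equalities gives $p_{i,k}=x(i,j,k)=Y(A_{k-1}[p_{i,[k]_2}],a)$, which is the claim; the $j$-independence of the left-hand side is reflected in the fact that, because the weights $a$ are periodic in $j$ (they depend only on $i$ and on the parity of $i+j$), the various Aztec diamonds $A_{k-1}[a_{i,j}]$ obtained for different admissible $j$ all yield the same ratio.

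The index bookkeeping above is routine; the one point that genuinely requires care — and the main obstacle I expect — is that the formula of~\cite{paper1} is an identity of rational functions in the initial data, so one must make sure the specialization to the one-parameter-per-column family $a_{i,j}=p_{i,[i+j]_2}$ does not land in the locus where the dimer denominator vanishes. For a genuine (non-singular) P-net the dSKP evolution is well defined and finite at every $(i,j,k)$, so both sides of the identity are finite and the specialization is legitimate; the degenerate situations where the denominator does vanish are precisely the singular P-nets, which are handled separately via the Devron property in Theorems~\ref{theo:pnetsingularity} and~\ref{theo:pnetpremature}.
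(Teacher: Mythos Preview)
Your proposal is correct and follows essentially the same route as the paper: define $x(i,j,k)=p_{i,k}$, verify via Lemma~\ref{lem:pnetdskp} that $x$ satisfies the dSKP recurrence with initial data $a_{i,j}=p_{i,[i+j]_2}$, and then apply Theorem~\ref{theo:expl_sol}. Your identification of the center weight via the parity constraint $[i+j]_2=[k]_2$ on $\calL$ is in fact slightly cleaner than the paper's, which instead substitutes $(i,j,k)\to(i,i+j,j)$ and then invokes the vertical $2$-periodicity of the face-weighted lattice; both arguments are equivalent.
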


The proofs of these ``explicit expression theorems'' all follow the same pattern. In each case, they rely on a key lemma relating the geometric system to the dSKP equation. Then, they consist in applying our general explicit expression dSKP result \cite[Theorem 1.1]{paper1}, written in the context of this paper as Theorem~\ref{theo:expl_sol} below.

We next turn to proving singularity results for geometric systems, whose meaning we now explain.
Let us assume that $T$ is an operator describing the dynamics, whether it is the iteration of the dSKP equation in general or the dynamics of some geometric system.
For some choices of initial data, referred to as \emph{singular},
$T^{-1}$ is not defined, however proceeding forwards with the dynamics
seems possible to some extent. Integrable systems are
  believed to have a common property, known as the \emph{Devron
    property}~\cite{gdevron}, that can be thought of as a variant of
the more common \emph{singularity confinement} property initially introduced in
relation with integrability of discrete equations~\cite{GRP,HV}. The Devron property says the following: if some periodic initial data is singular for the backwards dynamics, it should become singular after a finite number of steps of the forwards dynamics. 

\begin{figure}[tb]
	\centering
	\frame{\includegraphics[height=4.7cm]{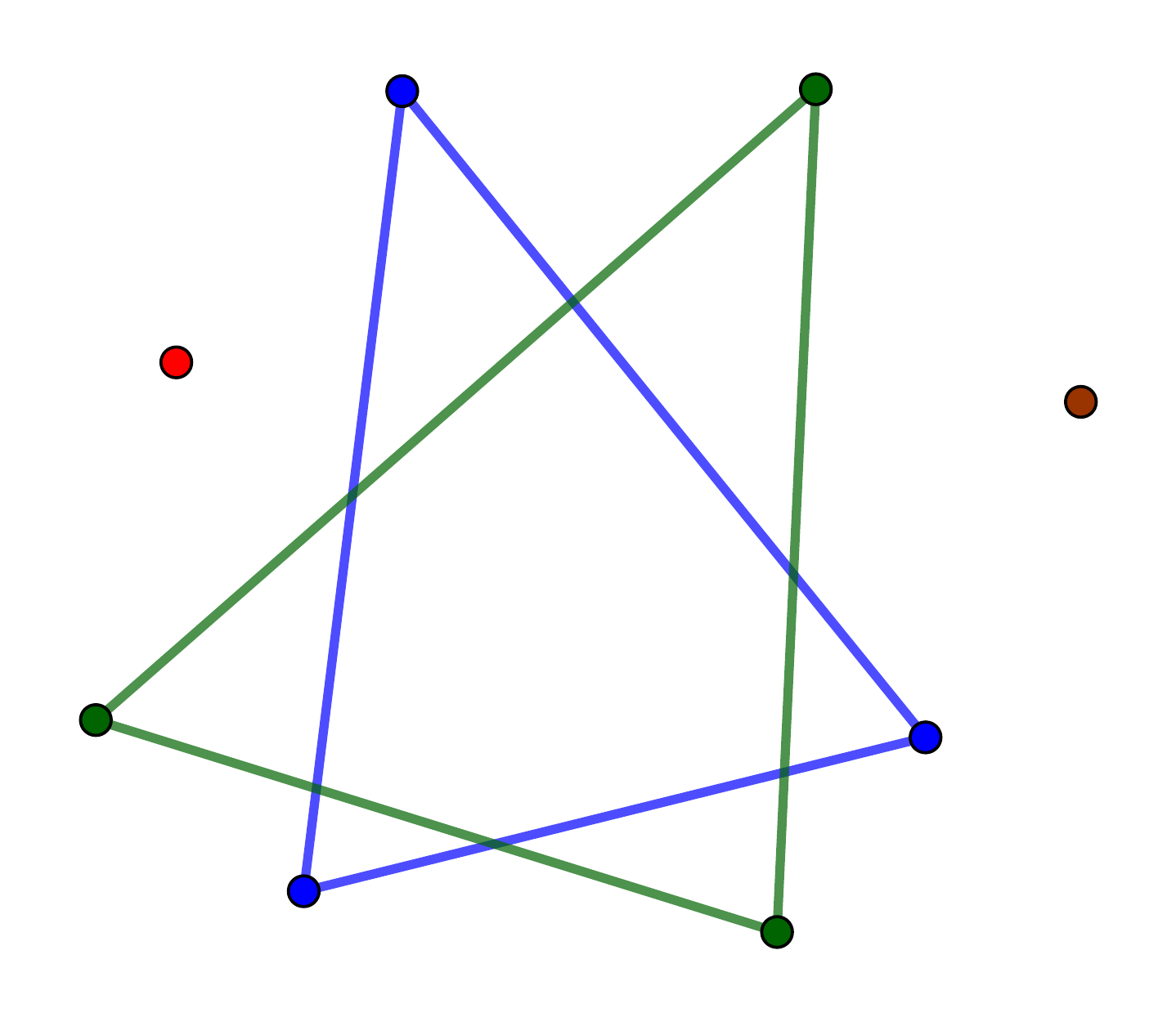}}
	\frame{\includegraphics[height=4.7cm,angle=0]{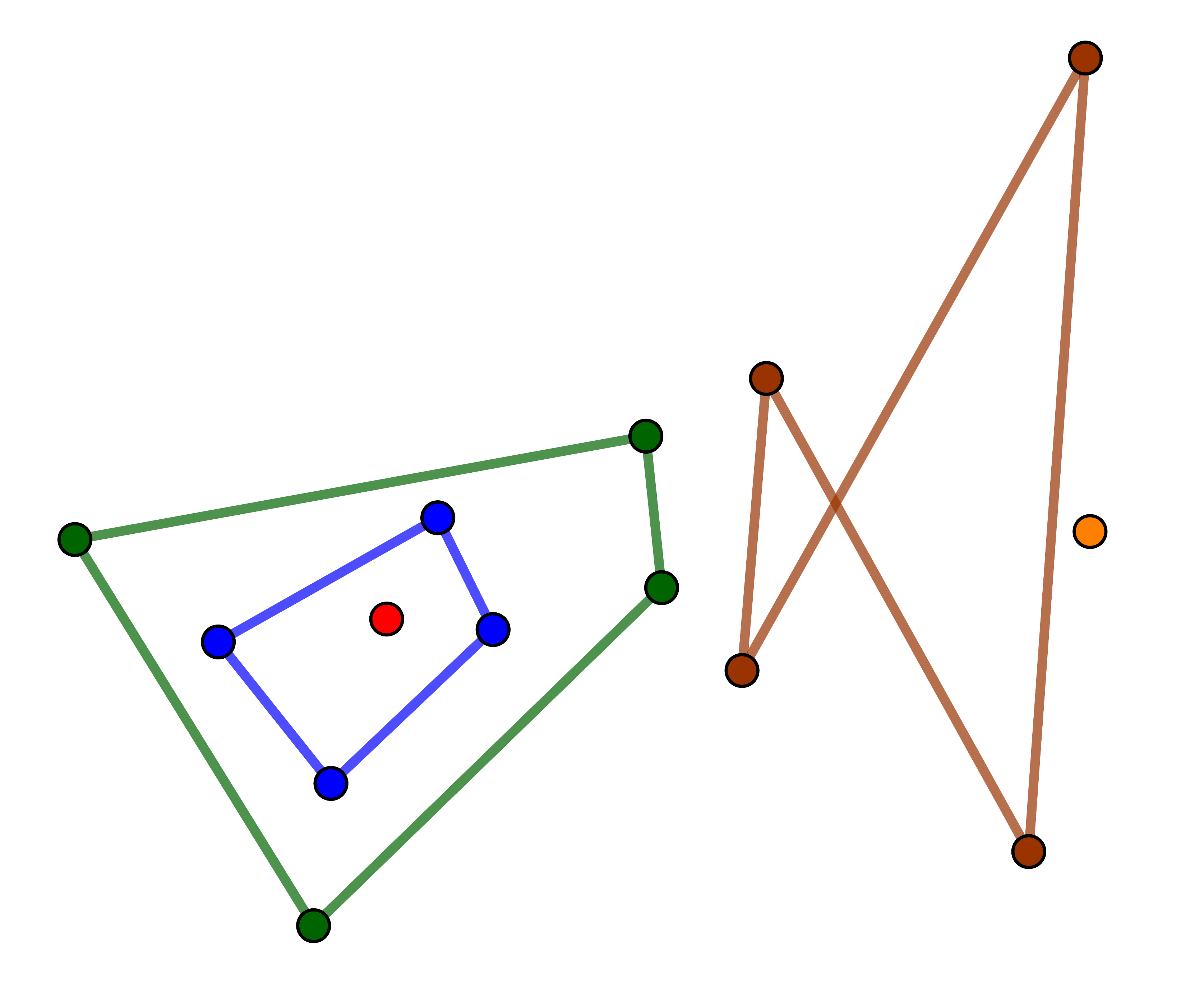}}
	\frame{\includegraphics[height=4.7cm,angle=0]{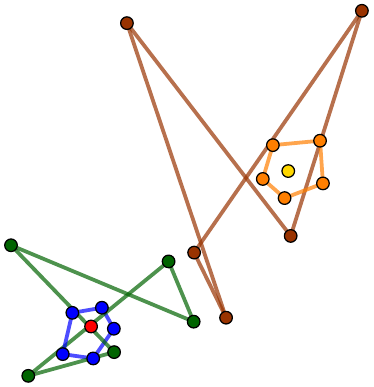}}
	\caption{P-net singularities. The red dot at $0$
			corresponds to $p_0$; the blue, green, brown, orange, yellow dots correspond to
		$p_1$, $p_2$, $p_3$, $p_4$ and $p_5$ respectively. Those are $m$-closed with $m=3$ (left), resp. $m=4$
		(center), resp. $m=5$ (right). Note that $p_m$ is constant, \emph{i.e.}, reduced to a point.}
	\label{fig:pnetsingularity}
\end{figure}

Let us illustrate this, again with the example of P-nets. Let $m\geq 1$, a P-net is said to be \emph{$m$-closed} if, for any $(i,j)\in \Z^2$, $p_{i,j+m}=p_{i,j}$; the reason for this terminology is that $p_0,p_1$ may be seen as discrete curves in $\hC$, and under the $m$-closed condition, those curves close back on themselves after $m$ steps; this closedness property propagates through the P-net evolution. The word \emph{periodic} will be reserved for dSKP initial data to avoid confusion. As a result of Theorem~\ref{theo:pnet_explicit_intro}, $m$-closed P-nets correspond to doubly $m$-periodic initial dSKP data, as can be seen by considering Figure~\ref{fig:pnet_ic} for closed $p_0,p_1$. Let us return to singularities: consider an $m$-closed P-net such that $p_0$ is constant equal to $0$. One checks that this is singular for the backwards dynamics: for any $i\in \Z$, $p_{i,-1}$ is undefined. On the other hand, the forwards dynamics on $p$ seems to be well-defined. For the corresponding dSKP initial data coming from Theorem~\ref{theo:pnet_explicit_intro}, note that there are zeros on the whole layer $k=0$. This kind of dSKP initial data is studied in \cite{paper1}, where the Devron property for dSKP is established. As a direct consequence, we get the Devron property for P-nets: the whole curve $p_m = (p_{i,m})_{i\in \Z}$ also becomes constant, so the forwards dynamics also reaches a singularity at $m$ steps. More precisely we reprove (see Theorem~\ref{theo:pnetsingularity} in the body of the paper):
\begin{theorem}[\cite{gdevron,yao}] Let $m\geq 1$, and
	let $p$ be an $m$-closed P-net such that $p_0\equiv 0$. Assume we can apply the propagation map $T$ to $(p_0,p_1)$ at least $m-1$ times.
	Then, for all $i\in \Z$, we have
	\begin{align}
		p_{i,m} = \Bigl(\frac{1}{m} \sum_{\ell=0}^{m-1} p_{\ell,1}^{-1}\Bigr)^{-1},
	\end{align}
	that is the singularity repeats after $m-1$ steps and its value is the harmonic mean of $p_1$.
\end{theorem}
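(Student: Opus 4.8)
The plan is to deduce this singularity statement from the explicit combinatorial expression for P-nets, Theorem~\ref{theo:pnet_explicit_intro}, combined with the general dSKP singularity results from \cite{paper1} recalled in Section~\ref{sec:prereq}. First I would translate the hypothesis $p_0\equiv 0$ into the language of dSKP initial data: by Theorem~\ref{theo:pnet_explicit_intro}, the $m$-closed P-net corresponds to doubly $m$-periodic dSKP initial data $a_{i,j}=p_{i,[i+j]_2}$, and the assumption $p_0\equiv 0$ means exactly that every entry on the layer $k=0$ of $\calL$ vanishes (the red variables in Figure~\ref{fig:pnet_ic}), while the layer $k=1$ carries the values of $p_1$. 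This is precisely the ``whole layer of zeros'' singular initial data analyzed in the companion paper, so the first step is to cite the corresponding dSKP Devron result to conclude that, after the appropriate number of forward steps (here $m-1$ steps in the $j$-direction, corresponding to reaching height $k=m$), a singularity reoccurs on the whole layer, i.e.\ $p_m=(p_{i,m})_{i\in\Z}$ is constant.

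The second, more quantitative step is to identify the value of that constant. Here I would use the explicit formula $p_{i,m}=Y(\az{m-1}{p_{i,[m]_2}},a)$: the right-hand side is a ratio of two oriented-dimer partition functions on the Aztec diamond $A_{m-1}$ whose face weights are built from $a$, hence from $p_0\equiv 0$ and $p_1$. Because all the $p_0$-weights are zero, the vast majority of dimer configurations contribute $0$, and only a small family of configurations survives in numerator and denominator; the surviving terms should be exactly those producing the symmetric function $\tfrac1m\sum_{\ell=0}^{m-1}p_{\ell,1}^{-1}$. Concretely, I expect that after cancelling common factors one is left with a numerator equal to (a constant times) $\prod_\ell p_{\ell,1}$ and a denominator equal to (the same constant times) $\sum_\ell \prod_{\ell'\neq\ell}p_{\ell',1}$, whose ratio is the harmonic mean. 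Tracking which monomials in the $a$-variables survive when the $k=0$ layer is set to zero, and checking the combinatorial signs/orientations so that nothing cancels spuriously, is the step I'd carry out carefully.

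Alternatively — and this may be cleaner — the value can be pinned down without re-deriving it from the dimer expansion, by using the structure already established in \cite{paper1} for layer-of-zeros singularities: the companion paper presumably gives the reoccurring singular value as an explicit symmetric function of the adjacent non-zero layer, and one only needs to specialize that formula to the P-net periodicity pattern and simplify. In that case the present theorem is essentially a corollary, and the only work is the bookkeeping translating dSKP coordinates and periods into P-net coordinates and the closedness parameter $m$.

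The main obstacle I anticipate is the second step: making sure the combinatorial cancellation in $Y(\az{m-1}{p_{i,[m]_2}},a)$ is controlled precisely enough to extract the harmonic mean rather than merely concluding ``constant''. One must verify that the surviving configurations in the numerator and denominator are exactly the claimed ones (no over- or under-counting), and that the orientation-induced signs all line up so the sums do not collapse to $0$ or acquire a spurious sign. The $m$-closedness (periodic boundary identification on the Aztec diamond) needs to be handled with care here, since it changes which configurations are admissible. Once that is settled, the harmonic-mean identity follows by a direct simplification of the resulting ratio of elementary symmetric polynomials in $p_{0,1},\dots,p_{m-1,1}$.
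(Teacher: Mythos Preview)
Your proposal is correct, and your ``alternative'' route is exactly what the paper does: it observes that the P-net initial data $a_{i,j}=p_{i,[i+j]_2}$ are $m$-Dodgson with $d=0$, and moreover satisfy the extra periodicity $a_{i,j}=a_{i,j+2}$ (this is the $p=-1$ case of the cyclic-permutation hypothesis in Corollary~\ref{cor:harm_mean}), so that corollary applies directly and yields both constancy of $p_m$ and the harmonic-mean value in one stroke. No direct dimer-configuration analysis is carried out in the paper.

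Your primary approach---tracking which oriented-dimer configurations survive in $Y(A_{m-1}[\cdot],a)$ when the $k=0$ layer is set to zero---would also work, but it is essentially re-deriving Corollary~\ref{cor:harm_mean} (or rather the underlying Proposition~\ref{prop:Nmat}) by hand in this special case. The one point you leave implicit and should make explicit is the extra translation invariance $a_{i,j}=a_{i,j+2}$: without it, $m$-Dodgson data only guarantees that $p_m$ is constant (Corollary~\ref{thm:Dodgson_prerequ}), not that the constant equals the harmonic mean. In the P-net setting this extra symmetry is automatic from the form of $a_{i,j}$, but it is the key ingredient that upgrades ``constant'' to ``harmonic mean''.
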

Geometrically, this corresponds to an \emph{incidence theorem}, where the geometric evolution on singular initial data finishes after a finite number of steps, see Figure~\ref{fig:pnetsingularity}.

In the companion paper \cite[Theorem 1.6]{paper1}, we prove that the dSKP recurrence features the Devron property for a wide family of initial data, that we call \emph{$(m,p)$-Devron initial data}; the importance of this family is that they include as special cases all the singularities of all the geometric systems we consider. We also obtain stronger results for a particularly symmetric case of $(m,1)$-Devron data referred to as \emph{$m$-Dodgson initial data} \cite[Corollary 1.5]{paper1}. For general dSKP, our singularity results are new. However, in several geometric systems that we consider, singularity results have already been obtained by Glick \cite{gdevron} and Yao \cite{yao}. What we provide in this paper is a unified framework, relying on the Devron property of general dSKP, which allows us to identify the number of iterations after which a singular initial data reoccurs and, in some cases, compute the position of the returning singularity. Table~\ref{tab:singularities} summarizes the results of this paper, those that are known and the new ones.

\begin{table}[t]
        \centering
        \begin{tabular}{|l|l|l|l|l|l|}
            \hline
            System & Initial condition & Steps & Reference & Citations \\ \hline
            Miquel dynamics& $m$-Dodgson      & $m-1$      & Theorem \ref{thm:sing_Miquel}   & new       \\ \hline
            P-nets & $m$-Dodgson         & $m-1$      & Theorem \ref{theo:pnetsingularity}   & \cite{gdevron, yao}       \\ \hline
            P-nets & $m$-Dodgson*  & $m-2$      & Theorem \ref{theo:pnetpremature}   & new     \\ \hline
            Int. cr-maps &  $(m,2)$-Devron  &     $2m-2$        &       Theorem \ref{theo:intcrsingular}     &   new        \\ \hline
            D. hol. f., $[m]_2=1$ &  $(m,2)$-Devron &  $2m-2$        &       Theorem \ref{theo:dholsingularity}     &   \cite{yao}        \\ \hline
            D. hol. f., $[m]_2=0$  &  $(m,2)$-Devron   &     $2m-3$        &       Theorem \ref{theo:dholsingularity}      &   new        \\ \hline
            D. hol. f., $[m]_2=0$ &  $(m,2)$-Devron*   &     $2m-4$        &       Theorem \ref{theo:dholpremature}     &   new    \\ \hline
            Orthogonal CP &  $(2m,2)$-Devron  &     $m-2$        &       Corollary \ref{cor:ocpsing}     &   new        \\ \hline
            Polygon recutting &  $(m,1)$-Devron  &     $m-1$        &       Theorem \ref{theo:recutsing}     &   \cite{gdevron}        \\ \hline
            Circle intersection dyn. &  $(m,1)$-Devron  &     $m-1$        &       Theorem \ref{theo:ciddodgson}     &   new        \\ \hline
            Circle intersection dyn. &  $(m,2)$-Devron  &     $2m-4$        &       Theorem \ref{theo:cidsing}     &   new        \\ \hline
            Pentagram map &  $m$-Dodgson  &     $m-1$        &       Theorem \ref{th:pentdodgson}     &   \cite{gdevron, yao}        \\ \hline
            Pentagram map &  $(m,2)$-Devron  &     $2m-4$        &       Theorem \ref{theo:pentsing}     &  new      \\ \hline
                $N$-corrugated pent.~map &  $m$-Dodgson  &     $2m-2$        &       Theorem \ref{th:corpentdodgson}     &   \cite{gdevron, yao}        \\ \hline
                Short diagonal hyp.~map &  $(m,2)$-Devron  &     $m-2$        &       Theorem \ref{theo:short_diagonal_sing}     &  new       \\ \hline
        \end{tabular}
        \caption{An overview of the singularity results presented in this paper and the literature. A * denotes some additional algebraic constraint on the initial conditions.
        }
        \label{tab:singularities}
\end{table}


The generic results for dSKP singularities from \cite{paper1} are recalled in the context of this paper in Section~\ref{sec:prerequ_sing}. The techniques used to obtain them are of a combinatorial nature, and computing the position of a singularity amounts to finding eigenvectors of associated matrices.
In comparison, Glick \cite{gdevron} uses various methods to prove the Devron property, including ``rescaled'' variations of Dodgson condensation for the dKP equation and a similar method for Y-systems, which he then applies to the pentagram maps and its generalizations. He also uses multi-dimensional consistency in the case of polygon recutting. On the other side, Yao \cite{yao} uses lifts to high-dimensional spaces and careful analysis of certain subspaces and invariants to prove the Devron property and the position of the recurring singularity.

Let us note that our singularity theorems provide \emph{upper bounds} for the number of iterations of the respective dynamics. We have done extensive numerical verifications, so we are confident that generically within the respective assumptions, the upper bounds are actually tight. A proof of this would require to provide example solutions in each case. In principle, as we provide explicit formulas for all the dynamics, our methods also provide the algebraic conditions on the initial conditions, such that the dynamics terminate before the upper bound is reached. Apart from the two premature singularity reoccurrences mentioned above, we have not investigated these cases any further.

As a conclusion to this introduction, let us turn to open questions. First, we believe that hexagonal circle patterns with constant intersection angles  \cite{abhexcp, bhhexcp} fall into our framework as well, but we have not investigated them in detail. Moreover, on the one hand, there has not been much research devoted yet to the study of the Devron property, on the other hand, it is easy to verify numerically that it appears in many geometric systems. This naturally leads to a lot of open questions. For example, we propose Conjecture \ref{conj:recutsing} on the position of the recurring singularity in polygon recutting. Moreover, Conjecture \ref{conj:pairsing} proposes a new type of recurring singularity, that does not seem to be a special case of a $(m,p)$-Devron singularity. Answering this conjecture would most likely help to also prove Conjecture \ref{conj:pentsing}, on the geometric nature of the $(m,2)$-Devron singularity in the pentagram map case. We believe these conjectures are well within the power of the general dSKP framework we developed.

There are also some geometric systems that feature the Devron
property, for which it is not clear whether they are describable via
the dSKP equation. In particular, we mention a higher dimensional
version of discrete holomorphic functions in Remark
\ref{rem:disosing}. There are also three conjectures in \cite[Section
9]{gdevron}. The first we prove in Theorem \ref{theo:cidsing}.
The second concerns a generalization of the pentagram map introduced
by Khesin and Soloviev \cite{kspent}; we prove it in
Theorem~\ref{theo:short_diagonal_sing}, using results of Glick and
Pylyavskyy \cite{gpymeshes} in combination with our framework.
The last conjecture mentioned by Glick on the so called \emph{Schubert
  flip} is open, and it is not clear whether there is a relation to the dSKP equation.
  
Another family of open problems concerns systems governed by multi-ratio equations for which it is not immediately obvious how they are governed by dSKP, see for example \cite{ksmultiratios, schiefdarboux}. However, in \cite{athesis}, it was shown that these systems are actually special cases of dSKP on a higher dimensional lattice, that is $A_n$ with $n > 3$. So far, we have only given explicit solutions on $A_3 = \mathcal L$. We suspect that it is possible to solve dSKP on general $A_n$ as well, possibly even using oriented dimers, but we do not know. Certainly, this would be an interesting direction of research.

Yet another open problem is to relate our methods with previous work that deals with periodic discrete holmorphic functions, see \cite{hmnpperiodic}. In this periodic case, it is possible to define propagation in a different direction, perpendicular to the coordinate axes instead of diagonally. This is called \emph{cross-ratio dynamics}, and the integrability of these dynamics was shown in \cite{afitcrossratio}. Then, in  \cite{agrcrdyn} it was shown how these dynamics can be captured by combining dSKP, dimer cluster integrable systems and geometric R-matrices. It would be interesting to see if it is possible to combine the techniques of the current paper with those developed in the study of R-matrices \cite{ilprmatrices}, to obtain explicit expressions for cross-ratio dynamics as well.

Finally, there is an intriguing coincidence of the evaluation of two Aztec diamonds of different sizes and with different weights. Both describe the propagation of discrete holomorphic functions, once via P-nets and once via integrable cross-ratio maps, see Question \ref{que:pnetdhol}.

\subsection*{Outline of the paper}
In Section~\ref{sec:prereq} we recall the prerequisites from the companion paper~\cite{paper1}. Then, the following sections are dedicated to the different geometric systems. Section~\ref{sec:miquel}: Miquel dynamics, Section~\ref{sec:pnets}: P-nets, Section~\ref{sec:Backlund_pairs}: integrable cross-ratio maps, Section~\ref{sec:dhol}:
discrete holomorphic functions and orthogonal circle patterns, Section \ref{sec:recut}: polygon recutting, Section \ref{sec:cid}: circle intersection dynamics, Section~\ref{sec:pent}: $N$-corrugated polygons and pentagram map, Section~\ref{sec:hyppent}: short diagonal hyperplane map.
All geometric systems sections essentially follow the same structure which consists of: describing the geometric system, establishing the key lemma relating it to the dSKP equation, proving the explicit expression result, and then showing singularity results.

\subsection*{Acknowledgments}
We would like to thank the anonymous referees for their thorough reading of the manuscript and for many useful remarks that helped increase the quality of the paper.
The first author would like to thank Max Glick and Sanjay Ramassamy for helpful discussions. He is supported by the Deutsche Forschungsgemeinschaft (DFG) Collaborative Research Center TRR 109 ``Discretization in Geometry and Dynamics'' as well as the by the MHI and Foundation of the ENS through the ENS-MHI Chair in Mathematics. The second and third authors
are partially supported by the DIMERS project ANR-18-CE40-0033 funded by the French National Research Agency. The authors would also like to thank Claude-Michel Viallet for interesting discussions on singularity confinements.

Data sharing not applicable to this article as no datasets were generated or analysed during the current study.

\section{Prerequisites}
\label{sec:prereq}

In this section we recall the results of the first paper
\cite{paper1}, in special cases that appear in geometric systems.

The dSKP recurrence lives on vertices of the \emph{octahedral-tetrahedral
  lattice $\calL$} defined as:
        \begin{align*}
                \calL = \left\{p=(i,j,k) \in \Z^3 : i+j+k \in 2\Z \right\}.
        \end{align*}

\begin{definition}\label{def:dSKP_recurrence}
        A function $x: \calL \rightarrow \hC$ satisfies the
        \emph{dSKP recurrence}, if
        \begin{align}\label{eq:dskp_x}
          \frac{(x_{-e_3}-x_{e_2})(x_{-e_1}-x_{e_3})(x_{-e_2}-x_{e_1})}{
          (x_{e_2}-x_{-e_1})(x_{e_3}-x_{-e_2})(x_{e_1}-x_{-e_3})} = -1,
        \end{align}
        holds evaluated at every point $p$ of $\Z^3\setminus \calL$, that
        is $x_q$ stands for $x_q(p):=x(q+p)$ for every $q\in (\pm e_i)_{i=1}^3$.
        More generally, if $A \subset \calL$ and $x:A\to \hat{\C}$, we
        say that $x$ \emph{satisfies the dSKP recurrence on} $A$ when
        \eqref{eq:dskp_x} holds whenever all the points are in $A$.
\end{definition}

The target space $\hC$ of the dSKP recurrence is an affine chart of the \emph{complex projective line} $\CP^1$, defined as follows. Consider the equivalence relation $\sim$ on $\C^2$ such that for $v,v'\in \C^2$ we have $v\sim v'$ if there is a $\lambda \in \C\setminus \{0\}$ such that $v = \lambda v'$. Every point in the projective line is an equivalence class $[v] = \{v' : v' \sim v\}$ for some $v \in \C^2 \setminus \{(0,0)\}$, that is
\begin{align*}
\CP^1= \{[v] : v \in \C^2 \setminus \{(0,0)\} \} = \left(\C^2 \setminus \{(0,0)\}\right)/\sim.
\end{align*}

It is practical to consider an \emph{affine chart} $\C$ of $\CP^1$, and the set $\hC = \C \cup \{\infty \}$ which we identify with $\CP^1$. 
Every point $z\in \C \subset \hC$ corresponds to $[z,1]$ in $\CP^1$ and $\infty\in \hC$ corresponds to $[1,0]$. In $\hC$ one can perform the usual arithmetic operations on $\C$. One can even apply the naive calculation rules $z + \infty = \infty, z/\infty = 0$ etc., see \cite[Section 17]{rgbook}. Note that solutions to the dSKP equation are invariant under \emph{projective transformations}, and the use of dSKP to get the value on a new point as in Figure~\ref{fig:octa_evol} has a natural interpretation in terms of a \emph{projective involution}, see \cite[Remark 2.3]{paper1}.

Given $(a_{i,j})_{(i,j)\in\Z^2}$, define the \emph{initial condition}
\begin{equation}\label{equ:init_cond}
\forall\ (i,j)\in\Z^2,\quad x(i,j,[i+j]_2)=a_{i,j},
\end{equation}
where $[n]_p$ generically denotes the value of $n$ modulo $p$,
taken in $\{0,\dots, p-1\}$. The idea is that if $x$ satisfies the
dSKP recurrence on $\calL$, its values are determined by the initial
conditions $\left(a_{i,j}\right)_{(i,j)\in\Z^2}$. Giving a
combinatorial expression for this function of the initial conditions
is the point of the next section.

Note that in the following sections we assume that all initial data for dSKP are generic, in the sense that the data propagates to the whole lattice $\calL$ as explained in the introduction. We consider propagation in the positive $k$ direction, but the negative direction is clearly symmetric. When we study singularities, we assume that the data is as generic as possible under the assumption of the initial singularity. We will not repeat these assumptions in the following.

\subsection{Explicit solution} \label{sec:explicit_solution_gen}

Consider the square lattice, with faces indexed by $\Z^2$. The face
indexed by $(i,j)$ is equipped with the weight $a_{i,j}$. For
$k\geq 1$, define the \emph{Aztec diamond of size $k$ centered at
  $a_{i,j}$}, denoted by\footnote{These two notations
    are useful to emphasize either the position of the central face or its
    weight, although we will only use the first one in Section~\ref{sec:hyppent}.} $A_k(i,j)$ or $A_k[a_{i,j}]$, to be the subgraph centered at $(i,j)$ whose
\emph{internal faces} are those with labels $(i',j')$ such that
$|i'-i| + |j'-j| < k$, see Figure~\ref{fig:ad}. We call \emph{open faces} those with label $(i',j')$
such that $|i'-i| + |j'-j| = k$, and denote by $F$ the disjoint union of all internal and all open faces of $A_k[a_{i,j}]$.
Let $V$ be the set of vertices of $A_k[a_{i,j}]$, that is, all vertices that belong to at least one internal face. These vertices can themselves be colored either black or white depending on their parity, making $A_k[a_{i,j}]$ a bipartite graph. We denote such a decomposition by $V=B\sqcup W$, see again Figure~\ref{fig:ad}. 

\begin{figure}[tb]
  \centering
  \includegraphics[width=12cm]{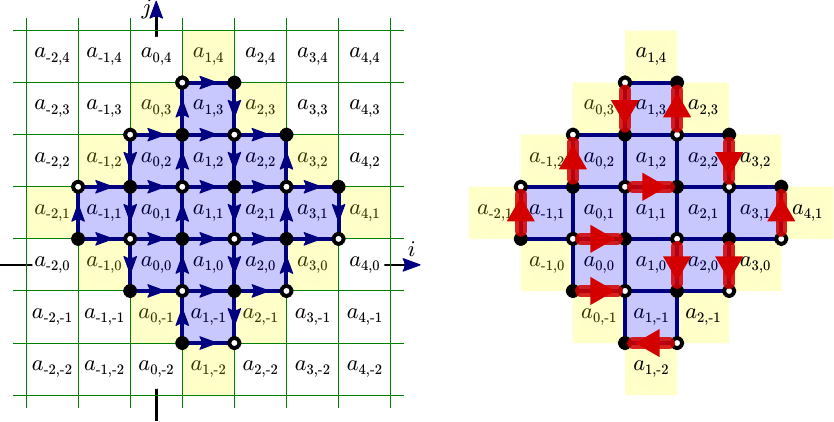}
  \caption{Left: the Aztec diamond of size $3$, $A_{3}[a_{1,1}]$, in blue, shown as a
    bipartite graph with a Kasteleyn orientation, with its
    internal faces in blue, and its open faces in yellow. Right: an
    example of an oriented dimer configuration on this graph. The
    ratio of partition functions is equal to $x(1,1,4)$.}
  \label{fig:ad}
\end{figure}

In general, for a finite,
planar bipartite graph $G$, let $\vec{E}$ be
the set of its directed edges.
A \emph{Kasteleyn orientation} \cite{Kasteleyn} is, in our case, a skew-symmetric
function $\vphi$ from $\vec{E}$ to $\{-1,1\}$ such that, for every
internal face $f$ of degree $2k$ we have
\[
  \prod_{wb\in\partial f}\vphi_{(w,b)}= (-1)^{k+1}.
\]
This corresponds to an orientation of edges of the graph: an edge
$e=wb$ is oriented from $w$ to $b$ when $\varphi_{(w,b)}=1$, and from
$b$ to $w$ when $\varphi_{(w,b)}=-1$. By Kasteleyn~\cite{Kasteleyn2}, such an
orientation of $G$ exists; in our case, an example is displayed in
Figure~\ref{fig:ad}.

An \emph{oriented dimer configuration} of $A_k[a_{i,j}]$ is a subset of oriented
edges $\vec{\Ms}$ such that its undirected version $\Ms$ is a dimer
configuration - that is, it touches every vertex exactly once. Denote
by $\vec{\M}$ the set of oriented dimer configurations of
$A_k[a_{i,j}]$.

Given a Kasteleyn orientation $\varphi$, and an oriented edge $\vec{e}$, denote by $f(\vec{e})$ the (internal or open) face to the right of $\vec{e}$.
The \emph{weight} of an oriented dimer configurations $\vec{\Ms}$ is
\begin{equation}
  \label{eq:defordimweight}
  w(\vec{\Ms}) = \prod_{\vec{e} \in \vec{\Ms}} \varphi_{\vec{e}} \ a_{f(\vec{e})},
\end{equation}
and the corresponding \emph{partition function} is
\begin{equation*}
  Z(A_k[a_{i,j}],a,\varphi) = \sum_{\vec{\Ms}\in\vec{\M}} w(\vec{\Ms}).
\end{equation*}

We consider the weighted ratio of partition functions, defined by
\begin{equation}\label{eq:defYpref}
  Y(A_k[a_{i,j}],a) =   \biggl(\,\prod_{f\in F}a_f\biggr) \
  \frac{Z(A_k[a_{i,j}],a^{-1},\varphi)}{Z(A_k[a_{i,j}],a,\varphi)}.
\end{equation}
It is also referred to as the \emph{ratio function of oriented dimers}.
Note that the partition function in the numerator uses the
inverse face weights $a^{-1}_{i,j}$ to define the weight of
configurations \eqref{eq:defordimweight}. The ratio of partition function does not depend on the choice of $\varphi$, see
\cite[Proposition~3.2]{paper1}.

All these definitions can be extended consistently to Aztec diamonds of size $0$: they have no internal face and only one open face, and a single (empty) oriented dimer configuration, which has weight $1$; taking the prefactor of \eqref{eq:defYpref} into account, this gives $Y(A_0[a_{i,j}],a) = a_{i,j}$.

The following is a special case of \cite[Theorem~3.4]{paper1},
detailed in \cite[Example~3.5]{paper1}.
\begin{theorem}[\cite{paper1}]
  \label{theo:expl_sol}
  If $x: \calL \rightarrow \hC$ satisfies the dSKP recurrence with initial condition~\eqref{equ:init_cond}, then
  for every $(i,j,k) \in \calL$ such that $k\geq 1$,
  \begin{equation*}
    x(i,j,k) = Y(A_{k-1}[a_{i,j}],a).
  \end{equation*}
\end{theorem}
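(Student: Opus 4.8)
The statement to prove, Theorem~\ref{theo:expl_sol}, says that if $x$ satisfies the dSKP recurrence with the initial condition \eqref{equ:init_cond}, then $x(i,j,k) = Y(A_{k-1}[a_{i,j}],a)$ for all $(i,j,k)\in\calL$ with $k\geq 1$. The excerpt explicitly labels this as "a special case of \cite[Theorem~3.4]{paper1}, detailed in \cite[Example~3.5]{paper1}." So the proof plan is essentially to derive this from the general companion-paper result, and the only real content is to identify the combinatorial data correctly.

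The plan is as follows. First I would recall the statement of the general theorem \cite[Theorem~3.4]{paper1}: it expresses the solution of dSKP (on the lattice $\calL$, or more generally on some $A_n$) as a ratio of partition functions of an oriented dimer model on a graph built from the initial data; in the $A_3=\calL$ case with the standard initial condition supported on the layers $k=0,1$, the relevant graph for computing $x(i,j,k)$ is precisely the Aztec diamond $A_{k-1}(i,j)$, with face weights read off from the initial data $a$. Second, I would check the base cases directly: for $k=1$, $A_0[a_{i,j}]$ has a single open face and the empty dimer configuration of weight $1$, so $Y(A_0[a_{i,j}],a)=a_{i,j}=x(i,j,[i+j]_2)=x(i,j,1)$ (since $i+j+1$ even forces $[i+j]_2=1$), matching the initial condition. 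Third, and this is the heart of a self-contained argument, I would verify that the right-hand side satisfies the dSKP recurrence \eqref{eq:dskp_x}: one fixes a point $p\in\Z^3\setminus\calL$ and expresses each of the six quantities $Y(A_{k-1}[\,\cdot\,],a)$ attached to the neighbors $p\pm e_i$ in terms of partition functions of Aztec diamonds of sizes $k-2, k-1, k$ centered at nearby faces, then shows the multi-ratio identity reduces to a determinantal / Desnanot–Jacobi ("condensation") type relation among these partition functions. By uniqueness of propagation (the dSKP recurrence determines $x$ on $\calL$ from the initial condition, under the genericity assumption already granted in the excerpt), the two functions agreeing on the initial layers and both satisfying the recurrence forces them to be equal everywhere.

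Concretely, the induction would be on $k$: assume $x(i',j',k') = Y(A_{k'-1}[a_{i',j'}],a)$ for all $k' < k$, apply the dSKP recurrence to solve for $x(i,j,k)$ in terms of the six neighboring values (all at heights $\le k-1$ or mixed), substitute the inductive formula, and simplify the resulting expression in partition functions down to $Y(A_{k-1}[a_{i,j}],a)$. The bookkeeping here — matching which open/internal faces of the smaller Aztec diamonds appear, tracking the prefactors $\prod_{f\in F} a_f$ in \eqref{eq:defYpref}, and checking independence from the Kasteleyn orientation (granted by \cite[Proposition~3.2]{paper1}) — is the routine-but-delicate part; the genuinely nontrivial input is the condensation-type identity for oriented-dimer partition functions on Aztec diamonds, which is exactly what is proved in \cite{paper1}.

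The main obstacle is thus not conceptual but referential: the real work lives in \cite{paper1}, and a proof here should be a careful transcription — pinning down that the general $A_n$ setup specializes to $\calL$ with this particular initial layer geometry, that "the associated graph" in \cite[Theorem~3.4]{paper1} is literally $A_{k-1}(i,j)$ with the face weights as defined above, and that the normalization conventions ($Y$ versus $Z$, the inverse weights $a^{-1}$ in the numerator, the $k=0$ convention $Y(A_0[a_{i,j}],a)=a_{i,j}$) line up. I would therefore structure the proof as: (1) quote \cite[Theorem~3.4]{paper1} and \cite[Example~3.5]{paper1}; (2) observe that \eqref{equ:init_cond} is exactly the initial data handled in that example; (3) check the dictionary between the notations; (4) conclude. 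If a more self-contained argument is wanted, replace (1)–(4) by the induction-on-$k$ plus condensation-identity argument sketched above, isolating the condensation identity as the one lemma imported from \cite{paper1}.
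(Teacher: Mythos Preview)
Your proposal is correct and matches the paper's treatment: the paper does not prove Theorem~\ref{theo:expl_sol} at all but simply imports it from \cite{paper1}, stating that it is ``a special case of \cite[Theorem~3.4]{paper1}, detailed in \cite[Example~3.5]{paper1}.'' Your plan (1)--(4) is exactly this, and your optional self-contained sketch (induction on $k$ via a condensation identity) is the natural way to unpack what \cite{paper1} does, though the paper here makes no attempt at that.
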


In~\cite{paper1}, we also prove an algebraic expression for the solution of the dSKP recurrence.
To state it, it is
easier to rotate the previous Aztec diamond by $45$ degrees, as in
Figure~\ref{fig:D}.
Recall that $B$ denotes the set of black vertices
of this Aztec diamond, and that $F$ denotes the set of (internal or open)
faces. Each face $f\in F$ corresponds to a weight, simply denoted by
$a_f$\footnote{For the following definition, it is more appropriate to index the weight of the face $f$ by its name rather than by its coordinates.}. We define a linear operator $D$ going from $\C^B \oplus \C^B$
to $\C^F$, in the following way: identify the canonical basis of $\C^B \oplus \C^B$ with two independent ``copies'' of $B$, and that of $\C^F$ with $F$. Then we define the matrix entries $D_{f,b}$ in these bases, that is, for $f\in F$ and for $b$ in either of the two copies of $B$. If $b$ is in the first
copy of $B$ and $f\in F$ is adjacent to $b$, $D_{f,b}:=\pm 1$, with
$+1$ if $b$ is on the right or above $f$ and $-1$ otherwise (relative to the orientation obtained by rotating the Aztec diamond by 45 degrees as in Figure~\ref{fig:D}). If $b'$
is in the second copy of $B$ and $f\in F$ is adjacent to $b'$,
$D_{f,b'}:=\pm a_f$, with the same sign rule as before, see Figure~\ref{fig:D}. All the other entries are zero.

For example, let $b$ be the bottom-left black vertex in Figure~\ref{fig:D}, and let $f_W,f_E,f_N$ be the (inner or open) faces respectively West, East and North of $b$. Let also $b'$ be the counterpart of $b$ in the second copy of $B$. Then we have
\begin{alignat*}{3}	
	& D_{f_W,b} = +1, && D_{f_E,b} = -1, && D_{f_N,b} = -1, \\
	& D_{f_W,b'} = a_{-2,1}, \ \ && D_{f_E,b'} = - a_{-1,0}, \ \ && D_{f_N,b'} = a_{-1,1}.
\end{alignat*}

\begin{figure}[tb]
  \centering
  \includegraphics[width=6cm]{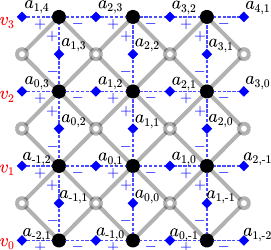}
  \caption{ Aztec diamond $A_3[a_{1,1}]$ rotated by 45 degrees.
    The elements
    of $B$ are shown as black dots, and those of $F$ are shown as blue
    diamonds. The non-zero entries of the operator $D$ correspond to
    the blue dashed lines, and their sign is indicated. The red labels
    are entries of the vector $v\in \C^F$ in the statement of
    Theorem~\ref{theo:D}.}
  \label{fig:D}
\end{figure}

Note that $2|B|=|F|-1$, so $D^T$ always has a
nontrivial kernel. We can now express the dSKP solution, using
\cite[Theorem~5.3]{paper1} after a change of index convention.

\begin{theorem}[\cite{paper1}] \label{theo:D}
  Let $x: \calL \rightarrow \hC$ satisfy the dSKP recurrence with initial condition~\eqref{equ:init_cond}.
  Let $(i,j,k)\in\calL$ such that $k\geq 1$, and $D$ be the
  operator defined from $(A_{k-1}[a_{i,j}],a)$.
  Let $v \in \C^F$ be a non-zero vector such that
  \begin{equation}
    D^T \ v = 0.
  \end{equation}
  Denote by $v_{0},\dots, v_{k-1}$ the entries of $v$ corresponding to
  the $k$ leftmost elements of $F$, with respective face weights
  $a_{i-k+1,j},\dots, a_{i,j+k-1}$. Then, the ratio function of oriented dimers
  can be expressed as:
  \begin{equation*}
    Y(A_{k-1}[a_{i,j}],a) = \frac{\sum_{\ell =0}^{k-1} a_{i-k+\ell+1,j+\ell}\, v_{\ell}}{\sum_{\ell=0}^{k-1} v_{\ell}}.
  \end{equation*}
\end{theorem}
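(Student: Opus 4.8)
The plan is to derive Theorem~\ref{theo:D} from Theorem~\ref{theo:expl_sol} by reinterpreting the ratio function of oriented dimers $Y(A_{k-1}[a_{i,j}],a)$ as a quotient of two subdeterminants of the operator $D$, and then extracting that quotient from a kernel vector of $D^T$. The key observation is that a bipartite (Kasteleyn-weighted) partition function is, up to sign, a determinant of the weighted adjacency matrix, and that the two partition functions appearing in $Y$ — the one with weights $a$ and the one with weights $a^{-1}$, multiplied by $\prod_{f\in F}a_f$ — differ precisely by which ``copy'' of $B$ (plain signs versus signs times $a_f$) is being used. Since $D$ maps $\C^B\oplus\C^B\to\C^F$ and $2|B|=|F|-1$, deleting one column of $F$ from $D$ yields a square matrix whose determinant is (up to sign) one of the two partition functions; deleting a different face column yields the other, with the roles of the two $B$-copies swapped. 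This is exactly the setup where Cramer's rule expresses ratios of such maximal minors through a single vector in $\ker D^T$.

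Concretely, first I would recall from \cite{paper1} (the cited \cite[Theorem~5.3]{paper1}) the precise combinatorial identity: up to a global sign and the explicit prefactor $\prod_{f\in F}a_f$, both $Z(A_{k-1}[a_{i,j}],a,\varphi)$ and $\bigl(\prod_{f}a_f\bigr)Z(A_{k-1}[a_{i,j}],a^{-1},\varphi)$ are equal to maximal minors of the matrix of $D$ obtained by striking out a single column indexed by a face $f\in F$. Because $v$ spans the (one-dimensional, generically) kernel of $D^T$, the signed maximal minor of $D$ with column $f$ removed is proportional to $v_f$, with a proportionality constant independent of $f$. Taking the two faces that realize the numerator and denominator of $Y$, the constant cancels in the ratio, and $Y$ becomes a ratio of two entries of $v$. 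The final step is bookkeeping: identify which faces of $F$ correspond to the numerator and denominator, and expand. The faces along the left boundary of the rotated Aztec diamond, with weights $a_{i-k+1,j},\dots,a_{i,j+k-1}$, are exactly the ones whose associated minors contribute, and summing the Cramer expansion over these $k$ faces — weighting the $\ell$-th one by its face weight $a_{i-k+\ell+1,j+\ell}$ in the numerator copy of $B$ and by $1$ in the plain copy — produces $\sum_\ell a_{i-k+\ell+1,j+\ell}v_\ell$ over $\sum_\ell v_\ell$, which is the claimed formula.

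The main obstacle, and the part requiring genuine care rather than routine manipulation, is the sign and indexing accounting: matching the Kasteleyn orientation $\varphi$ used to define $Z$ with the $\pm 1$ sign rule defining the entries $D_{f,b}$ (``$+1$ if $b$ is on the right or above $f$''), verifying that after the $45$-degree rotation these signs are globally consistent so that the minors really do reproduce $Z(\cdot,a,\varphi)$ and $Z(\cdot,a^{-1},\varphi)$ without stray signs, and checking that exactly the $k$ leftmost faces (rather than some other distinguished set) are the ones that survive in the expansion. A secondary subtlety is the genericity hypothesis: one must know $\ker D^T$ is one-dimensional (or at least that the relevant minors are not all zero) for the ratio to be well-defined, which is consistent with the standing genericity assumption stated at the end of Section~\ref{sec:prereq}. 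Since all of this is precisely the content of \cite[Theorem~5.3]{paper1}, the proof here amounts to transporting that statement through the change of index convention mentioned in the excerpt; I would therefore state the correspondence of indices explicitly (the old convention of \cite{paper1} versus the $(i,j,k)$-centered convention used here), cite \cite[Theorem~5.3]{paper1}, and then only need to check that the translation of the distinguished face set and the weights $a_{i-k+\ell+1,j+\ell}$ is correct, which is a finite diagram-chase on Figure~\ref{fig:D}.
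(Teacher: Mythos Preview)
The paper does not prove this theorem; it is imported directly from the companion paper as \cite[Theorem~5.3]{paper1}, with only the remark ``after a change of index convention.'' Your proposal ultimately does exactly the same thing: you explicitly say the proof ``amounts to transporting that statement through the change of index convention'' and cite \cite[Theorem~5.3]{paper1}. In that sense your approach matches the paper's treatment.

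The additional heuristic sketch you provide (Kasteleyn determinants, maximal minors of $D$, Cramer's rule via a kernel vector) is plausible as an outline of how the result is established in \cite{paper1}, but it is not needed here and is also somewhat loose in one place: you first say ``$Y$ becomes a ratio of two entries of $v$,'' then correct this to a ratio of weighted \emph{sums} over the $k$ leftmost faces. The latter is what is actually claimed, and the passage from ``minor with one face deleted is proportional to $v_f$'' to ``the numerator and denominator of $Y$ are the specific linear combinations $\sum_\ell a_{i-k+\ell+1,j+\ell}v_\ell$ and $\sum_\ell v_\ell$'' is precisely the nontrivial combinatorial content that lives in \cite{paper1}; your sketch does not supply an argument for it. Since the present paper does not attempt to supply it either, this is not a defect relative to the paper, but if you intend your write-up to stand on its own you should either drop the sketch or make that step precise.
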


As argued in the proof of \cite[Theorem~5.3]{paper1},
this statement should be understood as an equality of expressions in
formal variables $(a_{i,j})$; the entries $(v_\ell)$ are themselves
rational functions of the $(a_{i,j})$. One might wonder if this equality holds
when $\dim \ker D^T \geq 2$. The following proposition, combined  with
the fact that there exist global solutions to the dSKP recurrence
(see \cite[Example~2.4]{paper1}) implies that
this is generically not the case: if one sees $D^T$ as a formal matrix
in the $(a_{i,j})$ variables,
then its kernel is always one-dimensional.

\begin{proposition}\label{prop:dimker2}
With the same notation as in Theorem~\ref{theo:D}, suppose that the
initial conditions $(a_{i,j})$ are such that $\dim \ker D^T \geq
2$. Then $Y(A_{k-1}[a_{i,j}],a)$ is undefined in $\hC$, and so is
$x(i,j,k)$.
\end{proposition}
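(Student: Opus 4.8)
The plan is to exploit the fact, established in Theorem~\ref{theo:D}, that when $\dim\ker D^T = 1$ the quantity $Y(A_{k-1}[a_{i,j}],a)$ equals the ratio $\bigl(\sum_\ell a_{i-k+\ell+1,j+\ell}v_\ell\bigr)/\bigl(\sum_\ell v_\ell\bigr)$ for the essentially unique kernel vector $v$. The idea is to run the same computation in the degenerate case $\dim\ker D^T \geq 2$ and show the resulting expression is not well-defined. First I would revisit the derivation of Theorem~\ref{theo:D} in \cite[Theorem~5.3]{paper1}: the value $x(i,j,k)$ is obtained there by a projective/linear-algebra argument producing a vector whose entries span the one-dimensional kernel of $D^T$, and the final value is read off as a ratio of two linear functionals applied to that vector. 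When the kernel has dimension $\geq 2$, the argument instead forces $x(i,j,k)$ to be simultaneously equal to the corresponding ratio for \emph{every} vector in a two-dimensional (or larger) subspace $K \subseteq \ker D^T$, i.e.\ $\bigl(\sum_\ell a_{i-k+\ell+1,j+\ell}v_\ell\bigr)/\bigl(\sum_\ell v_\ell\bigr)$ must take the same value on all $v\in K$.

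Next I would show this is impossible generically, hence that $x(i,j,k)$ cannot be defined in $\hC$. Pick two linearly independent $v,v' \in K$ and write $L_0(v) = \sum_\ell v_\ell$, $L_1(v) = \sum_\ell a_{i-k+\ell+1,j+\ell}v_\ell$ for the two linear functionals on $\C^F$ restricted to the $k$ leftmost faces (extended by zero). If the ratio $L_1/L_0$ agreed on all of $\mathrm{span}\{v,v'\}$, then $L_1$ and $L_0$ would be proportional as linear functionals on that two-dimensional space, which means the $2\times 2$ matrix $\begin{pmatrix} L_0(v) & L_0(v') \\ L_1(v) & L_1(v')\end{pmatrix}$ is singular. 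But $K$ is itself a $2$-dimensional subspace of $\ker D^T$ that varies algebraically with the $(a_{i,j})$, and one checks (using that global dSKP solutions exist, so that generically $\dim\ker D^T = 1$ by Theorem~\ref{theo:D} together with uniqueness of the solution) that this proportionality is a nontrivial algebraic condition that is \emph{not} identically satisfied on the locus $\dim\ker D^T \geq 2$ --- indeed one can exhibit a single explicit choice of $(a_{i,j})$ in that locus where $L_0$ and $L_1$ restricted to $K$ are independent, which suffices. Alternatively, and perhaps more cleanly, I would argue directly: the construction in \cite[Theorem~5.3]{paper1} expresses $x(i,j,k)$ as a ratio $P/Q$ of two polynomials in the $(a_{i,j})$ whose common factor, after cancellation, is exactly the determinant-type quantity $\det D^T$ restricted appropriately (more precisely, $Q$ is --- up to sign and monomial factors --- a maximal minor of $D^T$, so $Q$ vanishes precisely when $\dim\ker D^T \geq 1$ \emph{fails to drop}, i.e.\ on the degeneracy locus $P$ vanishes too but to higher order or not at all, and the ratio is $0/0$ or $\infty$). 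Tracking this, $\dim\ker D^T \geq 2$ forces both numerator and denominator of the reduced fraction to vanish, so $x(i,j,k)$ is $0/0$, undefined in $\hC$.

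The main obstacle will be pinning down which of these two routes actually matches the machinery of \cite{paper1} and making the vanishing statement precise: I need to know exactly how $Y(A_{k-1}[a_{i,j}],a)$ is expressed as a reduced ratio of polynomials there, and in particular that the denominator is a maximal minor of $D^T$ (equivalently, a Kasteleyn-type determinant whose vanishing is the condition $\dim\ker D^T \geq 1$ being non-generic). If that identification is available --- and it should be, since $Z(A_{k-1}[a_{i,j}],a,\varphi)$ is a Kasteleyn determinant and $D$ is built from the same incidence data --- then the conclusion is immediate: on $\{\dim\ker D^T\geq 2\}$ all maximal minors of $D^T$ vanish, so the denominator $Z(A_{k-1}[a_{i,j}],a,\varphi)$ (or its reduced form) is $0$, and by the same token the numerator $Z(A_{k-1}[a_{i,j}],a^{-1},\varphi)$ vanishes as well (it is governed by the same degeneracy), whence $Y$, and therefore $x(i,j,k)$, is an undefined $0/0$ expression in $\hC$. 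A careful statement of why the numerator must also vanish --- rather than being a nonzero constant, which would give $x(i,j,k)=\infty$, still "undefined" only in the weaker sense --- is the one point I would want to check against the details of \cite{paper1}; in the worst case the proposition is simply the assertion that $x(i,j,k)\in\{0/0\}\cup\{\infty\}$, both of which count as "undefined in $\hC$" once one insists (as the paper does for the Devron property) on a genuine value.
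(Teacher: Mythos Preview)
Your starting point is right: Theorem~\ref{theo:D} says $Y=L_1(v)/L_0(v)$ for every nonzero $v\in\ker D^T$, and when $\dim\ker D^T\geq 2$ this must be consistent across a two-dimensional space. But you then take a wrong turn. You try to show that the proportionality of $L_0$ and $L_1$ on $K$ is a nontrivial algebraic condition that fails \emph{generically} on the locus $\{\dim\ker D^T\geq 2\}$. This does not prove the proposition, which is a universal statement: for \emph{every} choice of $(a_{i,j})$ with $\dim\ker D^T\geq 2$, the value $Y$ is undefined. A genericity argument leaves open the possibility of special $(a_{i,j})$ where $L_0,L_1$ happen to be proportional on $K$ and $Y$ might seem well-defined. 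Your second route, via maximal minors and $0/0$, is left too vague to assess. You also misidentify what ``undefined in $\hC$'' means: $\infty$ is a perfectly valid point of $\hC=\C\cup\{\infty\}$, so ``$Y=\infty$'' is not the same as ``$Y$ undefined''.

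The paper's argument is a two-line linear algebra trick that you nearly have but do not execute. Since $\dim K\geq 2$ and $L_0$ is a single linear functional, there is a nonzero $v\in K$ with $L_0(v)=0$; applying Theorem~\ref{theo:D} to this $v$ forces $Y\in\{\infty,\text{undefined}\}$. Symmetrically, there is a nonzero $v'\in K$ with $L_1(v')=0$, forcing $Y\in\{0,\text{undefined}\}$. Since $0\neq\infty$ in $\hC$, the only possibility is that $Y$ is undefined. No genericity, no algebraic geometry, no appeal to \cite{paper1} beyond Theorem~\ref{theo:D} is needed.
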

\begin{proof}
  Suppose that there are two linearly independent vectors
  in $\ker D^T$. By a linear combination, there is a non-zero vector
  $v \in \ker D^T$ such that $\sum_{\ell=0}^{k-1} v_{\ell}=0$, so by
  Theorem~\ref{theo:D}, $Y(A_{k-1}[a_{i,j}],a)$ has to be $\infty$ or
  undefined (if at the same time $\sum_{\ell =0}^{k-1} a_{i-k+\ell,j+\ell}\, v_{\ell}=0$). Similarly, we can get another non-zero vector
  $v'\in \ker D^T$ such that $\sum_{\ell =0}^{k-1} a_{i-k+\ell,j+\ell}\, v'_{\ell}=0$,
  so that $Y(A_{k-1}[a_{i,j}],a)$ has to be $0$ or
  undefined (if at the same time $\sum_{\ell=0}^{k-1} v'_{\ell}=0$). Since Theorem~\ref{theo:D} holds for any choice of non-zero vector in $\ker D^T$, the only solution that satisfies both conditions is that it is undefined. Using
  Theorem~\ref{theo:expl_sol}, $x(i,j,k)$ is undefined as well.
\end{proof}

\subsection{Singularities}\label{sec:prerequ_sing}

We now recall the results of \cite{paper1} about singular initial
conditions. Note that we return to the unrotated version of the Aztec diamond as in Figure~\ref{fig:ad}.

In some special cases, we prove a simpler
expression for the solution $x(i,j,k)$ than the one given by
Theorem~\ref{theo:D}. This is the case when there is a constant $d \in \C$ such that $a_{i,j}=d$
whenever $(i,j)\in\Z^2$ and $[i+j]_2=0$. Then $x(i,j,k)$ can be expressed in terms of the
inverse of a matrix $N$, of size $k\times k$, whose coefficients are the shifted inverses of the non-zero initial conditions
that enter into the computation of $x(i,j,k)$.
The following is taken from \cite[Corollary~5.10]{paper1}.
\begin{proposition}[\cite{paper1}]
  \label{prop:Nmat}
  Suppose that for all $(i,j)\in \Z^2$ such that $[i+j]_2=0$,
  $a_{i,j}=d$. Let $(i,j,k) \in \calL$ with $k\geq 1$. Consider the
  matrix $N=\left( N_{i',j'} \right)_{0\leq i',j' \leq k-1}$ with entries
  \begin{equation*}
        N_{i',j'} =\frac{1}{a_{i-i'+j',j+k-1-i'-j'}-d}.
  \end{equation*}
  Then
  \begin{equation*}
    x(i,j,k) = d + \sum_{0\leq i',j' \leq k-1} N^{-1}_{i',j'},
  \end{equation*}
where the sum is over entries of the inverse matrix $N^{-1}$.
\end{proposition}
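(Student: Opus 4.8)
The plan is to derive Proposition~\ref{prop:Nmat} from Theorem~\ref{theo:D} by explicitly constructing, under the normalization hypothesis $a_{i,j}=d$ for $[i+j]_2=0$, a vector $v\in\C^F$ in $\ker D^T$ in terms of the matrix $N^{-1}$, and then plugging it into the ratio formula of Theorem~\ref{theo:D}. First I would fix $(i,j,k)\in\calL$ with $k\geq1$ and set up the Aztec diamond $A_{k-1}[a_{i,j}]$ rotated by $45$ degrees as in Figure~\ref{fig:D}, together with the operator $D:\C^B\oplus\C^B\to\C^F$; since $D^T$ has a one-dimensional kernel generically, it suffices to exhibit one non-zero kernel vector. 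I would organize $F$ into diagonals: the faces with $[i'+j']_2=[i+j+k-1]_2$ (those that are "odd" relative to the normalization) carry genuinely free weights, while the faces with $[i'+j']_2=[i+j+k-1+1]_2$ all have weight $d$. The idea is that the equations $D^Tv=0$, read black-vertex by black-vertex, express a discrete harmonicity-type relation; after translating by $d$ (replacing each weight $a_f$ by $a_f-d$ on the non-constant faces and $0$ on the constant faces) the system decouples into a linear system whose solution is encoded by the inverse of the matrix $N$ whose entries are the shifted reciprocals $(a_{f}-d)^{-1}$.

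Concretely, the second step is a bookkeeping one: I would show that the black vertices of $A_{k-1}[a_{i,j}]$ are naturally indexed by pairs $(i',j')$ with $0\le i',j'\le k-1$ (a $k\times k$ array), that each black vertex $b_{i',j'}$ is adjacent to exactly two of the "free" faces and (for interior $b$) two of the "constant" faces, and then carefully match the adjacency and the sign rule of $D$ against the claimed entries $N_{i',j'}=(a_{i-i'+j',\,j+k-1-i'-j'}-d)^{-1}$. This identifies the two blocks of the block-matrix $D$: the first copy of $B$ contributes a $\pm1$ incidence matrix between free faces and black vertices, and the second copy contributes the same incidence pattern weighted by the face weights; after the shift by $d$ the constant faces drop out of the second block. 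The equation $D^Tv=0$ then splits into: (i) relations forcing $v$ on the constant faces to be determined (up to the overall constant we subtract) by neighboring values on free faces, and (ii) a square linear system on the free-face entries $(v_0,\dots,v_{k-1})$ and their analogues, whose matrix is exactly $N$ (up to signs and the $d$-shift). Summing the appropriate rows of $N^{-1}$ and matching against $\sum_\ell v_\ell$ and $\sum_\ell a_{i-k+\ell+1,j+\ell}v_\ell$ gives, after the translation by $d$ is undone, the stated formula $x(i,j,k)=d+\sum_{i',j'}N^{-1}_{i',j'}$.

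For the final step I would invoke Theorem~\ref{theo:D}: with $v$ the kernel vector just constructed and $v_0,\dots,v_{k-1}$ its entries on the $k$ leftmost faces (weights $a_{i-k+1,j},\dots,a_{i,j+k-1}$), one gets $Y(A_{k-1}[a_{i,j}],a)=(\sum_\ell a_{i-k+\ell+1,j+\ell}v_\ell)/(\sum_\ell v_\ell)$, and then Theorem~\ref{theo:expl_sol} identifies this with $x(i,j,k)$; it remains to rewrite the right-hand side as $d+\sum N^{-1}_{i',j'}$. Writing $a_{i-k+\ell+1,j+\ell}v_\ell = d\,v_\ell + (a_{i-k+\ell+1,j+\ell}-d)v_\ell$ reduces this to showing that $\sum_\ell (a_{i-k+\ell+1,j+\ell}-d)v_\ell \Big/ \sum_\ell v_\ell = \sum_{i',j'}N^{-1}_{i',j'}$, which is exactly the content of the linear-algebra identification: the numerator and denominator are, respectively, a sum of the row sums of $N^{-1}$ after appropriate scaling, and the normalizing scalar. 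I expect the main obstacle to be precisely this combinatorial bookkeeping — getting the indexing of $B$ and $F$, the orientation/sign conventions of $D$ inherited from the $45$-degree rotation, and the exact correspondence between "a kernel vector of $D^T$" and "an all-ones vector hit by $N^{-1}$" to line up without sign errors. Since \cite[Corollary~5.10]{paper1} is cited as the source, I would in practice reduce everything to a change of index convention between the companion paper's conventions and those of the present excerpt, and the genuine work is checking that this reindexing is consistent with Figure~\ref{fig:ad} and the normalization $[i+j]_2=0\Rightarrow a_{i,j}=d$.
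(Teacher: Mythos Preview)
The paper does not give its own proof of this proposition: it is simply recalled from \cite[Corollary~5.10]{paper1}, so there is no argument here to compare yours against.

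As for your outline, the broad strategy---manufacture an explicit kernel vector of $D^T$ out of $N^{-1}$ and feed it into Theorem~\ref{theo:D}---is the natural one and is presumably how the companion paper proceeds. But what you have written is a plan, not a proof: every substantive step is phrased as ``I would show'' and the key verifications are explicitly deferred. More seriously, your bookkeeping contains a concrete error. The Aztec diamond $A_{k-1}[a_{i,j}]$ has $|F|=2(k-1)^2+2(k-1)+1=2k^2-2k+1$ faces and hence $|B|=(|F|-1)/2=k(k-1)$ black vertices, not $k^2$; so the black vertices do \emph{not} form a $k\times k$ array as you claim. What does number $k^2$ is the set of ``odd'' faces (those carrying a free weight $a_f\neq d$), and it is these, not the black vertices, that are naturally in bijection with the entries of $N$. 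Correspondingly, your description of ``a square linear system on the free-face entries $(v_0,\dots,v_{k-1})$ whose matrix is exactly $N$'' is dimensionally muddled: the free-face entries form a $k^2$-vector, the leftmost entries appearing in Theorem~\ref{theo:D} form a $k$-vector, and $N$ is $k\times k$, so you must say precisely which linear map $N$ encodes and on which space it acts. Until that identification is actually carried out and checked against the sign conventions of $D$, the argument is not complete.
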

For example, if $d=0$, in the case of Figure~\ref{fig:ad} (that is
$(i,j,k)=(1,1,4)$), this matrix is
\begin{equation*}
  N =
  \begin{pmatrix}
    a_{1,4}^{-1} & a_{2,3}^{-1} & a_{3,2}^{-1} & a_{4,1}^{-1} \\
    a_{0,3}^{-1} & a_{1,2}^{-1} & a_{2,1}^{-1} & a_{3,0}^{-1} \\
    a_{-1,2}^{-1} & a_{0,1}^{-1} & a_{1,0}^{-1} & a_{2,-1}^{-1} \\
    a_{-2,1}^{-1} & a_{-1,0}^{-1} & a_{0,-1}^{-1} & a_{1,-2}^{-1}
  \end{pmatrix}.
\end{equation*}

We now turn to initial conditions with periodicity, which occur in singular closed geometric systems. The following definitions are illustrated in Figure~\ref{fig:sing_data}.
\begin{definition}\label{def:sing}
	
	Let $m\geq 1$. Initial conditions $(a_{i,j})_{(i,j)\in \Z^2}$ are said to be $m$-\emph{simply periodic} if 
	\begin{equation*}
		\forall (i,j)\in \Z^2, \ a_{i,j}=a_{i+m,j+m}.
	\end{equation*}
	They are said to be $m$-\emph{doubly periodic} if
	\begin{equation*}
	    \forall (i,j)\in \Z^2, a_{i,j}=a_{i+m,j+m}=a_{i+m,j-m}.
	\end{equation*}
 	We say that they are
	$m$-\emph{Dodgson initial conditions} if they are $m$-doubly periodic
	and in addition, there is a constant $d\in \hC$ such that
	\begin{equation*}
		\forall (i,j)\in\Z^2 \text{ with } [i+j]_2=0, \ a_{i,j}=d.
	\end{equation*}
	For $m,p\geq 1$, we say that they are
	$(m,p)$-\emph{Devron initial conditions} if they are $m$-simply
	periodic and in addition, 
	\begin{equation*} 
		\forall (i,j)\in \Z^2 \text{ with } [i-j]_{2p}=0, \ a_{i,j} = a_{i+1,j+1}.
	\end{equation*}
\end{definition} 
Note that $(m,p)$-Devron initial conditions amount to having every $p$-th SW-NE
diagonal at height $0$ constant, see Figure~\ref{fig:sing_data},
bottom right.

Then, as a corollary to Proposition~\ref{prop:Nmat}, we prove that for $m$-Dodgson initial conditions at
height $m$ (that is, after $m-1$ iterations of the dSKP recurrence),
the same kind of special conditions reappear. This implies that the
values of $x$ at height $m+1$ are not defined. The following is part of~\cite[Theorem~1.4]{paper1}.
\begin{corollary}[\cite{paper1}]\label{thm:Dodgson_prerequ}
  For $m$-Dodgson initial conditions, for every $(i,j)\in\Z^2$ such that $(i,j,m) \in \calL$, $x(i,j,m)$ is
  independent of $i,j$.
\end{corollary}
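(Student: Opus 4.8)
The plan is to apply Proposition~\ref{prop:Nmat}, which is available because $m$-Dodgson initial conditions in particular satisfy $a_{i,j}=d$ for all $(i,j)$ with $[i+j]_2=0$. Fix $(i,j)$ with $(i,j,m)\in\calL$, so that $[i+j]_2=[m]_2$, and consider a ``neighboring'' vertex, say $(i+1,j+1,m)$ or $(i+2,j,m)$, which also lies in $\calL$ at height $m$. For each of these vertices Proposition~\ref{prop:Nmat} expresses $x(\cdot,\cdot,m)$ as $d$ plus the sum of all entries of $N^{-1}$, where $N=(N_{i',j'})_{0\le i',j'\le m-1}$ has entries $N_{i',j'}=\bigl(a_{i-i'+j',\,j+m-1-i'-j'}-d\bigr)^{-1}$. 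So it suffices to show that the two matrices $N$ attached to two adjacent base points at height $m$ have the same sum of inverse-entries.

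First I would write out, for the base point $(i,j,m)$, the $m\times m$ array of indices $(i-i'+j',\,j+m-1-i'-j')$ appearing in $N$, and do the same for $(i+1,j+1,m)$ (equivalently for $(i+2,j,m)$). The key observation is that $m$-double periodicity, $a_{p,q}=a_{p+m,q+m}=a_{p+m,q-m}$, identifies the weight array of one base point with a row/column-permuted (in fact cyclically shifted, using the two periodicity directions to ``wrap'' the array) copy of the weight array of the adjacent base point. Concretely, shifting the base point by $(1,1)$ moves each index $(i-i'+j',\,j+m-1-i'-j')$; matching it against the target array forces a reindexing $i'\mapsto \sigma(i'),\ j'\mapsto\tau(j')$ where $\sigma,\tau$ are cyclic shifts on $\{0,\dots,m-1\}$, and the entries that fall ``outside'' the window are brought back in by one application of the $m$-periodicity relations (this is exactly why \emph{double} periodicity, not just simple periodicity, is needed, and why the statement lives precisely at height $m$). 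Hence $N(i+1,j+1) = P\,N(i,j)\,Q$ for permutation matrices $P,Q$ realizing these cyclic shifts.

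Then $N(i+1,j+1)^{-1} = Q^{-1} N(i,j)^{-1} P^{-1}$, and since conjugating/multiplying by permutation matrices only permutes the rows and columns of $N^{-1}$, the sum of all entries of $N^{-1}$ is unchanged. Therefore $x(i+1,j+1,m)=x(i,j,m)$, and likewise $x(i+2,j,m)=x(i,j,m)$. Since the steps $(i,j)\mapsto(i+1,j+1)$ and $(i,j)\mapsto(i+2,j)$ (together with their inverses) generate all of the sublattice of $\Z^2$ on which $(i,j,m)\in\calL$, we conclude that $x(i,j,m)$ is independent of $(i,j)$.

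The main obstacle is the bookkeeping in the second paragraph: one must verify carefully that the index map induced by shifting the base point, once corrected by the periodicity relations $a_{p,q}=a_{p+m,q\pm m}$, is genuinely a pair of \emph{permutations} of the $m\times m$ index set (i.e.\ that nothing collides and nothing is lost), and that it acts separately on the two axes so that it is realized by left and right multiplication by permutation matrices rather than by some more complicated transformation. I expect that drawing the array as in Figure~\ref{fig:ad} and tracking the SW--NE and SE--NW diagonals makes the cyclic-shift structure transparent; this is presumably also how \cite[Theorem~1.4]{paper1} is proved, and the present corollary is just the statement of that periodicity at height $m$.
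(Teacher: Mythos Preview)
Your approach is correct and is precisely the one the paper indicates: the statement is recorded as a corollary to Proposition~\ref{prop:Nmat}, with the detailed argument deferred to \cite[Theorem~1.4]{paper1}. The bookkeeping you flag as the main obstacle in fact resolves cleanly. Parametrize the entries of $N$ by diagonals: row $i'$ of $N(i,j)$ picks out the weights on the anti-diagonal $p+q=i+j+m-1-2i'$, and column $j'$ those on the diagonal $p-q=i-j-m+1+2j'$. A shift $(i,j)\mapsto(i+1,j+1)$ leaves the column diagonals fixed and cycles the row diagonals by one step: the new top row, on anti-diagonal $i+j+m+1$, coincides with the old bottom row on anti-diagonal $i+j-m+1$ via $a_{p,q}=a_{p+m,q+m}$. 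Hence $N(i+1,j+1)=P\,N(i,j)$ with $P$ a cyclic row shift and $Q=\mathrm{Id}$. Dually, $(i,j)\mapsto(i+1,j-1)$ leaves the row diagonals fixed and cycles the column diagonals, using $a_{p,q}=a_{p+m,q-m}$, so $N(i+1,j-1)=N(i,j)\,Q$ for a cyclic column shift $Q$. These two moves generate the parity sublattice $\{(i,j):[i+j]_2=[m]_2\}$, and in each case $\sum_{i',j'}(N^{-1})_{i',j'}$ is preserved, which is exactly what you need.
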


\begin{figure}[tb]
  \centering
  \includegraphics[width=11cm]{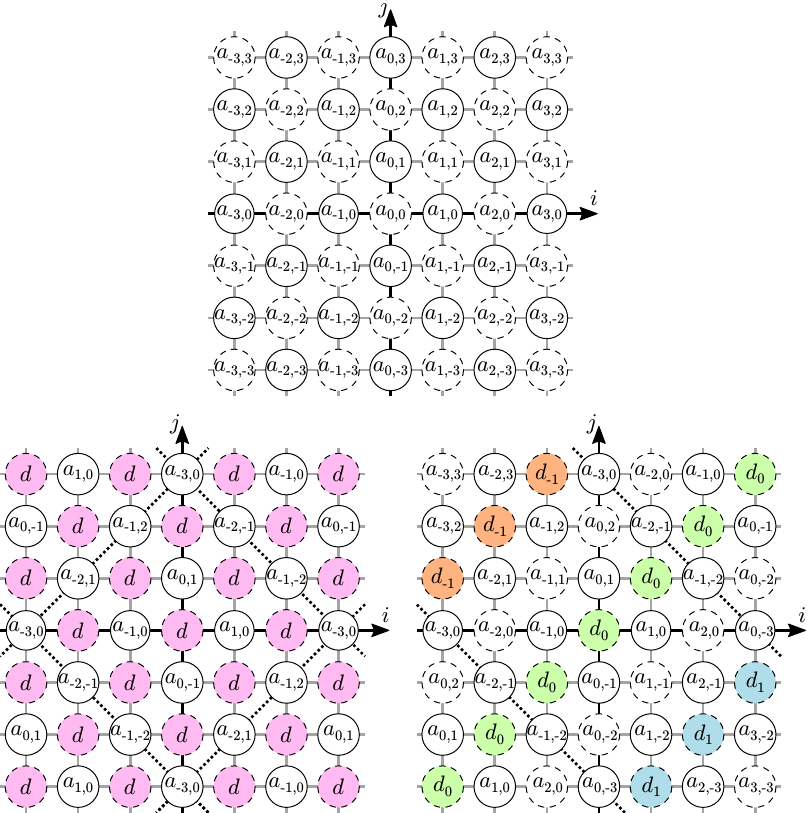}
  \caption{Initial data $(a_{i,j})$ for the dSKP recurrence. Variables in dashed circles lie
    at height $k=0$, while those in solid circles lie at height
    $k=1$. Top: generic. Bottom left: $3$-Dodgson; an
    elementary pattern is shown as a dashed square, and particular
    values at height $0$ are shown in purple. Bottom right:
    $(3,2)$-Devron; constant diagonals are shown in
    orange, green, blue.}
  \label{fig:sing_data}
\end{figure}

In some cases it is possible to give a simple expression for this
constant value on the layer at height $m$. More precisely, we prove such an expression when the NW-SE diagonals at height $1$
  contain data that are cyclic permutations of each other. This corresponds to
\cite[Corollary~1.5]{paper1}.

\begin{corollary}[\cite{paper1}]
  \label{cor:harm_mean}
  Suppose that the initial conditions are $m$-Dodgson, and suppose in addition
  that $d=0$ and that for some $p\not\in m\Z$, when $[i+j]_2=1$,
  $a_{i,j}=a_{i+p+1,j-p+1}$. Then, for all $(i,j)\in\Z^2$ such that $(i,j,m)\in\calL$, $x(i,j,m)$ is the harmonic mean of the $m$ different
  values of the initial data:
  \begin{equation*}
    x(i,j,m) = \left( \frac{1}{m} \sum_{i=0}^{m-1} a_{i,1-i}^{-1} \right) ^{-1}.
  \end{equation*}
\end{corollary}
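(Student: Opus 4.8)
The plan is to derive Corollary~\ref{cor:harm_mean} from Proposition~\ref{prop:Nmat} by exploiting the extra cyclic-permutation hypothesis to diagonalize (or at least sum the inverse of) the matrix $N$. Since $d=0$, Proposition~\ref{prop:Nmat} gives $x(i,j,m) = \sum_{i',j'} N^{-1}_{i',j'}$ with $N_{i',j'} = a_{i-i'+j',\,j+m-1-i'-j'}^{-1}$, a $k\times k$ matrix with $k=m$. The first step is to read off, using the $m$-doubly-periodic structure and the new relation $a_{i,j}=a_{i+p+1,j-p+1}$ on the height-$1$ diagonals, exactly which $m$ distinct values $a_{0,1}^{-1},a_{1,0}^{-1},\dots$ (equivalently $a_{\ell,1-\ell}^{-1}$, $\ell=0,\dots,m-1$) appear and in what pattern. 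I expect the upshot to be that $N$ is a \emph{circulant-like} matrix: each entry $N_{i',j'}$ depends only on $i'+j' \bmod m$ (or $i'-j'$ up to the permutation $p$), so that after a permutation of rows/columns $N$ becomes a genuine circulant built from the vector $(a_{0,1}^{-1},a_{m-1,2-m}^{-1}\text{-type entries})$ reindexed by $p$.

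The second step is the linear-algebra core: for a circulant matrix $C = \mathrm{circ}(c_0,\dots,c_{m-1})$, the sum of all entries of $C^{-1}$ equals $m / \sum_{\ell} c_\ell$. This is because the all-ones vector $\mathbf{1}$ is an eigenvector of $C$ with eigenvalue $\sum_\ell c_\ell$, hence an eigenvector of $C^{-1}$ with eigenvalue $(\sum_\ell c_\ell)^{-1}$; and $\sum_{i',j'} C^{-1}_{i',j'} = \mathbf{1}^T C^{-1} \mathbf{1} = \mathbf{1}^T \mathbf{1} \cdot (\sum_\ell c_\ell)^{-1} = m/\sum_\ell c_\ell$. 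A row/column permutation does not change the total sum of entries of the inverse, so this survives the reindexing. Applying this with $c_\ell = a_{\ell,1-\ell}^{-1}$ yields
\begin{equation*}
  x(i,j,m) = \frac{m}{\sum_{\ell=0}^{m-1} a_{\ell,1-\ell}^{-1}} = \left(\frac{1}{m}\sum_{\ell=0}^{m-1} a_{\ell,1-\ell}^{-1}\right)^{-1},
\end{equation*}
which is the claimed harmonic mean; independence of $i,j$ is already guaranteed by Corollary~\ref{thm:Dodgson_prerequ}.

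The main obstacle, and the step needing real care, is the first one: verifying that the combinatorial indexing of $N$ from Proposition~\ref{prop:Nmat}, combined with the constraint $p\notin m\Z$, genuinely turns $N$ into a circulant after a single simultaneous row-column permutation, rather than into something block-structured or degenerate. One has to check that the map sending the pair $(i',j')$ to the residue class governing $N_{i',j'}$ is, for each fixed ``anti-diagonal'' $i'+j'$, a constant, and that as $(i',j')$ ranges over $\{0,\dots,m-1\}^2$ the induced pattern is exactly a cyclic shift by $p$ in each successive row — this is where the hypothesis $a_{i,j}=a_{i+p+1,j-p+1}$ at height $1$ enters, and where $p\notin m\Z$ ensures the $m$ values are genuinely cycled (the permutation $\ell\mapsto \ell+p \bmod m$ being a full cycle is not needed, only that the resulting matrix is circulant, which holds regardless). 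I would also need to confirm the boundary/wrap-around entries of $N$ respect the $m$-periodicity so that no entry falls outside one fundamental domain; this is exactly what $m$-double periodicity of the Dodgson data provides. Once the circulant structure is established, the rest is the two-line eigenvector computation above.
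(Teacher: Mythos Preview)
Your approach is correct and is exactly the technique the paper uses for the closely related Theorems~\ref{theo:pnetpremature} and~\ref{th:pentdodgson}: invoke Proposition~\ref{prop:Nmat} with $d=0$, show that $\mathbf 1$ is an eigenvector of $N$, and read off $\mathbf 1^{T}N^{-1}\mathbf 1=m/\lambda$. (The corollary itself is only cited from the companion paper, so there is no in-paper proof to compare against, but your outline is the natural one.) One sharpening: combining the relation $a_{I,J}=a_{I+p+1,J-p+1}$ with the double $m$-periodicity gives $N_{i',j'}=N_{i'-1,\,j'+p}$ with indices read modulo $m$, so $N_{i',j'}$ depends only on $j'+pi'\bmod m$; successive rows are cyclic shifts by $p$, hence every row sum equals $\sum_{\ell=0}^{m-1}a_{\ell,1-\ell}^{-1}$, and $\mathbf 1$ is a right eigenvector. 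You do not need any permutation to a circulant form, only this constant-row-sum property.

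There is, however, a genuine gap in your claim that ``the resulting matrix is circulant, which holds regardless''. When $1<\gcd(p,m)<m$, rows $i'$ and $i'+m/\gcd(p,m)$ of $N$ coincide (since $p\cdot m/\gcd(p,m)\equiv 0\bmod m$), so $N$ is singular and Proposition~\ref{prop:Nmat} cannot be invoked. The hypothesis $p\notin m\Z$ alone does not rule this out; it only excludes the extreme case where \emph{all} rows coincide. In every application in this paper one has $p=\pm 1$, so the issue never arises in practice, but for the corollary in its stated generality your argument needs either the stronger hypothesis $\gcd(p,m)=1$ or a separate treatment of the degenerate case.
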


For $(m,p)$-Devron initial conditions, we show in \cite[Theorem~1.6]{paper1} that this kind of special
conditions reappears at height $(m-2)p+2$ (that is, after $(m-2)p+1$
applications of the dSKP recurrence):
\begin{theorem}[\cite{paper1}]
  \label{theo:devron_sing}
  For $(m,p)$-Devron initial data, let $k=(m-2)p+2$. Then, for all $(i,j)\in\Z^2$ such that $[i-j-mp]_{2p}=0$, we have
  \begin{equation*}
    x(i,j,k) = x(i+1,j+1,k).
  \end{equation*}
\end{theorem}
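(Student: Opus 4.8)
The plan is to reduce the statement about $(m,p)$-Devron data to the companion paper's general Devron result (Theorem~\ref{theo:devron_sing} is itself \cite[Theorem~1.6]{paper1}). In fact, the statement \emph{is} essentially a restatement of that companion result, so the real content here is to verify the combinatorial bookkeeping: that the definition of $(m,p)$-Devron initial conditions in Definition~\ref{def:sing} matches the hypotheses of the companion paper, and that the index-translation of the conclusion (the constancy of $x$ along the appropriate SW-NE diagonal at height $k=(m-2)p+2$) comes out as stated. So first I would unwind the periodicity: $m$-simple periodicity means $a_{i,j} = a_{i+m,j+m}$, and the extra constraint says every $p$-th SW-NE diagonal at height $0$ (those with $[i-j]_{2p}=0$) is constant, i.e. invariant under the unit SW-NE shift $(i,j)\mapsto(i+1,j+1)$. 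The goal conclusion is that the same ``every $p$-th SW-NE diagonal is constant'' pattern reappears at height $k$, now on the diagonals indexed by $[i-j-mp]_{2p}=0$ — the shift by $mp$ just tracks how the relevant diagonal has moved after $k-1$ iterations.

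The key step is to invoke Theorem~\ref{theo:expl_sol}: $x(i,j,k) = Y(A_{k-1}[a_{i,j}],a)$, so it suffices to show that $Y(A_{k-1}[a_{i,j}],a) = Y(A_{k-1}[a_{i+1,j+1}],a)$ for the relevant $(i,j)$. I would then compare the two Aztec diamonds $A_{k-1}[a_{i,j}]$ and $A_{k-1}[a_{i+1,j+1}]$: they are translates of each other by the SW-NE unit vector, and under the $(m,p)$-Devron hypotheses the face weights on one are obtained from those on the other by a symmetry of the initial data restricted to the relevant region — precisely because the diagonals involved in the computation fall on the ``constant'' class. The cleanest route is to translate this into the algebraic language of Theorem~\ref{theo:D}: identify the operator $D$ for $A_{k-1}[a_{i,j}]$ and the operator $D'$ for the shifted diamond, exhibit the kernel vector of $D^T$, and show it pushes forward to a kernel vector of $(D')^T$ so that the two ratio formulas $\frac{\sum a_{\cdot}v_\ell}{\sum v_\ell}$ agree. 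This is exactly the structure of the companion paper's proof, so really one only needs to check that $k = (m-2)p+2$ is the right height — i.e. that after $(m-2)p+1$ iterations the ``width'' of the Aztec diamond is such that the periodicity of the data forces the coincidence — and cite \cite[Theorem~1.6]{paper1}.

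Concretely, the steps in order would be: (1) record that $(m,p)$-Devron data in the sense of Definition~\ref{def:sing} is exactly the family for which \cite[Theorem~1.6]{paper1} applies (translating the indexing conventions: the companion paper may phrase simple periodicity and the diagonal-constancy condition with a different offset, which must be reconciled); (2) apply \cite[Theorem~1.6]{paper1} directly to obtain the conclusion at the height predicted there; (3) match that height to $k=(m-2)p+2$ and match the diagonal index to $[i-j-mp]_{2p}=0$, checking the $mp$ offset arises from tracking the SW-NE diagonal through $k-1$ dSKP steps (each step effectively shifts the relevant support of the Aztec diamond). The main obstacle — and it is bookkeeping rather than conceptual — is step (3): getting the offset $mp$ and the height $(m-2)p+2$ to come out correctly requires being careful about which corner of the Aztec diamond one centers at and how the diagonal labels transform under the $45^\circ$ rotation used in Theorem~\ref{theo:D} versus the unrotated picture of Figure~\ref{fig:ad}. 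Since the heavy lifting (the actual proof that the singular pattern reoccurs) is done in \cite{paper1}, the proof in this paper is short: cite \cite[Theorem~1.6]{paper1} and verify the dictionary between the two papers' conventions.
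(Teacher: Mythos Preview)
Your assessment is correct: this theorem is not proved in the present paper at all; it is simply recalled from the companion paper as \cite[Theorem~1.6]{paper1}, with no proof or even convention-matching given here. Your conclusion that the ``proof'' amounts to citing the companion result is exactly what the paper does, though the paper does not even spell out the bookkeeping steps (1)--(3) you describe --- it just states the theorem with attribution.
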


\section{Miquel dynamics} \label{sec:miquel}

\subsection{Circle patterns and Miquel dynamics}

Any line or circle in the Euclidean sense in $\C \subset \hC$ is considered to be a (generalized) \emph{circle} in $\CP^1$. It is straightforward to verify that this definition of circles is invariant under projective transformations, as defined in \cite[Remark 2.3]{paper1}. \emph{Möbius transformations} are all transformations that are compositions of projective transformations and complex conjugations $z \mapsto \bar z$. Clearly, the definition of circles above is also more generally invariant under all Möbius transformations. In an affine chart $\hC$ the \emph{center} of a circle in the Euclidean sense is the Euclidean center and the center of a Euclidean line is $\infty$. Circle centers are not a projective or Möbius invariant notion but they are still useful in a fixed affine chart.

\begin{definition}\label{def:cp}
A \emph{(square grid) circle pattern} is a map $p: \Z^2\rightarrow \hC$, such that for all $(i,j)\in \Z^2$, there is a circle $c_{i,j}$ such that $p_{i,j},p_{i+1,j},p_{i+1,j+1},p_{i,j+1}\in c_{i,j}$.
Denote by $t_{i,j}$ the center of the circle $c_{i,j}$, by $c:\Z^2\rightarrow \{\text{Circles of $\CP^1$}\}$, the map corresponding to circles, and by $t:
\Z^2 \rightarrow \C$ the one corresponding to circle centers.
\end{definition}

In the generic case, having both the circles $c_{i,j}$ and the intersection points $p_{i,j}$ is redundant, as the circles define the intersection points and vice versa. However we will look at non-generic cases later, so it is handy to have both descriptions ready. Note that we will use the terminology \emph{circle pattern} both for the map $p$ and for the map $c$.

\begin{definition}\label{def:t_embedding}
A \emph{(square grid) \emph{t-realization}} is a map $t: \Z^2\rightarrow \C$, such that, for all $(i,j)\in\Z^2$,
\begin{align}
\frac{(t_{i+1,j} - t_{i,j})(t_{i-1,j} - t_{i,j})}{(t_{i,j+1} - t_{i,j})(t_{i,j-1} - t_{i,j})} \in \R.
\end{align}
\end{definition}
We use the term t-\emph{realization} rather than t-\emph{embedding} \cite{clrtembeddings} since in Definition~\ref{def:t_embedding} we do not ask the full requirements of a t-embedding, \emph{i.e.}, we do not ask that it corresponds to an embedded graph with convex faces. Note that t-embeddings have also previously appeared under the name of Coulomb gauge in \cite{klrr}. The term t-realization has been used as a relaxation of t-embedding before \cite[Section~4.1]{clrlorentz}, although we relax even more as we do not require the quantity in Definition~\ref{def:t_embedding} to be real positive. In the discrete differential geometry community, t-realizations are the planar case of conical nets, which are also studied separately \cite{muellerconical}.

The circle centers of a circle pattern are a t-realization \cite{amiquel, klrr}. A t-realization does not uniquely define a circle pattern, but it does so up to the choice of one of the intersection points.

\begin{figure}[tb]
  \centering
  \includegraphics[scale=0.38]{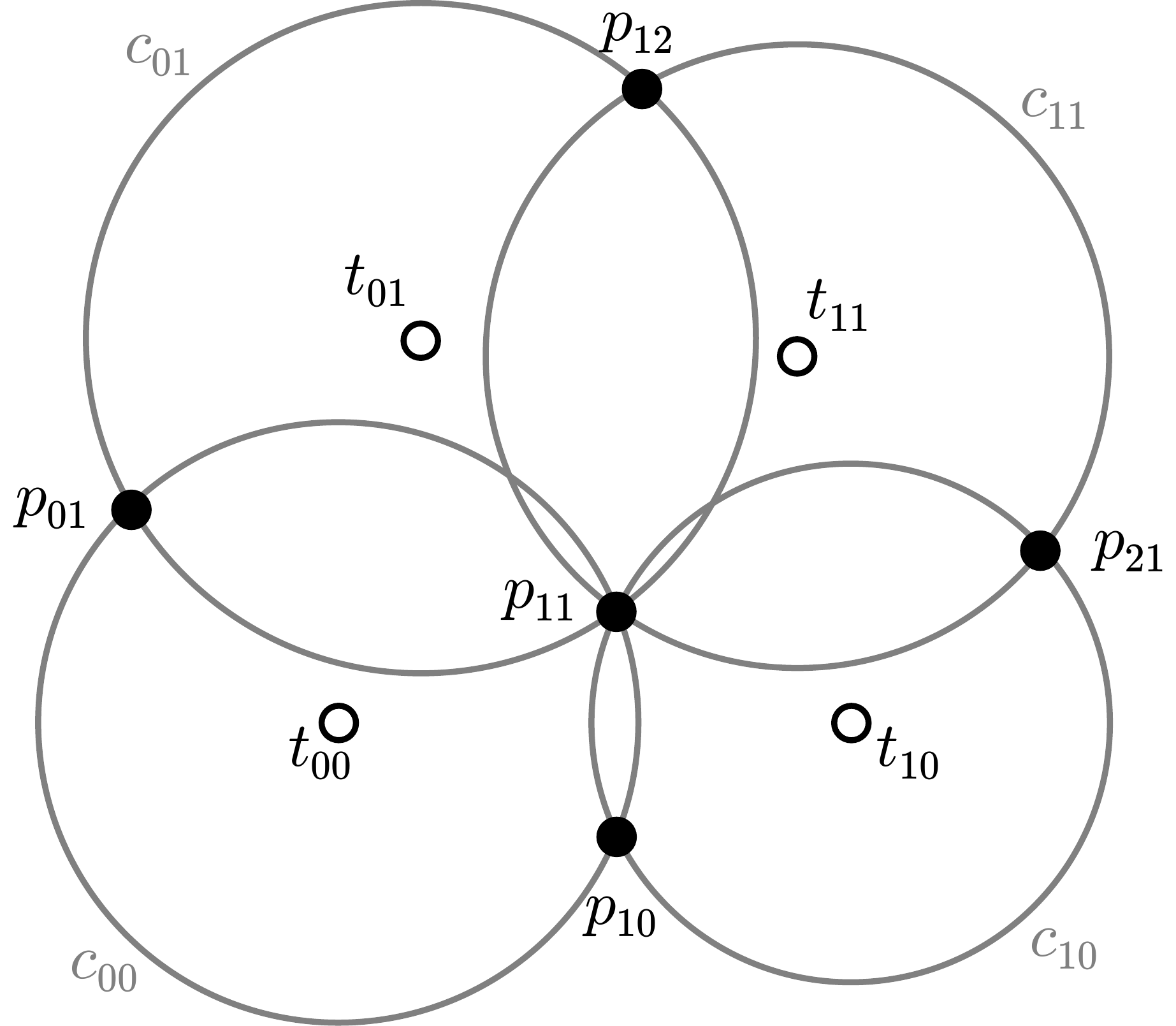}
  \hspace{5mm}
  \frame{\includegraphics[scale=0.5]{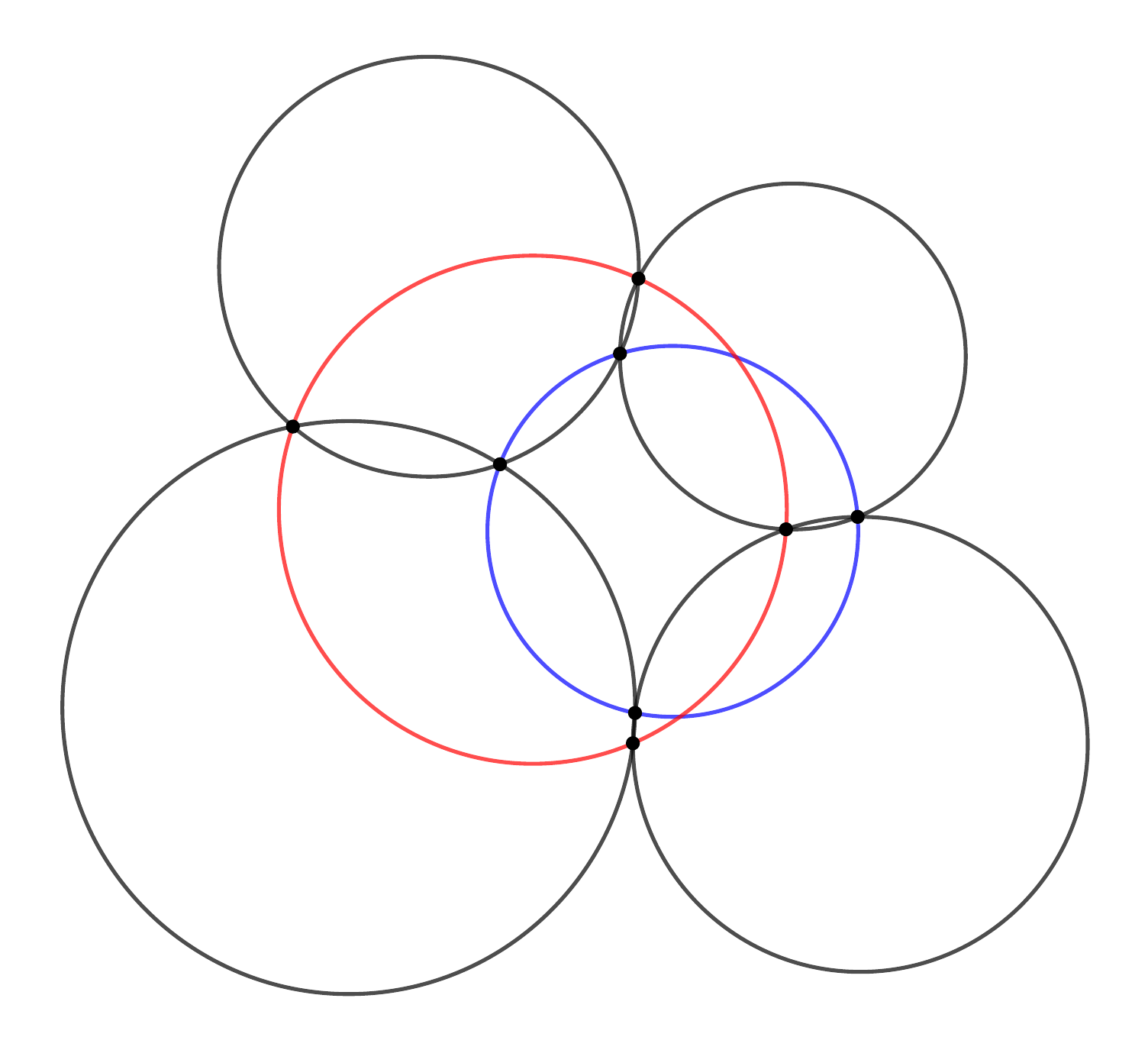}}
  \caption{Left: Labeling in a circle pattern. Right: The six circles of a Miquel configuration.}
  \label{fig:miquelconfig}
\end{figure}

In a square grid circle pattern, every circle has exactly four neighbouring circles, see Figure \ref{fig:miquelconfig}. These four circles intersect in eight points, four of which belong to the original circle. Miquel's six circles theorem \cite{miquel} states that the four other points belong to a sixth circle. Following~\cite{ramassamymiquel}, we can now introduce Miquel dynamics. The \emph{parity of a circle} $c_{i,j}$ is defined to be the parity of $i+j$.

\begin{definition}
\emph{Miquel dynamics} is the dynamics $T$ mapping circle patterns to circle patterns such that, for every $k\geq 1$, $T^k(c)$ is $T^{k-1}(c)$, except that if $k$ is even (resp. odd) every odd (even) circle is replaced  with the sixth circle that exists due to Miquel's theorem; $T^k(t)$ denotes the corresponding dynamics on circle centers.
\end{definition}

A relation of Miquel dynamics to dSKP is given by the next lemma.

\begin{lemma}[\cite{klrr,amiquel}]\label{lem:miqueldskp}
Let $t$ be a t-realization given as the centers of a circle pattern~$c$. Then, for all $(i,j)\in\Z^2$ such that $[i+j]_2=0$, we have
\begin{align}
\frac{(t_{i,j}-t_{i+1,j})(t_{i,j+1}-T(t)_{i,j})(t_{i-1,j}-t_{i,j-1})}{(t_{i+1,j}-t_{i,j+1})(T(t)_{i,j}-t_{i-1,j})(t_{i,j-1}-t_{i,j})} = -1. \label{eq:miqueldskp}
\end{align}
\end{lemma}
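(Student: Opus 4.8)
The plan is to exploit that \eqref{eq:miqueldskp} is local: it involves only $c_{i,j}$, its four neighbours $c_{i\pm1,j},c_{i,j\pm1}$, and the Miquel circle $T(c)_{i,j}$. First I would recall the combinatorics of Miquel's six circles theorem \cite{miquel}: the four neighbours meet $c_{i,j}$ in the ``inner'' points $p_{i,j},p_{i+1,j},p_{i+1,j+1},p_{i,j+1}$, which form a necklace around $c_{i,j}$; two cyclically consecutive neighbours meet in a second, ``outer'' point (write $q_{SW}$ for the one in $c_{i-1,j}\cap c_{i,j-1}$, and similarly $q_{SE},q_{NE},q_{NW}$); and these four outer points are concyclic, lying on $T(c)_{i,j}$, whose centre is by definition $T(t)_{i,j}$. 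Setting $z=t_{i,j}$, $a=t_{i+1,j}$, $b=t_{i,j+1}$, $c=t_{i-1,j}$, $d=t_{i,j-1}$, $w=T(t)_{i,j}$, the six centres sit on an octahedron with $z,w$ as opposite vertices and $a,b,c,d$ as the equatorial cycle, and \eqref{eq:miqueldskp} is exactly the dSKP relation \eqref{eq:dskp_x} on that octahedron. So the task reduces to that single algebraic identity between the six centres.

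Next I would reduce it to a trigonometric computation. The ratio in \eqref{eq:miqueldskp} is invariant under any similarity $v\mapsto\lambda v+\mu$ (it is translation invariant and homogeneous of equal degree in numerator and denominator), and similarities send circles to circles and centres to centres; so I may assume $c_{i,j}$ is the unit circle and $z=t_{i,j}=0$. Then (generically, in cyclic order) the inner points are $e^{i\alpha_1},\dots,e^{i\alpha_4}$, and since $0$ is equidistant from any two of them, each neighbour centre lies on a line through $0$: $a=\rho_a e^{i(\alpha_2+\alpha_3)/2}$ and cyclically for $b,c,d$, with the $\rho$'s real (the neighbour radii, which stay free parameters). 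Each outer point is then an explicit reflection; e.g.\ $q_{SW}$, being the reflection of $p_{i,j}=e^{i\alpha_1}$ across the line through the centres $c$ and $d$, equals $c+\tfrac{d-c}{\bar d-\bar c}\bigl(e^{-i\alpha_1}-\bar c\bigr)$. Solving \eqref{eq:miqueldskp} for $w$ with $z=0$ and simplifying the denominator gives $w=\dfrac{ac(b+d)-bd(a+c)}{ac-bd}$. Hence everything reduces to showing that this point is equidistant from $q_{SW},q_{SE},q_{NE},q_{NW}$ — which, as a bonus, reproves Miquel's concyclicity.

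For the last step I would use the cyclic $\Z/4\Z$ symmetry of the normalized configuration (rotating which neighbour is ``East'', which cyclically permutes the four $\alpha_k$, the four $\rho$'s, the four outer points, and the roles of $a,b,c,d$): since the displayed formula for $w$ is invariant under cyclic — in fact dihedral — relabeling of $a,b,c,d$, it suffices to verify equidistance for one adjacent pair of outer points, say $q_{SW},q_{SE}$. Both of these lie on $c_{i,j-1}$, whose centre is $d$, so $|w-q_{SW}|=|w-q_{SE}|$ is equivalent to $w-d$ being orthogonal to $q_{SW}-q_{SE}$, i.e.\ to $(w-d)\overline{(q_{SW}-q_{SE})}$ being purely imaginary — a single identity in $e^{i\alpha_1},\dots,e^{i\alpha_4}$ and the real $\rho$'s. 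I expect this identity to be the only genuine obstacle: it should fall out after substitution and clearing denominators, but one must keep careful track of the reflections and of the half-angle directions $e^{i(\alpha_k+\alpha_{k+1})/2}$, carrying the four radii as free parameters (which is both where the bookkeeping lives and a useful consistency check, since the identity must hold identically in them). If that route proves too heavy, an alternative is to identify the projective involution supplied by \cite[Remark~2.3]{paper1} — the one swapping $z$ with $w$, $a$ with $c$, and $b$ with $d$ — with the central symmetry of the eight‑point, six‑circle ``cube'' underlying Miquel's theorem; another is to try to extract \eqref{eq:miqueldskp} from the ``alternating angle'' relations holding at each of the six circles (each carrying four of the $p$'s and $q$'s, linked by reflections across lines through its centre), but those relations pin down only the arguments of the centre differences and would still need to be supplemented by the inter‑centre distances, which is precisely what the coordinate computation above provides.
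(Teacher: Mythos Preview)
The paper does not supply its own proof of this lemma: it is stated with attribution to \cite{klrr,amiquel} and used as a black box. So there is no in-paper argument to compare against; I evaluate your proposal on its own terms.

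Your reduction is sound. The multi-ratio in \eqref{eq:miqueldskp} is invariant under $v\mapsto\lambda v+\mu$, and such similarities take circles to circles and centres to centres, so normalizing $c_{i,j}$ to the unit circle with $z=t_{i,j}=0$ loses nothing. The parametrization of the neighbour centres along the half-angle rays is correct; one small slip is that $\rho_a$ is the signed inter-centre distance $|a|$, not the neighbour radius, but this is irrelevant since all you need is that it is a free real parameter. Your closed form $w=\dfrac{ac(b+d)-bd(a+c)}{ac-bd}$ for the dSKP solution at $z=0$ is correct, and it is indeed dihedrally invariant in $(a,b,c,d)$, so the $\Z/4\Z$ reduction to a single adjacent pair $|w-q_{SW}|=|w-q_{SE}|$ is legitimate. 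The reflection description of the outer points is also correct.

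What remains is the single real identity $(w-d)\,\overline{(q_{SW}-q_{SE})}\in i\R$ in the eight free real parameters. You have correctly isolated it as the only substantive step, but you have not carried it out, and while it is a finite polynomial check after clearing denominators, it is not short. As a proof \emph{outline} this is fine; as a proof it is incomplete until that computation is done. If you want a less brute-force route, the cited references organize the computation through the intersection angles of adjacent circles: each factor such as $(t_{i+1,j}-t_{i,j})/(t_{i,j+1}-t_{i,j})$ has its argument determined by those angles (this is what underlies the t-realization condition of Definition~\ref{def:t_embedding}), and the angle relations around the Miquel configuration assemble into the multi-ratio being $-1$. Your own final paragraph gestures at this with the ``alternating angle'' idea, and correctly notes that one then still has to control the moduli; in the cited arguments that is handled by writing the centre differences in terms of the shared chord and the two radii, which makes the modulus bookkeeping systematic rather than ad hoc.
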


A consequence of Lemma \ref{lem:miqueldskp} is that $T(t)$ only depends on the circle centers $t$ and needs no additional information on the actual circle pattern $c$.

\subsection{Explicit solution}

Using Theorem~\ref{theo:expl_sol}, we provide an explicit expression for $T^k(t)$ using the ratio function of oriented dimers of the Aztec diamond. Recall from Section~\ref{sec:explicit_solution_gen} that for the square lattice $\Z^2$ with face weights $(a_{i,j})_{(i,j)\in\Z^2}$, $A_{k}[a_{i,j}]$ denotes the Aztec diamond of size $k$ centered at $a_{i,j}$.

\begin{theorem}\label{theo:explmiquel}
Let $t:\Z^2\rightarrow \C$ be a t-realization, and consider the graph $\Z^2$ with face-weights $(a_{i,j})_{(i,j)\in\Z^2}$ given by
\[
a_{i,j}=t_{i,j}.
\]
Then, for all $(i,j)\in\Z^2,\,k\geq 1$ such that $[i+j+k]_2=1$, we have
\begin{align*}
T^k(t)_{i,j}=Y(A_k[t_{i,j}],t).
\end{align*}
\end{theorem}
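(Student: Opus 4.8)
The plan is to realize the dSKP solution with initial data $a_{i,j}=t_{i,j}$ as a reindexing of the Miquel orbit of $t$, and then to apply Theorem~\ref{theo:expl_sol}. Concretely, define $\hat x:\calL\to\hC$ by $\hat x(i,j,k)=T^{k-1}(t)_{i,j}$ for $k\geq 1$ and $\hat x(i,j,0)=t_{i,j}$; I would show that $\hat x$ satisfies the dSKP recurrence on $\calL$ with $\hat x(i,j,[i+j]_2)=t_{i,j}$, and then read off $T^k(t)_{i,j}=\hat x(i,j,k+1)$ from Theorem~\ref{theo:expl_sol}.

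First I would record the bookkeeping rule: the circle $c_{i,j}$ is modified at the $\ell$-th Miquel step exactly when $[\ell]_2\neq[i+j]_2$. Consequently, for $(i,j,k)\in\calL$ (equivalently $[i+j]_2=[k]_2$) the value $T^{k-1}(t)_{i,j}$ is one that is ``freshly produced'' at step $k-1$, and for $k\in\{0,1\}$ it is just $t_{i,j}$, so $\hat x$ does carry the initial data~\eqref{equ:init_cond} with $a_{i,j}=t_{i,j}$. The same rule shows that at a point $p=(i,j,k)\in\Z^3\setminus\calL$ the six neighbours of $p$ carry the values $\hat x(p+e_3)=T^k(t)_{i,j}$, $\hat x(p-e_3)=T^{k-2}(t)_{i,j}=T^{k-1}(t)_{i,j}$ (that circle is untouched at step $k-1$), and $\hat x(p\pm e_1),\hat x(p\pm e_2)=T^{k-1}(t)_{i\pm1,j},T^{k-1}(t)_{i,j\pm1}$ --- in other words exactly the six circle centres appearing in Lemma~\ref{lem:miqueldskp} applied to the circle pattern $T^{k-1}(c)$ at the circle $(i,j)$.

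The core step is then to check that the multi-ratio identity provided by Lemma~\ref{lem:miqueldskp} for these six values is precisely the dSKP relation~\eqref{eq:dskp_x} at the octahedron centred at $p$. Two points need care. First, Lemma~\ref{lem:miqueldskp} is stated only for the Miquel step that replaces even circles ($[i+j]_2=0$), whereas for $k$ even we meet the step that replaces odd circles; this is remedied by applying the lemma after translating the lattice by $(1,0)$, which swaps the two parities and under which Miquel's six-circle theorem is covariant. Second, one must match~\eqref{eq:miqueldskp} to~\eqref{eq:dskp_x} term by term: both are instances of the cyclic multi-ratio $\tfrac{(z_1-z_2)(z_3-z_4)(z_5-z_6)}{(z_2-z_3)(z_4-z_5)(z_6-z_1)}=-1$, and the value $-1$ is preserved by the octahedral relabellings (cyclic permutation of the axes, axis reversals) needed to align the two orderings, so the identification is legitimate.

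Once $\hat x$ is known to satisfy the dSKP recurrence with initial condition $a_{i,j}=t_{i,j}$, Theorem~\ref{theo:expl_sol} gives $\hat x(i,j,k)=Y(A_{k-1}[t_{i,j}],t)$ for all $(i,j,k)\in\calL$ with $k\geq1$; taking $k+1$ in place of $k$ (so $[i+j+k]_2=1$, $k\geq1$) and using $\hat x(i,j,k+1)=T^k(t)_{i,j}$ yields the statement. I expect the main obstacle to be this core step: carefully tracking which Miquel step each of the six octahedron neighbours is produced at (in particular the collapse $T^{k-1}(t)_{i,j}=T^{k-2}(t)_{i,j}$), handling the even/odd parity of the Miquel step via the lattice translation, and aligning the two forms of the multi-ratio relation. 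As always here, one works under the standing genericity assumption, so that the orbit $T^\ell(c)$ is defined for all $\ell$ that occur.
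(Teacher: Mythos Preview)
Your proposal is correct and is essentially the same argument as the paper's: define $x(i,j,k)=T^{k-1}(t)_{i,j}$ for $k\ge1$ and $x(i,j,0)=t_{i,j}$, invoke Lemma~\ref{lem:miqueldskp} to verify the dSKP recurrence, then apply Theorem~\ref{theo:expl_sol} and read off $T^k(t)_{i,j}=x(i,j,k+1)$. The paper is terse where you are careful---it simply asserts that Lemma~\ref{lem:miqueldskp} yields the dSKP recurrence---whereas you spell out the parity bookkeeping (the collapse $T^{k-2}(t)_{i,j}=T^{k-1}(t)_{i,j}$, the lattice shift to handle the odd-step case, and the matching of the two multi-ratio orderings), but the route is identical.
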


\begin{proof}
Consider the function $x:\calL \to \hC$ given by
        \begin{align}
                x(i,j,k) = \begin{cases}
                        T^{k-1}(t)_{i,j} & \mbox{for } k > 1,\\
                        t_{i,j} & \mbox{for } k\in\{0,1\},
                \end{cases}
        \end{align}
for every $(i,j,k)$ such that $i+j+k\in2\Z$, $k\geq 0$.
As a consequence of Lemma~\ref{lem:miqueldskp} we have that, for $k\geq 1$, the function $x$ satisfies the dSKP recurrence.
Moreover, for all $(i,j)\in\Z^2$, the function $x$ satisfies the initial condition
\[
a_{i,j}:=x(i,j,h(i,j))=x(i,j,[i+j]_2)=t_{i,j},
\]
giving the face weights of the statement.

As a consequence of Theorem~\ref{theo:expl_sol}, we know that $x(i,j,k)=Y(A_{k-1}[t_{i,j}],t)$. Using that $T^k(t)_{i,j}=x(i,j,k+1)$ ends the proof.
\end{proof}

\subsection{Singularities}

Using Corollary~\ref{thm:Dodgson_prerequ}, we now study singularities of Miquel dynamics; this discussion is illustrated in Figure~\ref{fig:miquel_dodgson}. The idea is to perform Miquel dynamics starting from a singular circle pattern (referred to as a Dodgson circle pattern) and prove that, if the singular circle pattern is moreover closed, a similar singular circle pattern appears after a determined number of steps. This is the content of Theorem~\ref{thm:sing_Miquel} below.

\begin{figure}[tb]
  \centering
  \includegraphics[scale=1.3]{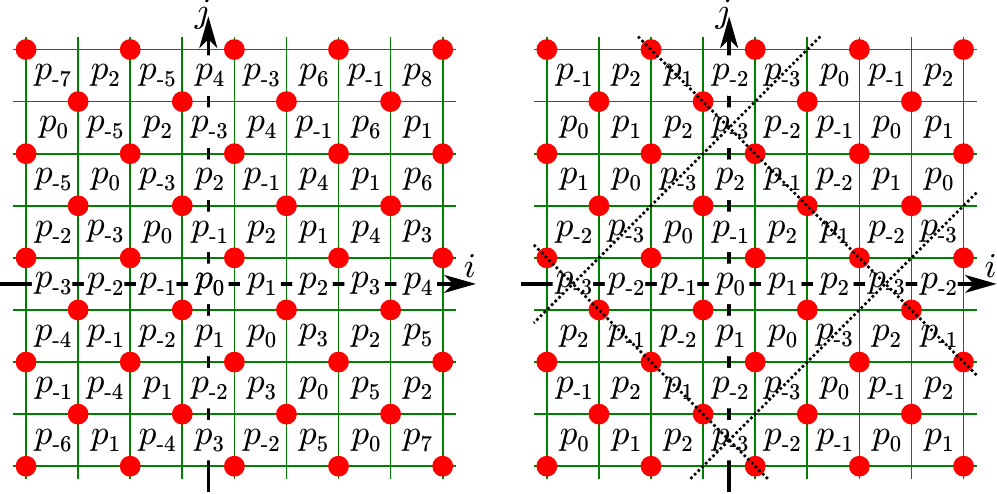}
  \caption{Left: a piece of an even Dodgson circle pattern. Every red dot
    corresponds to the same circle $\mathscr{D}$. Every face corresponds to a
    point $p^0_i$ on $\mathscr{D}$, abbreviated by $p_i$ to alleviate the
    picture. Right: a doubly $m$-closed case for $m=3$; an
    elementary pattern corresponds to a dotted square and repeats on
    the whole graph; the initial
    data consists in $2m=6$ points $p_{-3},p_{-2},p_{-1},p_0,p_1,p_2$
    on $\mathscr{D}$, and $m^2=9$ circles distinct from $\mathscr{D}$, each passing
    through two points $p_i,p_j$ with $i \not\equiv j \mod 2$. A
    geometric realisation is shown in Figure~\ref{fig:miqdodgson},
    where the green circle can be thought of as $\mathscr{D}$ and its six
    vertices as $p_{-3},\dots,p_2$ in cyclic order.
  }
  \label{fig:miquel_dodgson}
\end{figure}

\begin{definition}
Let $\mathscr{D} \subset \C$ be a fixed circle and let $p^0: \Z \rightarrow \mathscr{D}$. An \emph{even Dodgson circle pattern (with respect to $\mathscr{D}$)} is a circle pattern $p^\mathscr{D}: \Z^2 \rightarrow \C$ such that, for all $(i,j)\in\Z^2$,
\begin{align} \label{eq:dodgson_circle_patterns}
p^\mathscr{D}_{i,j} =
\begin{cases}
    p^0_{i-j} & \mbox{if } i+j\in 2\Z+1,\\
    p^0_{i+j} & \mbox{if } i+j\in 2\Z.
\end{cases}
\end{align}
An \emph{odd Dodgson circle pattern} is defined similarly exchanging the parity in~\eqref{eq:dodgson_circle_patterns}.
\end{definition}

Note that in a circle pattern, diagonally opposite circles intersect and, in the definition of an even Dodgson circle pattern $p^\mathscr{D}$, even circles share three points of $\mathscr{D}$. This implies that all even circles coincide with $\mathscr{D}$, and thus all the even circle centers coincide as well. Similarly, in an odd Dodgson circle pattern, all odd circles, resp. odd circle centers, coincide. Moreover, note that the intersection points $p^0$ of a Dodgson circle pattern do not completely determine the circle pattern. Thus in this case it is necessary to consider a circle pattern to be defined by the combination of intersection points $p$ and circles $c$.

Thus from the perspective of dSKP we are in Dodgson initial conditions from Definition~\ref{def:sing}, explaining the above terminology.

\begin{figure}[tb]
	\centering
	\frame{\includegraphics[scale=0.22]{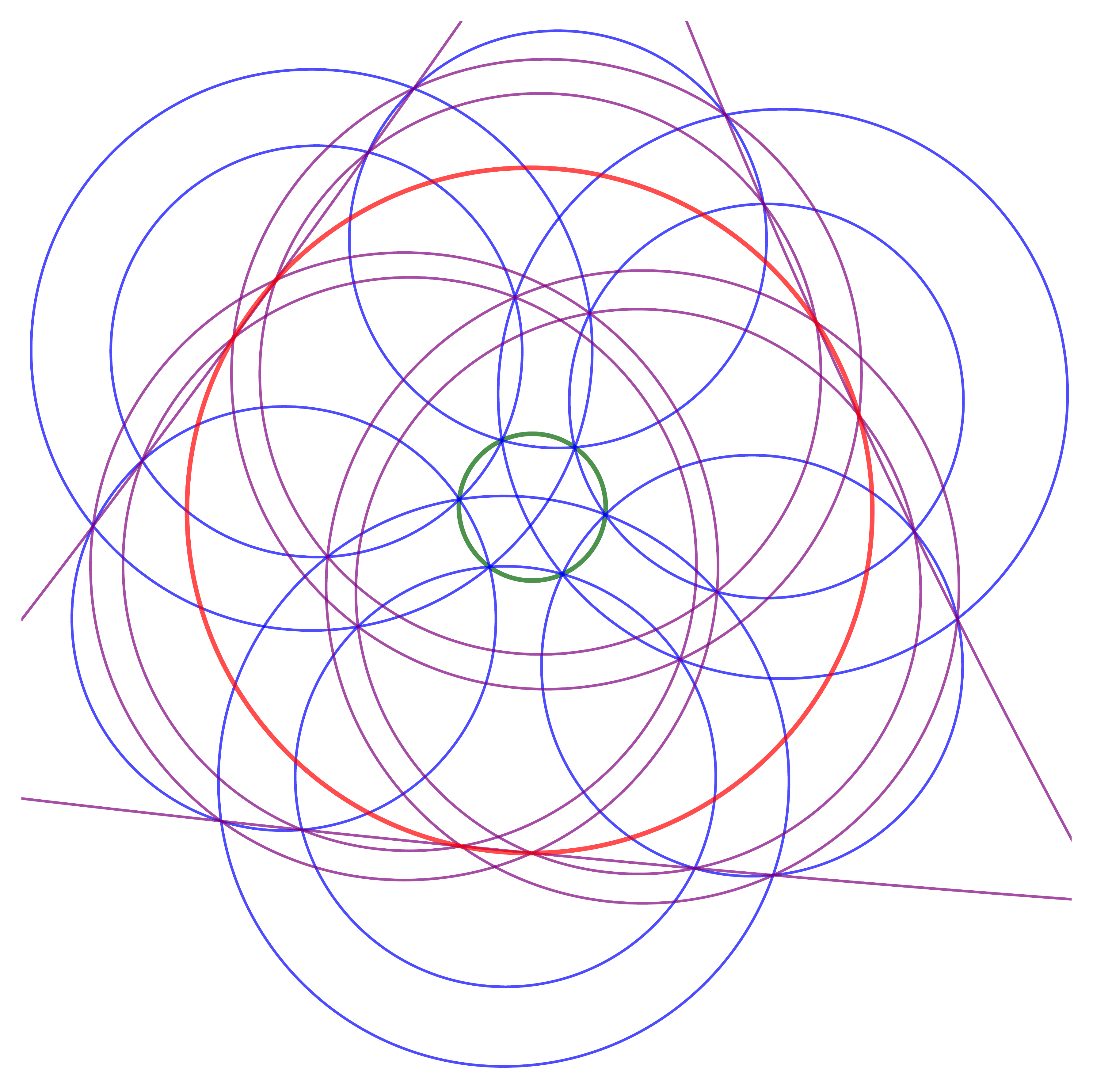}}
	\caption{Miquel Dodgson for $m=3$ yields an incidence theorem involving 20 circles. The four layers of circles, in order, are green, blue, purple, red. Green + blue is an even Dodgson circle pattern, and red + purple an odd Dodgson circle pattern, and the second pair is obtained from the first by two applications of Miquel dynamics.}
	\label{fig:miqdodgson}
\end{figure}

Let $m\geq 1$.
We call a circle pattern \emph{$m$-doubly closed} if, for all $(i,j)\in \Z^2$,
\[
c_{i+m,j+m} = c_{i,j} = c_{i+m,j-m}.
\]
\begin{theorem}\label{thm:sing_Miquel}
Let $m\geq 1$, and let $p^\mathscr{D}$ be a doubly $m$-closed even Dodgson circle pattern with respect to a circle $\mathscr{D}$. Assume we can apply Miquel dynamics on $p^\mathscr{D}$ at least $m-1$ times. Then there is a circle $\mathscr{D}'$ such that, when $m$ is even, resp. odd, $T^{m-1}(p^\mathscr{D})$ is an even, resp. odd, Dodgson circle pattern with respect to $\mathscr{D}'$.
\end{theorem}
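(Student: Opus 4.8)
The plan is to translate the geometric statement about Miquel dynamics on a doubly $m$-closed even Dodgson circle pattern into a statement about the dSKP recurrence with $m$-Dodgson initial conditions, and then invoke Corollary~\ref{thm:Dodgson_prerequ}. First I would set up the dictionary: by Lemma~\ref{lem:miqueldskp}, the circle centers $t=t_{i,j}$ of the circle pattern $p^\mathscr{D}$, together with their images under iterated Miquel dynamics, assemble into a function $x:\calL\to\hC$ satisfying the dSKP recurrence, exactly as in the proof of Theorem~\ref{theo:explmiquel}; the initial conditions are $a_{i,j}=t_{i,j}=x(i,j,[i+j]_2)$. The key observation is that for an even Dodgson circle pattern, all even circles coincide with $\mathscr{D}$, hence all even circle centers coincide: there is a constant $d\in\C$ (the center of $\mathscr{D}$, or $\infty$ if $\mathscr{D}$ is a line) such that $a_{i,j}=d$ whenever $[i+j]_2=0$. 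Moreover the doubly $m$-closed condition on circles forces $c_{i+m,j+m}=c_{i,j}=c_{i+m,j-m}$, hence the same periodicity on circle centers, i.e.\ $a_{i,j}=a_{i+m,j+m}=a_{i+m,j-m}$. Together these are precisely the $m$-Dodgson initial conditions of Definition~\ref{def:sing}, which is why the terminology was chosen.

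Next I would apply Corollary~\ref{thm:Dodgson_prerequ}: for $m$-Dodgson initial conditions, $x(i,j,m)$ is independent of $(i,j)$ (over the relevant parity sublattice). Unwinding the dictionary via $T^{k-1}(t)_{i,j}=x(i,j,k)$, this says $T^{m-1}(t)_{i,j}$ is a constant, say $d'$, over all $(i,j)$ with $[i+j+m]_2=0$; these are exactly the circle centers of parity $m+1 \bmod 2$ after $m-1$ steps of Miquel dynamics. A set of coinciding circle centers means the corresponding circles all share a common center, and since any two diagonally opposite circles in a circle pattern intersect in two points and these circles all have the same center, they must in fact all be one and the same circle $\mathscr{D}'$. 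After $m-1$ applications of Miquel dynamics on an even Dodgson circle pattern, the circles whose parity got flipped $m-1$ times are those of parity $m\bmod 2$ wait --- I need to be careful: starting from an even Dodgson pattern, after an even number of steps the even circles are the ones that have been modified an even number of times and the odd ones an odd number, so one checks directly that $T^{m-1}(p^\mathscr{D})$ has all its circles of one fixed parity equal to a single circle $\mathscr{D}'$, with that parity being even if $m$ is even and odd if $m$ is odd. Then, repeating the structural argument that even circles sharing three points of $\mathscr{D}$ all coincide (Definition following), one shows that $T^{m-1}(p^\mathscr{D})$ has the layered structure \eqref{eq:dodgson_circle_patterns} with respect to $\mathscr{D}'$, i.e.\ it is an even (resp.\ odd) Dodgson circle pattern.

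The bookkeeping step I expect to be the main obstacle is tracking parities through Miquel dynamics and matching them to the parity conventions in $\calL$ and in Definition~\ref{def:sing}: one must verify that the ``constant parity sublattice'' on which $x(\cdot,\cdot,m)$ is constant corresponds, after the shift $T^{k-1}(t)_{i,j}=x(i,j,k)$, to exactly the set of circles that have been Miquel-flipped $m-1$ times, and that these are precisely the circles which, being forced to coincide, play the role of $\mathscr{D}'$ while the complementary family of circles takes the role of $p^0$ restricted to $\mathscr{D}'$. A secondary technical point is justifying the geometric implication ``same center $+$ pairwise intersecting $\Rightarrow$ same circle'' in the projective/affine chart setting, including the degenerate case where $d'=\infty$ (the circles are all the same line); this follows from the fact that two distinct circles with the same center are disjoint in $\C$, contradicting the intersection property inherent to circle patterns, and a similar argument with parallel lines. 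Once these identifications are made, no further computation is needed: the statement is a direct corollary of Corollary~\ref{thm:Dodgson_prerequ} plus the structural rigidity of Dodgson circle patterns, and I would also remark (as the figure caption suggests) that the number of steps $m-1$ is the one predicted by the general dSKP Devron phenomenon.
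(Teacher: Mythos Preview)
Your approach is essentially the same as the paper's: identify the circle centers with dSKP initial data via Theorem~\ref{theo:explmiquel}, observe that a doubly $m$-closed even Dodgson circle pattern yields $m$-Dodgson initial conditions, and apply Corollary~\ref{thm:Dodgson_prerequ} to conclude that $T^{m-1}(t)_{i,j}$ is constant on the sublattice $[i+j+m]_2=0$. The paper's proof in fact stops there, leaving the passage from ``all centers of one parity coincide'' to ``the result is a Dodgson circle pattern with respect to some $\mathscr{D}'$'' implicit; your sketch of that step (concentric circles sharing a point must coincide) is correct and actually more complete than what the paper writes.
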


\begin{proof}
By Theorem~\ref{theo:explmiquel}, we know that for $k\geq 1$, and all $(i,j)\in\Z^2$ such that $[i+j+k]_2=1$, $T^k(t)_{i,j}$ can be expressed as the ratio function of oriented dimers. The initial condition in the claim are $m$-Dodgson and therefore Corollary~\ref{thm:Dodgson_prerequ} applies, telling us that the singularity appears after $m-1$ steps. That is that $T^{m-1}(t)_{i,j}$ is independent of $(i,j)$ for all $(i,j)\in\Z^2$, such that $[i+j+m]_2=0$. This means that after $m-1$ iterations, when $m$ is even, resp. odd, all the even, resp. odd, circle centers coincide.
\end{proof}

Due to Thereom \ref{thm:sing_Miquel}, iterating Miquel dynamics on doubly $m$-closed initial data yields a finite configuration of circles and intersection points. In general, looking at all the iterations of Miquel dynamics, on each circle appearing there are $8$ intersection points that appear. By contrast, for a Dodgson circle pattern, on the circle $\mathscr{D}$ in the initial layer there are $2m$ intersection points, and also $2m$ points on the final circle $\mathscr{D}'$. Moreover, since some of the initial intersection points coincide, circles in the second (and the second to last) layer lose two intersection points with respect to the expected $8$. Additionally, no matter which layer, four circles pass through every intersection point. This leads to a particularly symmetric configuration. Let us describe it more precisely for small values of $m$.

In the case $m=2$ there are 3 layers of circles. The second and second to last layer coincide and so there are $8-2-2=4$ points on the circles of the middle layer. Therefore, there are $4$ intersection points on every circle, and four circles pass through every intersection point. There are $6$ circles and $8$ intersection points in total, and this is just the configuration in Miquel's theorem, see Figure~\ref{fig:miquelconfig}.

In the case $m=3$ there are 4 layers of circles. The layers are the first, the second, the second to last and the last layer. Therefore there are $6$ intersection points on all circles, and four circles pass through every intersection point. In total, there are $20$ circles and $30$ intersection points, which leads to another symmetric configuration, see Figure \ref{fig:miqdodgson}.

In the case $m>3$ there are $(m+1)$ layers, and now the number of intersection points in the layers varies, so the configurations are less symmetric.

Other singularities like column-wise and single-column coincidences of circles in the initial-data exist as well. The same arguments as in the Dodgson Miquel case apply.

\section{P-nets}\label{sec:pnets}

\subsection{Definitions}

We now consider P-nets, which were introduced in the study of discrete isothermic nets~\cite[Section 6.2]{bpdiscsurfaces} and relations of these surfaces to discrete integrable systems. P-nets are also related to discrete quadratic holomorphic differentials by Lam \cite{lamminimal}.
Although more abstract, P-nets have a geometric realisation in terms of discrete holomorphic functions, see Section~\ref{sec:discrete_holom_P_nets_0}, and occur in orthogonal circle patterns, see Section~\ref{sec:orthogonal_circle_patterns}.

\begin{definition}\label{def:pnet}
A \emph{P-net} is a map $p:\Z^2 \rightarrow \hat{\C}$ such that, for all $(i,j)\in\Z^2$,
\begin{align}
\frac{1}{p_{i+1,j}-p_{i,j}} - \frac{1}{p_{i,j+1}-p_{i,j}} + \frac{1}{p_{i-1,j}-p_{i,j}} - \frac{1}{p_{i,j-1}-p_{i,j}} = 0\label{eq:pnetsum}.
\end{align}
\end{definition}
In fact, this definition is equivalent to requiring that for all $(i,j)\in \Z^2$, in any affine chart of $\CP^1$ such that $p_{i,j}$ is at infinity, the quad $(p_{i+1,j}, p_{i,j+1}, p_{i-1,j}, p_{i,j-1})$ is a parallelogram. This property justifies the \emph{P} in P-net.  Note that the defining equation of P-nets also occurs in the context of Wynn's identity, see \cite[Equation (15)]{wynn}.

P-nets are exactly those maps from $\Z^2$ to $\hat{\C}$ that satisfy the following identity, see also \cite[Section 5.2]{ksclifford}, or in the context of Wynn's identity also \cite{dswynn}. As we will see in the next section, this relation underlies the occurrence of the dSKP recurrence in P-nets.

\begin{lemma}\label{lem:pnetdskp}
A map $p:\Z^2 \rightarrow \hC$ is a P-net if and only if, for all $(i,j)\in\Z^2$,
\begin{align}\label{eq:pnetmr}
\frac{(p_{i,j} - p_{i+1,j})(p_{i,j+1} - p_{i,j})(p_{i-1,j} - p_{i,j-1})}{(p_{i+1,j} - p_{i,j+1})(p_{i,j} - p_{i-1,j})(p_{i,j-1} - p_{i,j})} = -1.
\end{align}
\end{lemma}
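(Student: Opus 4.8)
The plan is to prove the equivalence between the additive P-net equation \eqref{eq:pnetsum} and the multi-ratio equation \eqref{eq:pnetmr} by a direct algebraic manipulation, working in a fixed affine chart. Fix $(i,j)\in\Z^2$ and abbreviate $p_{i,j}=p$, $p_{i+1,j}=a$, $p_{i,j+1}=b$, $p_{i-1,j}=c$, $p_{i,j-1}=d$. Then \eqref{eq:pnetsum} reads $\frac{1}{a-p}-\frac{1}{b-p}+\frac{1}{c-p}-\frac{1}{d-p}=0$, while \eqref{eq:pnetmr} reads $\frac{(p-a)(b-p)(c-d)}{(a-b)(p-c)(d-p)}=-1$, i.e. $(p-a)(b-p)(c-d)=-(a-b)(p-c)(d-p)=(a-b)(p-c)(p-d)$ after a sign adjustment. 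The approach is to show that each side, suitably cleared of denominators, equals a common polynomial expression in $p,a,b,c,d$.

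First I would handle the case $p=\infty$ separately: then \eqref{eq:pnetsum} becomes $0=0$ trivially (each term is $1/\infty=0$), and one must check that \eqref{eq:pnetmr} also holds, which amounts to verifying that the limit of the multi-ratio as $p\to\infty$ is $-1$; expanding, $\frac{(p-a)(b-p)}{(p-c)(d-p)}\to 1$ as $p\to\infty$, so \eqref{eq:pnetmr} reduces to $\frac{c-d}{a-b}=-1$, i.e. $a-b=d-c$, i.e. $a-d=b-c$, i.e. $p_{i+1,j}-p_{i,j-1}=p_{i,j+1}-p_{i-1,j}$, which is exactly the parallelogram condition (this also serves to justify the remark in Definition~\ref{def:pnet}). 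For the generic case $p\in\C$, I would clear denominators in \eqref{eq:pnetsum}: multiplying by $(a-p)(b-p)(c-p)(d-p)$ gives
\begin{align*}
(b-p)(c-p)(d-p)-(a-p)(c-p)(d-p)+(a-p)(b-p)(d-p)-(a-p)(b-p)(c-p)=0.
\end{align*}
The leading $p^3$ terms cancel; collecting, one gets a quadratic in $p$ whose factorization I would compare with the cleared form of \eqref{eq:pnetmr}. In fact the cleaner route is: \eqref{eq:pnetsum} is equivalent to $\frac{1}{a-p}+\frac{1}{c-p}=\frac{1}{b-p}+\frac{1}{d-p}$, i.e. $\frac{(a+c)-2p}{(a-p)(c-p)}=\frac{(b+d)-2p}{(b-p)(d-p)}$, and cross-multiplying gives
\begin{align*}
\bigl((a+c)-2p\bigr)(b-p)(d-p)=\bigl((b+d)-2p\bigr)(a-p)(c-p).
\end{align*}
Expanding both sides and cancelling the common $-2p^3$ term, the difference simplifies — after routine but careful bookkeeping — to a factored identity equivalent to $(p-a)(p-b)(c-d) = (a-b)(p-c)(p-d)$, which upon sign rearrangement is precisely \eqref{eq:pnetmr}.

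The main obstacle, and really the only nontrivial point, is the bookkeeping in the polynomial expansion: one must track roughly a dozen degree-$\le 3$ monomials in five variables and verify that the coefficient of each monomial type ($p^2$, $p$, constant) matches between the two cleared equations. I expect no conceptual difficulty — both equations define the same quadric in $p$ over the field $\C(a,b,c,d)$, and since they share the same leading behaviour and (after the $p=\infty$ analysis) the same structure, matching two further coefficients suffices. An alternative, perhaps slicker, presentation is to note that both \eqref{eq:pnetsum} and \eqref{eq:pnetmr} are Möbius-covariant in an appropriate sense, reduce to the chart where $p=\infty$ by applying the Möbius transformation sending $p_{i,j}$ to $\infty$, and there both become the single parallelogram condition $a-b=d-c$; this avoids the expansion entirely and is the argument I would actually write, invoking the already-stated fact that the dSKP cross-ratio-type expression \eqref{eq:pnetmr} is projectively invariant. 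I would include a sentence pointing out that \eqref{eq:pnetmr} is exactly \eqref{eq:dskp_x} evaluated at the relevant octahedral configuration, which is what makes it the bridge to the dSKP recurrence used in the next section.
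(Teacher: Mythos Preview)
The paper states Lemma~\ref{lem:pnetdskp} without proof, deferring to the literature \cite{ksclifford,dswynn}, so there is no in-paper argument to compare against. Your direct algebraic approach is correct: clearing denominators in both \eqref{eq:pnetsum} and \eqref{eq:pnetmr} yields the same quadratic polynomial in $p$ with coefficients in $a,b,c,d$, exactly as you outline. One slip: your factored identity ``$(p-a)(p-b)(c-d) = (a-b)(p-c)(p-d)$'' has a sign error; the form equivalent to \eqref{eq:pnetmr} is $(p-a)(p-b)(c-d) = (b-a)(p-c)(p-d)$, as a check with $p=0$, $a=1$, $b=2$, $c=-1$, $d=-2$ confirms. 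This is cosmetic and does not affect the method.

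Your alternative Möbius route needs tightening. The multi-ratio \eqref{eq:pnetmr} is projectively invariant, but \eqref{eq:pnetsum} is not \emph{manifestly} so; asserting its Möbius-covariance without justification is close to assuming what you want to prove. The clean version of the argument is: apply the specific map $z\mapsto 1/(z-p)$ sending $p_{i,j}$ to $\infty$; under this substitution \eqref{eq:pnetsum} \emph{literally} reads $a'+c'=b'+d'$ in the new coordinates (just substitution, no invariance needed), while \eqref{eq:pnetmr}, being genuinely Möbius-invariant, keeps its form in the new chart and reduces in the limit $p'\to\infty$ to the same parallelogram condition. This is presumably the content of the remark after Definition~\ref{def:pnet}, and it does avoid the expansion entirely. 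It also resolves the apparent tension in your write-up between ``\eqref{eq:pnetsum} becomes $0=0$ when $p=\infty$'' and ``\eqref{eq:pnetsum} becomes the parallelogram condition in the chart where $p=\infty$'': the former evaluates the equation at a point with $p=\infty$ in the \emph{current} chart, the latter rewrites the equation in a \emph{new} chart before evaluating.
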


\subsection{Explicit solution}

Let $p_j:=(p_{i,j})_{i\in\Z}$ denote the points of the $j$-th row. The recurrence~\eqref{eq:pnetmr} of Lemma \ref{lem:pnetdskp} implies that the points $p_{j+1}$ are determined by the points $p_j$ and $p_{j-1}$. Therefore we view two rows of data as initial data, and this determines the whole P-net.

Note that this framework is different from Miquel dynamics, where we viewed the \emph{whole} circle pattern as initial data.

The next theorem makes this more explicit and proves that, for all
$i\in\Z,j\geq 1$, the point $p_{i,j}$ is equal to the ratio function of
oriented dimers of an Aztec diamond subgraph of $\Z^2$ with face
weights a subset of $(p_{i,0})_{i\in\Z},(p_{i,1})_{i\in\Z}$, see Figure~\ref{fig:pnet_ic}.


\begin{theorem}\label{theo:explpnet}
Let $p:\Z^2 \rightarrow \hC$ be a P-net, and consider the graph $\Z^2$ with face-weights $(a_{i,j})_{(i,j)\in\Z^2}$ given by
\begin{equation*}
a_{i,j} = p_{i,[i+j]_2}.
\end{equation*}
Then, for all $i\in\Z, j\geq 1$, we have
\begin{align*}
        p_{i,j}=  Y(\az{j-1}{p_{i,[j]_2}},a).
\end{align*}
\end{theorem}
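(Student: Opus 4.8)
The plan is to reduce Theorem~\ref{theo:explpnet} to Theorem~\ref{theo:expl_sol} exactly as was done for Miquel dynamics in the proof of Theorem~\ref{theo:explmiquel}, the only extra ingredient being that the associated dSKP initial data on $\calL$ has to be read off correctly from the two initial rows $p_0,p_1$ of the P-net. Concretely, I would define a function $x:\calL\to\hC$ by setting $x(i,j,k):=p_{i,k}$ for every $(i,j,k)\in\calL$ with $k\geq 0$ (and symmetrically for $k<0$), noting that this is well-defined because $i+j+k\in 2\Z$ places no constraint on the relation between $i$ and $k$ beyond parity bookkeeping, and that $x$ genuinely does not depend on $j$ — this is precisely the content of the unnumbered theorem ``The function $x(i,j,k)=p_{i,k}$ satisfies the dSKP recurrence'' stated in the introduction.

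First I would verify that this $x$ satisfies the dSKP recurrence~\eqref{eq:dskp_x}. Evaluating~\eqref{eq:dskp_x} at a point $p=(i,j,k)\in\Z^3\setminus\calL$, the six neighbours $x_{\pm e_1},x_{\pm e_2},x_{\pm e_3}$ become, after dropping the irrelevant $j$-coordinate, $p_{i\pm1,k}$, $p_{i,k}$ (twice, from the $\pm e_2$ shifts), and $p_{i,k\pm1}$. Substituting into the left-hand side of~\eqref{eq:dskp_x} and matching terms, one recovers — up to relabelling and sign conventions — exactly the multi-ratio identity~\eqref{eq:pnetmr} of Lemma~\ref{lem:pnetdskp}, which holds precisely because $p$ is a P-net. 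So the heart of the argument is the substitution check showing that~\eqref{eq:dskp_x} for this particular $j$-independent $x$ collapses to~\eqref{eq:pnetmr}; this is the step I expect to require the most care, since one must track which of the three P-net differences $p_{i+1,j}-p_{i,j}$, $p_{i,j+1}-p_{i,j}$, $p_{i-1,j}-p_{i,j-1}$ corresponds to which dSKP factor, and make sure the parity of $k$ is consistent with the parity of $[i+j]_2$ so that the ``$j$ or $j\pm1$'' ambiguity never actually arises. Lemma~\ref{lem:pnetdskp} is stated as an ``if and only if'', so this direction is immediate once the substitution is matched.

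Next I would check the initial condition. The initial data for dSKP prescribed in~\eqref{equ:init_cond} is $x(i,j,[i+j]_2)=a_{i,j}$; with our $x$ this reads $a_{i,j}=x(i,j,[i+j]_2)=p_{i,[i+j]_2}$, which is exactly the face weight assignment in the statement of the theorem. Since $x$ satisfies the dSKP recurrence with this initial condition, Theorem~\ref{theo:expl_sol} applies verbatim: for every $(i,j,k)\in\calL$ with $k\geq 1$ one has $x(i,j,k)=Y(A_{k-1}[a_{i,j}],a)=Y(A_{k-1}[p_{i,[k]_2}],a)$, using $a_{i,j}=p_{i,[i+j]_2}$ and $[i+j]_2=[k]_2$ for points of $\calL$. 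Finally, unwinding the definition $x(i,j,k)=p_{i,k}$ and choosing, for a given $i$ and $j\geq 1$, any auxiliary second coordinate making $(i,\cdot,j)\in\calL$, we get $p_{i,j}=Y(\az{j-1}{p_{i,[j]_2}},a)$, which is the claimed formula.

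The only genuine obstacle is the bookkeeping in the substitution step: one has to be confident that the $j$-independence of $x$ is consistent (i.e.\ that plugging $x(i,j,k)=p_{i,k}$ into~\eqref{eq:dskp_x} at a point of $\Z^3\setminus\calL$ never forces us to compare $p_{i,k}$ with two different values) and that the resulting identity is literally~\eqref{eq:pnetmr} and not some cyclic variant with a different sign — but since Lemma~\ref{lem:pnetdskp} and the companion-paper dSKP equation~\eqref{eq:dskp_x} are both already in the ``multi-ratio $=-1$'' normal form, this is a finite, mechanical match rather than a real difficulty. Everything else is a direct citation of Lemma~\ref{lem:pnetdskp} and Theorem~\ref{theo:expl_sol}, so the proof will be short, parallel to the proof of Theorem~\ref{theo:explmiquel}.
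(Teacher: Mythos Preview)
Your proposal is correct and follows essentially the same route as the paper: define $x(i,j,k)=p_{i,k}$, invoke Lemma~\ref{lem:pnetdskp} to see that $x$ satisfies the dSKP recurrence, read off the initial data $a_{i,j}=p_{i,[i+j]_2}$, and apply Theorem~\ref{theo:expl_sol}. The only point where the paper is slightly more explicit than you is in justifying why the choice of auxiliary second coordinate does not matter: since $a_{i,j}=a_{i,j+2}$, the face-weighted lattice is invariant under vertical translations of length $2$, so any Aztec diamond centered at a face with weight $p_{i,[j]_2}$ carries the same weights and hence the same ratio function --- you use this implicitly when you pass from $A_{k-1}[a_{i,j}]$ to $\az{j-1}{p_{i,[j]_2}}$, and it would be worth stating.
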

\begin{proof}

Consider the function $x:\calL\rightarrow\hat{\C}$ given by
\[
x(i,j,k)=p_{i,k}.
\]
As a consequence of Lemma~\ref{lem:pnetdskp}, we have that, for $k\geq1$,
the function $x$ satisfies the dSKP recurrence.
Note that, for all $(i,j)\in\Z^2$, the function $x$ satisfies the initial condition
\[
a_{i,j}=x(i,j,[i+j]_2)=p_{i,[i+j]_2},
\]
giving the face weights of the statement.
As a consequence of Theorem~\ref{theo:expl_sol}, we know that $x(i,j,k)=Y(A_{k-1}[a_{i,j}],a)$. This implies that
\[x(i,i+j,j) = p_{i,j}, \text{ and }\ a_{i,i+j}=p_{i,[2i+j]_2}=p_{i,[j]_2}. \]
The proof is concluded by using that the face weighted graph $\Z^2$ is invariant by vertical translations of length 2.\qedhere
\end{proof}

\subsection{Singularities}

Assume we only know the restrictions $p_0,p_1:\Z \rightarrow \hC$ of
a P-net $p$ to two consecutive rows $p_0,p_1$. Then, recall that by iterating Lemma \ref{lem:pnetdskp},
all of $p$ is uniquely reconstructable from $p_0,p_1$.

Let us denote the propagation of data in a P-net as the map
\begin{align*}
        T: (\hC)^{\Z_2 \times \Z } \rightarrow (\hC)^{\Z_2 \times \Z },\quad  (p_j,p_{j+1}) \mapsto (p_{j+1},p_{j+2}).
\end{align*}

Let $m\geq 1$. A P-net $p$ is said to be \emph{$m$-closed} if $p_{i,j} = p_{i+m,j}$ for all $i,j\in \Z$.
We now study the occurrence of singularities when we start from a singular $m$-closed P-net, \emph{i.e.}, an $m$-closed P-net such that $p_0\equiv 0$, see also Figure~\ref{fig:pnetsingularity}. The recurrence of the singularity has already been proven by Glick \cite[Theorem 6.15]{gdevron}, and the precise position by Yao \cite[Theorem 4.1]{yao}. Note that Glick and Yao call the propagation of P-nets the \emph{lower pentagram map}, as it bears some resemblance to the pentagram map algebraically. We are able to obtain the result as an immediate corollary of our general singularity theorems.

\begin{theorem}\label{theo:pnetsingularity} Let $m\geq 1$, and
let $p$ be an $m$-closed P-net such that $p_0\equiv 0$. Assume we can apply the propagation map $T$ to $(p_0,p_1)$ at least $m-1$ times.
Then, for all $i\in \Z$, we have
\begin{align}
p_{i,m} = \Bigl(\frac{1}{m} \sum_{\ell=0}^{m-1} p_{\ell,1}^{-1}\Bigr)^{-1},
\end{align}
that is the singularity repeats after $m-1$ steps and its value is the harmonic mean of $p_1$.
\end{theorem}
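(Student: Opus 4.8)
The plan is to realize this statement as a direct application of Corollary~\ref{cor:harm_mean} (the harmonic-mean case of an $m$-Dodgson singularity), via the explicit solution of Theorem~\ref{theo:explpnet}. First I would set $a_{i,j} = p_{i,[i+j]_2}$ as in Theorem~\ref{theo:explpnet}, so that the P-net is encoded by the dSKP solution $x(i,j,k) = p_{i,k}$ with this initial condition. The singular hypothesis $p_0 \equiv 0$ means $a_{i,j} = 0$ whenever $[i+j]_2 = 0$, so the constant $d = 0$ required in the Dodgson definition is in place. The $m$-closedness $p_{i,j} = p_{i+m,j}$ translates into periodicity of the $a_{i,j}$: since $a_{i,j}$ depends only on $i$ (modulo the parity class of $i+j$) and on $[i+j]_2$, and the dependence on $i$ is $m$-periodic, one checks $a_{i,j} = a_{i+m,j+m} = a_{i+m,j-m}$ (both shifts preserve the parity $[i+j]_2$ and shift the first index by $m$). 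Hence the initial data is $m$-doubly periodic, and together with $d=0$ it is $m$-Dodgson.

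Next I would verify the extra cyclic-permutation hypothesis of Corollary~\ref{cor:harm_mean}: we need some $p \notin m\Z$ such that $a_{i,j} = a_{i+p+1,j-p+1}$ whenever $[i+j]_2 = 1$. Taking $p = 1$, the map $(i,j) \mapsto (i+2, j)$ preserves the parity $[i+j]_2$ and sends $a_{i,j} = p_{i,1}$ (when $[i+j]_2=1$) to $a_{i+2,j} = p_{i+2,1}$; more directly, since $a_{i,j}$ with $[i+j]_2 = 1$ equals $p_{i,1}$ and depends only on $i$, the relation $a_{i,j} = a_{i+2,j}$ along the relevant diagonal holds, and $1 \notin m\Z$ as soon as $m \geq 2$ (the case $m=1$ being trivial, or handled by a degenerate instance). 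So Corollary~\ref{cor:harm_mean} applies and gives, for $(i,j,m) \in \calL$,
\begin{equation*}
x(i,j,m) = \left( \frac{1}{m} \sum_{\ell=0}^{m-1} a_{\ell,1-\ell}^{-1} \right)^{-1}.
\end{equation*}

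It then remains to translate both sides back into P-net language. On the left, $x(i,j,m) = p_{i,m}$; choosing $j$ appropriately (namely $j$ with $i + j + m$ even, e.g. $j = i$ when $m$ is even — more carefully one uses the index matching $x(i,i+j,j) = p_{i,j}$ from the proof of Theorem~\ref{theo:explpnet}, evaluated at the $m$-th layer) one gets $p_{i,m}$, independent of $i$ by the Dodgson structure. On the right, $a_{\ell,1-\ell}$ has $[\ell + (1-\ell)]_2 = [1]_2 = 1$, so $a_{\ell,1-\ell} = p_{\ell,1}$, yielding exactly $\bigl(\frac{1}{m}\sum_{\ell=0}^{m-1} p_{\ell,1}^{-1}\bigr)^{-1}$. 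This matches the claimed formula. Finally, I would note that the hypothesis ``we can apply $T$ at least $m-1$ times'' is precisely the genericity assumption ensuring the dSKP solution propagates to layer $m$, so Corollary~\ref{cor:harm_mean} is legitimately invoked.

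The main obstacle I anticipate is purely bookkeeping rather than conceptual: carefully matching the index conventions between the P-net lattice $\Z^2$ and the dSKP lattice $\calL$ (the shear $x(i,i+j,j) = p_{i,j}$ and the corresponding identification of which SW-NE or NW-SE diagonals of $(a_{i,j})$ carry the data $p_1$), and confirming that the periodicity parameter $p$ in Corollary~\ref{cor:harm_mean} can genuinely be chosen outside $m\Z$. Once the dictionary is set up correctly, the result is immediate from the cited corollary; there is essentially no independent computation to perform, which is the whole point of the unified framework.
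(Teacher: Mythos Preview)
Your approach is the same as the paper's: encode the P-net as a dSKP solution via Theorem~\ref{theo:explpnet}, observe that the initial data $a_{i,j}=p_{i,[i+j]_2}$ is $m$-Dodgson with $d=0$, and apply Corollary~\ref{cor:harm_mean}. The translation of the conclusion back to $p_{i,m}$ is also correct.

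There is one concrete slip in your verification of the extra hypothesis of Corollary~\ref{cor:harm_mean}. You take the parameter there equal to $1$, which would require $a_{i,j}=a_{i+2,j}$ on the odd layer; but you yourself compute $a_{i,j}=p_{i,1}$ and $a_{i+2,j}=p_{i+2,1}$, and these are not equal in general. The correct observation (the one the paper uses) is that $a_{i,j}=p_{i,[i+j]_2}$ depends only on $i$ and on the parity of $i+j$, hence $a_{i,j}=a_{i,j+2}$ for all $(i,j)$. This corresponds to taking the parameter equal to $-1$ in Corollary~\ref{cor:harm_mean} (so that $(i,j)\mapsto(i+0,j+2)$), and $-1\notin m\Z$ for $m\geq 2$. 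With this correction your argument goes through exactly as written and matches the paper's proof.
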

\begin{proof}
By Theorem \ref{theo:explpnet} we know that $p_j$ for $j\geq 1$ can be
expressed via a solution of the dSKP recurrence.
Since the P-net $p$ is $m$-closed, the corresponding initial conditions for dSKP are $m$-doubly periodic, moreover they satisfy $a_{i,j}=a_{i,j+2}$, and the fact that $p_0 \equiv 0$ implies that they are $m$-Dodgson; see Figure~\ref{fig:pnet_ic} and Definition~\ref{def:sing}. As a result,
they satisfy the hypothesis of
Corollary~\ref{cor:harm_mean}. Therefore, the values of the dSKP
solution at height $m$ (which are also the values of $p_m$) are
all equal to the harmonic mean of $p_1$.
\end{proof}

In the case of $m$-closed singular initial data with even $m$, it turns out that one may add an additional constraint, which forces the singularity to appear a step earlier than in Theorem~\ref{theo:pnetsingularity}. The constraint is essentially that the harmonic mean of the even parity points of $p_1$ equals the harmonic mean of the odd parity points of $p_1$. We formalize this observation in the next theorem.

\begin{theorem}\label{theo:pnetpremature} Let $m\in2\N+2$, and
        let $p$ be an $m$-closed P-net such that $p_0\equiv 0$, and such that
        \begin{align}
                \sum_{\ell=0}^{m-1} (-1)^\ell p_{\ell,1}^{-1} = 0.
        \end{align}
        Assume we can apply the propagation map $T$ to $(p_0,p_1)$ at least $m-2$ times.
        Then, for all $i\in\Z$, we have
        \begin{align}
                p_{i,m-1} = \Bigl(\frac{1}{m} \sum_{\ell = 0}^{m-1} p_{\ell,1}^{-1}\Bigr)^{-1},
        \end{align}
        that is the singularity repeats after $m-2$ steps and its value is the harmonic mean of $p_1$.
\end{theorem}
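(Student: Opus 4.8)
The plan is to mimic the proof of Theorem~\ref{theo:pnetsingularity}, but instead of invoking Corollary~\ref{cor:harm_mean} at height $m$, to show that under the extra constraint the singularity already occurs at height $m-1$, and to identify its value. As in the previous proof, by Theorem~\ref{theo:explpnet} the P-net $p$ corresponds to a solution $x$ of the dSKP recurrence with initial data $a_{i,j}=p_{i,[i+j]_2}$, which is $m$-doubly periodic, satisfies $a_{i,j}=a_{i,j+2}$, and has $p_0\equiv 0$, hence is $m$-Dodgson with constant $d=0$. First I would observe that, because $a_{i,j}=a_{i,j+2}$ forces extra SW--NE periodicity, these $m$-Dodgson data are in fact also a special case of $(m,1)$-Devron data (every SW--NE diagonal at height $0$ is constant equal to $0$); this is exactly the ``$m$-Dodgson*'' row of Table~\ref{tab:singularities}. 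Thus one route is to invoke Theorem~\ref{theo:devron_sing} with $p=1$, which gives a returning singularity at height $k=(m-2)+2=m$; but that only recovers Theorem~\ref{theo:pnetsingularity}, so the premature drop to $m-1$ must come from the extra algebraic constraint $\sum_\ell(-1)^\ell p_{\ell,1}^{-1}=0$.

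The heart of the argument is therefore a direct computation at height $m-1$ using the matrix description of Proposition~\ref{prop:Nmat} (valid since $d=0$). I would write down the $(m-1)\times(m-1)$ matrix $N$ whose entries are shifted inverses $a_{\cdot,\cdot}^{-1}$ evaluated on the relevant Aztec diamond $A_{m-2}[a_{i,j}]$; using the $m$-periodicity and the row structure $a_{i,j}=p_{i,[i+j]_2}$, the nonzero entries are $p_{\cdot,1}^{-1}$ arranged in a circulant-like pattern indexed modulo $m$. The claim $x(i,j,m-1)=d+\sum_{i',j'}N^{-1}_{i',j'}$ being independent of $(i,j)$, and the value being $(\tfrac1m\sum_\ell p_{\ell,1}^{-1})^{-1}$, should follow by producing an explicit right/left ``eigen-type'' relation for $N$: namely that the all-ones vector, or a suitable sign-twisted vector, is mapped by $N$ to a scalar multiple of a fixed vector precisely when $\sum_\ell(-1)^\ell p_{\ell,1}^{-1}=0$. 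Concretely I expect that $\mathbf{1}^T N$ and $N\mathbf{1}$ both equal $(\sum_\ell p_{\ell,1}^{-1})$ times a constant vector modulo a correction term proportional to $\sum_\ell (-1)^\ell p_{\ell,1}^{-1}$, so that when this alternating sum vanishes the sum of all entries of $N^{-1}$ collapses to $m/\sum_\ell p_{\ell,1}^{-1}$, independently of the starting face.

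An alternative, and perhaps cleaner, route is to use Theorem~\ref{theo:D}: the value $x(i,j,m-1)$ is the ratio $\big(\sum_\ell a_{\cdot}v_\ell\big)/\big(\sum_\ell v_\ell\big)$ where $v\in\ker D^T$ for the operator $D$ built from $A_{m-2}[a_{i,j}]$. Because of the $(m,1)$-Devron structure, $D^T$ has a near-degenerate kernel, and the extra constraint $\sum_\ell(-1)^\ell p_{\ell,1}^{-1}=0$ is exactly the condition making $\dim\ker D^T\ge 2$ at height $m$ drop down one level, i.e.\ forcing the analog of Proposition~\ref{prop:dimker2}'s degeneracy to happen already at height $m-1$ in the ``sum'' $\sum_\ell v_\ell$; tracking which linear combination survives yields both the independence of $(i,j)$ and the harmonic-mean value. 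I would check the small cases $m=2$ (where $m-1=1$, $x$ at height $1$ is just the initial data, and the constraint $p_{0,1}^{-1}=p_{1,1}^{-1}$ forces $p_1$ constant — a sanity check) and $m=4$ explicitly to fix signs and conventions.

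\textbf{Main obstacle.} The delicate point is purely computational bookkeeping: correctly reading off the circulant structure of $N$ (or of $\ker D^T$) from the Aztec diamond $A_{m-2}[\cdot]$ with the $m$-doubly-periodic, $2$-vertically-periodic weights, and then proving that the alternating-sum hypothesis is \emph{exactly} what makes $\sum_{i',j'}N^{-1}_{i',j'}$ both independent of the base point and equal to $m(\sum_\ell p_{\ell,1}^{-1})^{-1}$. Getting the parity/sign conventions right (the role of $[m]_2=0$, which is why $m$ is required even) and ensuring the claimed identity is an equality of rational functions in the $p_{\ell,1}$ — not just generically — is where the real work lies; everything else is a direct application of the cited results.
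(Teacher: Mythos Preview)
Your overall strategy is exactly that of the paper: reduce to the dSKP solution with $m$-Dodgson initial data via Theorem~\ref{theo:explpnet}, apply Proposition~\ref{prop:Nmat} at height $m-1$ with $d=0$, and compute $p_{i,m-1}=\sum_{i',j'}N^{-1}_{i',j'}$ for the $(m-1)\times(m-1)$ matrix $N$ with entries $N_{i',j'}=p^{-1}_{i-i'+j',1}$ (indices mod $m$).

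However, your concrete expectation that $N\mathbf{1}$ or $\mathbf{1}^T N$ is a constant vector up to a correction proportional to the alternating sum is wrong. The $i'$-th row sum of $N$ is $\sum_{j'=0}^{m-2}p^{-1}_{i-i'+j',1}=\bigl(\sum_{\ell}p^{-1}_{\ell,1}\bigr)-p^{-1}_{i-i'-1,1}$, which misses a \emph{different} single term in each row; this is not constant and is not fixed by the alternating-sum hypothesis. The sign-twisted vector $(1,-1,1,\dots)^T$ does not work either.

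The correct test vector, and the one the paper uses, is the even-indicator $e=(1,0,1,0,\dots,1)^T$ of length $m-1$ (which has $m/2$ ones since $m$ is even). Then $(Ne)_{i'}=\sum_{j'\text{ even}}p^{-1}_{i-i'+j',1}$, and as $j'$ runs over $0,2,\dots,m-2$ the index $i-i'+j'$ runs over all residues of a fixed parity mod $m$; hence $(Ne)_{i'}$ equals either $\sum_{\ell\text{ even}}p^{-1}_{\ell,1}$ or $\sum_{\ell\text{ odd}}p^{-1}_{\ell,1}$, and the hypothesis makes these equal to a common value $\lambda$. Thus $Ne=\lambda\mathbf{1}$, so $e=\lambda N^{-1}\mathbf{1}$ and $\mathbf{1}^T N^{-1}\mathbf{1}=\lambda^{-1}\mathbf{1}^T e=\tfrac{m}{2}\lambda^{-1}$, which is the harmonic mean and is visibly independent of $i$. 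Once you replace your $\mathbf{1}$-vector guess with $e$, the proof goes through in two lines; your Theorem~\ref{theo:D}/kernel-dimension detour is unnecessary.
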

\proof{
  By Theorem \ref{theo:explpnet} we know that $p_j$ for $j>1$ can be
  expressed via a solution of the dSKP recurrence.
  The initial conditions in the claim satisfy the hypothesis of
  Proposition~\ref{prop:Nmat} with $d=0$, as all initial data at height $0$ are
  equal to $0$ (see also Figure~\ref{fig:pnet_ic}). Therefore, the
  value of $p_{i,m-1}$ is given by
  \begin{equation*}
    p_{i,m-1}=\sum_{0\leq i',j' \leq m-2} N^{-1}_{i',j'}
  \end{equation*}
  where $N$ is the matrix of size $m-1$:
  \begin{equation*}
    N = \begin{pmatrix}
      p^{-1}_{i,1} & p^{-1}_{i+1,1} & p^{-1}_{i+2,1} & \dots & p^{-1}_{i+m-2,1} \\
      p^{-1}_{i-1,1} & p^{-1}_{i,1} & p^{-1}_{i+1,1} & \dots &
      p^{-1}_{i+m-3,1} \\
       \vdots & & & & \vdots  \\
      p^{-1}_{i-m+2,1} & \dots &  &  & p^{-1}_{i,1}
    \end{pmatrix},
  \end{equation*}
  with indices taken modulo $m$. Since we suppose
  \begin{equation*}
    p^{-1}_0 + p^{-1}_2 + \dots + p^{-1}_{m-2} = p^{-1}_1 + p^{-1}_3 +
    \dots + p^{-1}_{m-1} =: \lambda,
  \end{equation*}
  we see that $N$ applied to the vector $(1,0,1,0,\dots,1)^T$
  gives the constant vector $\lambda (1,1,\dots,1)^T$. Therefore,
  \begin{equation*}
    (1,0,1,0,\dots,1)^T = \lambda N^{-1} (1,1,\dots,1)^T,
  \end{equation*}
  and then
\begin{align*}
      p_{i,m-1} &= \sum_{0\leq i',j' \leq m-2} N^{-1}_{i',j'}
       = (1, 1, \dots, 1) N^{-1} (1,1,\dots,1)^T \\
       &= \lambda^{-1} (1, 1, \dots, 1) (1,0,1,0,\dots,1)^T
       = \frac{m}{2} \lambda^{-1} = \Bigl(\frac{2}{m} \lambda \Bigr)^{-1} \\
      & = \Bigl(\frac{1}{m} \sum_{\ell = 0}^{m-1} p_{\ell,1}^{-1}\Bigr)^{-1}. \hspace{8cm}\qedhere
\end{align*}
}

\section{Integrable cross-ratio maps and Bäcklund pairs}\label{sec:Backlund_pairs}

Integrable cross-ratio maps were introduced in relation to the discrete KdV equation \cite[Section 2]{nc95} and discrete isothermic surfaces \cite[Section 4]{bpdisosurfaces}. They are solutions to one of the discrete integrable equations on quad-graphs \cite[(Q1) with $\delta = 0$]{absquads}. They are also of interest because they contain many other examples as special cases, in particular the discrete holomorphic functions (see Section \ref{sec:dhol}), orthogonal circle packings (see Section \ref{sec:orthogonal_circle_patterns}), polygon recutting (see Section \ref{sec:recut}) and circle intersection dynamics (see Section \ref{sec:cid}).

\subsection{Definitions and properties}\label{subsec:Backlund_pairs_defi}

Let us briefly discuss the concept of an \emph{edge-labeling}, see also \cite{bsintquad}. An edge-labeling is a function $\xi$ from the edges of $\Z^N$ to $\C \setminus \{0\}$, such that in each quad of $\Z^N$ the values of $\xi$ on opposite edges agree. As a consequence, an edge-labeling $\xi$ is just a sequence of functions $\chi^1, \chi^2,\dots,\chi^N$ from $\Z$ to $\C \setminus \{0\}$, where $\chi^i$ encodes all the values of $\xi$ corresponding to edges parallel to the $i$-th coordinate direction. We call the functions $\chi^1, \chi^2,\dots,\chi^N$ the \emph{edge-labels}. In the following, we work mostly on $\Z^2$, and we will use the letters $\alpha$ for $\chi^1$ and $\beta$ for $\chi^2$. 
In addition, in Definition~\ref{def:backlundpair}, $\gamma$ may be thought of as $\chi^3(0)$.

\begin{definition}[\cite{nc95,bpdisosurfaces}] \label{def:intcrmap}
        Let $\alpha,\beta: \Z \rightarrow \C\setminus\{0\}$ be \emph{edge-labels}. An \emph{integrable cross-ratio map} is a map $z:\Z^2 \rightarrow \hC$ such that, for all $(i,j)\in\Z^2$,
        \begin{align}
          \label{eq:intcrmapdef}
                \cro(z_{i,j},z_{i+1,j},z_{i+1,j+1},z_{i,j+1}) :=\frac{(z_{i,j}-z_{i+1,j})(z_{i+1,j+1}-z_{i,j+1})}{(z_{i+1,j}-z_{i+1,j+1})(z_{i,j+1}-z_{i,j})}  = \frac{\alpha_i}{\beta_j}.
        \end{align}
\end{definition}

\begin{definition}\label{def:backlundpair}
        Let $\alpha,\beta: \Z \rightarrow \C\setminus\{0\}$ and $\gamma \in \C\setminus\{0\}$. A \emph{Bäcklund pair of integrable cross-ratio maps $z,w$} is a pair of integrable cross-ratio maps such that, for all $(i,j)\in\Z^2$,
        \begin{align}
            \cro(z_{i,j},z_{i+1,j},w_{i+1,j},w_{i,j}) &= \frac{\alpha_i}{\gamma}, \label{eq:Bäck_1}\\
            \cro(z_{i,j},z_{i,j+1},w_{i,j+1},w_{i,j}) &= \frac{\beta_j}{\gamma}.\label{eq:Bäck_2}
        \end{align}
\end{definition}

It is not trivial that Bäcklund pairs exist, but 
in fact, for any choice of $\gamma$ and integrable cross-ratio map $z$ there is a one complex parameter family of integrable cross-ratio maps $w$ such that $z,w$ is a Bäcklund pair, \cite{bmsanalytic}. To give the reader an improved understanding of 
this statement, and because we will need the construction to study singularities, let us explain how to obtain $w$ from $z$. For some $(i,j)\in \Z^2$ choose $w_{i,j}\in \C$ such that $w_{i,j}$ is not equal to $z_{i,j}$. Then $w_{i+1,j}$ is determined by Definition \ref{def:backlundpair}, more concretely the formula is
\begin{align}
	w_{i+1,j} = \frac{w_{i,j}(\gamma z_{i,j} + (\alpha_i-\gamma) z_{i+1,j} ) - \alpha_i z_{i,j}z_{i+1,j} }{w_{i,j} \alpha_i  + \gamma(z_{i,j} - z_{i+1,j}) - \alpha_i z_{i,j} }.
\end{align}
Therefore, $w_{i+1,j}$ as a function of $w_{i,j}$ is a Möbius transformation $M^1_{i,j}(w_{i,j})$ of $w_{i,j}$, with coefficients depending on $z$. Analogously, it is possible to express $w_{i,j+1}$ as a Möbius transformation $M^2_{i,j}(w_{i,j})$ of $w_{i,j}$. Note that Möbius transformations form a group: the \emph{Möbius group} $\mathrm{PGL}(2, \C)$. Therefore the composition of two Möbius transformations is also a Möbius transformation. By composing the Möbius transformations of type $M^1_{k,\ell}$ and $M^2_{k,\ell}$, it is possible to obtain all of $w$ as Möbius transformations of an initial $w_{i,j}$. However, it is not immediately clear that this construction is well-defined. For example, we observe that
\begin{align}
	w_{i+1,j+1} = M^2_{i+1,j} \circ M^1_{i,j+1}(w_{i,j}),
\end{align}
but also
\begin{align}
 	w_{i+1,j+1} = M^1_{i,j+1} \circ  M^2_{i+1,j}(w_{i,j}).
\end{align}
A priori, it is not clear why these two definitions of $w_{i+1,j+1}$ should coincide. However, it is indeed the case that $M^1_{i,j+1} \circ  M^2_{i+1,j} = M^2_{i+1,j} \circ M^1_{i,j+1}$, which is due to the \emph{multi-dimensional consistency} of the integrable cross-ratio maps, see \cite{absquads}. As a result, the Bäcklund pair $z,w$ is indeed uniquely defined from $z$, $w_{i,j}$ for some $i,j\in \Z$ and the edge-labels.

The following lemma is a natural observation for integrable cross-ratio maps, and we are certainly not the first to notice. However, an important consequence is that integrable cross-ratio maps are a reduction of dSKP lattices. This fact, to the best of our knowledge, is new. It is made explicit and used in the next section for proving an explicit formula for the solution.

\begin{lemma}\label{lem:intcrdskp}
        Let $z,w$ be a Bäcklund pair of integrable cross-ratio maps. Then, the following equations hold for all  $(i,j)\in \Z^2$,
        \begin{align}
            \frac{(z_{i,j}-z_{i+1,j})(w_{i+1,j}-w_{i+1,j+1})(w_{i,j+1}-z_{i,j+1})}{(z_{i+1,j}-w_{i+1,j})(w_{i+1,j+1}-w_{i,j+1})(z_{i,j+1}-z_{i,j})} &= -1,\label{eq:intcrdskpa}\\
            \frac{(w_{i,j}-w_{i+1,j})(z_{i+1,j}-z_{i+1,j+1})(z_{i,j+1}-w_{i,j+1})}{(w_{i+1,j}-z_{i+1,j})(z_{i+1,j+1}-z_{i,j+1})(w_{i,j+1}-w_{i,j})} &= -1.\label{eq:intcrdskpb}
        \end{align}
\end{lemma}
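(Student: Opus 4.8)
The strategy is to recognize that each of the two displayed identities \eqref{eq:intcrdskpa}--\eqref{eq:intcrdskpb} is a multiplicative identity among six cross-ratios (or, equivalently, an identity of the form ``product of cross-ratios $=1$''), and that all the needed cross-ratios are \emph{prescribed} by the Bäcklund-pair hypotheses. Concretely, I would start by writing the left-hand side of \eqref{eq:intcrdskpa} as a product of cross-ratios whose factors are exactly the ``quad'' cross-ratios governing the four faces incident to the vertical edge $z_{i,j}w_{i,j}$ of the combinatorial cube with bottom face $(z_{i,j},z_{i+1,j},z_{i+1,j+1},z_{i,j+1})$ and top face $(w_{i,j},w_{i+1,j},w_{i+1,j+1},w_{i,j+1})$. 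Since $z$ and $w$ are both integrable cross-ratio maps with edge-labels $\alpha,\beta$, and since the four ``vertical'' faces of this cube have cross-ratios $\alpha_i/\gamma$, $\beta_j/\gamma$, $\alpha_i/\gamma$, $\beta_j/\gamma$ (up to orientation) by \eqref{eq:Bäck_1}--\eqref{eq:Bäck_2} and their shifted versions, the products of the labels will cancel, leaving $-1$.

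First I would make this precise by the following bookkeeping step: regroup the product on the left of \eqref{eq:intcrdskpa} into three factors, each of which is an elementary multi-ratio that, after inserting missing pairs of terms (a ``telescoping'' insertion of $1 = (a-b)/(a-b)$), becomes a genuine cross-ratio of one of the cube's faces. The point is that the six differences appearing in \eqref{eq:intcrdskpa} --- namely $z_{i,j}-z_{i+1,j}$, $w_{i+1,j}-w_{i+1,j+1}$, $w_{i,j+1}-z_{i,j+1}$, $z_{i+1,j}-w_{i+1,j}$, $w_{i+1,j+1}-w_{i,j+1}$, $z_{i,j+1}-z_{i,j}$ --- are precisely the edges of a hexagonal path in the cube's graph, and multi-ratios along hexagonal paths in cubes are exactly what decompose into face cross-ratios. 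Once the left-hand side is written as
\[
  \mathrm{cr}\bigl(z_{i,j},z_{i+1,j},w_{i+1,j},w_{i,j}\bigr)^{\pm1}\cdot
  \mathrm{cr}\bigl(z_{i,j+1},z_{i,j},w_{i,j},w_{i,j+1}\bigr)^{\pm1}\cdot
  \mathrm{cr}\bigl(w_{i+1,j},w_{i+1,j+1},z_{i+1,j+1},z_{i+1,j}\bigr)^{\pm1}
\]
(with a sign bookkeeping that produces the overall $-1$), substituting the Bäcklund values $\alpha_i/\gamma$, $\beta_j/\gamma$, $\alpha_i/\gamma$ and tracking orientations yields $\alpha_i/\gamma \cdot (\beta_j/\gamma)^{-1} \cdot$ (something involving $\alpha_i/\gamma$) together with the cross-ratio of the bottom and top faces $\alpha_i/\beta_j$; all labels cancel and the residual sign is $-1$. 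Equation \eqref{eq:intcrdskpb} is then obtained by the same argument applied to the opposite vertical edge $z_{i+1,j}w_{i+1,j}$, or more simply by the symmetry of Definition~\ref{def:backlundpair} under exchanging the roles of $z$ and $w$ together with the reflection $i \mapsto i+1$ (which swaps \eqref{eq:Bäck_1} with itself and leaves the cube structure intact).

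\textbf{Main obstacle.} The genuine difficulty is purely combinatorial/bookkeeping: getting the orientations of all six face cross-ratios consistent so that the accumulated sign is exactly $-1$ (not $+1$) and the accumulated product of edge-labels is exactly $1$. A clean way to avoid sign errors is to verify the identity first as a formal rational-function identity in the six independent quantities after the substitution, or to use the known fact (e.g.\ the ``star-triangle''/cube-flip relation, or the hexahedron recurrence underlying dSKP) that the alternating product of the three pairs of opposite-face cross-ratios of a combinatorial cube equals $1$; from that the sign is forced. Alternatively, one may simply clear denominators and check the resulting polynomial identity directly, but the structural argument via cube faces is more transparent and is the approach I would write up.
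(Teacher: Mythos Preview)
Your approach is essentially the paper's --- decompose the left-hand side as (minus) a product of three face cross-ratios of the combinatorial cube and substitute the prescribed values --- but the specific decomposition you display is wrong. Your third factor $\cro(w_{i+1,j},w_{i+1,j+1},z_{i+1,j+1},z_{i+1,j})$ involves $z_{i+1,j+1}$, which does not appear anywhere in \eqref{eq:intcrdskpa} and has nothing to cancel against; your three faces do not share a common ``inserted'' vertex, so they cannot telescope back to the six-term multi-ratio.

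The six points in \eqref{eq:intcrdskpa} form the hexagonal link of the cube avoiding the antipodal pair $\{w_{i,j},\,z_{i+1,j+1}\}$. To decompose you must insert exactly \emph{one} of these two vertices and use the three faces through it. The paper inserts $w_{i,j}$ and obtains
\[
\text{LHS of \eqref{eq:intcrdskpa}}
=-\,\cro(z_{i,j},z_{i+1,j},w_{i+1,j},w_{i,j})\,
\cro(w_{i,j},w_{i,j+1},w_{i+1,j+1},w_{i+1,j})\,
\cro(w_{i,j+1},z_{i,j+1},z_{i,j},w_{i,j}),
\]
which is a one-line telescoping check. The three factors are $\alpha_i/\gamma$, $\beta_j/\alpha_i$ (the top $w$-face, which you replaced by the right $\beta$-face), and $\gamma/\beta_j$, with product $1$. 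There is no sign obstacle and no need for star--triangle or hexahedron identities: once the correct three faces are chosen, the proof is two lines. Equation \eqref{eq:intcrdskpb} then follows by the $z\leftrightarrow w$ symmetry, as you note.
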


\proof{
    The left-hand side of the first equation can be decomposed into the product
    \begin{align*}
            -\cro(z_{i,j}, z_{i+1,j}, w_{i+1,j}, w_{i,j}) \cro(w_{i,j}, w_{i,j+1}, w_{i+1,j+1}, w_{i+1,j}) \cro(w_{i,j+1}, z_{i,j+1}, z_{i,j}, w_{i,j}),
    \end{align*}
    of three cross-ratios. But by definition of integrable cross-ratio maps and Bäcklund pairs the cross-ratios are $\alpha_i \gamma^{-1}$, $\beta_j \alpha_i^{-1}$ and $\gamma \beta_j^{-1}$, thus the first equation is proven. The proof of the second equation uses the same arguments.\qed
}

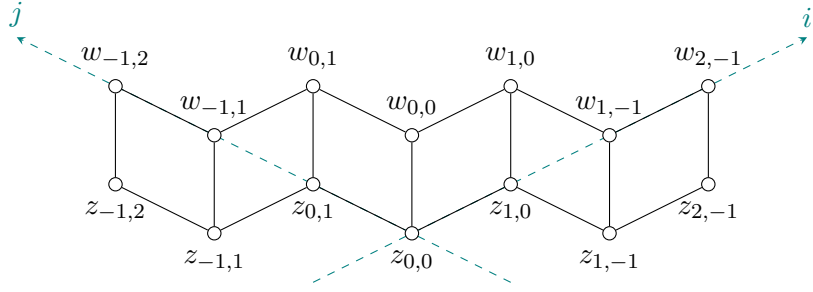
\begin{figure}[tb]
        \centering
        \begin{tikzpicture}[scale=1.3,baseline={([yshift=0ex]current bounding box.center)}]
                \draw[->, >=stealth, dashed, color=teal] (-2,0) -- (3,2.5);
                \draw[->, >=stealth, dashed, color=teal] (0,0) -- (-5,2.5);
                \draw (3,2.5) node [above, color=teal] {$i$};
                \draw (-5,2.5) node [above, color=teal] {$j$};

                \node[wvert,label=below:$z_{1,-1}$] (z1m1) at (1,0.5) {};
                \node[wvert,label=below:$z_{0,0}$] (z00) at (-1,0.5) {};
                \node[wvert,label=below:$z_{1,0}$] (z10) at (0,1) {};
                \node[wvert,label=below:$z_{0,1}$] (z01) at (-2,1) {};
                \node[wvert,label=below:$z_{2,-1}$] (z2m1) at (2,1) {};
                \node[wvert,label=below:$z_{-1,1}$] (zm11) at (-3,0.5) {};
                \node[wvert,label=below:$z_{-1,2}$] (zm12) at (-4,1) {};
                \node[wvert,label=above:$w_{1,-1}$] (w1m1) at (1,1.5) {};
                \node[wvert,label=above:$w_{0,0}$] (w00) at (-1,1.5) {};
                \node[wvert,label=above:$w_{1,0}$] (w10) at (0,2) {};
                \node[wvert,label=above:$w_{0,1}$] (w01) at (-2,2) {};
                \node[wvert,label=above:$w_{2,-1}$] (w2m1) at (2,2) {};
                \node[wvert,label=above:$w_{-1,1}$] (wm11) at (-3,1.5) {};
                \node[wvert,label=above:$w_{-1,2}$] (wm12) at (-4,2) {};
                \draw[-]
                        (zm12) -- (zm11) -- (z01) -- (z00) -- (z10) -- (z1m1) -- (z2m1)
                        (wm12) -- (wm11) -- (w01) -- (w00) -- (w10) -- (w1m1) -- (w2m1)
                        (zm12) -- (wm12) (zm11) -- (wm11) (z01) -- (w01) (z00) -- (w00) (z10) -- (w10) (z1m1) -- (w1m1) (z2m1) -- (w2m1)
                ;
        \end{tikzpicture}
        \caption{
          Labeling of initial data for the
          propagation in a Bäcklund pair of integrable cross-ratio
          maps.
        }
        \label{fig:intcraztec}
\end{figure}

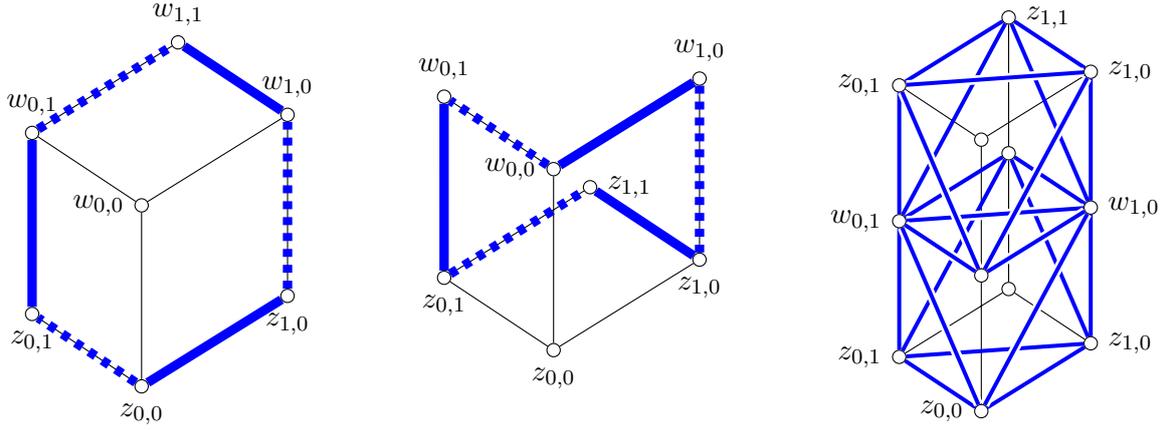
\begin{figure}[tb]
	\centering
	\begin{tikzpicture}[scale=1.2,baseline={([yshift=0ex]current bounding box.center)}]
		\node[wvert,label=below:$z_{0,0}$] (z00) at (0,0) {};
		\node[wvert,label=below:$z_{1,0}$] (z10) at (1.6,1) {};
		\node[wvert,label=below:$z_{0,1}$] (z01) at (-1.2,0.8) {};
		\node[wvert,label=left:$w_{0,0}$] (w00) at (0,2) {};
		\node[wvert,label=above:$w_{1,0}$] (w10) at ($(w00)+(z10)$) {};
		\node[wvert,label=above:$w_{0,1}$] (w01) at ($(w00)+(z01)$) {};
		\node[wvert,label=above:$w_{1,1}$] (w11) at ($(z01)+(z10)+(w00)$) {};
		\draw[-]
		      (z01) -- (z00) 
		       (w01) -- (w00) -- (w10) -- (w11) -- (w01)
		      (z01) -- (w01) (z00) -- (w00) (z10) -- (w10) 
		;
		\draw[blue, solid, line width=3.5pt]
		             	(z00) -- (z10) (w10) -- (w11) (w01) -- (z01)                
		;
		\draw[blue, dashed, line width=3.5pt]
		             	(z10) -- (w10) (w01) -- (w11) (z00) -- (z01)                
		;
	\end{tikzpicture}\hspace{1cm}
	\begin{tikzpicture}[scale=1.2,baseline={([yshift=0ex]current bounding box.center)}]
		\node[wvert,label=below:$z_{0,0}$] (z00) at (0,0) {};
		\node[wvert,label=below:$z_{1,0}$] (z10) at (1.6,1) {};
		\node[wvert,label=below:$z_{0,1}$] (z01) at (-1.2,0.8) {};
		\node[wvert,label=left:$w_{0,0}$] (w00) at (0,2) {};
		\node[wvert,label=above:$w_{1,0}$] (w10) at ($(w00)+(z10)$) {};
		\node[wvert,label=above:$w_{0,1}$] (w01) at ($(w00)+(z01)$) {};
		\node[wvert,label=right:$z_{1,1}$] (z11) at ($(z01)+(z10)$) {};
		\draw[-]
		      (z01) -- (z00) -- (z10) -- (z11) -- (z01)
		       (w01) -- (w00)
		      (z01) -- (w01) (z00) -- (w00) (z10) -- (w10) 
		;
		\draw[blue, solid, line width=3.5pt]
		             	(w00) -- (w10) (z10) -- (z11) (z01) -- (w01)                
		;
		\draw[blue, dashed, line width=3.5pt]
		             	(w10) -- (z10) (z01) -- (z11) (w00) -- (w01)                
		;
	\end{tikzpicture}\hspace{1cm}
	\begin{tikzpicture}[scale=0.9,baseline={([yshift=0ex]current bounding box.center)}]
		\node[wvert,label=left:$z_{0,0}$] (z00) at (0,0) {};
		\node[wvert,label=right:$z_{1,0}$] (z10) at (1.6,1) {};
		\node[wvert,label=left:$z_{0,1}$] (z01) at (-1.2,0.8) {};
		\node[wvert] (w00) at (0,2) {};
		\node[wvert,label=right:$w_{1,0}$] (w10) at ($(w00)+(z10)$) {};
		\node[wvert,label=left:$w_{0,1}$] (w01) at ($(w00)+(z01)$) {};
		\node[wvert] (w11) at ($(z01)+(z10)+(w00)$) {};
		\node[wvert] (z11) at ($(z01)+(z10)$) {};
		\node[wvert] (zz00) at ($(w00)+(w00)$) {};
		\node[wvert,label=right:$z_{1,0}$] (zz10) at ($(z10)+2*(w00)$) {};
		\node[wvert,label=left:$z_{0,1}$] (zz01) at ($(z01)+2*(w00)$) {};
		\node[wvert,label=right:$z_{1,1}$] (zz11) at ($(z11)+2*(w00)$) {};

		\draw[-]
		      (z01) -- (z00) -- (z10) -- (z11) -- (z01)
		       (w01) -- (w00) -- (w10) -- (w11) -- (w01)
		      (z01) -- (w01) -- (zz01) (z00) -- (w00) -- (zz00)
		      (z10) -- (w10) -- (zz10) (z11) -- (w11) -- (zz11)
		      (zz01) -- (zz00) -- (zz10) -- (zz11) -- (zz01)
		;
		\draw[blue, solid, line width=1.5pt]
            	(z00) edge (z01) edge (z10) edge (w01) edge (w10)
            	(w00) edge (w01) edge (w10) edge (zz01) edge (zz10)
            	(w11) edge (z01) edge (z10) edge (w01) edge (w10)
            	(zz11) edge (w01) edge (w10) edge (zz01) edge (zz10)
            	(z01) -- (z10) -- (w10) -- (w01) -- (z01)
            	(w01) -- (w10) -- (zz10) -- (zz01) -- (w01)
		;
		\draw[white, solid, line width=3.5pt]
 			(w01) -- (w10)
		;		
		\draw[blue, solid, line width=1.5pt]
 			(w01) -- (w10)
		;		
		\draw[white, solid, line width=3.5pt]
 			(zz01) -- (zz10) -- (w00) -- (zz01)
 			(w10) -- (w00) -- (w01) -- (z00) -- (w10)
		;		
		\draw[blue, solid, line width=1.5pt]
 			(zz01) -- (zz10) -- (w00) -- (zz01)
 			(w10) -- (w00) -- (w01) -- (z00) -- (w10)
		;		
	\end{tikzpicture}		
	\caption{
		The initial data consists of $z_{0,0},z_{1,0},z_{0,1},w_{0,0},w_{1,0},w_{0,1}$. On the left we use Equation~\eqref{eq:intcrdskpa} to determine $w_{1,1}$, in the center we use Equation~\eqref{eq:intcrdskpb} to determine $z_{1,1}$. On the right we illustrate how to see the octahedra (blue) in $\Z^3 / (0,0,2)$.
	}
	\label{fig:mrinint}
\end{figure}

Before we give a formal explanation of the explicit solution, let us outline how we propagate initial data in a Bäcklund pair and how we use Lemma~\ref{lem:intcrdskp} to relate this propagation to the dSKP equation. Assume we know the initial data $(z_{i,j})_{i+j\in \{0,1\}}$ and $(w_{i,j})_{i+j\in \{0,1\}}$ of a Bäcklund pair (see Figure~\ref{fig:intcraztec}), as well as the edge-labels $\alpha_i,\beta_i$ for all $i\in \Z$ and $\gamma$. Then the $z$-initial data together with the edge-labels and Equation~\ref{eq:intcrmapdef} determine $(z_{i,j})_{i+j = 2}$. Analogously, the $w$-initial data together with the edge-labels and Equation~\ref{eq:intcrmapdef} determine $(w_{i,j})_{i+j = 2}$. By iterating this procedure forwards and backwards, we obtain the whole Bäcklund pair on $\Z^2$. Note that the propagation \emph{decouples}: the $z$-data only depends on the edge-labels and the $z$-initial data, while the $w$-data only depends on the edge-labels and the $w$-initial data.

In contrast, when using dSKP the propagation is \emph{coupled}, as we explain in the following. Given the same initial data, the $z$- and $w$-initial data and Equation~\ref{eq:intcrdskpa} determine $(w_{i,j})_{i+j = 2}$, see Figure~\ref{fig:mrinint} (left). Analogously, the $z$- and $w$-initial data and Equation~\ref{eq:intcrdskpb} determine $(z_{i,j})_{i+j = 2}$, see Figure~\ref{fig:mrinint} (center). Note that propagation via dSKP does not need the edge-label data, this data is implicitly contained in the initial data of the Bäcklund pair.

Finally, let us attempt to visualize how to see the octahedral lattice $\calL$ together with the Bäcklund pair. Figure~\ref{fig:mrinint} (right) is a visualization of the following explanation. Consider the following identification of $\Z^3$ with the initial data of the Bäcklund pair:
\begin{align}
	(i,j,k) \rightarrow \begin{cases}
		z_{i,j} & \mbox{if } [k]_2 = 0, \\
		w_{i,j} & \mbox{if } [k]_2 = 1.
	\end{cases}
\end{align}
Note that this identification is well defined on $\Z^3 / (0,0,2)$. We obtain the edges of the octahedral lattice in $\Z^3$ by all integer shifts of the edges
\begin{align}
	((0,0,0), (1,0,0)), \quad ((0,0,0), (0,1,0)),\quad ((0,0,0), (0,0,1))&, \\((0,0,0), (1,0,1)),\quad ((0,0,0), (0,1,1)),\quad ((1,0,0), (0,1,0))&.
\end{align}
With these edges, the vertex set of each octahedron is an integer shift of the vertices
\begin{align}
	(0,0,0), \quad (1,0,0), \quad (0,1,0), \quad (1,0,1), \quad (0,1,1), \quad (1,1,1).
\end{align}
Therefore each octahedron in $\calL$ corresponds to a cube in $\Z^3$. The propagation of the initial data of a Bäcklund pair in $\Z^3$ using dSKP via Lemma~\ref{lem:intcrdskp}, corresponds to the usual propagation of dSKP on $\calL$ as explained in the introduction.

\subsection{Explicit solution}

In the same spirit as for P-nets, but more involved in this case, the following theorem expresses the points $z_{i,j},w_{i,j}$ for all $(i,j)\in\Z^2$ such that $i+j\geq 1$, as the ratio function of oriented dimers of an Aztec diamond subgraph of $\Z^2$, with face weights being a subset of
$(z_{i,j}),(w_{i,j})_{i+j\in\{0,1\}}$, see Figure~\ref{fig:backlund_ic} for an example.

\begin{figure}[tb]
  \centering
  \includegraphics[width=13cm]{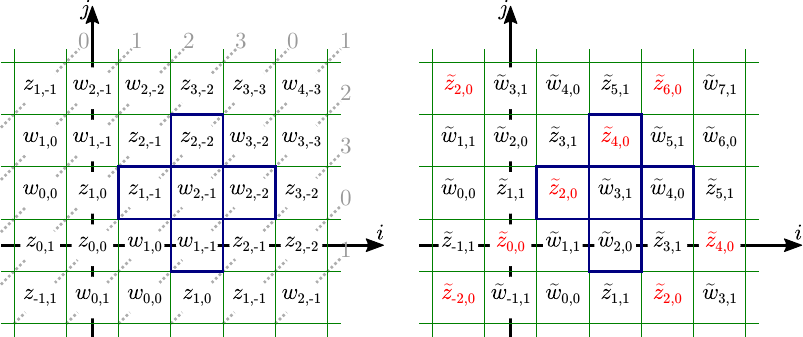}
  \caption{Left: face weights for the explicit solution of Bäcklund pairs in Theorem~\ref{theo:explintcr}. The
  dotted diagonals correspond to constant values of $[i-j]_4$, with
  that value indicated at the end. The sample blue Aztec diamond corresponds to the computation of $z_{3,0}$. Right: the same
  values after the change of variable
  \eqref{eq:rotated_weights}. Variables in red are those that are
  set to $0$ in Theorem \ref{theo:intcrsingular}, giving $(m,2)$-Devron initial conditions.}
  \label{fig:backlund_ic}
\end{figure}

\begin{theorem}\label{theo:explintcr}
        Let $z,w:\Z^2 \rightarrow \hC$ be a Bäcklund pair of integrable cross-ratio maps, and consider the graph $\Z^2$ with face weights $(a_{i,j})_{(i,j)\in\Z^2}$ given by,
        \[
        a_{i,j}=
        \begin{cases}
        z_{\frac{i+j+[i+j]_2}{2},\frac{-(i+j)+[i+j]_2}{2}} & \text{ if }[i-j]_4\in\{0,3\},\\
        w_{\frac{i+j+[i+j]_2}{2},\frac{-(i+j)+[i+j]_2}{2}} & \text{ if }[i-j]_4\in\{1,2\}.
        \end{cases}
        \]
        Then, for all $(i,j)\in\Z^2$ such that $i+j\geq 1$, introducing the notation
        \[
        (i',j'):=\Bigl(\frac{i-j+[i-j]_2}{2},\frac{-(i-j)+[i-j]_2}{2}\Bigr),
        \]
        we have
        \begin{align*}
        z_{i,j} &=
        \begin{cases}
        Y(A_{i+j-1}[z_{i',j'}],a)&
        \text{  if $[i+j]_4 \in \{0,1\}$},\\
        Y(A_{i+j-1}[w_{i',j'}],a)&
        \text{  if $[i+j]_4 \in \{2,3\}$},
        \end{cases} \\
        w_{i,j}&=
        \begin{cases}
        Y(A_{i+j-1}[z_{i',j'}],a)&
        \text{  if $[i+j]_4 \in \{2,3\}$},\\
        Y(A_{i+j-1}[w_{i',j'}],a)&
        \text{  if $[i+j]_4 \in \{0,1\}$.}
        \end{cases}
        \end{align*}
\end{theorem}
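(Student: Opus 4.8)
The plan is to follow the three-step scheme used for Theorems~\ref{theo:explmiquel} and~\ref{theo:explpnet}: exhibit a function $x:\calL\to\hC$ that satisfies the dSKP recurrence and whose initial condition~\eqref{equ:init_cond} is the family of face weights $(a_{i,j})$ of the statement, then invoke Theorem~\ref{theo:expl_sol}, and finally rewrite the resulting formula in the coordinates $(i,j)$ of the Bäcklund pair. The function $x$ is the one implicit in the discussion preceding the statement: one starts from the identification $(i,j,k)\mapsto z_{i,j}$ for $[k]_2=0$ and $(i,j,k)\mapsto w_{i,j}$ for $[k]_2=1$, which descends to $\Z^3/(0,0,2)$, and transports it to $\calL$ through the explicit correspondence described there between the octahedra of $\calL$ and the cubes of $\Z^3$. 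Concretely, I would first write $x$ in closed form: for $(I,J,k)\in\calL$, set $x(I,J,k)$ to be a value of $z$ or of $w$ whose Bäcklund index is recovered from $I-J$ and $k$, the choice between $z$ and $w$ being dictated by a residue mod $4$; this is exactly the index change recorded in the statement through the formulas for $(i',j')$ and for $a_{i,j}$, and its well-definedness is immediate.

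Next I would check that $x$ satisfies the dSKP recurrence on $\calL$. This is the substantive step. Each vertex of $\Z^3\setminus\calL$ at which~\eqref{eq:dskp_x} is evaluated is of one of two parity types, and under the change of coordinates, after reindexing the vertex and rearranging the factors, the relation~\eqref{eq:dskp_x} there becomes Equation~\eqref{eq:intcrdskpa} of Lemma~\ref{lem:intcrdskp} for one type and Equation~\eqref{eq:intcrdskpb} for the other. So both equations of Lemma~\ref{lem:intcrdskp} are genuinely needed, and together they cover all of $\calL$; this is the ``coupled propagation'' discussed before the statement, and it is what makes a Bäcklund pair --- rather than a single integrable cross-ratio map --- the object that reduces to a dSKP lattice.

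Having established that $x$ solves dSKP, its initial condition is read off the construction: on the two layers $k\in\{0,1\}$, the values of $x$ are precisely the $z$- and $w$-initial data, and the rule selecting a $z$-value or a $w$-value on a given initial face alternates along the layer according to $[i-j]_4$, which is the case split $[i-j]_4\in\{0,3\}$ versus $[i-j]_4\in\{1,2\}$ appearing in the statement; the index formula $\bigl(\tfrac{i+j+[i+j]_2}{2},\tfrac{-(i+j)+[i+j]_2}{2}\bigr)$ is the explicit recovery of the Bäcklund index from a face of $\Z^2$ on those layers. Thus $x(i,j,[i+j]_2)=a_{i,j}$ with $(a_{i,j})$ as in the statement, and Theorem~\ref{theo:expl_sol} gives $x(i,j,k)=Y(A_{k-1}[a_{i,j}],a)$ for $k\ge1$.

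It remains to unwind the coordinate change. For a Bäcklund index $(i,j)$ with $i+j\ge1$, the corresponding point of $\calL$ has height $i+j$ and is centered, in the sense of Theorem~\ref{theo:expl_sol}, at the face $a_{i',j'}$; the value of $x$ there is $z_{i,j}$ when $[i+j]_4\in\{0,1\}$ and $w_{i,j}$ when $[i+j]_4\in\{2,3\}$, while $a_{i',j'}$ equals $z_{i',j'}$ or $w_{i',j'}$ according to whether $[i'-j']_4=[i-j]_4$ lies in $\{0,3\}$ or in $\{1,2\}$. Substituting these identifications into the formula of Theorem~\ref{theo:expl_sol} produces the four displayed equalities. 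I expect the only real obstacle to be this modular bookkeeping: a single bijection between $\calL$ and the data of the Bäcklund pair has to send the two initial layers onto the $z$- and $w$-initial data, be compatible with the $(0,0,2)$-periodicity of the auxiliary identification, and turn the two distinct equations of Lemma~\ref{lem:intcrdskp} --- signs included --- into the single relation~\eqref{eq:dskp_x}; checking that one choice meets all three requirements, and propagating the residues mod $4$ through to the statement, is where the care lies.
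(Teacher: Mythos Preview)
Your approach is exactly the paper's: it sets $x(I,J,k)=z_{(I+J+k)/2,\,(-(I+J)+k)/2}$ when $[I-J+k]_4=0$ and the same with $w$ when $[I-J+k]_4=2$, verifies dSKP via the two equations of Lemma~\ref{lem:intcrdskp}, reads off $a_{I,J}=x(I,J,[I+J]_2)$, and applies Theorem~\ref{theo:expl_sol}.

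Two details in your unwinding need to be sharpened. First, for a fixed Bäcklund index $(i,j)$, \emph{both} $z_{i,j}$ and $w_{i,j}$ occur as values of $x$ at height $i+j$, at different lattice points: the paper uses $z_{i,j}=x(j,i-2j,i+j)$ and $w_{i,j}=x(j+1,i-2j-1,i+j)$. Your phrasing (``the value of $x$ there is $z_{i,j}$ when $[i+j]_4\in\{0,1\}$ and $w_{i,j}$ when $[i+j]_4\in\{2,3\}$'') suggests a single lattice point per $(i,j)$, which would only recover half of the four displayed identities. Second, the notation $A_{i+j-1}[z_{i',j'}]$ identifies the Aztec diamond by its central \emph{weight}, not its lattice position; this is well-defined only because the face weights $(a_{I,J})$ are invariant under the translation $(2,-2)$, i.e.\ because $x$ is $(2,-2,0)$-periodic. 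The paper makes this step explicit: from $z_{i,j}=x(j,i-2j,i+j)$ one gets an Aztec diamond centered at lattice position $(j,i-2j)$, computes its central weight $a_{j,i-2j}$ to be $z_{i',j'}$ or $w_{i',j'}$ according to $[i+j]_4$, and then invokes the periodicity to pass to the position-free notation. In your last paragraph $(i',j')$ is a Bäcklund index, not a lattice position, so ``centered at the face $a_{i',j'}$'' conflates the two; the correct statement is that the central face, wherever it sits, carries that weight.
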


\begin{proof}
Consider the function $x:\calL \rightarrow \hat{\C}$ given by
\begin{align}\label{eq:proof_Backlund}
x(i,j,k) =
\begin{cases}
z_{\frac{i+j+k}{2},\frac{-(i+j)+k}{2}} & \mbox{ for } [i-j+k]_4 = 0, \\
w_{\frac{i+j+k}{2},\frac{-(i+j)+k}{2}} & \mbox{ for } [i-j+k]_4 = 2.
\end{cases}
\end{align}
As a consequence of Lemma~\ref{lem:intcrdskp}, we have that, for all $(i,j)\in\Z^2$, and all $k\geq 1$, the function $x$ satisfies the dSKP recurrence. Consider $(i,j)\in\Z^2$, such that $[i-j]_4\in\{0,3\}$ (the argument when $[i-j]_4\in\{1,2\}$ is similar), then $[i-j+[i+j]_2]_4=[i-j+[i-j]_2]_4=0$, and the function $x$ satisfies the initial condition
\[
a_{i,j}=x(i,j,[i+j]_2)=z_{\frac{i+j+[i+j]_2}{2},\frac{-(i+j)+[i+j]_2}{2}},
\]
giving the face weights of the statement.
As a consequence of Theorem~\ref{theo:expl_sol}, we know that $x(i,j,k)=Y(A_{k-1}[a_{i,j}],a)$. Using Equation~\eqref{eq:proof_Backlund}, we have that for every $m\in\Z$,
\begin{equation}\label{eq:proof_Backlund_1}
x(m,i-j-m,i+j)=
\begin{cases}
z_{i,j}&\text{ if $m+j\in 2\Z$},\\
w_{i,j}&\text{ if $m+j\in 2\Z+1$}.
\end{cases}
\end{equation}
Applying this at $m=j$, we get that
$z_{i,j}=x(j,i-2j,i+j)=Y(A_{i+j-1}[a_{j,i-2j}])$. An explicit
computation of the face weight $a_{j,i-2j}$ gives
\begin{itemize}
\item if $[-i+3j]_4 \in \{0,3\}$ (which is equivalent to $[-i-j]_4
  \in \{0,3\}$, or $[i+j]_4 \in \{0,1\}$), then
  $a_{j,i-2j}
  = z_{i',j'}$.
\item if $[-i+3j]_4 \in \{1,2\}$ (which is equivalent to $[-i-j]_4
  \in \{1,2\}$, or $[i+j]_4 \in \{2,3\}$), then
  $a_{j,i-2j}
  = w_{i',j'}$.
\end{itemize}
This shows that $z_{i,j}$ is the ratio of partition functions of
the Aztec diamond of size $i+j-1$ centered at $(j,i-2j)$, whose
central face has weight $z_{i',j'}$. However, note that the whole
solution~\eqref{eq:proof_Backlund} is invariant under translations of
multiples of $(2,-2,0)$. Therefore, \emph{any} Aztec diamond of size
$i+j-1$ whose central face has weight $z_{i',j'}$ is a translate of
the first one by such a translation, and has the same face weights and
ratio of partition functions. As a consequence, it is not important that the
Aztec diamond is centered at $(j,i-2j)$, but only that it has the
announced central face weight.

Doing the same for $m=j+1$ gives the expression of $w_{i,j}$.
\end{proof}

\begin{remark}
Due to the definition of integrable cross-ratio maps it is possible to express $(z_{i,j})_{i+j > 1}$ in terms of $(z_{i,j})_{i+j \in \{0,1\} }$ and $(\alpha_i, \beta_j)_{i,j\in \Z}$. However, these expressions can become arbitrarily complicated rational functions, so this is not in the spirit of giving explicit combinatorial expressions. It would be interesting to find an explicit combinatorial expression for the data of $z$ only in terms of $(z_{i,j})_{i+j \in \{0,1\} }$ and $(\alpha_i, \beta_j)_{i,j\in \Z}$, without resorting to a Bäcklund partner $w$.
\end{remark}

\subsection{Dual map}

The following construction helps both in the proof and the statement of singularities. Given an integrable cross-ratio map, we define the discrete 1-form $\dd{z}(v,v') = z(v') - z(v)$ for any two adjacent $v,v' \in \Z^2$. This 1-form is closed by construction. Additionally, we consider the \emph{dual discrete 1-form $\dd{z^*}$} given by
\begin{align}
        \dd{z^*}((i,j),(i+1,j)) &= \frac{\alpha_i}{z_{i,j}-z_{i+1,j}}, &
        \dd{z^*}((i,j),(i,j+1)) &= \frac{\beta_j}{z_{i,j}-z_{i,j+1}}.
\end{align}

The idea of this 1-form goes back to the study of \emph{discrete isothermic surfaces} \cite{bpdisosurfaces}. Our definition of the dual form uses slightly different conventions, with no minus sign introduced for different lattice directions and no complex conjugation. This simplifies the calculations and is in closer correspondence with the conventions used for the \emph{cross-ratio system} in \cite{bsintquad}.

The 1-form $\dd{z^*}$ is closed, which follows by elementary calculation from the cross-ratio
condition of Definition \ref{def:intcrmap}, also compare with \cite[Theorem 6]{bpdisosurfaces}. Any map
$z^*$ obtained by integrating $\dd{z^*}$ is called a \emph{dual map to
$z$}. Every dual map $z^*$ is also an integrable cross-ratio map with the same edge-labels as $z$, which may also be verified by elementary calculation. Additionally, assume $z,w$ are a Bäcklund pair.
By employing the cross-ratio conditions
of Definition \ref{def:backlundpair} we may moreover assume that
the dual maps $z^*, w^*$ satisfy
$w^*-z^* = \gamma(z-w)^{-1}$. We may see $z^*$ (resp.~$w^*$) as defined on the plane
at height $0$ (resp.~$1$) in $\Z^3$ as in Figure~\ref{fig:intcraztec},
so the previous formula gives vertical increments.
Finally, the following identity \cite[Section 5.4]{bsintquad}
{\small \begin{align}\label{eq:intcrstarform}
\dd{z^*}((i+1,j),(i,j)) +  \dd{z^*}((i,j),(i,j+1))  =
\frac{\alpha_i} {z_{i+1,j} - z_{i,j}} + \frac{\beta_{j}}{z_{i,j}- z_{i,j+1}}  = \frac{\alpha_i - \beta_{j}}{z_{i+1,j+1} - z_{i,j}},
\end{align}}
is called the \emph{three-legged form} and is useful in some of the upcoming calculations.

\subsection{Change of coordinates}

The following change of coordinates helps to better outline the structure of the data. It is also useful to describe singularities in Section~\ref{sec:sing_Backlund_pairs}. For $(i,j)\in\Z^2$, let
\begin{equation}
	z_{i,j} = \tilde{z}_{i-j,i+j}, \qquad \mbox{and } \tilde{z}_j=(\tilde{z}_{i,j})_{[i+j]_2=0}, 
  \label{eq:rotated_weights}
\end{equation}
 noting that
$\tilde z_{i,j}$ is only defined for $i+j\in 2\Z$.
We will use the tilde to signify this change of coordinates in the remainder of the paper. The inverse coordinate transform is
\begin{equation} \label{eq:rotated_weights_1}
\tilde z_{i,j} = z_{\frac{i+j}{2},\frac{-i+j}{2}}.
\end{equation}
An example is provided in Figure~\ref{fig:backlund_ic}.

\begin{remark}
In the rotated coordinates, the explicit expression of Theorem~\ref{theo:explintcr} takes the following simpler form: for all $(i,j)\in\Z^2$ such that $[i+j]_2=0$,
\begin{align*}
\tilde{z}_{i,j} =
\begin{cases}
Y(A_{j-1}[\tilde{z}_{i,[i]_2}],a)&
\text{  if $[j]_4 \in \{0,1\}$},\\
Y(A_{j-1}[\tilde{w}_{i,[i]_2}],a)&
\text{  if $[j]_4 \in \{2,3\}$},
\end{cases} \quad \quad
\tilde{w}_{i,j}=
\begin{cases}
Y(A_{j-1}[\tilde{z}_{i,[i]_2}],a)&
\text{  if $[j]_4 \in \{2,3\}$},\\
Y(A_{j-1}[\tilde{w}_{i,[i]_2}],a)&
\text{  if $[j]_4 \in \{0,1\}$.}
\end{cases}
\end{align*}

\end{remark}

\subsection{Singularities}\label{sec:sing_Backlund_pairs}

We now turn to studying singularities of integrable cross-ratio maps.
If we know $\tilde{z}_{j}$, $\tilde{z}_{j+1}$ and the functions $\alpha$ and $\beta$, then by Definition~\ref{def:intcrmap}, we also know $\tilde{z}_{j+2}$. Let us denote by $T$ the map representing the propagation of data
\[
T:(\hC)^{\Z_2\times \Z}\rightarrow (\hC)^{\Z_2\times \Z},\quad (\tilde{z}_{j},\tilde{z}_{j+1})\mapsto (\tilde{z}_{j+1},\tilde{z}_{j+2}).
\]

Let $m\geq 1$. An integrable cross-ratio map $z$ is said to be
\emph{$m$-closed} if $\alpha_i=\alpha_{i+m}$,
$\beta_i=\beta_{i+m}$, and $z_{i,j} = z_{i+m,j-m}$ for all $(i,j)\in
\Z^2$; the last condition is equivalent to $\tilde z_{i+2m,j} = \tilde z_{i,j}$ for all $(i,j) \in \Z^2$ such that $[i+j]_2=0$. A dual map $z^*$ to a closed map $z$ is not necessarily closed, instead $z^*$ has an additive monodromy $M[z^*] = z^*_{i+m,j-m} - z^*_{i,j}$ that does not depend on $i,j$ or the choice of the dual map $z^*$. A Bäcklund pair of integrable cross-ratio maps $z,w$ is called \emph{$m$-closed} if both $z$ and $w$ are $m$-closed. We claim that generically, for a given $m$-closed integrable cross-ratio map $z$ there exists an integrable cross-ratio map $w$, such that $z,w$ is an $m$-closed Bäcklund pair. Recall that we explained how to construct a Bäcklund pair from $w$ in the non-closed case in Section~\ref{subsec:Backlund_pairs_defi}. In the closed case, assume we begin by setting $\tilde w_{0,0} = \hat w$ for some formal variable $\hat w$. Then we iteratively express $\tilde w_{1,1}, \tilde w_{2,0}, \tilde w_{3,1}, \tilde w_{4,0}, \dots, \tilde w_{2m,0}$ as Möbius transforms of $\hat w$. As a Möbius transform generically has two fixed points, we can choose $\hat w$ to equal one of the fixed points. The remainder of $\tilde w$ is determined by the propagation $T$ and thus we have constructed an $m$-closed Bäcklund pair.

We prove the following singularity result.
\begin{theorem}\label{theo:intcrsingular}
  Let $m\geq 1$, and let $\tilde z$ be an $m$-closed
  integrable cross-ratio map such that $\tilde z_{i,0} = 0$ for all
  $i\in 2\Z$. Assume we can apply the propagation map $T$ to
  $(\tilde z_0, \tilde z_1)$ at least $2m-2$ times. Then
  for all $i\in 2\Z+1$,
  \begin{align}
     \tilde{z}_{i,2m-1} = \frac{\sum_{\ell=0}^{m-1}\left( \alpha_\ell - \beta_{\ell}\right) }{\sum_{\ell=0}^{m-1} \frac1{\tilde z_{2\ell+1,1}} \left(\alpha_{\ell} - \beta_{-\ell-1} \right)} = \frac1{M[z^*]}\sum_{\ell=0}^{m-1}\left( \alpha_\ell - \beta_{\ell}\right) .
  \end{align}
\end{theorem}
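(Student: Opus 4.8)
The plan is to run the same scheme as for P-nets: transport the statement to the dSKP recurrence through Theorem~\ref{theo:explintcr}, recognize the resulting initial data as $(m,2)$-Devron, and then extract the value of the reappearing singularity from the dSKP machinery, re-expressing the answer through the dual map $z^*$.

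To set things up, I would extend $\tilde z$ to an $m$-closed Bäcklund pair $(\tilde z,\tilde w)$ by the fixed-point construction recalled just before the statement, and apply Theorem~\ref{theo:explintcr} — in the rotated form of the Remark following it — so that $(\tilde z,\tilde w)$ is realized as the dSKP solution with some initial data $a$. The $m$-closedness gives $a_{i,j}=a_{i+m,j+m}$, and the hypothesis $\tilde z_{i,0}=0$ for even $i$ becomes, exactly as drawn in Figure~\ref{fig:backlund_ic}, the condition $a_{i,j}=0$ whenever $[i-j]_4=0$; hence $a$ is $(m,2)$-Devron. Since $\tilde z_{i,2m-1}$ is the value of this dSKP solution at height $2m-1$, one step beyond the reappearance height $2m-2=(m-2)\cdot 2+2$ of Theorem~\ref{theo:devron_sing}, the remaining task is to compute that value explicitly.

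For this I would invoke Theorem~\ref{theo:D} at $k=2m-1$: with $D$ the operator attached to the Aztec diamond $A_{2m-2}$ centered at the relevant face, any nonzero $v\in\ker D^T$ realizes $\tilde z_{i,2m-1}$ as the ratio displayed in that theorem. The main obstacle — and the only genuinely new ingredient — is to produce such a $v$ geometrically. My expectation is that $\ker D^T$ is spanned by the values of the closed dual $1$-form $\dd z^*$ read off along a staircase path through $A_{2m-2}$ (or a closely related assignment): the relations encoded by $D^T$ are precisely the cross-ratio and Bäcklund identities of Definitions~\ref{def:intcrmap} and~\ref{def:backlundpair}, conveniently repackaged by the three-legged form \eqref{eq:intcrstarform}; closedness of $\dd z^*$ makes the assignment along the staircase consistent; and the Devron zeros $z_{k,-k}=0$ are exactly what forces the path to close up at height $2m-1$, so that $\tilde z_{i,2m-1}$ becomes independent of $i$. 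Verifying $D^Tv=0$ face by face, while tracking the $z/w$ alternation, the rotated-versus-unrotated coordinates, and the Aztec-diamond face labelling, is where the real work lies.

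With $v$ in hand, substituting it into Theorem~\ref{theo:D} collapses the numerator — using $z_{k,-k}=0$ and the periodicity $\alpha_i=\alpha_{i+m}$, $\beta_i=\beta_{i+m}$ — to $\sum_{\ell=0}^{m-1}(\alpha_\ell-\beta_\ell)$, and the denominator to $\sum_{\ell=0}^{m-1}\tilde z_{2\ell+1,1}^{-1}(\alpha_\ell-\beta_{-\ell-1})$, which is the first claimed equality. For the second equality I would integrate $\dd z^*$ along the staircase $(0,0)\to(1,0)\to(1,-1)\to(2,-1)\to\cdots\to(m,-m)$: since $z$ vanishes on the level-$0$ anti-diagonal and $z_{\ell+1,-\ell}=\tilde z_{2\ell+1,1}$, each consecutive pair of steps contributes $(\beta_{-\ell-1}-\alpha_\ell)/\tilde z_{2\ell+1,1}$, and summing shows the total increment $z^*_{m,-m}-z^*_{0,0}=M[z^*]$ coincides (up to the sign fixed by the dual-form convention) with that denominator. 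Hence $\tilde z_{i,2m-1}=M[z^*]^{-1}\sum_{\ell=0}^{m-1}(\alpha_\ell-\beta_\ell)$.
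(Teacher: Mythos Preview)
Your strategy is the paper's: embed the Bäcklund pair in dSKP via Theorem~\ref{theo:explintcr}, recognise the initial data as $(m,2)$-Devron, and then extract the value of the returning singularity from Theorem~\ref{theo:D} using a kernel vector tied to the dual form. The identity with $M[z^*]$ is indeed immediate from integrating $\dd z^*$ along the staircase you describe; the paper dismisses it in one line.

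The gap is that you have not constructed the kernel vector, and you say so yourself (``is where the real work lies''). This is not bookkeeping; it is essentially the whole proof. Two points on what the paper actually does:

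First, the paper does not compute $\tilde z_{i,2m-1}$ directly. It applies Theorem~\ref{theo:devron_sing} at height $2m-2$ to conclude that $\tilde w_{2m-2}$ is constant, then uses a short local argument (if $\tilde w_{i,j}=\tilde w_{i+2,j}$ then $\tilde z_{i+1,j+1}=\tilde w_{i,j}$) to get that $\tilde z_{2m-1}$ equals that constant. The value is then computed by applying Theorem~\ref{theo:D} to $\tilde w_{2m-2,2m-2}$, whose Aztec diamond has size $2m-3$, one smaller than the one you target.

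Second, the kernel vector is not simply ``the values of $\dd z^*$ along a staircase''. The paper passes to the cylinder obtained by quotienting the $D$-graph by the $m$-periodicity, then exhibits an explicit basis of $\ker\bar D^T$ of size $m$: there are $m-1$ trivial vectors supported on the columns of zeros, plus one nontrivial vector whose entries on faces of weight $\tilde z_{2\ell+1,1}$, $\tilde w_{2\ell+1,1}$, $\tilde w_{2\ell,0}$ are $\frac{\alpha_\ell-\beta_{-\ell-1}}{\tilde z_{2\ell+1,1}}$, $\frac{\alpha_\ell-\beta_{-\ell-1}}{\tilde w_{2\ell+1,1}}$, $\frac{-1}{\tilde w_{2\ell,0}}$. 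So the vector genuinely involves $w$, not just $z$, and checking it lies in the kernel uses the three-legged form \eqref{eq:intcrstarform} for $w$ together with closedness of the dual $1$-form on the full Bäcklund pair (this is Equation~\eqref{eq:proofvbar}). A separate dimension argument via Proposition~\ref{prop:dimker2} is then needed to ensure $\dim\ker\bar D^T=m$, so that the periodised vector $\bar v$ is forced to be a combination of those $m$ vectors; only then can one read off the leftmost entries and plug into Theorem~\ref{theo:D}. Your sketch does not anticipate the cylinder quotient, the $m-1$ trivial kernel directions, or the dimension argument, all of which are needed to make the computation go through.
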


\begin{remark}\label{rk:alphabeta}
 Prior to proving the theorem, let us mention again that this may be
  seen as an equality on rational functions of formal variables
  $\alpha,\beta$ and $\tilde{z}_1$. Thus we may assume that there is
  an $\ell$ such that $\alpha_\ell \neq \beta_{-\ell-1}$, indeed, when
  this is not the case, the conclusion of the theorem (taken as a
  formal expression specified to that case) shows that
  $\tilde{z}_{i,2m-1}$ is undefined, so the propagation map $T$ could
  not, in fact, be applied $2m-2$ times.
\end{remark}

\begin{proof}
Note that the right equality follows immediately from the definition of the monodromy $M[z^*]$ of the dual 1-form.

Let $\tilde w$ be an $m$-closed integrable cross-ratio map such that
$\tilde z, \tilde w$ is a Bäcklund pair with $\gamma=1$.

The initial condition in the claim is a case of $(m,2)$-Devron initial
condition defined in Definition~\ref{def:sing}, see Figure~\ref{fig:backlund_ic}. Therefore, by
Theorem~\ref{theo:devron_sing}, we know that for the corresponding
solution of the dSKP recurrence, if $[i-j-2m]_4=0$ then $x(i,j,2m-2)=x(i+1,j+1,2m-2)$. This
solution is also explicitly described in Equation \eqref{eq:proof_Backlund},
and noting that in this case $[i-j+(2m-2)]_4 = 2$, the previous
equation becomes
\begin{equation*}
  w_{\frac{i+j}{2}+m-1,\frac{-(i+j)}{2}+m-1} =
  w_{\frac{i+j}{2}+m,\frac{-(i+j)}{2}+m-2},
\end{equation*}
or equivalently, $\tilde{w}_{i+j,2m-2}=\tilde{w}_{i+j+2,2m-2}$. In
other words, $\tilde{w}_{2m-2}$ is constant. Generally, if $\tilde
w_{i,j} = \tilde w_{i+2,j} \neq \tilde w_{i+1,j-1}$ then $\tilde
w_{i+1,j+1}$ is not defined by Equation \eqref{def:intcrmap}. However,
if $\tilde z_{i,j}, \tilde z_{i+2,j}, \tilde z_{i+1,j-1}$ are pairwise
different and different from $\tilde w_{i,j}, \tilde w_{i+2,j}, \tilde
w_{i+1,j-1}$ then $\tilde z_{i+1,j+1}$ is defined and equal to $\tilde
w_{i,j}$. As a consequence, $\tilde{z}_{2m-1}$ is constant.

We now compute this constant, for which we choose to
compute $\tilde w_{2m-2,2m-2}$ using Theorem~\ref{theo:D}. The
corresponding Aztec diamond has size $2m-3$, and its weights can be
obtained from the previous solution of dSKP; a quick check shows
that they are those of Figure~\ref{fig:Dbacklund}. Let $D$ be
the corresponding operator as in the discussion preceding
Theorem~\ref{theo:D}, and let $v\in \C^F$ be a non-zero element of
$\ker D^T$, which exists by a dimension argument.

\begin{figure}[tb]
  \centering
  \includegraphics[width=10cm]{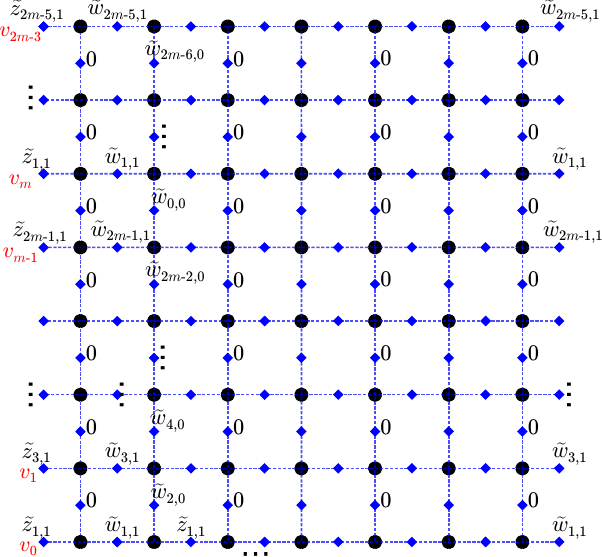}
  \caption{The weights on the graph defining $D$ in the case of a
    $m$-closed Bäcklund pair with $\tilde{z}_0 \equiv 0$. The same
    $4$ columns of weights repeat throughout the graph. In particular
    there are $m-1$ columns with weights $0$.
  }
  \label{fig:Dbacklund}
\end{figure}

\begin{figure}[tb]
  \centering
  \includegraphics[width=10cm]{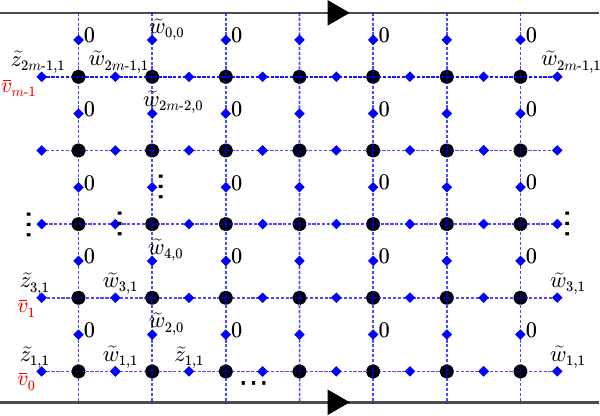}
  \caption{The graph on a cylinder obtained from that of
      Figure~\ref{fig:Dbacklund} by quotienting the $m$-periodicity.}
  \label{fig:Dbacklundcyl}
\end{figure}

Consider the graph on a cylinder obtained by a $(0,m)$ quotient of the
previous graph, inheriting the face weights consistently, see
Figure~\ref{fig:Dbacklundcyl}; we denote its black vertices by $\bar{B}$,
its faces by $\bar{F}$, and the corresponding operator from
$\C^{\bar{B}}\oplus \C^{\bar{B}}$ to $\C^{\bar{F}}$ by $\bar{D}$. Each
element of $\bar{F}$ corresponds to one or two elements of $F$. By
summing the entries of $v$ from these one or two elements, we get a
vector $\bar{v}\in\C^{\bar{F}}$. It is easy to see that $\bar{v}\in
\ker \bar{D}^T$. Note also that the weighted sums of the leftmost entries of
$v$, as in Theorem~\ref{theo:D}, can be computed from
$\bar{v}$, using the closedness of $\tilde z_1$:
\begin{equation}
  \label{eq:sumvbar}
  \begin{split}
    \sum_{\ell=0}^{2m-3}v_{\ell} & = \sum_{\ell=0}^{m-1}\bar{v}_\ell,\\
    \sum_{\ell=0}^{2m-3}a_{\ell} v_{\ell} & =
    \sum_{\ell=0}^{2m-3} \tilde z_{2\ell +1,1} v_{\ell} =
    \sum_{\ell=0}^{m-1}\tilde z_{2\ell + 1,1}\bar{v}_\ell.
  \end{split}
\end{equation}

If $\bar v \equiv 0$, then these two sums are $0$, and by
Theorem~\ref{theo:D}, $\tilde{w}_{2m-2,2m-2}$ is undefined, which means
that for these initial conditions the map $T$ cannot be applied
$2m-2$ times. We now suppose that $\bar v$ is not the zero vector.

We have $|\bar F|=2|\bar B|+m$, so
  $\dim \ker \bar{D}^T \geq m$. We will show that this dimension is in
  fact $m$ and provide a basis of $\ker \bar{D}^T$, which we will use
  to get information on $\bar v$. First, for each of the $m-1$ columns
  of zeros in the quotient graph, we can put ones on this whole
  column, and zeros everywhere else; it is easy to check that this
  gives $m-1$ linearly independent vectors in $\ker \bar{D}^T$. The $m$-th vector we
  introduce has the following entries:
  \begin{itemize}
  \item on each element of $\bar{F}$ carrying a weight $\tilde
    z_{2\ell+1,1}$, the entry is $\frac{\alpha_\ell-\beta_{-\ell-1}}{\tilde z_{2\ell+1,1}}$;
  \item on each element of $\bar{F}$ carrying a weight $\tilde w_{2\ell+1,1}$, the
    entry is $\frac{\alpha_\ell-\beta_{-\ell-1}}{\tilde w_{2\ell+1,1}}$;
  \item on each element of $\bar{F}$ carrying a weight $\tilde w_{2\ell,0}$, the
    entry is $\frac{-1}{\tilde w_{2\ell,0}}$;
  \item on each element of $\bar{F}$ carrying a weight $0$ and on the
    same row as $\tilde w_{2\ell,0}$, the
    entry is $\frac{-1}{\tilde w_{2\ell,0}}$.
  \end{itemize}
  By Remark~\ref{rk:alphabeta}, this is not in the span of the
  previous $m-1$ vectors. We now check that this indeed defines a vector in $\ker
  \bar{D}^T$, which means we have to check two linear conditions for
  each element of $\bar{B}$. By periodicity of the vector, we just
  have to check it for the first two columns of $\bar{B}$. On the
  first column at row $\ell$, the first relation to check is
  \begin{equation}
    \label{eq:proofvbar}
    \frac{\alpha_\ell-\beta_{-\ell-1}}{\tilde w_{2\ell+1,1}} + \frac{-1}{\tilde w_{2\ell+2,0}}
    = \frac{\alpha_\ell-\beta_{-\ell-1}}{\tilde z_{2\ell+1,1}} + \frac{-1}{\tilde w_{2\ell,0}}.
  \end{equation}
        Recall that $\tilde z_{2\ell,0},
          \tilde z_{2\ell +2,0}$ are zero, so that we may assume that
          $\tilde w_{2\ell+1,-1}$ is zero as well; a short computation
          shows that \eqref{eq:intcrmapdef} still holds for
          $w$. Moreover, applying Equation~\eqref{eq:intcrstarform}
          for $w$ at $i=\ell, j=-\ell-1$, we obtain
        \begin{align}
                \frac{\alpha_\ell - \beta_{-\ell-1}}{\tilde
          w_{2\ell+1,1}} = \frac{\alpha_\ell} {\tilde w_{2\ell+2,0}} -
          \frac{\beta_{-\ell-1}}{\tilde w_{2\ell,0}}.
        \end{align}
        Substituting this expression into Equation \eqref{eq:proofvbar}, we obtain the integral of the dual 1-form along the closed path corresponding to $\tilde z_{2\ell,0},\tilde w_{2\ell,0}, \tilde w_{2\ell+1,-1}, \tilde w_{2\ell+2,0}, \tilde z_{2\ell+2,0}, \tilde z_{2\ell+1,1}, \tilde z_{2\ell,0}$. Thus Equation \eqref{eq:proofvbar} follows from the fact that the dual 1-form is closed.

  The second condition for the first column is
  \begin{equation}\label{equ:Back_relation}
    \tilde w_{2\ell+1,1} \times \frac{\alpha_\ell-\beta_{-\ell-1}}{\tilde w_{2\ell+1,1}} + 0 \times \frac{-1}{\tilde w_{2\ell+2,0}}
    = \tilde z_{2\ell+1,1} \times \frac{\alpha_\ell-\beta_{-\ell-1}}{\tilde z_{2\ell+1,1}} + 0 \times\frac{-1}{\tilde w_{2\ell,0}},
  \end{equation}
  which is trivial. We now turn to the second column. The first
  condition is again \eqref{eq:proofvbar}.
  The second condition is
  \begin{equation}
    \tilde w_{2\ell+1,1} \times \frac{\alpha_\ell-\beta_{-\ell-1}}{\tilde w_{2\ell+1,1}} + \tilde w_{2\ell+2,0} \times \frac{-1}{\tilde w_{2\ell+2,0}}
    = \tilde z_{2\ell+1,1} \times \frac{\alpha_\ell-\beta_{-\ell-1}}{\tilde z_{2\ell+1,1}} + \tilde w_{2\ell,0} \times\frac{-1}{\tilde w_{2\ell,0}},
  \end{equation}
  which is also trivial.

Thus we have found $m$ linearly independent vectors in
  $\ker \bar{D}^T$. We claim that this space has dimension
  $m$. Indeed, if its dimension was higher, using the rank-nullity
  theorem we would have $\dim \ker \bar{D} \geq 1$, so there would be
  a non-zero vector $\bar{u}\in \C^{\bar B}\oplus \C^{\bar B}$ in this
  subspace. By putting the same entries as those of $\bar{u}$ on the
  initial graph, we would get a non-zero vector
  $u\in \C^B \oplus \C^B$, and it is easy to see that $u\in \ker
  D$. By the rank-nullity theorem again, this implies that
  $\dim \ker D^T \geq 2$. Using Proposition~\ref{prop:dimker2}, this
  implies again that the map $T$ could not be
  applied $2m-2$ times.

  Therefore, $\bar{v}$ is a combination of the $m$
  vectors we described. This implies that there is a constant $\lambda$ such that
$(\bar v_\ell)_{0\leq \ell \leq m-1} = \lambda \
(\frac{\alpha_\ell-\beta_{-\ell-1}}{\tilde z_{2\ell+1,1}})_{0\leq \ell
\leq m-1}$. By \eqref{eq:sumvbar} and Theorem~\ref{theo:D}, this gives
the claimed value; note that if $\lambda=0$, again the ratio is not
defined and we conclude as before.
\qedhere

\end{proof}

\begin{remark}
        Let us write $S[\alpha,\beta] := \sum_{\ell=0}^{m-1}(\alpha_\ell-\beta_\ell)$. Assuming $\tilde z$ is $m$-closed and singular, in the sense that $\tilde z_0$ is constant, Theorem \ref{theo:intcrsingular} states that
        \begin{align}
                \tilde z_{2m-1} - \tilde z_0 = \frac{S[\alpha, \beta]}{M[z^*]}.
        \end{align}
        Thus there is a simple relation between the positions of the two singularities and the additive monodromy of $z^*$. Moreover, note that $z^*$ features a repeating singularity as well. In particular, one can immediately verify that $\tilde z^*_{-1} = \tilde z^*_{2m} \equiv \infty$. It would be interesting to understand if and how singularity theorems also hold in more general setups when $z$ is only quasi-periodic.
\end{remark}

\section{Discrete holomorphic functions and orthogonal circle patterns}\label{sec:dhol}

\subsection{Definitions and relation to Bäcklund pairs}
There are different kinds of maps that are considered to be discretizations of holomorphic functions in the literature \cite{bmsanalytic, clrtembeddings, duffin, kenyonisoradial, schramm, smirnov, stephenson} . The definition we use here is due to Bobenko and Pinkall \cite{bpdisosurfaces} and independently Nijhoff and Capel \cite{nc95}. It is a specific instance of an integrable cross-ratio map when $\alpha_i \equiv -\mu, \beta_j \equiv \mu^{-1}$ for some $\mu \in \C\setminus\{0\}$, although without loss of generality we assume $\mu =1$ and therefore  $\alpha_i\equiv -1,\beta_j\equiv 1$. These discrete holomorphic functions are also solutions to the discrete KdV equation \cite[Section 5]{ksclifford}.

\begin{definition}\label{def:dhol}
A \emph{discrete holomorphic function} is a map $z: \Z^2 \rightarrow \hC$ such that, for every quad of $\Z^2$, the following holds
\begin{equation}\label{eq:crmone}
\cro(z_{i,j},z_{i+1,j},z_{i+1,j+1},z_{i,j+1}) = -1.
\end{equation}
\end{definition}

Recall that for a given $\gamma$ and a given point in $\C$, there is a unique integrable cross-ratio map $w$ such that $z,w$ is a Bäcklund pair. If $z$ is a discrete holomorphic function, then $w$ is also a discrete holomorphic function. Note that the cross-ratios on the side quads $(z_{i,j}, z_{i+1,j}, w_{i+1,j}, w_{i,j})$ and $(z_{i,j}, z_{i,j+1}, w_{i,j+1}, w_{i,j})$ are $\alpha_i\gamma^{-1} \equiv -\gamma^{-1}$ and $\beta_i\gamma^{-1} \equiv \gamma^{-1}$ respectively. Therefore, the cross-ratios cannot be $-1$ on all quads simultaneously.

A useful fact is that, in the specific case of a discrete holomorphic function $z$ and $\gamma = 1$, a Bäcklund pair $z,w$ can be constructed explicitly as stated by the following.

\begin{lemma}\label{lem:holom_Backlund}
Consider a discrete holomorphic function $z:\Z^2\rightarrow\hC$. Set $\gamma=1$, and for a given $(i',j')\in\Z^2$, set $w_{i',j'}=z_{i',j'+1}$. Then, the discrete holomorphic function $w:\Z^2\rightarrow\hC$ such that $z,w$ is a Bäcklund pair is explicitly given by
\begin{equation}\label{equ:holom_Backlund}
	w_{i,j}=z_{i,j+1},
\end{equation}
for all $(i,j)\in\Z^2$.
\end{lemma}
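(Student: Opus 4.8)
The plan is to verify directly that the map $(i,j)\mapsto z_{i,j+1}$, together with the original $z$, satisfies the two cross-ratio conditions of Definition~\ref{def:backlundpair} with $\gamma=1$, $\alpha_i\equiv -1$, $\beta_j\equiv 1$, and then invoke uniqueness of the Bäcklund partner given one initial value. Uniqueness is exactly the content of the discussion in Section~\ref{subsec:Backlund_pairs_defi}: a Bäcklund pair $z,w$ is determined by $z$, the edge-labels, $\gamma$, and the single value $w_{i',j'}$. Since we have chosen $w_{i',j'}=z_{i',j'+1}$, once we check that $w_{i,j}:=z_{i,j+1}$ does define a Bäcklund pair, it must be \emph{the} one, and the formula \eqref{equ:holom_Backlund} follows on all of $\Z^2$.

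First I would record that $w_{i,j}=z_{i,j+1}$ is itself a discrete holomorphic function: this is immediate since its defining quads are the quads of $z$ shifted by one in the $j$-direction, and the cross-ratio condition \eqref{eq:crmone} holds on every quad of $z$. Next I would check \eqref{eq:Bäck_1}: with $w_{i,j}=z_{i,j+1}$ and $w_{i+1,j}=z_{i+1,j+1}$, the quad $(z_{i,j},z_{i+1,j},w_{i+1,j},w_{i,j})=(z_{i,j},z_{i+1,j},z_{i+1,j+1},z_{i,j+1})$ is again a quad of $z$, so its cross-ratio is $-1=\alpha_i/\gamma=-1/1$, as required. Similarly \eqref{eq:Bäck_2} asks for $\cro(z_{i,j},z_{i,j+1},w_{i,j+1},w_{i,j})=\beta_j/\gamma=1$; here $w_{i,j}=z_{i,j+1}$ and $w_{i,j+1}=z_{i,j+2}$, so this is $\cro(z_{i,j},z_{i,j+1},z_{i,j+2},z_{i,j+1})$, which contains a repeated point. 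One computes directly from the definition of $\cro$ in \eqref{eq:intcrmapdef} that $\cro(p,q,r,q)=\frac{(p-q)(r-q)}{(q-r)(q-p)}=1$, so the condition holds for \emph{any} configuration with that degeneracy, regardless of $z$.

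The only subtle point — and the main thing to get right — is the degenerate "side" quads: in a genuine Bäcklund pair one wants the six cross-ratios to be generic, but here two vertical edges of each side quad collapse ($w_{i,j}=z_{i,j+1}$ coincides with a $z$-vertex one step up). I would address this by noting that Definition~\ref{def:backlundpair} only imposes the cross-ratio \emph{equations}, which remain valid in $\hC$ under the naive arithmetic rules recalled in Section~\ref{sec:prereq} even when points coincide, as the computation $\cro(p,q,r,q)=1$ shows; and that the uniqueness statement for Bäcklund partners (from \cite{bmsanalytic}, as recalled around \eqref{eq:Bäck_2}) is a statement about the Möbius-transformation propagation $M^1,M^2$, which is still well-defined here. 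Hence no genericity is lost. Finally, since $w_{i,j}=z_{i,j+1}$ satisfies all the defining relations of the Bäcklund pair and agrees with the prescribed seed value $w_{i',j'}=z_{i',j'+1}$, uniqueness forces it to be the Bäcklund partner, proving \eqref{equ:holom_Backlund}.
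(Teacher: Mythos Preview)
Your proof is correct and follows essentially the same approach as the paper: verify that the candidate $w_{i,j}=z_{i,j+1}$ satisfies the Bäcklund-pair cross-ratio conditions (the first side quad gives $-1$ because it is a quad of $z$, the second gives $+1$ via the degenerate computation $\cro(p,q,r,q)=1$), and then invoke the uniqueness of the Bäcklund partner with prescribed seed value from Section~\ref{subsec:Backlund_pairs_defi}. The paper's one-line proof (``explicit reconstruction from $w_{i',j'}$'') amounts to the same thing, and the paragraph immediately following the lemma spells out exactly the cross-ratio checks you perform.
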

\begin{proof}
The proof consists in an explicit reconstruction of $w$ starting from the point $w_{i',j'}=z_{i',j'+1}$ using Definition~\ref{def:backlundpair} as described in Section~\ref{subsec:Backlund_pairs_defi}.
\end{proof}

Let us discuss briefly the geometry of a Bäcklund pair as in Lemma~\ref{lem:holom_Backlund}. On the bottom quads $(z_{i,j}, z_{i+1,j}, z_{i+1,j+1}, z_{i,j+1})$ and the top quads $(w_{i,j}, w_{i+1,j}, w_{i+1,j+1}, w_{i,j+1})$ the cross-ratios are $-1$ by the definition of discrete holomorphic functions. On the first kind of side quads $(z_{i,j}, z_{i+1,j}, w_{i+1,j}, w_{i,j})$ the cross-ratios are the ratio of $\alpha_i \equiv -1$ and $\gamma \equiv 1$, which is therefore also $-1$. In contrast, on the second kind of side quads $(z_{i,j}, z_{i,j+1}, w_{i,j+1}, w_{i,j})$  the cross-ratios are the ratio of $\beta_i \equiv 1$ and $\gamma \equiv 1$, which is therefore $+1$. A cross-ratio of four points is only $+1$ if two non-consecutive points coincide, which is satisfied on the second kind of side faces due to $w_{i,j} = z_{i,j+1}$. 

\begin{remark}
	Let us add an observation which we do not need in the following, but which may be of separate interest. The explicit construction of the Bäcklund pair as given in Lemma~\ref{lem:holom_Backlund} is not unique to the discrete holomorphic case. Instead, it suffices that the edge labels $\alpha_i$ and $\beta_i$ are \emph{constant}. In this case we may construct the Bäcklund pair in the same way, except that this is a Bäcklund pair for $\gamma = \beta$.
\end{remark}

Using Lemma~\ref{lem:holom_Backlund}, we now state the dSKP relation given by Lemma~\ref{lem:intcrdskp} in the setting of discrete holomorphic functions, thus recovering a result of~\cite[Section 5]{ksclifford}.

\begin{corollary}\label{lem:dholdskp}
Let $z$ be a discrete holomorphic function. Then, the following equation holds for all $(i,j)\in\Z^2$,
\begin{equation}\label{eq:dholdskp}
\frac{(z_{i,j}-z_{i+1,j})(z_{i+1,j+1}-z_{i+1,j+2})(z_{i,j+2}-z_{i,j+1})}{(z_{i+1,j}-z_{i+1,j+1})(z_{i+1,j+2}-z_{i,j+2})(z_{i,j+1}-z_{i,j})} = -1.
\end{equation}
\end{corollary}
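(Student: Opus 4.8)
The plan is to obtain \eqref{eq:dholdskp} directly from the two lemmas that precede it, namely Lemma~\ref{lem:holom_Backlund} and Lemma~\ref{lem:intcrdskp}. The point is that a discrete holomorphic function carries an especially explicit Bäcklund partner --- a mere vertical shift of itself --- and feeding this partner into the dSKP relation for Bäcklund pairs collapses it onto a relation involving $z$ alone.

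Concretely, I would argue as follows. First, apply Lemma~\ref{lem:holom_Backlund} with $\gamma=1$: the map $w\colon\Z^2\to\hC$ given by $w_{i,j}=z_{i,j+1}$ is a discrete holomorphic function such that $z,w$ is a Bäcklund pair. Second, apply identity \eqref{eq:intcrdskpa} of Lemma~\ref{lem:intcrdskp} to this Bäcklund pair at an arbitrary $(i,j)\in\Z^2$. Third, substitute $w_{i+1,j}=z_{i+1,j+1}$, $w_{i+1,j+1}=z_{i+1,j+2}$ and $w_{i,j+1}=z_{i,j+2}$ into its left-hand side, which then reads
\[
\frac{(z_{i,j}-z_{i+1,j})(z_{i+1,j+1}-z_{i+1,j+2})(z_{i,j+2}-z_{i,j+1})}{(z_{i+1,j}-z_{i+1,j+1})(z_{i+1,j+2}-z_{i,j+2})(z_{i,j+1}-z_{i,j})},
\]
i.e.\ exactly the left-hand side of \eqref{eq:dholdskp}, hence equal to $-1$.

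There is essentially no obstacle: once the two lemmas are in hand, the corollary is a substitution. The only subtlety worth flagging is that the Bäcklund pair produced by Lemma~\ref{lem:holom_Backlund} is degenerate, in that the cross-ratios on the side quads $(z_{i,j},z_{i,j+1},w_{i,j+1},w_{i,j})$ equal $+1$ because two non-consecutive vertices coincide; but Lemma~\ref{lem:holom_Backlund} already certifies that $z,w$ is a genuine Bäcklund pair, so \eqref{eq:intcrdskpa} applies verbatim with no special treatment of these degeneracies. A self-contained alternative would be to imitate the proof of Lemma~\ref{lem:intcrdskp} and split the left-hand side of \eqref{eq:dholdskp} into a product of three cross-ratios --- two of which are $-1$ by Definition~\ref{def:dhol}, the third being the degenerate value $+1$ forced by the coincidence $w_{i,j}=z_{i,j+1}$ --- but routing through Lemma~\ref{lem:intcrdskp} is the shorter path. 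This recovers the identity of \cite[Section 5]{ksclifford}.
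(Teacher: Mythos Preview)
Your proof is correct and follows exactly the route the paper indicates: apply Lemma~\ref{lem:holom_Backlund} to obtain the Bäcklund partner $w_{i,j}=z_{i,j+1}$, then substitute into \eqref{eq:intcrdskpa} of Lemma~\ref{lem:intcrdskp}. The paper does not spell out the substitution, but your argument is precisely what is intended.
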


\subsection{Explicit solution}

We now apply Theorem~\ref{theo:explintcr} to the Bäcklund pair $z,w$ corresponding to holomorphic functions thus giving an explicit expression for $z_{i,j}$ for all $(i,j)\in\Z^2$ such that $i+j\geq 1$, as a function of $(z_{i',j'})_{i'+j'\in\{0,1,2\}}$. Observe that Theorem~\ref{theo:explintcr} gives two ways of expressing $z_{i,j}$: one using $z_{i,j}$ of course, and the other using $z_{i,j}=w_{i,j-1}$. As a consequence, $z_{i,j}$ can be expressed using an Aztec diamond of size $i+j-1$ or $i+j-2$; we use the second way in Corollary~\ref{cor:expl_dholom} below since the Aztec diamond is smaller. Note that one can also see on the level of combinatorics that the two ways are consistent:
some edges of the Aztec diamond can be removed, which impose some edges to be present on the boundary, and allow to reduce the size of the Aztec diamond by one in the case where we use the one having size $i+j-1$, see also Remark~\ref{rem:dholom_explicit}. An illustration of Corollary~\ref{cor:expl_dholom} is given in Figure~\ref{fig:dhol_ic}.

\begin{corollary}\label{cor:expl_dholom}
Let $z:\Z^2 \rightarrow \hC$ be a discrete holomorphic function, and consider the graph $\Z^2$ with face weights $(a_{i,j})_{(i,j)\in\Z^2}$ given by,
\[
a_{i,j}=
\begin{cases}
z_{\frac{i+j+[i+j]_2}{2},\frac{-(i+j)+[i+j]_2}{2}} & \text{ if }[i-j]_4\in\{0,3\},\\
z_{\frac{i+j+[i+j]_2}{2},\frac{-(i+j)+[i+j]_2}{2}+1} & \text{ if }[i-j]_4\in\{1,2\}.
\end{cases}
\]
Then, for all $(i,j)\in\Z^2$ such that $i+j\geq 2$, we have
\begin{align*}
z_{i,j}&=
\begin{cases}
Y(A_{i+j-2}[z_{i',(j-1)'}],a)&
\text{  if $[i+j]_4 \in \{0,3\}$},\\
Y(A_{i+j-2}[z_{i',(j-1)'+1}],a)&
\text{  if $[i+j]_4 \in \{1,2\}$.}
\end{cases}
\end{align*}
\end{corollary}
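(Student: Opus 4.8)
The plan is to deduce this as a corollary of Theorem~\ref{theo:explintcr}, applied to the Bäcklund pair furnished by Lemma~\ref{lem:holom_Backlund}, together with the trivial identity $z_{i,j}=w_{i,j-1}$.

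First I would set $\gamma=1$ and let $w$ be the Bäcklund partner of $z$ given by Lemma~\ref{lem:holom_Backlund}, so that $w_{i,j}=z_{i,j+1}$ for all $(i,j)\in\Z^2$ and $w$ is again a discrete holomorphic function. The face weights $(a_{i,j})$ appearing in Theorem~\ref{theo:explintcr} are $z_{\dots}$ on the diagonals with $[i-j]_4\in\{0,3\}$ and $w_{\dots}$ on those with $[i-j]_4\in\{1,2\}$; substituting $w_{a,b}=z_{a,b+1}$ into the latter turns this array into precisely the weights $(a_{i,j})$ of the present statement.

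Next, rather than reading $z_{i,j}$ off the $z$-expression of Theorem~\ref{theo:explintcr} (which uses an Aztec diamond of size $i+j-1$), I would use its $w$-expression evaluated at the shifted index $(i,j-1)$, since $z_{i,j}=w_{i,j-1}$. Because $i+(j-1)=i+j-1$, the resulting Aztec diamond has size $i+j-2$, which is the claimed reduction; the hypothesis $i+j\geq 2$ is exactly what Theorem~\ref{theo:explintcr} requires at $(i,j-1)$. It then remains to match the case splits: one has $[i+(j-1)]_4=[i+j-1]_4$, so $[i+j]_4\in\{0,3\}$ is equivalent to $[i+j-1]_4\in\{2,3\}$, for which Theorem~\ref{theo:explintcr} gives as central face weight $z$ evaluated at the prime-transform of $(i,j-1)$, that is $z_{i',(j-1)'}$; and $[i+j]_4\in\{1,2\}$ is equivalent to $[i+j-1]_4\in\{0,1\}$, for which it gives the central weight $w_{i',(j-1)'}=z_{i',(j-1)'+1}$. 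Re-substituting $w=z(\cdot,\cdot+1)$ in this last weight produces exactly the two cases of the statement.

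The only genuine work is the modular-arithmetic bookkeeping: checking that the index map $(i,j)\mapsto(i',j')$ of Theorem~\ref{theo:explintcr}, precomposed with $j\mapsto j-1$, yields the centers $z_{i',(j-1)'}$ and $z_{i',(j-1)'+1}$ as written, and confirming that the two a priori distinct expressions of Theorem~\ref{theo:explintcr} for $z_{i,j}$ (one via $z$, one via $w$) are consistent. The latter can also be seen purely combinatorially, by deleting the forced boundary edges of the size-$(i+j-1)$ Aztec diamond to recover the size-$(i+j-2)$ one, as indicated before the statement and in Remark~\ref{rem:dholom_explicit}. I do not expect any conceptual obstacle beyond this careful but routine index chase.
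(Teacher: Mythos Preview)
Your proposal is correct and follows essentially the same approach as the paper: the corollary is deduced from Theorem~\ref{theo:explintcr} applied to the explicit B\"acklund pair $w_{i,j}=z_{i,j+1}$ from Lemma~\ref{lem:holom_Backlund}, reading off $z_{i,j}$ via the identity $z_{i,j}=w_{i,j-1}$ to obtain the Aztec diamond of size $i+j-2$ rather than $i+j-1$. The paper does not spell out a separate proof but sketches exactly these steps in the paragraph preceding the statement; your case-matching $[i+j-1]_4\in\{2,3\}\Leftrightarrow[i+j]_4\in\{0,3\}$ and $[i+j-1]_4\in\{0,1\}\Leftrightarrow[i+j]_4\in\{1,2\}$ is the routine bookkeeping it leaves implicit.
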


\begin{figure}[tb]
  \centering
  \includegraphics[width=14cm]{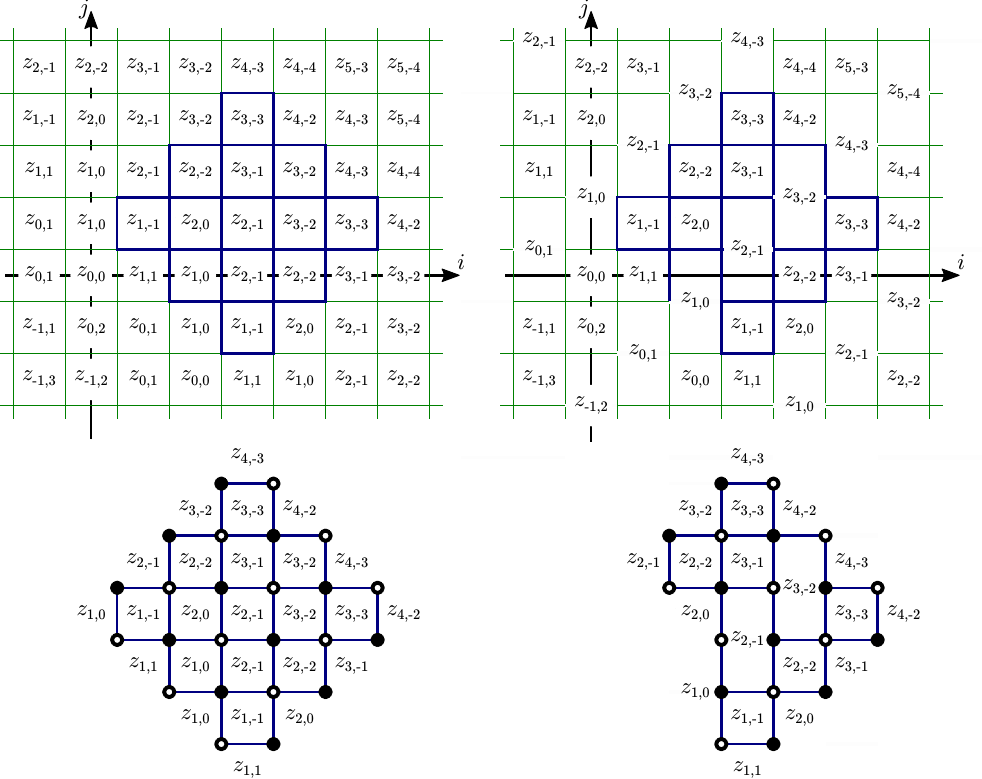}
  \caption{Face weights for the explicit solution of discrete
    holomorphic functions. Top left: face weights deduced from the
    Bäcklund pair case with $w_{i,j}=z_{i,j+1}$. Top right: graph
    obtained after removing edges of resulting dimer weight
    $0$. Bottom left: Aztec diamond of size $3$ used to compute $z_{4,1}$,
    using $w_{4,0}$ in the Bäcklund pair case. Bottom right: graph
    with the same ratio of partition functions, obtained from the
    Aztec diamond via the existence of forced dimers.}
  \label{fig:dhol_ic}
\end{figure}

\begin{remark}\label{rem:dholom_explicit}\leavevmode
There are pairs of diagonals with identical weights. Returning
to the definition of the oriented dimer partition function, an edge
separating two squares with the same face has two possible
orientations that cancel each other, and can thus be removed. As a consequence, in this case, the lattice $\Z^2$ can be transformed into a lattice consisting of a repeating pattern of pairs of diagonals of squares, followed by a diagonal of hexagons, see Figure~\ref{fig:dhol_ic}.
One can also obtain this resulting graph made of hexagons and squares via the method
of \emph{crosses and wrenches}, using the solution to the dSKP recurrence associated to the height function of an initial condition different than $[i+j]_2$.
This method was developed by Speyer for
the dKP recurrence \cite{Speyer}, and extended to the dSKP
recurrence in \cite[Section~2]{paper1}.
\end{remark}

\subsection{Discrete holomorphic functions and P-nets}\label{sec:discrete_holom_P_nets_0}

We now explain a connection between discrete holomorphic functions and P-nets. This link provides an alternative explicit expression for $z_{i,j}$, and is also of use in the next section on singularities.

Let $\Z^2_\pm$ denote the even and odd sublattices $\{(i,j) \in \Z^2 : (-1)^{i+j} = \pm1\}$, where we consider two vertices in $\Z_\pm^2$ to be adjacent if they are at graph distance 2 in $\Z^2$.

Consider a discrete holomorphic function $z:\Z^2\rightarrow \hC$, written in the rotated coordinates $(\tilde{z}_{i,j})_{[i+j]_2=0}$ given by Equation~\eqref{eq:rotated_weights}. Let $p$ be the restriction of $\tilde{z}$ to $\Z^2_+$, and $q$ the restriction to $\Z^2_-$, \emph{i.e.}, for all $(i,j)\in\Z^2$,
\begin{align}\label{eq:dholpnet}
\begin{split}
p_{i,j} & = \tilde{z}_{2i,2j} = z_{i+j,-i+j},\\
q_{i,j} & = \tilde{z}_{2i+1,2j+1} = z_{i+j+1,-i+j}.
\end{split}
\end{align}
Then, we have the following.

\begin{lemma}\emph{(\cite[Lemma 6]{bpdiscsurfaces})}\label{lem:dholpnet}
        Let $z:\Z^2\rightarrow \hC$ be a discrete holomorphic function. Then both $p$ and $q$ are P-nets. Conversely, given a P-net $p$ there is a (complex) one-parameter family of P-nets $q'$ such that $p$ and $q'$ together are a discrete holomorphic function.
\end{lemma}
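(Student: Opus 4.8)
The plan is to reduce the statement that both restrictions $p$ and $q$ of a discrete holomorphic function $z$ are P-nets to the characterization of P-nets given in Lemma~\ref{lem:pnetdskp}, namely the multi-ratio equation~\eqref{eq:pnetmr}. Concretely, for $p$ I would start from the four cross-ratio equations of Definition~\ref{def:dhol} attached to the four quads of $\Z^2$ surrounding a fixed vertex $z_{i+j,-i+j}=p_{i,j}$ (that is, the quads whose diagonals pass through that vertex), each equal to $-1$. The eight outer points of these four quads are exactly the eight neighbors of $p_{i,j}$ in $\Z^2_+$-sense and the four "diagonal'' points $p_{i\pm1,j},p_{i,j\pm1}$ which, in the original $\Z^2$ picture, are the vertices of $\Z^2$ at graph distance $2$ from $z_{i+j,-i+j}$ along the coordinate axes of the rotated lattice. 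The point is that each of these cross-ratio relations, rewritten with $p_{i,j}$ "sent to infinity'' (i.e. after the affine change putting $p_{i,j}$ at $\infty$), becomes a statement that a certain triangle of points is degenerate/parallel; multiplying the four relations and cancelling the shared (odd-sublattice) points yields precisely~\eqref{eq:pnetmr} for $p$. The same computation with the roles of the sublattices exchanged gives the claim for $q$.

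In more detail, the key steps in order would be: (1) write out, in the rotated coordinates $\tilde z$, which $\Z^2$-quads are incident to a given $\Z^2_+$ vertex, and record the four cross-ratio $=-1$ equations; (2) observe that the cross-ratio $\CR(\tilde z_{a},\tilde z_b,\tilde z_c,\tilde z_d)=-1$ is equivalent, after the Möbius normalization placing $p_{i,j}$ at $\infty$, to an additive relation among the remaining three points (a "harmonic conjugate'' / midpoint relation); (3) combine the four such additive relations so that the contributions of the odd-sublattice (i.e. $q$) vertices cancel, leaving exactly the P-net identity~\eqref{eq:pnetsum}, or equivalently multiply the four cross-ratios to land on~\eqref{eq:pnetmr}; (4) repeat verbatim with parities swapped to conclude that $q$ is a P-net. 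A clean way to organize step~(2)–(3) is to note that $\CR=-1$ on a quad of $\Z^2$ is exactly Equation~\eqref{eq:dholdskp}'s building block, and that the telescoping of four such relations around a vertex is the same algebraic manipulation used in the proof of Corollary~\ref{lem:dholdskp}; so this direction is essentially bookkeeping once the incidence pattern is set up. Alternatively, one can invoke Lemma~\ref{lem:dholpnet}'s cited source directly, but spelling out the cancellation is short and self-contained.

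For the converse, the plan is: given a P-net $p$ on $\Z^2_+$, I would build $q$ on $\Z^2_-$ one vertex at a time, choosing one initial value $q_{i_0,j_0}$ freely (this is the complex parameter) and then propagating. The requirement that the interleaved map $\tilde z$ (with $\tilde z|_{\Z^2_+}=p$, $\tilde z|_{\Z^2_-}=q$) be discrete holomorphic says that on each $\Z^2$-quad the cross-ratio is $-1$; since three of the four vertices of such a quad are already determined once we move along the lattice, the fourth is forced by solving $\CR=-1$, which is a Möbius (fractional-linear) equation in the unknown vertex and hence has a unique solution. The only thing to check is \emph{consistency}: that going around a unit cell of $\Z^2_-$ gives a well-defined value, independent of the path. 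This is where the P-net hypothesis on $p$ is used: the compatibility of the two ways of closing a square is precisely the vanishing expressed by~\eqref{eq:pnetsum}/\eqref{eq:pnetmr} for $p$ (one checks that the obstruction to path-independence is a multi-ratio in the $p$-values that equals $-1$ iff $p$ is a P-net). I expect \textbf{this consistency check to be the main obstacle} — not conceptually, but in that it requires carefully identifying which multi-ratio equation in the $p_{i,j}$ is the obstruction and matching it to~\eqref{eq:pnetmr}; everything else (existence/uniqueness of each propagation step, the one-parameter freedom) is immediate from the fractional-linear nature of the cross-ratio equation. Since the statement is quoted from \cite{bpdiscsurfaces}, it is also legitimate to keep this part brief and reference that computation, but the consistency argument above is the honest content.
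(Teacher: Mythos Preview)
The paper does not actually prove this lemma; it is stated with a citation to \cite[Lemma~6]{bpdiscsurfaces} and no argument is given. So there is nothing to compare against, and your proposal should be judged on its own.

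Your plan is essentially correct. For the forward direction, the approach of applying a Möbius transformation that sends $p_{i,j}=z_{a,b}$ to $\infty$ is the right one: each of the four cross-ratio $=-1$ conditions on the quads incident to $z_{a,b}$ becomes a midpoint relation (e.g.\ $z_{a+1,b}+z_{a,b+1}=2z_{a+1,b+1}$), and summing these with appropriate signs the odd-sublattice values cancel, leaving exactly the parallelogram identity $p_{i+1,j}+p_{i-1,j}=p_{i,j+1}+p_{i,j-1}$, which is the P-net condition in that chart. One caution: your alternative ``multiply the four cross-ratios and the $q$-terms cancel, yielding \eqref{eq:pnetmr}'' does \emph{not} work as stated, because each $q$-vertex enters the product through factors with different partners (e.g.\ $z_{a+1,b}$ appears paired with $z_{a+1,b+1}$ in one quad and with $z_{a+1,b-1}$ in another), so there is no direct multiplicative cancellation. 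Stick with the affine-chart argument.

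For the converse, your propagation-plus-consistency outline is exactly right, and the consistency check you flag as the ``main obstacle'' is in fact short once you again send the relevant $p$-vertex to $\infty$: going around the four quads incident to $z_{0,0}$, the composite Möbius map returns $z_{1,0}\mapsto z_{1,0}+2\bigl(z_{1,-1}-z_{-1,-1}+z_{-1,1}-z_{1,1}\bigr)$, so the obstruction vanishes precisely when $p$ satisfies the parallelogram form of \eqref{eq:pnetsum}. That $q'$ is then itself a P-net you get for free from the forward direction. This is a complete and self-contained argument; citing \cite{bpdiscsurfaces} for the computation is also acceptable in the paper's style.
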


\begin{question}\label{que:pnetdhol} As a consequence of Lemma~\ref{lem:dholpnet} one can use the P-net explicit solution of Theorem~\ref{theo:explpnet} to give an explicit expression of $z_{i,j}$, separating the even and odd cases. It is striking to see that the Aztec diamond with this approach is typically only half the size of the one given by Corollary~\ref{cor:expl_dholom}. At this stage we do not understand how to prove directly that the ratio partition functions of these two Aztec diamonds are equal, and pose this as an intriguing open question.
\end{question}

\subsection{Singularities}

In this section, we study singularities of discrete holomorphic functions. The first part consists in writing down the integrable cross-ratio/Bäcklund pairs result (Theorem~\ref{theo:intcrsingular}) in the specific case of discrete holomorphic functions, thus recovering a result of~\cite{yao}. Next, using the connection between discrete holomorphic functions and P-nets, we provide an alternative proof of this result, as well as a refined version thereof.

\subsubsection{Immediate consequences}

Recall the definition of the map $T$ describing the propagation of data
\[
T:(\hC)^{\Z_2\times \Z}\rightarrow (\hC)^{\Z_2\times \Z},\quad (\tilde{z}_{j},\tilde{z}_{j+1})\mapsto (\tilde{z}_{j+1},\tilde{z}_{j+2}).
\]

Let $m\geq 1$. A discrete holomorphic function $z$ is said to be
\emph{m-closed} if it is $m$-closed as integrable cross-ratio map,
that is if $z_{i,j} = z_{i+m,j-m}$ for all $(i,j)\in \Z^2$, or
equivalently $\tilde{z}_{i+2m,j}=\tilde{z}_{i,j}$ for all $i,j\in\Z$
such that $[i+j]_2=0$. Then, using that $\alpha_i\equiv -1$, $\beta_j\equiv 1$, Theorem~\ref{theo:intcrsingular} becomes ~\cite[Theorem 1.5]{yao}.
\begin{corollary}[\cite{yao}]
Let $m\geq 1$, and let $\tilde{z}$ be an $m$-closed discrete holomorphic function such that $\tilde{z}_{i,0}=0$ for all $i\in 2\Z$. Assume we can apply the propagation map $T$ to $(\tilde{z}_0,\tilde{z}_1)$ at least $2m-2$ times. Then,
for all $i\in 2\Z+1$,
\[
\tilde{z}_{i,2m-1}=\Bigl(\frac{1}{m}\sum_{\ell=0}^{m-1}\frac{1}{\tilde{z}_{2\ell+1,1}}
\Bigr)^{-1},
\]
that is $\tilde{z}_{2m-1}$ is constant with value equal to the harmonic mean
of $\tilde{z}_1$.

\end{corollary}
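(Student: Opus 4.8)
The plan is to derive this corollary as a direct specialization of Theorem~\ref{theo:intcrsingular} to the case of discrete holomorphic functions. First I would recall, from Definition~\ref{def:dhol} and the discussion opening Section~\ref{sec:dhol}, that a discrete holomorphic function is exactly an integrable cross-ratio map with the constant edge-labels $\alpha_i \equiv -1$ and $\beta_j \equiv 1$. In particular the conditions $\alpha_i = \alpha_{i+m}$ and $\beta_i = \beta_{i+m}$ appearing in the definition of $m$-closedness hold automatically, so an $m$-closed discrete holomorphic function in the sense of the statement is the same thing as an $m$-closed integrable cross-ratio map; likewise the propagation map $T$ and the singular-data hypothesis $\tilde z_{i,0} = 0$ for $i \in 2\Z$ are word for word those of Theorem~\ref{theo:intcrsingular}. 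Hence all hypotheses of that theorem are met.

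The second step is purely computational: substitute $\alpha_\ell = -1$ and $\beta_\ell = \beta_{-\ell-1} = 1$ into the conclusion of Theorem~\ref{theo:intcrsingular}. One gets $\sum_{\ell=0}^{m-1}(\alpha_\ell - \beta_\ell) = -2m$ for the numerator and $\sum_{\ell=0}^{m-1} \tilde z_{2\ell+1,1}^{-1}(\alpha_\ell - \beta_{-\ell-1}) = -2\sum_{\ell=0}^{m-1} \tilde z_{2\ell+1,1}^{-1}$ for the denominator; the common factor $-2$ cancels and leaves
\[
\tilde z_{i,2m-1} = \frac{m}{\sum_{\ell=0}^{m-1} \tilde z_{2\ell+1,1}^{-1}} = \Bigl(\frac{1}{m}\sum_{\ell=0}^{m-1}\frac{1}{\tilde z_{2\ell+1,1}}\Bigr)^{-1},
\]
which is the claimed expression. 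Since the right-hand side is independent of $i$, the row $\tilde z_{2m-1}$ is constant, equal to the harmonic mean of $\tilde z_1$, as asserted.

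There is essentially no obstacle here beyond making the dictionary explicit: the whole analytic and combinatorial content is already contained in Theorem~\ref{theo:intcrsingular}. The only auxiliary point worth mentioning is that an $m$-closed $\tilde z$ admits an $m$-closed Bäcklund partner $\tilde w$ with $\gamma = 1$, which is precisely the two-fixed-point Möbius construction recalled at the beginning of Section~\ref{sec:sing_Backlund_pairs}; but this is already invoked inside the proof of Theorem~\ref{theo:intcrsingular}, so nothing new is needed and the proof reduces to the substitution above.
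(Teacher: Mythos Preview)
Your proposal is correct and matches the paper's own derivation: the corollary is obtained exactly by specializing Theorem~\ref{theo:intcrsingular} with $\alpha_i\equiv -1$, $\beta_j\equiv 1$ and simplifying the resulting fraction, which is precisely the one-line argument the paper gives.
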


\subsubsection{Further singularity results}

Let us prove a preparatory lemma.

\begin{lemma}\label{lem:pnetsinganddhol}
Let $m\in 2\N+2$, let $z:\Z^2\rightarrow\hC$ be an $m$-closed discrete holomorphic function such that, for all $i\in\Z$, $z_{i,-i} = 0$, and let $p,q$ be the associated P-nets. Then
\begin{align*}
\sum_{i=0}^{m-1}\frac{(-1)^i}{ p_{i,1}}= 0.
\end{align*}
\end{lemma}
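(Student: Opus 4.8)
The plan is to write each $1/p_{i,1}$ as the average of two consecutive terms $1/u_i$ and $1/u_{i+1}$ living on the anti-diagonal just above the vanishing one, and then to telescope the resulting alternating sum, using $m$-closedness to identify its two surviving boundary terms. Note that only the P-net $p$ enters, so $q$ plays no role.

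First I would unwind the changes of coordinates. By \eqref{eq:dholpnet} we have $p_{i,1} = \tilde z_{2i,2} = z_{i+1,1-i}$, so the $p_{i,1}$ are exactly the values of $z$ on the anti-diagonal $\{(i,j):i+j=2\}$, while the hypothesis $z_{i,-i}=0$ says that $z$ vanishes identically on $\{(i,j):i+j=0\}$. Set $u_i := z_{i,1-i}$ for $i\in\Z$, the values of $z$ on the intermediate anti-diagonal $\{i+j=1\}$; then $z_{i,-i+1}=u_i$, $z_{i+1,-i}=u_{i+1}$ and $z_{i+1,-i+1}=p_{i,1}$.

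Next I would produce the local identity. Since a discrete holomorphic function has $\alpha_i\equiv-1$ and $\beta_j\equiv1$, applying the three-legged form \eqref{eq:intcrstarform} at the vertex $(i,-i)$, where $z_{i,-i}=0$, gives
\[
\frac{-1}{z_{i+1,-i}}-\frac{1}{z_{i,-i+1}}=\frac{-2}{z_{i+1,-i+1}},
\]
which, after multiplying by $-1$ and rewriting in the $u$- and $p$-notation, becomes
\[
\frac{1}{u_i}+\frac{1}{u_{i+1}}=\frac{2}{p_{i,1}}.
\]
Equivalently, this drops out of expanding the cross-ratio relation \eqref{eq:crmone} on the quad with corners $(i,-i),(i+1,-i),(i+1,-i+1),(i,-i+1)$ and setting $z_{i,-i}=0$, a one-line computation.

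Finally, summing with alternating signs and telescoping,
\[
\sum_{i=0}^{m-1}\frac{(-1)^i}{p_{i,1}}=\frac12\sum_{i=0}^{m-1}(-1)^i\Bigl(\frac{1}{u_i}+\frac{1}{u_{i+1}}\Bigr)=\frac12\Bigl(\frac{1}{u_0}-\frac{1}{u_m}\Bigr),
\]
where I use that $m$ is even so that all interior terms cancel in pairs. Since $z$ is $m$-closed, $z_{i,j}=z_{i+m,j-m}$, hence $u_m=z_{m,1-m}=z_{0,1}=u_0$, and the sum vanishes; this is precisely the algebraic constraint entering the hypothesis of Theorem~\ref{theo:pnetpremature}. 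The computation is short, so the only delicate points are the index bookkeeping through the two rotations relating $z$, $\tilde z$ and $p$, and the fact that \eqref{eq:intcrstarform} involves denominators that could a priori degenerate; the latter I would handle either via the standing genericity assumptions of Section~\ref{sec:prereq}, or simply by reading $\frac{2}{p_{i,1}}=\frac1{u_i}+\frac1{u_{i+1}}$ as an identity of rational functions in the initial data of the discrete holomorphic function.
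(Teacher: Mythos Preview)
Your proof is correct and follows essentially the same approach as the paper's: both use the three-legged form~\eqref{eq:intcrstarform} at $(i,-i)$ with $z_{i,-i}=0$ to obtain $\frac{2}{p_{i,1}}=\frac{1}{u_i}+\frac{1}{u_{i+1}}$, and then telescope the alternating sum using $m$-closedness and evenness of $m$. Your version is slightly more explicit about the telescoping step, but otherwise identical.
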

\begin{proof}
Using $\alpha_i\equiv -1$, $\beta_j\equiv 1$, and $z_{i,-i} = 0$, Equation \eqref{eq:intcrstarform} becomes
\begin{align*}
        \frac{{-1}}{- z_{i,-i+1}} + \frac{1} {z_{i+1,-i}}  = \frac{2}{z_{i+1,-i+1}}.\label{equ:Back_relation_holom}
\end{align*}
Therefore, using the relation between P-nets and holomorphic functions \eqref{eq:dholpnet}, the claim becomes
\begin{align*}
        \sum_{i=0}^{m-1} \frac{(-1)^i}2\left(\frac{{1}}{ z_{i,-i+1}} + \frac{1} {z_{i+1,-i}}\right) = 0,
\end{align*}
which is clearly true because of the closedness of $z$ and the fact that $m$ is even.\qedhere

\end{proof}

We are now ready to state our refined singularity result. More precisely, we reprove that for $m$ odd, $\tilde{z}_{2m-1}$ is indeed identically equal to this harmonic mean, while for $m$ even, we show that $\tilde{z}_{2m-2}$ is in fact already identically equal to this harmonic mean, \emph{i.e.}, the singularity appears one step earlier than predicted by Yao. Note that this is not a contradiction, it just means that the map $T$ cannot be iterated $2m-2$ times.

\begin{figure}[t]
        \centering
        \frame{\includegraphics[height=5.7cm]{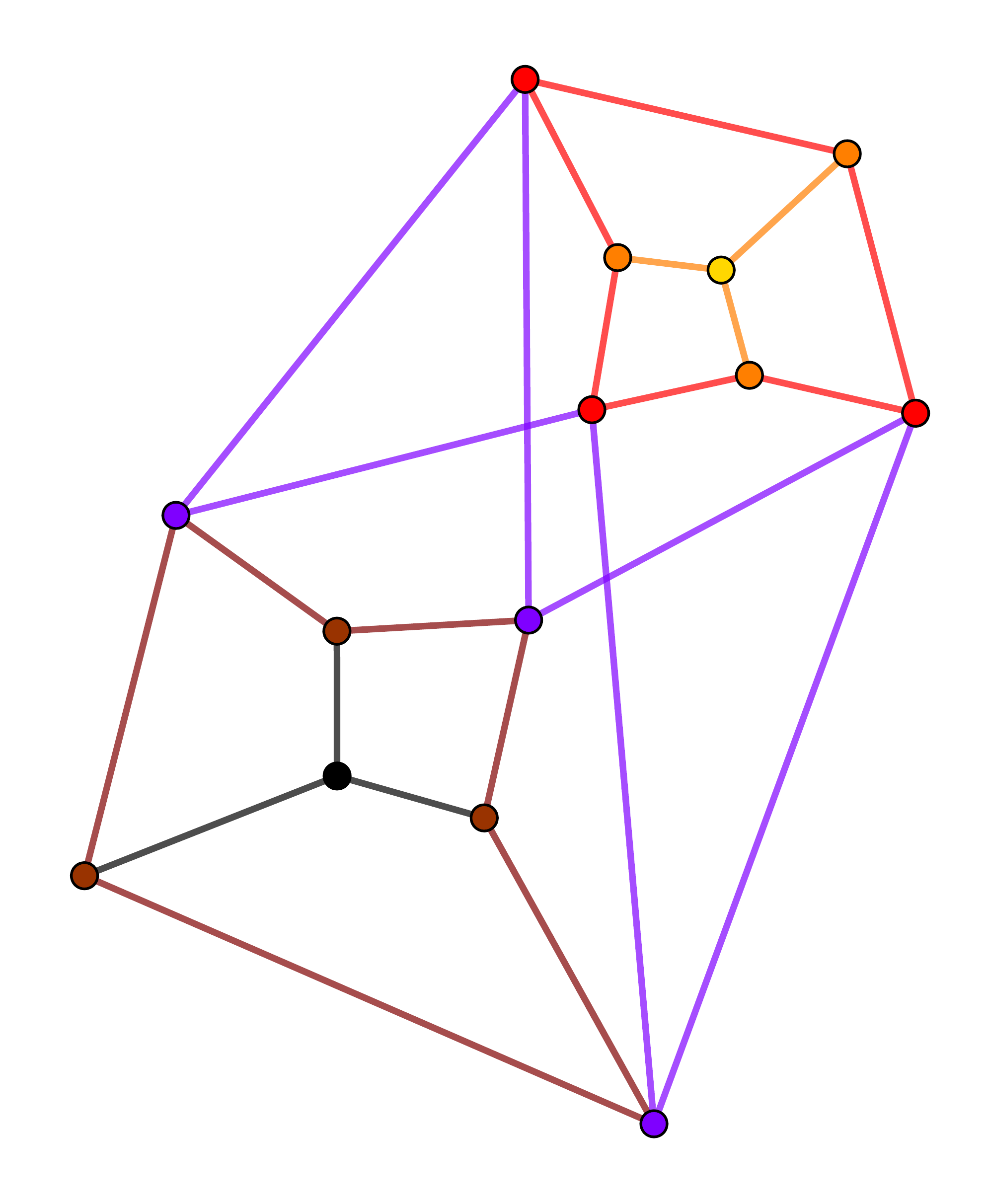}}
        \frame{\includegraphics[height=5.7cm,angle=0]{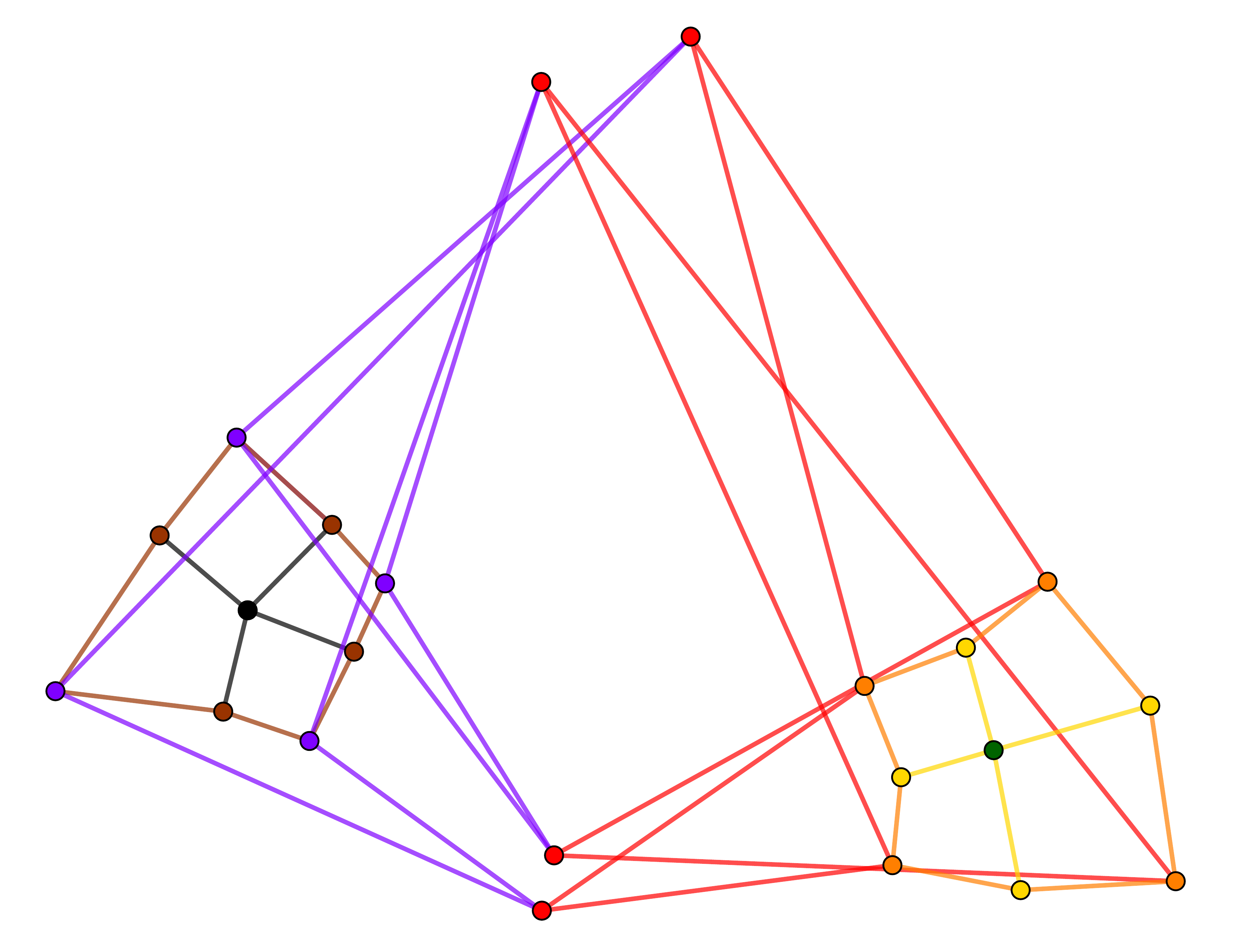}}
        \frame{\includegraphics[width=5.7cm,angle=-90]{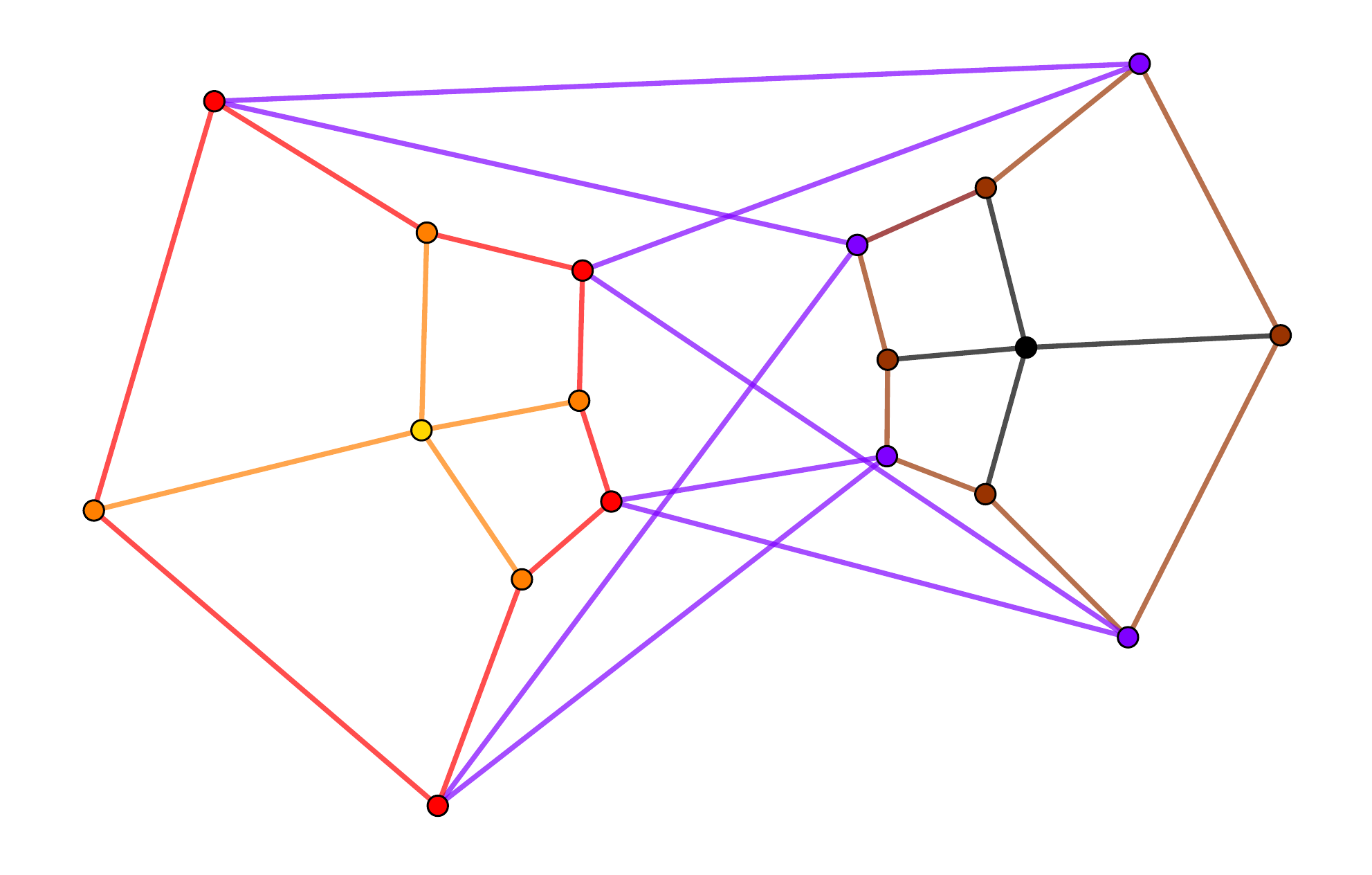}}
        \caption{Propagation of discrete holomorphic functions with an
          initial singularity at the black point. Left: 3-closed
          case, which degenerates after 4
          steps. Center and right: 4-closed case, which degenerates generically after 5 steps
          (center), and in special cases after 4 steps (right).}
        \label{fig:dholsingularity}
\end{figure}

\begin{theorem}[\cite{yao} ($m$ odd)]\label{theo:dholsingularity}
Let $m\geq 1$ and let $n=2m-2$ if $m$ is odd, and $n=2m-3$ if $m$ is even. Let $\tilde z$ be an $m$-closed discrete holomorphic function such that $\tilde z_{i,0} = 0$ for all $i\in 2\Z$. Assume we can apply the propagation map $T$ to $(\tilde z_0, \tilde z_1)$ at least $n$ times. Then, for all $i\in\Z$ such that $[i+n+1]_2=0$, we have
\begin{align*}
\tilde z_{i,n+1} = \left(\frac{1}{m}\sum_{\ell=0}^{m-1} \frac1{\tilde z_{2\ell+1,1}} \right)^{-1}.
\end{align*}
In other words, $\tilde z_{n+1}$ is constant with value equal to the harmonic mean of $\tilde z_1$.
\end{theorem}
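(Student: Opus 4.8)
The plan is to split on the parity of $m$, and in the even case to push the \emph{premature} P-net singularity of Theorem~\ref{theo:pnetpremature} through the correspondence \eqref{eq:dholpnet} between discrete holomorphic functions and P-nets. First I would set up the reductions: from $\tilde z_{i,0}=0$ for $i\in 2\Z$ together with $m$-closedness of $\tilde z$, the P-net $p$ with $p_{i,j}=\tilde z_{2i,2j}$ is an $m$-closed P-net with $p_0\equiv 0$, the layer $\tilde z_{2b}$ is exactly the row $p_b$, and $\tilde z_1$ is the list of odd-index entries $\tilde z_{2\ell+1,1}$ (the row $q_0$ of the companion P-net $q_{i,j}=\tilde z_{2i+1,2j+1}$). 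I would also record the elementary identity $\sum_{\ell=0}^{m-1}p_{\ell,1}^{-1}=\sum_{\ell=0}^{m-1}\tilde z_{2\ell+1,1}^{-1}$, obtained exactly as in the proof of Lemma~\ref{lem:pnetsinganddhol}: the three-legged form \eqref{eq:intcrstarform} with $\alpha\equiv-1$, $\beta\equiv1$ and $z_{i,-i}=0$ reads $\tfrac1{\tilde z_{2\ell-1,1}}+\tfrac1{\tilde z_{2\ell+1,1}}=\tfrac{2}{p_{\ell,1}}$, and summing over $\ell$ and using $m$-closedness gives the identity. In particular the harmonic mean of $p_1$ equals the harmonic mean of $\tilde z_1$.

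For $m$ odd one has $n+1=2m-1$, and the statement is then exactly the corollary of~\cite{yao} established just above (the specialization of Theorem~\ref{theo:intcrsingular} to $\alpha\equiv-1$, $\beta\equiv1$): that corollary already asserts, under the same hypotheses, that $\tilde z_{2m-1}$ is constant and equal to the harmonic mean of $\tilde z_1$, and $\{i:[i+n+1]_2=0\}=2\Z+1$ is precisely the index set there. So no additional work is needed in this case.

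For $m\in 2\N+2$ one has $n+1=2m-2$, so the claimed layer $\tilde z_{n+1}$ is the P-net row $p_{m-1}$. Here I would first check that the assumption that $T$ can be applied $n=2m-3$ times to $(\tilde z_0,\tilde z_1)$ makes the layers $\tilde z_0,\dots,\tilde z_{2m-2}$, hence the rows $p_0,\dots,p_{m-1}$, available; since $p$ is a P-net (Lemma~\ref{lem:dholpnet}), the P-net propagation $T$ can then be applied $m-2$ times to $(p_0,p_1)$. As $p$ is $m$-closed, $p_0\equiv 0$, and Lemma~\ref{lem:pnetsinganddhol} provides $\sum_{\ell=0}^{m-1}(-1)^\ell p_{\ell,1}^{-1}=0$, Theorem~\ref{theo:pnetpremature} applies and yields $p_{i,m-1}=\bigl(\tfrac1m\sum_{\ell=0}^{m-1}p_{\ell,1}^{-1}\bigr)^{-1}$ for all $i$. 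Translating back by $p_{i,m-1}=\tilde z_{2i,2m-2}$ and using the harmonic-mean identity above gives $\tilde z_{i,2m-2}=\bigl(\tfrac1m\sum_{\ell=0}^{m-1}\tilde z_{2\ell+1,1}^{-1}\bigr)^{-1}$ for all even $i$; since $n+1=2m-2$ is even, $[i+n+1]_2=0$ means $i$ even, so this is the claim.

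The part that needs care rather than cleverness is the bookkeeping of iteration counts: one must track precisely that $n$ steps of the holomorphic propagation on $\tilde z$ correspond to only $m-2$ steps of the P-net propagation on $p$, so that it is Theorem~\ref{theo:pnetpremature} — and not the one-step-later Theorem~\ref{theo:pnetsingularity} — that one is entitled to invoke, and, conversely, that in the odd case the P-net $p$ cannot be used at all (its own singularity occurs at layer $2m$, one step beyond what is available), so that one really has to fall back on the integrable cross-ratio computation. A minor additional point is to confirm that availability of the holomorphic layers forces the intermediate P-net propagation to be non-degenerate, which is part of the standing genericity conventions.
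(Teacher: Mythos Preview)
Your proof is correct. The even case is essentially identical to the paper's argument: both invoke Lemma~\ref{lem:pnetsinganddhol} to verify the alternating constraint on $p_1$, apply Theorem~\ref{theo:pnetpremature} to $p$, and then use the three-legged identity to translate the harmonic mean of $p_1=\tilde z_2$ into the harmonic mean of $\tilde z_1$.

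The odd case differs. You simply invoke the corollary obtained by specializing Theorem~\ref{theo:intcrsingular} to $\alpha\equiv-1$, $\beta\equiv1$, which is legitimate and immediate. The paper instead gives an independent P-net argument: it extends the companion P-net $q$ (with $q_{i,j}=\tilde z_{2i+1,2j+1}$) to a P-net $r$ by setting $r_{-1}\equiv 0$, checks that the P-net equation still holds in row $0$, and then applies Theorem~\ref{theo:pnetsingularity} to $r$, obtaining that $\tilde z_{2m-1}$ is the harmonic mean of $r_0=\tilde z_1$. Your remark that ``$p$ cannot be used at all'' in the odd case is right for $p$, but the paper's point is precisely that $q$ (shifted) can. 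The paper's route is more self-contained and uniform --- both parities are handled via P-nets, without appealing to the heavier Bäcklund-pair machinery behind Theorem~\ref{theo:intcrsingular} --- whereas your route is shorter given that the corollary is already in hand.
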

\proof{
Consider the P-nets $p,q$ associated to $z$, where $p,q$ are defined on $\{(i,j) \in \Z^2 : j \geq 0\}$. The row $p_0$ corresponds to the singular initial row that is mapped to 0, see Figure \ref{fig:dholnetexplanation}. We consider the case of odd $m$ first.
Define a map $r$ on $\{(i,j) \in \Z^2 : j \geq -1\}$ as a continuation of $q$ by
\begin{align*}
                r_{i,j} = \begin{cases}
                q_{i,j} & j \geq 0, \\
                0 & j = -1.
                \end{cases}
\end{align*}
A small calculation shows that $r$ satisfies the P-net condition of Definition \ref{def:pnet} also in row $0$, where the calculation uses that $r_1=\tilde{z}_3$ is completely determined by $p_0=\tilde{z}_0, r_0=\tilde{z}_1$ via the propagation of discrete holomorphic functions. Thus $r$ is also a P-net.
Therefore, we can apply Theorem \ref{theo:pnetsingularity} to $r$ and obtain that $z$ is singular after $2m-2$ iterations of discrete holomorphic propagation, becoming equal to the harmonic mean of $r_0 = \tilde{z}_1$.

Next, we consider the case of even $m$. In this case, due to Lemma \ref{lem:pnetsinganddhol}, we have that
\begin{align}
                \sum_{i=0}^{m-1}(-1)^i \frac1{p_{i,1}} = 0.
\end{align}
        Therefore $p$ satisfies the assumptions of Theorem \ref{theo:pnetpremature}, and thus $z$ becomes singular after $2m-3$ iterations of discrete holomorphic propagation, becoming equal to the harmonic mean of $p_1 = \tilde{z}_2$. Due to Equation~\eqref{equ:Back_relation_holom},
        the harmonic mean of $p_1$ is the harmonic mean of $\tilde{z}_1$.\qed
}

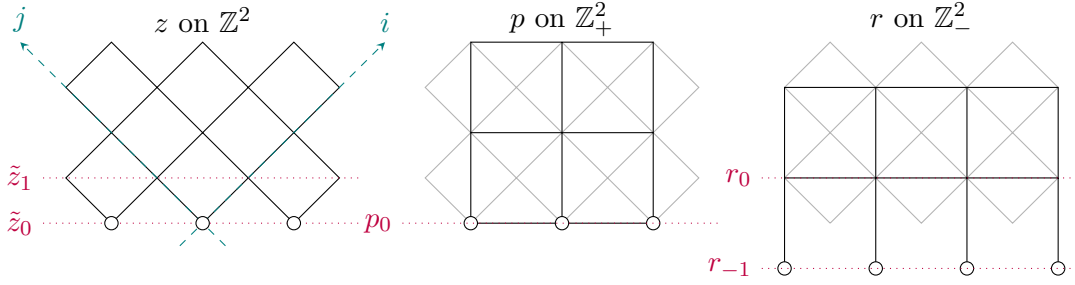
\begin{figure}
        \centering
        \begin{tikzpicture}[scale=0.6,baseline={(current bounding box.center)}]
                \useasboundingbox (0,-2) rectangle (6,4.5);
                \foreach \i in {0, 1, 2, 3, 4, 5, 6} {
                        \foreach \j in {0, 1, 2, 3, 4} {
                                \coordinate (v\i\j) at (\i,\j);
                        }
                }
                \draw[-]
                        (v10) -- (v54) -- (v63) -- (v30) -- (v03) -- (v14) -- (v50) -- (v61) -- (v34) -- (v01) -- (v10)
                ;

                \draw[->, >=stealth, dashed, color=teal] (2.5,-0.5)
                -- (7,4);
                \draw[->, >=stealth, dashed, color=teal] (3.5,-0.5)
                -- (-1,4);
                \draw (7,4) node [above, color=teal] {$i$};
                \draw (-1,4) node [above, color=teal] {$j$};
                \draw[dotted, color=purple] (-0.5,0) -- (6.5,0);
                \draw[dotted, color=purple] (-0.5,1) -- (6.5,1);
                \draw (-0.5,0) node [left, color=purple] {$\tilde{z}_0$};
                \draw (-0.5,1) node [left, color=purple] {$\tilde{z}_1$};

                \foreach \i in {1, 3, 5} {
                        \node[wvert] (w\i) at (\i,0) {};
                }
                \node (l) at (3,4.5) {$z$ on $\Z^2$};
        \end{tikzpicture}\hspace{10mm}
        \begin{tikzpicture}[scale=0.6,baseline={(current bounding box.center)}]
                \useasboundingbox (0,-2) rectangle (6,4.5);
                \foreach \i in {0, 1, 2, 3, 4, 5, 6} {
                        \foreach \j in {0, 1, 2, 3, 4} {
                                \coordinate (v\i\j) at (\i,\j);
                        }
                }
                \draw[gray!60,-]
                        (v10) -- (v54) -- (v63) -- (v30) -- (v03) -- (v14) -- (v50) -- (v61) -- (v34) -- (v01) -- (v10)
                ;
                \draw[-]
                        (v10) -- (v50) -- (v54) -- (v14) -- (v10)
                        (v30) -- (v34) (v12) -- (v52)
                ;
                \foreach \i in {1, 3, 5} {
                        \node[wvert] (w\i) at (\i,0) {};
                }
                \node (l) at (3,4.5) {$p$ on $\Z^2_+$};
                 \draw[dotted, color=purple] (-0.5,0) -- (6.5,0);
                                \draw (-0.5,0) node [left, color=purple] {$p_0$};
        \end{tikzpicture}\hspace{10mm}
        \begin{tikzpicture}[scale=0.6,baseline={(current bounding box.center)}]
                \useasboundingbox (0,-2) rectangle (6,4.5);
                \foreach \i in {0, 1, 2, 3, 4, 5, 6} {
                        \foreach \j in {0, 1, 2, 3, 4} {
                                \coordinate (v\i\j) at (\i,\j);
                        }
                }
                \draw[gray!60,-]
                        (v10) -- (v54) -- (v63) -- (v30) -- (v03) -- (v14) -- (v50) -- (v61) -- (v34) -- (v01) -- (v10)
                ;
                \draw[-]
                        (v01) -- (v61) -- (v63) -- (v03) -- (v01)
                        (2,-1) -- (v23) (4,-1) -- (v43) (0,-1) -- (0,1) (6,-1) -- (6,1)
                ;
                \foreach \i in {0,2,4,6} {
                        \node[wvert] (w\i) at (\i,-1) {};
                }
                \node (l) at (3,4.5) {$r$ on $\Z^2_-$};
                \draw[dotted, color=purple] (-0.5,-1) -- (6.5,-1);
                \draw[dotted, color=purple] (-0.5,1) -- (6.5,1);
                \draw (-0.5,1) node [left, color=purple] {$r_0$};
                \draw (-0.5,-1) node [left, color=purple] {$r_{-1}$};
        \end{tikzpicture}
        \caption{The copies of $\Z^2$, where the discrete holomorphic function
          $z$, as well as the P-nets $p,q,r$ are defined as used
          in the proofs of Theorems \ref{theo:dholsingularity} and
          \ref{theo:dholpremature}. The points that are mapped
          to the initial singularity are drawn as $\circ$.
          The initial axes labeling $z$ as in
            Definition~\ref{def:dhol} are given in the left-most
            figure.}
        \label{fig:dholnetexplanation}
\end{figure}

In the even case, it is also possible to identify a case in which the singularity appears a step earlier.
\begin{theorem}\label{theo:dholpremature}
Let $m\in 2\N+2$, and let $\tilde z$ be an $m$-closed discrete holomorphic function such that $\tilde z_{i,0} = 0$ for all $i\in 2\Z$, and such that
    \begin{align}
                \sum_{i=0}^{m-1}(-1)^i \frac1{\tilde z_{2i+1,1}} = 0.
        \end{align}
        Assume we can apply the propagation map $T$ to $(\tilde z_0, \tilde z_1)$ at least $2m-4$ times. Then, for all $i\in 2\Z+1$
        \begin{align}
                \tilde z_{i,2m-3} = \left(\frac{1}{m}\sum_{\ell=0}^{m-1} \frac1{\tilde z_{2\ell+1,1}} \right)^{-1}.
        \end{align}
        In other words, $\tilde z_{2m-3}$ is constant with value equal to the harmonic mean of $\tilde z_1$.
\end{theorem}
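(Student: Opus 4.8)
The plan is to reduce the statement to the P-net premature singularity result, Theorem~\ref{theo:pnetpremature}, via the correspondence between discrete holomorphic functions and P-nets of Lemma~\ref{lem:dholpnet}, following the extension trick already used in the proof of Theorem~\ref{theo:dholsingularity}. Let $q$ be the P-net obtained as the restriction of $\tilde z$ to the odd sublattice, so that $q_{i,j}=\tilde z_{2i+1,2j+1}$ as in~\eqref{eq:dholpnet}. Contrary to the P-net $p$, which sits directly on the singular row $\tilde z_{\cdot,0}\equiv 0$, the net $q$ lives on the odd rows and does not see the singularity. I would therefore extend $q$ to a map $r$ defined on $\{(i,j):j\geq -1\}$ by setting $r_{i,-1}=0$ and $r_{i,j}=q_{i,j}$ for $j\geq 0$, exactly as in the proof of Theorem~\ref{theo:dholsingularity}.

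The first step is to check that $r$ is again a P-net. The P-net relation holds at every row $j\geq 1$ because $q$ is a P-net; the only row to verify is row $0$, and there it follows from the fact that $r_{\cdot,1}=\tilde z_{\cdot,3}$ is obtained from the zero row $\tilde z_{\cdot,0}$ and from $r_{\cdot,0}=\tilde z_{\cdot,1}$ through discrete holomorphic propagation --- this is the same short computation as in the proof of Theorem~\ref{theo:dholsingularity}, equivalently a specialization of the three-legged form~\eqref{eq:intcrstarform} to $\tilde z_{\cdot,0}\equiv 0$, in the spirit of Lemma~\ref{lem:pnetsinganddhol}. Since $z$ is $m$-closed, so are $q$ and $r$, and the singular initial row of $r$ is $r_{\cdot,-1}\equiv 0$.

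The second step is to translate hypotheses and conclusion. Reindex $r$ by a shift of $+1$ in the $j$-direction so that its singular row sits at height $0$; then the row at height $1$ is $r_{\cdot,0}=\tilde z_{\cdot,1}$, and the extra assumption $\sum_{i=0}^{m-1}(-1)^i\tilde z_{2i+1,1}^{-1}=0$ is precisely the additional constraint $\sum_{\ell=0}^{m-1}(-1)^\ell r_{\ell,0}^{-1}=0$ required by Theorem~\ref{theo:pnetpremature}. As $m$ is even, Theorem~\ref{theo:pnetpremature} applies to $r$ and gives that $r_{\cdot,m-2}$ is constant, equal to the harmonic mean of $r_{\cdot,0}=\tilde z_{\cdot,1}$. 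Since $m\geq 2$ we have $r_{i,m-2}=q_{i,m-2}=\tilde z_{2i+1,2m-3}$, whence $\tilde z_{i,2m-3}$ is constant over odd $i$ with value $\bigl(\frac1m\sum_{\ell=0}^{m-1}\tilde z_{2\ell+1,1}^{-1}\bigr)^{-1}$, which is the assertion. The iteration counts match: one step of the P-net propagation of $r$ amounts to two steps of discrete holomorphic propagation of $z$ (the rows of $r$ are the odd rows of $\tilde z$), so the hypothesis that $T$ can be applied at least $2m-4$ times to $(\tilde z_0,\tilde z_1)$ supplies exactly the $m-2$ applications of the P-net propagation needed for Theorem~\ref{theo:pnetpremature}, and likewise these $2m-4$ steps of $z$-propagation produce all of $\tilde z$ through row $2m-3$, hence all of $r$ through row $m-2$.

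The routine parts are the index bookkeeping (the factor of two between the two propagations, the shift placing the singular row of $r$ at height $0$) and the degenerate corner case $m=2$, where the statement reads off directly. The only step with any content is verifying that $r$ satisfies the P-net relation across the junction row $0$; but since this is literally the computation already performed in the proof of Theorem~\ref{theo:dholsingularity}, the whole argument is short and essentially consists in invoking Theorem~\ref{theo:pnetpremature} in place of Theorem~\ref{theo:pnetsingularity}.
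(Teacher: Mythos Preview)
Your proposal is correct and follows exactly the paper's approach: the paper's proof is a single sentence referring back to the P-net $r$ constructed in the proof of Theorem~\ref{theo:dholsingularity} and then invoking Theorem~\ref{theo:pnetpremature}, and you have simply unpacked that sentence (the extension $r$ of $q$ by a row of zeros, the verification that $r$ is a P-net at the junction row, the reindexing, and the identification $r_{\cdot,m-2}=\tilde z_{\cdot,2m-3}$). Your treatment of the iteration count and the hypothesis matching is also accurate.
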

\proof{
Let $r$ be the P-net as in the proof of Theorem \ref{theo:dholsingularity}. By assumption of the theorem, $r$ satisfies the assumptions of Theorem \ref{theo:pnetpremature}, which proves the claim.\qed
}

\begin{remark}\label{rem:disosing}
        There is a generalization of discrete holomorphic functions \cite[Definition 6]{bpdisosurfaces} to $\R^3$, for which simulations show analogous singularity behaviour. Call a map $\xi: \Z^2 \rightarrow \R^3$ \emph{discrete isothermic surface}, if the image of the vertices of each quad is contained in a circle, and if the cross-ratio of the four points on each circle is $-1$. We conjecture that singularities repeat for discrete isothermic surfaces as they do for discrete holomorphic functions, in the sense of Theorem \ref{theo:dholsingularity}. However, we do not currently have the tools to approach this problem. It is possible that the problem for discrete isothermic surfaces can be reduced to the problem for discrete holomorphic functions, for example via a suitable discrete Weierstraß representation. Another possibility is that one identifies $\R^3$ with the set of quaternions with vanishing real part, and finds a generalization of our general results to the non-commutative setting.
\end{remark}

\subsection{Orthogonal circle patterns}\label{sec:orthogonal_circle_patterns}

\begin{definition}
  A \emph{(square grid) orthogonal circle pattern} $p:\Z^2 \rightarrow \hC$ is a circle pattern, see Definition \ref{def:cp}, such that adjacent circles intersect orthogonally. As for general circle patterns, we denote by $c:\Z^2\rightarrow \{\mbox{Circles of } \CP^1\}$ the circles and by $t:\Z^2\rightarrow \C$ the circle centers. An example is provided in Figure~\ref{fig:ocpaztec}.
\end{definition}

If we only look at every second circle of an orthogonal circle pattern, we recover a \emph{circle packing}, sometimes called \emph{Schramm circle packing} \cite{schramm}.

\begin{figure}[tb]
    \centering
    \begin{tikzpicture}[baseline={([yshift=0ex]current bounding box.center)}]
        \draw (1,0.5) circle (1.12);
        \draw (3,0.5) circle (1.12);
        \draw (1,1.5) circle (1.12);
        \draw (3,1.5) circle (1.12);
        \node[wvert,label=left:$p_{0,0}$] (p00) at (0,0) {};
        \node[wvert,label=left:$p_{1,0}$] (p10) at (2,0) {};
        \node[wvert,label=left:$p_{2,0}$] (p20) at (4,0) {};
        \node[wvert,label=left:$p_{0,1}$] (p01) at (0,1) {};
        \node[wvert,label=left:$p_{1,1}$] (p11) at (2,1) {};
        \node[wvert,label=left:$p_{2,1}$] (p21) at (4,1) {};
        \node[wvert,label=left:$p_{0,2}$] (p02) at (0,2) {};
        \node[wvert,label=left:$p_{1,2}$] (p12) at (2,2) {};
        \node[wvert,label=left:$p_{2,2}$] (p22) at (4,2) {};
        \node[bvert,label={[shift={(0.2,-0.3)}]left:$t_{0,0}$}] (t00) at (1,0.5) {};
        \node[bvert,label={[shift={(0.2,-0.3)}]left:$t_{1,0}$}] (t10) at (3,0.5) {};
        \node[bvert,label={[shift={(0.2,-0.3)}]left:$t_{0,1}$}] (t01) at (1,1.5) {};
        \node[bvert,label={[shift={(0.2,-0.3)}]left:$t_{1,1}$}] (t11) at (3,1.5) {};        
    \end{tikzpicture}\hspace{2mm}
    \begin{tikzpicture}[baseline={([yshift=0ex]current bounding box.center)}]
        \draw (1,0.5) circle (1.12);
        \draw (3,0.5) circle (1.12);
        \draw (1,1.5) circle (1.12);
        \draw (3,1.5) circle (1.12);
        \node[wvert,label=left:$\tilde z_{0,0}$] (p00) at (0,0) {};
        \node[wvert,label=left:$\tilde z_{2,0}$] (p10) at (2,0) {};
        \node[wvert,label=left:$\tilde z_{4,0}$] (p20) at (4,0) {};
        \node[wvert,label=left:$\tilde z_{0,2}$] (p01) at (0,1) {};
        \node[wvert,label=left:$\tilde z_{2,2}$] (p11) at (2,1) {};
        \node[wvert,label=left:$\tilde z_{4,2}$] (p21) at (4,1) {};
        \node[wvert,label=left:$\tilde z_{0,4}$] (p02) at (0,2) {};
        \node[wvert,label=left:$\tilde z_{2,4}$] (p12) at (2,2) {};
        \node[wvert,label=left:$\tilde z_{4,4}$] (p22) at (4,2) {};
        \node[bvert,label={[shift={(0.2,-0.3)}]left:$\tilde z_{1,1}$}] (t00) at (1,0.5) {};
        \node[bvert,label={[shift={(0.2,-0.3)}]left:$\tilde z_{3,1}$}] (t10) at (3,0.5) {};
        \node[bvert,label={[shift={(0.2,-0.3)}]left:$\tilde z_{1,3}$}] (t01) at (1,1.5) {};
        \node[bvert,label={[shift={(0.2,-0.3)}]left:$\tilde z_{3,3}$}] (t11) at (3,1.5) {};        
    \end{tikzpicture}\hspace{2mm}
    \begin{tikzpicture}[baseline={([yshift=0ex]current bounding box.center)}]
        \draw (1,0.5) circle (1.12);
        \draw (3,0.5) circle (1.12);
        \draw (1,1.5) circle (1.12);
        \draw (3,1.5) circle (1.12);
        \node[wvert,label=left:$z_{0,0}$] (p00) at (0,0) {};
        \node[wvert,label=left:$z_{1,-1}$] (p10) at (2,0) {};
        \node[wvert,label=left:$z_{2,-2}$] (p20) at (4,0) {};
        \node[wvert,label=left:$z_{1,1}$] (p01) at (0,1) {};
        \node[wvert,label=left:$z_{2,0}$] (p11) at (2,1) {};
        \node[wvert,label=left:$z_{3,-1}$] (p21) at (4,1) {};
        \node[wvert,label=left:$z_{2,2}$] (p02) at (0,2) {};
        \node[wvert,label=left:$z_{3,1}$] (p12) at (2,2) {};
        \node[wvert,label=left:$z_{4,0}$] (p22) at (4,2) {};
        \node[bvert,label={[shift={(0.2,-0.3)}]left:$z_{1,0}$}] (t00) at (1,0.5) {};
        \node[bvert,label={[shift={(0.2,-0.3)}]left:$z_{2,-1}$}] (t10) at (3,0.5) {};
        \node[bvert,label={[shift={(0.2,-0.3)}]left:$z_{2,1}$}] (t01) at (1,1.5) {};
        \node[bvert,label={[shift={(0.2,-0.3)}]left:$z_{3,0}$}] (t11) at (3,1.5) {};        
    \end{tikzpicture}
	\caption{The labelings of the intersection points (white) and circle centers (black) in an orthogonal circle pattern.}
    \label{fig:ocpaztec}    
\end{figure}
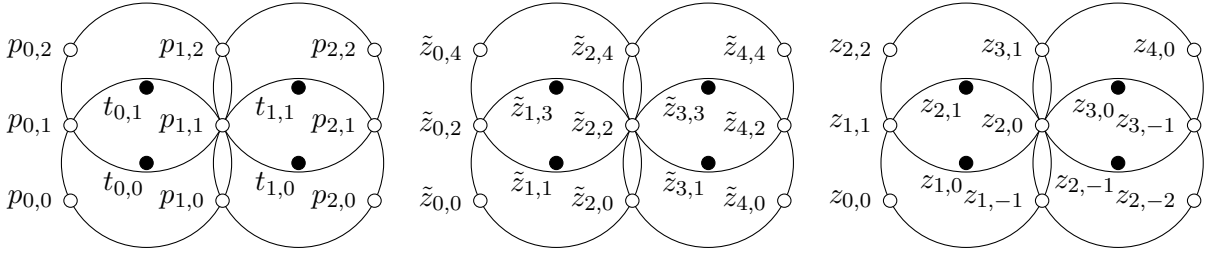

Orthogonal circle patterns provide a classical example of discrete holomorphic functions. Indeed, consider an orthogonal circle pattern $p:\Z^2 \rightarrow\hC$, and its circle centers $t:\Z^2\rightarrow\hat{\C}$. From $p$ and $q:=t$ construct a map $\tilde{z}$ using Equation~\eqref{eq:dholpnet}. Then, we have

\begin{lemma}\emph{\cite[Equation (8.1)]{ddgbook}}
Consider an orthogonal circle pattern $p$, the corresponding map $\tilde{z}$ constructed from $p$ and $t$ as above, and let $z:\Z^2\rightarrow\hat{\C}$ be the rotated version of $\tilde{z}$. Then, $z$ is a discrete holomorphic function.
\end{lemma}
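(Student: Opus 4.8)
The plan is to check directly that the cross-ratio $\cro$ around every quad of $\Z^2$ equals $-1$. First I would unwind the definitions. By \eqref{eq:dholpnet}, the map $\tilde z$ lives on $\{(a,b)\in\Z^2:a+b\in2\Z\}$ with $\tilde z_{2i,2j}=p_{i,j}$ an intersection point of the pattern and $\tilde z_{2i+1,2j+1}=q_{i,j}=t_{i,j}$ a circle centre; and by \eqref{eq:rotated_weights} we have $z_{i,j}=\tilde z_{i-j,\,i+j}$. Writing $a=i-j$ and $b=i+j$ (so $a\equiv b\pmod 2$), the quad $(z_{i,j},z_{i+1,j},z_{i+1,j+1},z_{i,j+1})$ reads, in the $\tilde z$-coordinates, $(\tilde z_{a,b},\tilde z_{a+1,b+1},\tilde z_{a,b+2},\tilde z_{a-1,b+1})$ in this cyclic order.

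The next step is a parity case distinction that identifies this quad geometrically. If $a$ is even, put $k=a/2$, $\ell=b/2$: the quad becomes $(p_{k,\ell},t_{k,\ell},p_{k,\ell+1},t_{k-1,\ell})$, and $p_{k,\ell},p_{k,\ell+1}$ are exactly the two intersection points of the neighbouring circles $c_{k,\ell}$ and $c_{k-1,\ell}$ of the pattern, whose centres are $t_{k,\ell}$ and $t_{k-1,\ell}$. If $a$ is odd, put $k=(a-1)/2$, $\ell=(b-1)/2$: the quad becomes $(t_{k,\ell},p_{k+1,\ell+1},t_{k,\ell+1},p_{k,\ell+1})$, and $p_{k,\ell+1},p_{k+1,\ell+1}$ are the two intersection points of the neighbouring circles $c_{k,\ell}$ and $c_{k,\ell+1}$, with centres $t_{k,\ell}$ and $t_{k,\ell+1}$. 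In both cases the two circles involved are adjacent in the pattern, hence meet at a right angle, and the four vertices of the quad in cyclic order are (intersection point, centre, intersection point, centre) of a pair of orthogonally intersecting circles.

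It then remains to establish the classical fact that if two circles meet orthogonally at $P_1,P_2$ and have centres $T_1,T_2$, then $\cro(P_1,T_1,P_2,T_2)=-1$. I would prove this by applying an orientation-preserving similarity of $\C$, which preserves circles, their centres, orthogonality and cross-ratios, to reduce to $P_1=i$, $P_2=-i$. Then $T_1,T_2$ lie on the real axis, say $T_1=s$ and $T_2=t$ with $s,t\in\R$; the radii are $\sqrt{s^2+1}$ and $\sqrt{t^2+1}$, and the orthogonality relation $|T_1-T_2|^2=(s^2+1)+(t^2+1)$ reduces to $st=-1$. A one-line computation then gives
\begin{equation*}
  \cro(i,s,-i,t)=\frac{(i-s)(-i-t)}{(s+i)(t-i)}=\frac{(1+st)+i(s-t)}{(1+st)-i(s-t)}=\frac{i(s-t)}{-i(s-t)}=-1.
\end{equation*}
(If a circle degenerates to a line its centre is $\infty$, and the identity persists by the usual conventions in $\hC$ or by continuity; also, a harmonic quadruple stays harmonic under every dihedral relabelling of a $4$-cycle, so the orientation in which the quad is traversed does not matter.) Putting the three steps together, $\cro$ around every quad of $z$ equals $-1$, so $z$ is a discrete holomorphic function. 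The only genuinely tedious part is the index bookkeeping in the parity case distinction; the geometric content is the short computation above, and I expect no real obstacle.
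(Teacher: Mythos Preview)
Your proof is correct. The paper does not give a proof of this lemma but simply cites \cite[Equation (8.1)]{ddgbook}, which is the general formula $\cro=e^{2i\theta}$ relating the cross-ratio of (intersection, centre, intersection, centre) to the intersection angle $\theta$ of two circles; for $\theta=\pi/2$ this yields $-1$. Your argument follows the same route---identify each quad of $z$ with such a configuration and then evaluate the cross-ratio---and supplies a self-contained derivation of the orthogonal special case instead of invoking the general formula.
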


As a consequence of Lemma~\ref{lem:dholpnet}, we obtain the following. Note that this result was independently obtained in the context of orthogonal circle patterns by~\cite{ksclifford} for the intersection points, and by~\cite{klrr} for the circle centers.

\begin{lemma}\label{lem:ortho_Pnets}
Let $p:\Z^2 \rightarrow \hC$ be an orthogonal circle pattern. Then both
the intersection points $p$ and the circle centers $t$ are P-nets.
\end{lemma}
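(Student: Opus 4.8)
The plan is to chain together the two results stated immediately before the lemma, so that essentially no new computation is needed. First I would invoke the preceding lemma (\cite[Equation (8.1)]{ddgbook}): starting from the orthogonal circle pattern $p$ and its circle centers $t$, one assembles a single map $\tilde z$ via Equation~\eqref{eq:dholpnet}, placing the intersection points $p$ on the even sublattice $\Z^2_+$ and $q:=t$ on the odd sublattice $\Z^2_-$; unrotating via~\eqref{eq:rotated_weights} gives a map $z:\Z^2\to\hC$ which that lemma guarantees to be a discrete holomorphic function.

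Next I would apply Lemma~\ref{lem:dholpnet} (\cite[Lemma 6]{bpdiscsurfaces}) to this $z$. That lemma asserts that the restriction of $\tilde z$ to $\Z^2_+$ and the restriction of $\tilde z$ to $\Z^2_-$ are both P-nets, and it identifies these restrictions with maps $\Z^2\to\hC$ precisely through Equation~\eqref{eq:dholpnet}: the $\Z^2_+$-restriction, after the index change $(i,j)\mapsto(2i,2j)$, is the map $(i,j)\mapsto p_{i,j}$, and the $\Z^2_-$-restriction, after $(i,j)\mapsto(2i+1,2j+1)$, is the map $(i,j)\mapsto q_{i,j}=t_{i,j}$. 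Hence both the intersection points $p$ and the circle centers $t$ are P-nets, which is exactly the two-part claim of the lemma — and both parts come out of the single application of Lemma~\ref{lem:dholpnet}.

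The only point needing a line of care is bookkeeping: one has to make sure that the symbols $p$ and $q$ occurring in Equation~\eqref{eq:dholpnet} and in Lemma~\ref{lem:dholpnet} are matched to the intersection points $p$ and the circle centers $q:=t$ in exactly the way the preceding lemma sets them up, so that the two cited statements compose without an index mismatch between the square-lattice P-net indexing and the $\Z^2_\pm$ sublattice indexing. I do not anticipate a genuine obstacle here: both ingredients are already in hand, and the argument is a direct two-step composition; what little difficulty there is is purely notational consistency, together with recording (as the surrounding text already does) that the same statement was obtained independently in~\cite{ksclifford} for the intersection points and in~\cite{klrr} for the circle centers.
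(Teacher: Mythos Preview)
Your proposal is correct and follows exactly the approach the paper takes: the paper simply says ``As a consequence of Lemma~\ref{lem:dholpnet}, we obtain the following,'' relying on the immediately preceding lemma to produce a discrete holomorphic function $z$ from $p$ and $t$, and then on Lemma~\ref{lem:dholpnet} to conclude that both sublattice restrictions are P-nets. Your write-up just spells this out in slightly more detail, including the index bookkeeping.
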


Putting together Lemma~\ref{lem:ortho_Pnets} and Theorem~\ref{theo:explpnet} yields an explicit expression for the points $p_{i,j}$, $i\in\Z,j\geq 1$ as a function of $(p_{i,0})_{i\in\Z}$, $(p_{i,1})_{i\in\Z}$. By exchanging the role of $p$ and $t$, a similar expression can be obtained for $t_{i,j}$.
\begin{corollary}
Let $p:\Z^2\rightarrow \hC$ be an orthogonal circle pattern, and consider the graph $\Z^2$ with face weights $(a_{i,j})=(p_{i,[i+j]_2})_{(i,j)\in\Z^2}$. Then, for all $i\in\Z,j\geq 1$, we have
\[
p_{i,j}=Y(A_{j-1}[p_{i,[j]_2}],a).
\]
\end{corollary}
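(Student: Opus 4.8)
The plan is to invoke the two results immediately at hand: Lemma~\ref{lem:ortho_Pnets}, which tells us that the intersection points $p$ of an orthogonal circle pattern form a P-net, and Theorem~\ref{theo:explpnet}, which gives the explicit combinatorial expression for the values of any P-net. The statement to be proved is literally the specialization of Theorem~\ref{theo:explpnet} to this particular P-net, so the ``proof'' is really just a citation glued together. Concretely, first I would note that by Lemma~\ref{lem:ortho_Pnets}, the map $p:\Z^2\to\hC$ is a P-net. Then I would apply Theorem~\ref{theo:explpnet} to $p$: setting the face weights on $\Z^2$ to be $a_{i,j}=p_{i,[i+j]_2}$, that theorem yields $p_{i,j}=Y(\az{j-1}{p_{i,[j]_2}},a)$ for all $i\in\Z$, $j\geq 1$, which is exactly the claim.

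So the proof is essentially one line: ``By Lemma~\ref{lem:ortho_Pnets}, $p$ is a P-net; the claim then follows directly from Theorem~\ref{theo:explpnet} applied to $p$.'' The only thing worth spelling out, if anything, is that the face weights $(a_{i,j})$ appearing in the statement of the corollary coincide with those prescribed in Theorem~\ref{theo:explpnet} — but they are defined by the same formula $a_{i,j}=p_{i,[i+j]_2}$, so there is nothing to check. One might also remark, for completeness, that the same argument with the roles of $p$ and $t$ exchanged (using that $t$ is also a P-net, again by Lemma~\ref{lem:ortho_Pnets}) gives the analogous expression for the circle centers $t_{i,j}$, as mentioned in the sentence preceding the corollary; this is why the corollary only states the result for $p$ explicitly.

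There is no real obstacle here. The conceptual work has already been done: Lemma~\ref{lem:dholpnet} (from which Lemma~\ref{lem:ortho_Pnets} is deduced) establishes that orthogonal circle patterns decompose into P-nets, and Theorem~\ref{theo:explpnet} — itself a consequence of Lemma~\ref{lem:pnetdskp} together with the general dSKP solution Theorem~\ref{theo:expl_sol} — provides the combinatorial formula. The corollary is a straightforward transport of the P-net result through the identification of Lemma~\ref{lem:ortho_Pnets}, and no new estimate, construction, or computation is required. If I wanted to be maximally careful I would simply write out the chain of implications in one or two sentences; the ``hard part'', such as it is, was proving Lemma~\ref{lem:dholpnet} and Theorem~\ref{theo:explpnet}, both of which we are entitled to assume.

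\begin{proof}
By Lemma~\ref{lem:ortho_Pnets}, the map $p:\Z^2\rightarrow\hC$ giving the intersection points of the orthogonal circle pattern is a P-net. Applying Theorem~\ref{theo:explpnet} to $p$, with the graph $\Z^2$ equipped with the face weights $a_{i,j}=p_{i,[i+j]_2}$, yields that for all $i\in\Z$ and $j\geq 1$,
\begin{equation*}
	p_{i,j}=Y(\az{j-1}{p_{i,[j]_2}},a),
\end{equation*}
which is the claimed expression. (Exchanging the roles of $p$ and $t$ and using that $t$ is also a P-net by Lemma~\ref{lem:ortho_Pnets} gives the analogous formula for the circle centers.)
\end{proof}
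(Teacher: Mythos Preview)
Your proof is correct and matches the paper's own approach: the corollary is stated immediately after the sentence ``Putting together Lemma~\ref{lem:ortho_Pnets} and Theorem~\ref{theo:explpnet} yields an explicit expression\ldots'', and that is exactly what you do.
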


Now, applying Theorem~\ref{theo:dholsingularity} and its proof in the case where $m$ is even yields the following singularity result, see Figure~\ref{fig:ocpsing} for an example.
\begin{corollary}\label{cor:ocpsing}
Let $m\geq 1$, and let $p:\Z^2\rightarrow \hC$ be an $m$-closed orthogonal circle pattern, such that $p_{i,0}=0$ for all $i\in 2\Z$. Assume we can apply the propagation map $T$ to $(\tilde{z}_0,\tilde{z}_1)$ at least $m-2$ times. Then,
for all $i\in \Z$,
\[
p_{i,m-1}=\Bigl(\frac{1}{m}\sum_{\ell=0}^{m-1} \frac{1}{p_{i,1}}\Bigr)^{-1}=
\Bigl(\frac{1}{m}\sum_{\ell=0}^{m-1} \frac{1}{t_{i,0}}\Bigr)^{-1}.
\]
\end{corollary}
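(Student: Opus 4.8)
The plan is to reduce to the singularity results already obtained for discrete holomorphic functions and P-nets, following the route indicated just before the statement. From the orthogonal circle pattern $p$ and its circle centers $t$ I would form $\tilde z$ via \eqref{eq:dholpnet} with $q:=t$, and let $z$ be its rotated version; by the lemma of \cite[Equation (8.1)]{ddgbook} recalled above, $z$ is a discrete holomorphic function, and by Lemma~\ref{lem:ortho_Pnets} the intersection points $p$ and the centers $t$ are exactly its two sublattice P-nets from \eqref{eq:dholpnet}, $p_{i,j}=\tilde z_{2i,2j}$ and $t_{i,j}=\tilde z_{2i+1,2j+1}$. The $m$-closedness of the circle pattern then translates into $m$-closedness of $z$, and the degeneracy hypothesis into singular discrete-holomorphic initial data, as follows.

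A priori the hypothesis gives only $p_{i,0}=0$ for even $i$, i.e. $\tilde z_{i,0}=0$ for $i\in 4\Z$. But in an orthogonal circle pattern, if $p_{i,0}=0$ for all even $i$, then for odd $i$ both circles $c_{i-1,0}$ and $c_{i,0}$ pass through $0$ — each contains a point $p_{i\pm1,0}=0$ — while meeting exactly in $\{p_{i,0},p_{i,1}\}$; hence $0\in\{p_{i,0},p_{i,1}\}$, and generically (under the conventions of Section~\ref{sec:prereq}) this forces $p_{i,0}=0$ for odd $i$ as well. So the whole row $\tilde z_0$ vanishes, which is precisely the singular initial data of Theorem~\ref{theo:dholsingularity}, i.e. a Devron-type singular configuration for dSKP of the kind recorded in Table~\ref{tab:singularities}.

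Then I would invoke Theorem~\ref{theo:dholsingularity} together with the even-$m$ branch of its proof: the even-sublattice P-net of $z$, which is exactly $p$, satisfies $\sum_{\ell=0}^{m-1}(-1)^\ell p_{\ell,1}^{-1}=0$ by Lemma~\ref{lem:pnetsinganddhol}, so Theorem~\ref{theo:pnetpremature} applies to $p$ and yields that $p_{m-1}$ is constant, equal to $(\frac1m\sum_{\ell=0}^{m-1}p_{\ell,1}^{-1})^{-1}$, reached after $m-2$ iterations of the P-net propagation; unwinding $p_{i,j}=\tilde z_{2i,2j}$ turns this into the claimed statement about $(\tilde z_0,\tilde z_1)$ and $m-2$ iterations of $T$. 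Finally, the two stated forms of the value agree because $(p_{\ell,1})_\ell$ are the intersection points on row $1$ and $(t_{\ell,0})_\ell$ the circle centers on row $0$: specialising the three-legged form \eqref{eq:intcrstarform} to $\alpha\equiv-1$, $\beta\equiv1$ with $z_{i,-i}=0$ — that is, the identity \eqref{equ:Back_relation_holom} — and summing over one period using closedness gives $\sum_\ell p_{\ell,1}^{-1}=\sum_\ell t_{\ell,0}^{-1}$, exactly as in the last lines of the proof of Theorem~\ref{theo:dholsingularity}.

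The main obstacle will be the bookkeeping of the second paragraph: making the genericity argument that the zero condition propagates to the entire row $\tilde z_0$ precise, carrying the orthogonal-circle-pattern indexing of $(p,t)$ faithfully through \eqref{eq:dholpnet} and the rotation \eqref{eq:rotated_weights}, and checking that $m$-closedness of the circle pattern feeds Theorem~\ref{theo:pnetpremature} with the correct parameter and parity and that the step count $m-2$ and the height $m-1$ of the recurring point-singularity line up after restricting to the even sublattice. Everything else is a routine unwinding of already-established statements.
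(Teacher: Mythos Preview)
Your route is the paper's one-line proof: apply Theorem~\ref{theo:dholsingularity} in its even-$m$ branch (that is, Lemma~\ref{lem:pnetsinganddhol} feeding Theorem~\ref{theo:pnetpremature}) to the discrete holomorphic function built from $p$ and $t$, then use the identity derived from \eqref{eq:intcrstarform} to equate the two harmonic means. That is exactly what the paper does.

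Your second paragraph, however, is trying to patch what is almost certainly a transcription slip in the statement. Under $p_{i,j}=\tilde z_{2i,2j}$, the hypothesis ``$\tilde z_{i,0}=0$ for all $i\in 2\Z$'' of Theorem~\ref{theo:dholsingularity} becomes ``$p_{i,0}=0$ for all $i\in\Z$''; the ``$i\in 2\Z$'' in the corollary was copied verbatim from the $\tilde z$ formulation and should read $i\in\Z$. With the intended hypothesis $p_0\equiv 0$ your bridging argument is unnecessary. It is also not quite sound as written: from $0\in\{p_{i,0},p_{i,1}\}$ for odd $i$ there is no genericity principle that selects $p_{i,0}=0$ over $p_{i,1}=0$; the alternative gives a genuinely different configuration, not the singular one you want.

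Two further bookkeeping points you already flagged: one P-net step on $p$ is \emph{two} steps of the $\tilde z$-propagation $T$ (since $p_{i,j}=\tilde z_{2i,2j}$), so ``$m-2$ P-net iterations'' does not literally unwind to ``$m-2$ iterations of $T$ on $(\tilde z_0,\tilde z_1)$'' as you wrote; and the paper explicitly restricts to ``the case where $m$ is even'' in the sentence preceding the corollary, which Theorem~\ref{theo:pnetpremature} genuinely requires, so the ``$m\geq 1$'' in the statement is another imprecision you should not try to argue around.
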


That is, the point of the singularity is the harmonic mean of the first row of the circle pattern, and at the same time the harmonic mean of the first row of circle centers.

\begin{figure}[tb]
        \centering
        \frame{\includegraphics[height=5cm]{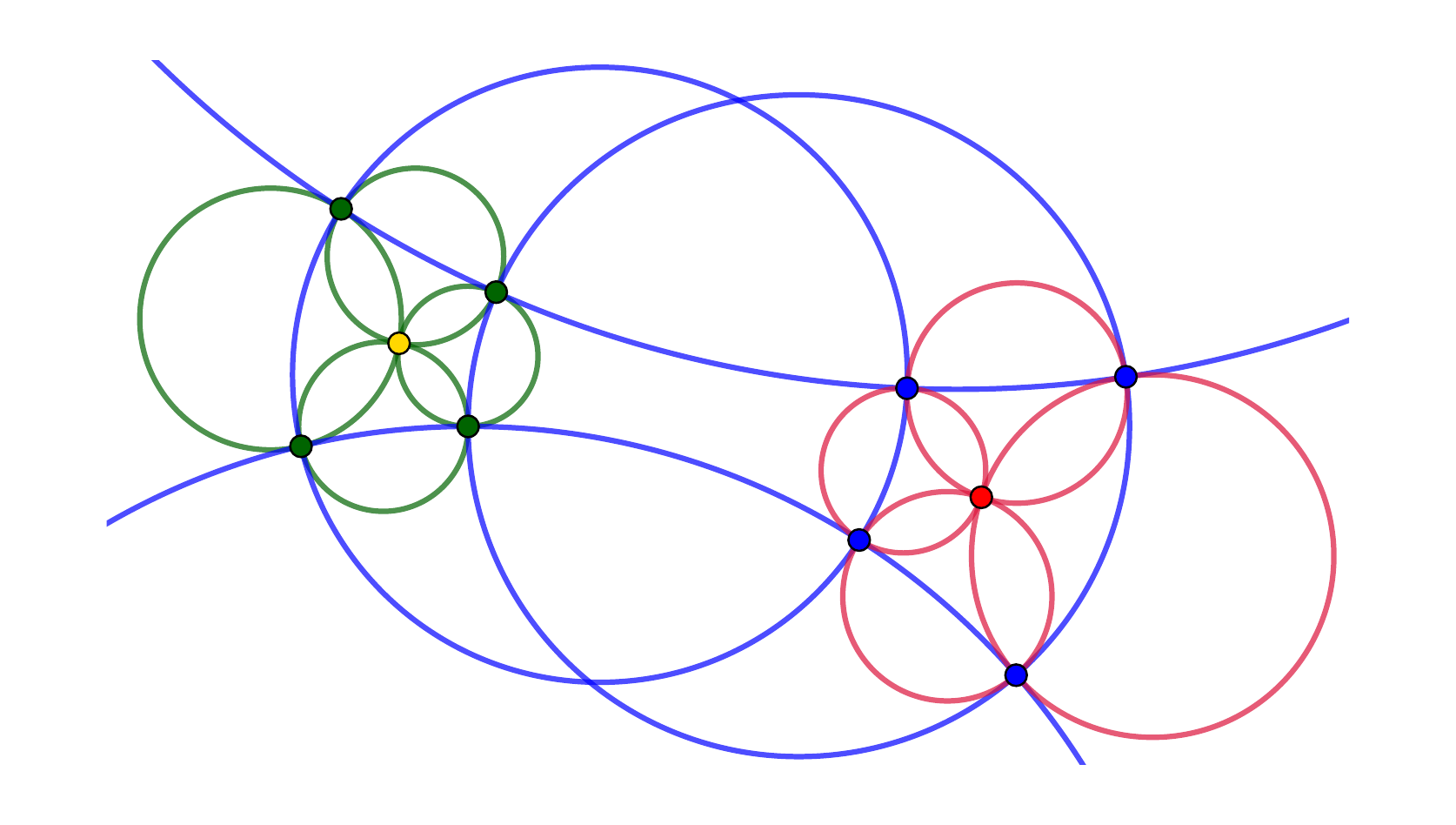}}
        \caption{Devron property in an $m$-closed orthogonal circle pattern for $m=4$. The initial $(m,2)$-Devron singularity is at the yellow point and the final singularity after 2 iterations at the red point.}
        \label{fig:ocpsing}
\end{figure}

\section{Polygon recutting}\label{sec:recut}

Polygon recutting was introduced by Adler \cite{adlerrecutting}, as an integrable dynamical system acting on polygons. Its integrable properties have been studied in a number of papers, see the introduction of \cite{izosimovpolyrecut} and references therein. In this section we explain how polygon recutting arises as a special case of integrable cross-ratio maps, which enables us to provide an explicit expression for the iteration of polygon recutting. We are also able to show that the Devron phenomenon for polygon recutting is a case of a $(m,1)$-Devron singularity from Definition~\ref{def:sing}, and thus follows as a consequence of our general results.

\subsection{Definitions and explicit solution}

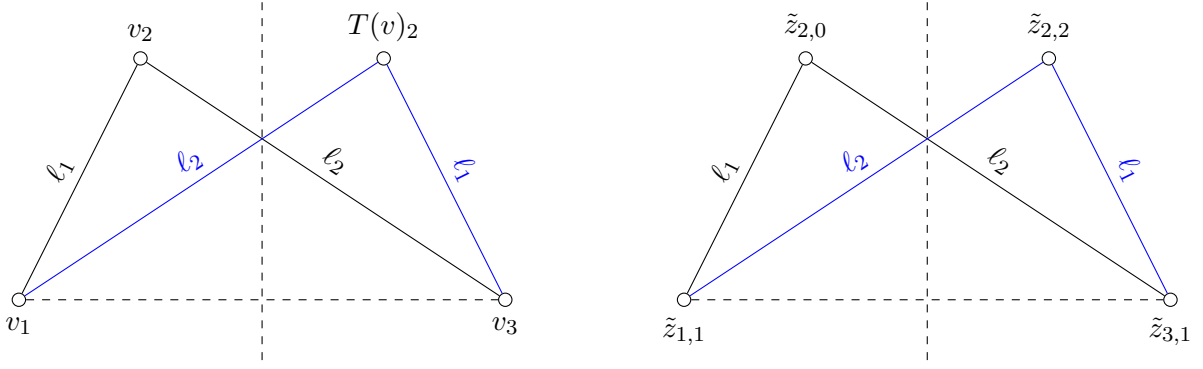
\begin{figure}[tb]
	\centering
	\begin{tikzpicture}[baseline={([yshift=0ex]current bounding box.center)},scale=1.6]
		\node[wvert,label=below:$v_{1}$] (v0) at (0,0) {};
		\node[wvert,label=above:$v_{2}$] (v1) at (1,2) {};
		\node[wvert,label=below:$v_{3}$] (v2) at (4,0) {};
		\node[wvert,label=above:$T(v)_{2}$] (tv1) at (3,2) {};
		
		\draw[-]
			(v0) to node [midway, above, sloped] {$\ell_1$}  (v1) to node [midway, above, sloped] {$\ell_2$}  (v2)					
		;
		\draw[-, blue]
			(v0)  to node [midway, above, sloped] {$\ell_2$}  (tv1)  to node [midway, above, sloped] {$\ell_1$} (v2)					
		;
		\draw[dashed]
			(v0) -- (v2)
			(2,-0.5) -- (2, 2.5)
		;
	\end{tikzpicture}\hspace{1.5cm}
	\begin{tikzpicture}[baseline={([yshift=0ex]current bounding box.center)},scale=1.6]
		\node[wvert,label=below:$\tilde z_{1,1}$] (v0) at (0,0) {};
		\node[wvert,label=above:$\tilde z_{2,0}$] (v1) at (1,2) {};
		\node[wvert,label=below:$\tilde z_{3,1}$] (v2) at (4,0) {};
		\node[wvert,label=above:$\tilde z_{2,2}$] (tv1) at (3,2) {};
		
		\draw[-]
			(v0) to node [midway, above, sloped] {$\ell_1$}  (v1) to node [midway, above, sloped] {$\ell_2$}  (v2)					
		;
		\draw[-, blue]
			(v0)  to node [midway, above, sloped] {$\ell_2$}  (tv1)  to node [midway, above, sloped] {$\ell_1$} (v2)					
		;
		\draw[dashed]
			(v0) -- (v2)
			(2,-0.5) -- (2, 2.5)
		;
	\end{tikzpicture}
	\caption{ An illustration of the effect of polygon recutting at index $k=2$, before (black) and after (blue), together with the two different labelings.}
	\label{fig:polyrecutting}
\end{figure}

\begin{definition}
        Consider points in the complex plane $(v_i)_{i\in\Z}$. A step of a \emph{polygon recutting} consists of fixing an index $k$, and reflecting the vertex $v_k$ with respect to the perpendicular bisector of the segment $[v_{k-1}v_{k+1}]$ joining its neighbors; note that this is an involution.
\end{definition}

Let $\tilde{z}_j=(\tilde{z}_{i,j})_{[i+j]_2=0}$ be such that $\tilde z_{i,j}$ is the reflection of $\tilde z_{i,j-2}$ about the perpendicular bisector of the segment $[\tilde z_{i-1,j-1}\tilde z_{i+1,j-1}]$ for all $i,j\in \Z$ with $i+j\in 2\Z$. We refer to $\tilde{z}$ as a \emph{polygon recutting lattice map}.

Consider the dynamics $T$ mapping $(\tilde{z}_0,\tilde{z}_1)$ to $(\tilde{z}_1,\tilde{z}_2)$. This map
is such that, for every $i\in2\Z$, $\tilde{z}_{i,2}$ is the reflection of $\tilde{z}_{i,0}$ with respect to the perpendicular bisector of $[\tilde{z}_{i-1,1}\tilde{z}_{i+1,1}]$, for every $i\in2\Z$.
Note that we can apply polygon recutting dynamics to points $(v_i)_{i\in\Z}$ by identifying $v$ with $\tilde z$ via
\begin{equation*}
v_i = \begin{cases}
\tilde{z}_{i,0} \text{ if $i$ is even},\\
\tilde{z}_{i,1} \text{ if $i$ is odd}.
\end{cases}
\end{equation*}
The map $T$ naturally extends to $T:\hat{\C}^{\Z_2\times\Z}\rightarrow\hat{\C}^{\Z_2\times\Z}$, mapping $(\tilde{z}_{j-1},\tilde{z}_{j})\mapsto (\tilde{z}_{j},\tilde{z}_{j+1})$, and is referred to as the \emph{polygon recutting dynamics}.

We now consider $z:\Z^2\rightarrow \hat{\C}$ obtained from $\tilde{z}$ by the change of coordinates of Equation~\eqref{eq:rotated_weights}, which we recall for convenience
\[
z_{i,j}=\tilde{z}_{i-j,i+j};
\]
we also refer to $z$ as a \emph{polygon recutting lattice map}.
For every $i\in\Z$, define
\begin{align*}
\ell_{2i}&:=|v_{2i}-v_{2i+1}|=|\tilde{z}_{2i,0}-\tilde{z}_{2i+1,1}|,\\
\ell_{2i-1}&:=|v_{2i-1}-v_{2i}|=|\tilde{z}_{2i-1,1}-\tilde{z}_{2i,0}|.
\end{align*}
Then, we have the following.

\begin{lemma}\label{lem:icr_recutting}
The map $z:\Z^2\rightarrow \hat{\C}$ is an integrable cross-ratio map  with edge-labels
\[
\forall\ i,j\in\Z,\quad \alpha_i=(\ell_{2i})^2,\quad \beta_j=(\ell_{-2j-1})^2.
\]
\end{lemma}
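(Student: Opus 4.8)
The plan is to verify the cross-ratio condition of Definition~\ref{def:intcrmap} on each elementary quad of $z$, after recognizing that every such quad is a polygon recutting configuration in disguise; the edge-labels will then drop out of the classical fact that recutting preserves side lengths. \emph{Step 1 (each quad is a recutting quad).} Using the change of coordinates $z_{i,j}=\tilde z_{i-j,i+j}$ of Equation~\eqref{eq:rotated_weights}, I would rewrite the four corners of the quad $(z_{i,j},z_{i+1,j},z_{i+1,j+1},z_{i,j+1})$ as $\tilde z_{i-j,i+j}$, $\tilde z_{i-j+1,i+j+1}$, $\tilde z_{i-j,i+j+2}$, $\tilde z_{i-j-1,i+j+1}$. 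The defining relation of a polygon recutting lattice map applied to $\tilde z_{i-j,i+j+2}$ then says exactly that $z_{i+1,j+1}$ is the reflection of $z_{i,j}$ about the perpendicular bisector of $[\tilde z_{i-j-1,i+j+1},\tilde z_{i-j+1,i+j+1}]=[z_{i,j+1},z_{i+1,j}]$. Denote this reflection by $R$; it is an orientation-reversing Euclidean isometry which swaps $z_{i,j+1}$ and $z_{i+1,j}$ and sends $z_{i,j}$ to $z_{i+1,j+1}$.

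\emph{Step 2 (edge lengths depend on a single coordinate).} Since $R$ maps the segment $[z_{i,j},z_{i+1,j}]$ onto $[z_{i+1,j+1},z_{i,j+1}]$ and maps $[z_{i,j},z_{i,j+1}]$ onto $[z_{i+1,j+1},z_{i+1,j}]$, opposite sides of every quad have equal length:
\begin{equation*}
|z_{i,j}-z_{i+1,j}|=|z_{i,j+1}-z_{i+1,j+1}|,\qquad |z_{i,j}-z_{i,j+1}|=|z_{i+1,j}-z_{i+1,j+1}|.
\end{equation*}
The first identity shows $|z_{i,j}-z_{i+1,j}|$ is independent of $j$, the second that $|z_{i,j}-z_{i,j+1}|$ is independent of $i$. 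Evaluating the first on the diagonal $j=-i$ and translating back via Equation~\eqref{eq:rotated_weights} gives $|z_{i,j}-z_{i+1,j}|=|\tilde z_{2i,0}-\tilde z_{2i+1,1}|=|v_{2i}-v_{2i+1}|=\ell_{2i}$; evaluating the second on $i=-j$ gives $|z_{i,j}-z_{i,j+1}|=|\tilde z_{-2j,0}-\tilde z_{-2j-1,1}|=|v_{-2j-1}-v_{-2j}|=\ell_{-2j-1}$.

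\emph{Step 3 (cross-ratio of a recutting quad, and conclusion).} I would prove the elementary identity: for $P,A,B\in\C$ with $A\neq B$ and $P'$ the reflection of $P$ about the perpendicular bisector of $[A,B]$, one has $\cro(P,B,P',A)=|P-B|^2/|P-A|^2$. Indeed, both sides are invariant under orientation-preserving similarities of $\C$, and such a similarity $g$ conjugates the reflection about the perpendicular bisector of $[A,B]$ to the reflection about that of $[g(A),g(B)]$; hence one may assume $A=-1$, $B=1$, so that the reflection is $w\mapsto-\bar w$, $P'=-\bar P$, and a one-line computation of $\cro(P,1,-\bar P,-1)$ gives $|P-1|^2/|P+1|^2$. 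Applying this with $(P,B,A)=(z_{i,j},z_{i+1,j},z_{i,j+1})$ and $P'=z_{i+1,j+1}$, which is legitimate by Step 1, and then inserting the lengths from Step 2 yields
\begin{equation*}
\cro(z_{i,j},z_{i+1,j},z_{i+1,j+1},z_{i,j+1})=\frac{|z_{i,j}-z_{i+1,j}|^2}{|z_{i,j}-z_{i,j+1}|^2}=\frac{(\ell_{2i})^2}{(\ell_{-2j-1})^2}=\frac{\alpha_i}{\beta_j},
\end{equation*}
which is precisely Definition~\ref{def:intcrmap} with the claimed edge-labels. The only mildly delicate points are the index bookkeeping in Step 1 (identifying which corner of the quad is the reflected vertex and which two points span the bisector) and, in Step 3, checking that an orientation-preserving similarity conjugates a line reflection to the reflection about the image line; neither is hard, and the former is the single place where one must be careful. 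Everything else is routine.
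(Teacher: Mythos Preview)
Your proof is correct and follows essentially the same route as the paper: identify each elementary quad as a recutting configuration, deduce that opposite side lengths agree and hence depend on a single index, and evaluate on the diagonal to get the $\ell$-labels. The only difference is in how the key identity $\cro(z_{i,j},z_{i+1,j},z_{i+1,j+1},z_{i,j+1})=|z_{i,j}-z_{i+1,j}|^2/|z_{i,j}-z_{i,j+1}|^2$ is obtained: the paper argues geometrically (the four points are concyclic by the reflection symmetry, so the cross-ratio is real; an arc consideration gives the sign; symmetry gives the value), whereas you normalize by an orientation-preserving similarity to $A=-1,\ B=1$ and compute directly---a clean and arguably more transparent derivation.
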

\begin{proof}
By definition of $z$, we have that, for all $i,j\in\Z$, $z_{i+1,j+1}$ is the reflection of $z_{i,j}$ with respect to the perpendicular bisector $M$ of the segment $[z_{i+1,j}z_{i,j+1}]$; therefore the four points are symmetric with respect to reflection about $M$ and thus on a common circle $C$, which shows that the cross-ratio $\cro(z_{i,j},z_{i+1,j},z_{i+1,j+1},z_{i,j+1})$ is real. The vertices $z_{i+1,j},z_{i,j+1}$ define two arcs and by construction, $z_{i+1,j+1}$ and $z_{i,j}$ are on the same one, implying that the cross-ratio is real positive. Then, using symmetries we deduce that the cross-ratio is equal to
        \begin{align*}
        \cro(z_{i,j},z_{i+1,j},z_{i+1,j+1},z_{i,j+1}) = \frac{|z_{i,j}-z_{i+1,j}|^2}{|{z_{i+1,j}}-z_{i+1,j+1}|^2}.
        \end{align*}
Using symmetries again we have that, for every $j\in\Z$, $|z_{i,j}-z_{i+1,j}|=|z_{i,j+1}-z_{i+1,j+1}|$. Iterating this along the column corresponding to $i,i+1$, we deduce that for all $j$,
\[
|z_{i,j}-z_{i+1,j}|=|z_{i,-i}-z_{i+1,-i}|=|\tilde{z}_{2i,0}-\tilde{z}_{2i+1,1}|=|v_{2i}-v_{2i+1}|=\ell_{2i},
\]
where we used the relation between $z$ and $\tilde{z}$, and the definition of $\tilde{z}_0,\tilde{z}_1$. In a similar way, iterating along rows we obtain for all $i$,
\[
|z_{i,j}-z_{i,j+1}|=|z_{-j,j}-z_{-j,j+1}|=|\tilde{z}_{-2j,0}-\tilde{z}_{-2j-1,1}|=|v_{-2j}-v_{-2j-1}|=\ell_{-2j-1},
\]
allowing to conclude the proof.\qedhere
\end{proof}

As a consequence of Lemma~\ref{lem:icr_recutting} and Theorem~\ref{theo:explintcr}, we immediately obtain the following explicit expression for $z_{i,j}$, when $i+j\geq 1$.
\begin{corollary}
  \label{cor:polyrec_dskp}
Let $z:\Z^2\rightarrow \hC$ be a polygon recutting lattice map. Let $w:\Z^2\rightarrow \hC$ be an integrable cross-ratio map such that $z,w$ is a Bäcklund pair. Then, for all $(i,j)\in\Z^2$ such that $i+j\geq 1$, $z_{i,j}$ is equal to the ratio function of oriented dimers of an Aztec diamond with face weights a subset of $(z_{i,j})_{i+j\in\{0,1\}},(w_{i,j})_{i+j\in\{0,1\}}$ as described in Theorem~\ref{theo:explintcr}.
\end{corollary}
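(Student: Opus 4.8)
The plan is to obtain the statement as a direct specialization of the explicit solution for Bäcklund pairs, namely Theorem~\ref{theo:explintcr}, so the proof is essentially a matter of checking that its hypotheses are in place. The first step is to recall Lemma~\ref{lem:icr_recutting}: a polygon recutting lattice map $z$ is in fact an integrable cross-ratio map, with edge-labels $\alpha_i=(\ell_{2i})^2$ and $\beta_j=(\ell_{-2j-1})^2$. One should note here that these edge-labels are non-zero under the standing genericity assumption that consecutive vertices of the recut polygon are distinct, so that $z$ genuinely satisfies Definition~\ref{def:intcrmap}.

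The second step is to invoke the existence of a Bäcklund partner: as explained in Section~\ref{subsec:Backlund_pairs_defi}, for any fixed $\gamma\in\C\setminus\{0\}$ and any initial value $w_{i,j}$ at one vertex there is a (generically well-defined, via the commuting Möbius transformations $M^1,M^2$) integrable cross-ratio map $w$ making $z,w$ a Bäcklund pair. Thus the pair $(z,w)$ to which the corollary refers indeed exists and is a Bäcklund pair of integrable cross-ratio maps.

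With these two points established, the third and final step is simply to apply Theorem~\ref{theo:explintcr} to the pair $(z,w)$. That theorem expresses $z_{i,j}$, for every $(i,j)$ with $i+j\geq1$, as $Y(A_{i+j-1}[z_{i',j'}],a)$ or $Y(A_{i+j-1}[w_{i',j'}],a)$ according to the value of $[i+j]_4$, where the face weights $a$ are assembled from the initial data $(z_{i,j})_{i+j\in\{0,1\}}$ and $(w_{i,j})_{i+j\in\{0,1\}}$ exactly as in its statement; this is precisely the content of the corollary. Since the argument is a pure specialization, there is no genuine obstacle: the only point requiring a word of care is that the hypotheses of Definition~\ref{def:intcrmap} and of the Bäcklund construction are met, i.e. that the edge-labels are non-zero and that $z$ is generic enough for the partner $w$ to exist, both of which are already part of the running assumptions on polygon recutting lattice maps.
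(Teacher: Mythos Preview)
Your proposal is correct and follows essentially the same route as the paper: Lemma~\ref{lem:icr_recutting} shows $z$ is an integrable cross-ratio map, and then Theorem~\ref{theo:explintcr} applied to the Bäcklund pair $(z,w)$ immediately gives the claim. The only small remark is that your second step, arguing the existence of a Bäcklund partner, is not strictly required since the statement of the corollary already takes such a $w$ as given; it merely confirms the hypothesis is non-vacuous.
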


\begin{remark}
	If $w$ in Corollary \ref{cor:polyrec_dskp} is such that $z,w$ is a Bäcklund pair with edge-label $\gamma = |z_i -w_i|^2$, then $w$ is also called a \emph{discrete bicycle transformation} of $z$. Moreover, it was shown that the discrete bicycle transformation commutes with polygon recutting \cite{ttbicycle}. This follows also as a corollary from the fact that $z,w$ are a Bäcklund pair of integrable cross-ratio maps.
\end{remark}

\begin{remark}
    Izosimov introduced a quiver of a cluster algebra which he associated to polygon recutting \cite{izosimovpolyrecut}. We do not need cluster algebras in this paper, but each local occurrence of the dSKP equation corresponds to a mutation in a cluster algebra \cite{athesis}. The weights of the Aztec diamond that we use to express propagation of integrable cross-ratio maps (and thus of polygon recutting) are in fact periodic, that is they are well defined on $\Z^2/\{(2,-2)\}$. In the case of $m$-closed initial conditions the weights are doubly periodic, that is well defined on $\Z^2/\{(2,-2), (m,m)\}$. The quiver that appears in \cite{izosimovpolyrecut} has the combinatorics of $\Z^2/\{(2,-2), (m,m)\}$ as well.
\end{remark}

\subsection{Singularities}

We now consider singularities, that is we assume that, for every $i\in\Z$, $z_{i,-i}=0$. In this case, the first two diagonals of any Bäcklund partner can be explicitly computed as stated by the following.

\begin{lemma}\label{lem:Bäck_partner_recutting}
Consider a polygon recutting lattice map $z:\Z^2\rightarrow\hC$, and suppose that for all $i\in\Z$, $z_{i,-i}=0$. Set $\gamma=1$, and for given $i'\in\Z$ and $x\in \hC\setminus \{0\}$, set $w_{i',-i'}=x$. Then, the first two diagonals of the Bäcklund partner $w$ of $z$ are explicitly given by
\begin{equation*}
\forall \, i\in\Z, \quad w_{i,-i}=x,\quad w_{i+1,-i}=\frac{\ell_{2i}^2-1}{x\ell_{2i}^2 - z_{i+1,-i}}xz_{i+1,-i}.
\end{equation*}
\end{lemma}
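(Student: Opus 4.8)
First I would set up the argument as the reconstruction of Section~\ref{subsec:Backlund_pairs_defi}: once $z$, the constant $\gamma=1$, and the single value $w_{i',-i'}=x$ are fixed, the Bäcklund partner $w$ is uniquely determined, so the proof reduces to propagating this value along a path and reading off the entries on the two diagonals $\{i+j=0\}$ and $\{i+j=1\}$. The convenient path is the staircase $\dots,(i,-i),(i+1,-i),(i+1,-i-1),(i+2,-i-1),\dots$, which alternates a horizontal edge (on which $w$ is governed by the first relation of Definition~\ref{def:backlundpair}) with a vertical edge (second relation); all of its vertices with $i+j=0$ carry $z$-value $0$ by the singularity hypothesis, since $z_{k,-k}=\tilde z_{2k,0}=0$.

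The one non-mechanical step is to notice that the singularity forces the edge-labels to pair up, namely $\alpha_i=\beta_{-i-1}$ for every $i$. Under the identification of the recutting polygon $v$ with $\tilde z$ we have $v_{2i}=\tilde z_{2i,0}=z_{i,-i}=0$, so every even vertex of $v$ sits at the origin; hence $\ell_{2i}=|v_{2i}-v_{2i+1}|=|v_{2i+1}|=|v_{2i+1}-v_{2i+2}|=\ell_{2i+1}$, and Lemma~\ref{lem:icr_recutting} turns this into $\alpha_i=\ell_{2i}^2=\ell_{2i+1}^2=\beta_{-i-1}$.

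Then I would run the induction on $|i-i'|$ along the staircase. Assuming $w_{i,-i}=x$, the first relation of Definition~\ref{def:backlundpair} at $(i,-i)$, with $z_{i,-i}=0$ and $\gamma=1$, is a Möbius relation in the fourth vertex; solving it gives
\[
w_{i+1,-i}=\frac{z_{i+1,-i}\,x\,(\alpha_i-1)}{\alpha_i x-z_{i+1,-i}}=\frac{\ell_{2i}^2-1}{x\,\ell_{2i}^2-z_{i+1,-i}}\,x\,z_{i+1,-i},
\]
which is exactly the claimed value on the second diagonal. Next, since $(i+1)+(-i-1)=0$ we have $z_{i+1,-i-1}=\tilde z_{2i+2,0}=0$, and the second relation of Definition~\ref{def:backlundpair} at $(i+1,-i-1)$ gives, after clearing denominators,
\[
w_{i+1,-i-1}=\frac{z_{i+1,-i}\,x\,(\alpha_i-1)}{x(\alpha_i-\beta_{-i-1})-z_{i+1,-i}(1-\beta_{-i-1})},
\]
which collapses to $x$ precisely because $\alpha_i=\beta_{-i-1}$. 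Thus a rightward step preserves $w\equiv x$ on $\{i+j=0\}$, and the leftward step is identical after exchanging the two relations of Definition~\ref{def:backlundpair} (and $\alpha\leftrightarrow\beta$), using $\alpha_{i-1}=\beta_{-i}$. This yields both displayed formulas for all $i\in\Z$.

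I expect the pairing $\ell_{2i}=\ell_{2i+1}$ (equivalently $\alpha_i=\beta_{-i-1}$) to be the only real obstacle: without it the zig-zag would not close and $w$ would fail to be constant on the bottom diagonal. Everything else is the elementary algebra of solving a cross-ratio equation for one of its four arguments, and the compatibility of the two computed diagonals with the remaining quads of Definition~\ref{def:backlundpair} is automatic from the uniqueness of the Bäcklund partner.
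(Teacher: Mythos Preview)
Your proof is correct and follows essentially the same approach as the paper: both isolate the key identity $\ell_{2i}=\ell_{2i+1}$ (equivalently $\alpha_i=\beta_{-i-1}$) from the singularity hypothesis, and then use the two cross-ratio relations of Definition~\ref{def:backlundpair} along the staircase. The only stylistic difference is that the paper states the formulas and says ``an explicit computation proves'' the Bäcklund relations hold, whereas you actually carry out the Möbius algebra and \emph{derive} the formulas by induction; the underlying content is identical.
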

\begin{proof}
We have that, for every $i\in\Z$,
\begin{align*}
|z_{i,-i}-z_{i+1,-i}|&=|\tilde{z}_{2i,0}-\tilde{z}_{2i+1,1}|=\ell_{2i},\\
|z_{i+1,-i}-z_{i+1,-(i+1)}|&=|\tilde{z}_{2i+1,1}-\tilde{z}_{2i+2,0}|=\ell_{2i+1}.
\end{align*}
Since, for all $i\in\Z$, $z_{i,-i}= 0$, both left-hand-sides are equal, therefore we obtain
$\ell_{2i}=\ell_{2i+1}$. Using that, for every $i\in\Z$, $\ell_{2i}=\ell_{2i+1}$, an explicit computation proves that Equation~\eqref{eq:Bäck_1} is satisfied for $j=-i$, and Equation~\eqref{eq:Bäck_2} is satisfied for $j=-(i+1)$. The proof is concluded by recalling that these two sets of equations determine the first two diagonals of the Bäcklund partner $w$ of $z$.\qedhere
\end{proof}

Let $m\geq 1$. A polygon recutting lattice map $z:\Z^2\rightarrow\hC$ is said to be \emph{$m$-closed} if it is $m$-closed as an integrable cross-ratio map, that is, for all $i,j\in\Z$, $z_{i,j}=z_{i+m,j-m}$, or equivalently, for all $i,j$ such that $[i+j]_2=0$, $\tilde{z}_{i+2m,j}=\tilde{z}_{i,j}$.

\begin{figure}
        \centering
        \frame{\includegraphics[height=6cm]{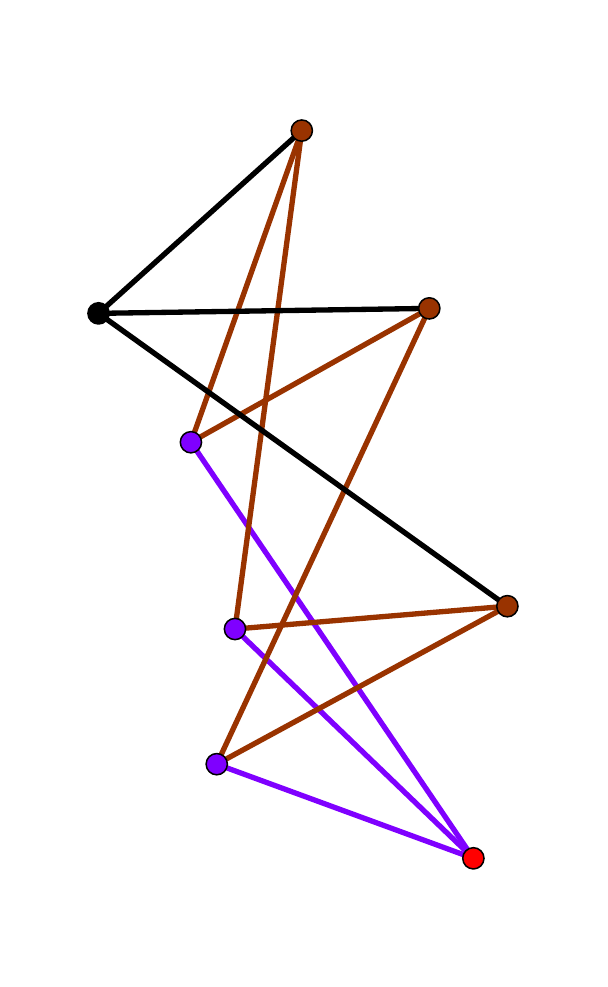}}\hspace{10mm}
        \frame{\includegraphics[height=6cm,angle=0]{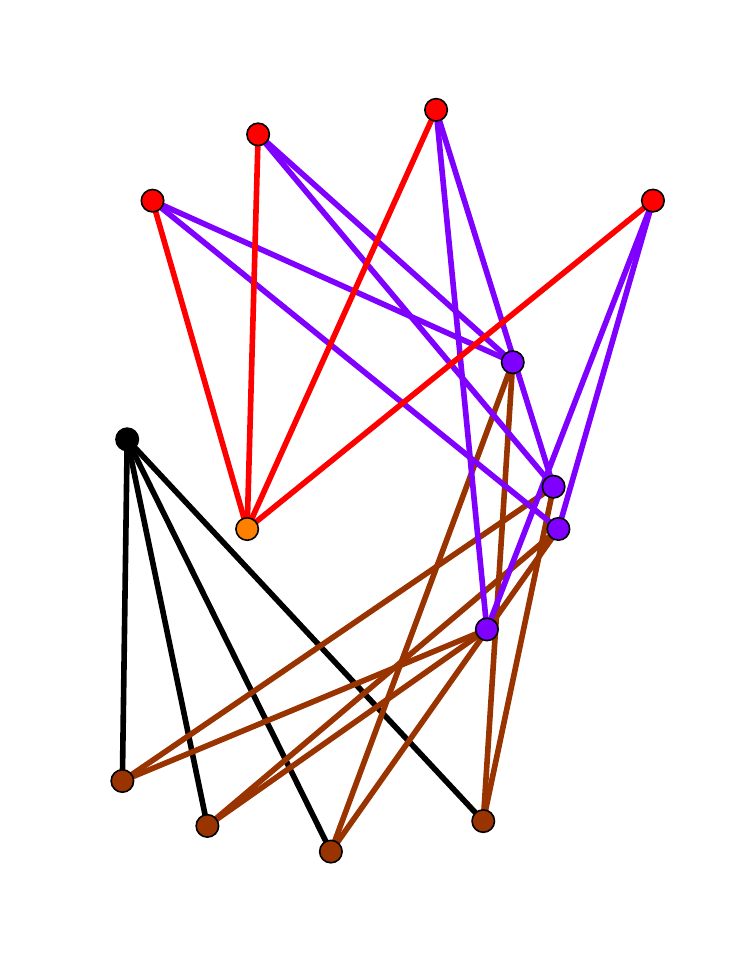}}
        \caption{Polygon recutting singularities. The black dot at $0$
            corresponds to $\tilde{z}_0$, and the brown dots are the values of
          $\tilde{z}_1$. Those are $m$-closed with $m=3$ (left), resp. $m=4$
        (right). Note that $\tilde{z}_m$ is constant.}
        \label{fig:polyrecutsing}
\end{figure}

The following Devron phenomenon has already been observed and proven by Glick \cite[Theorem 7.3]{gdevron}.

\begin{theorem}\label{theo:recutsing}
Let $m\in\N$, and let $\tilde{z}$ be an $m$-closed polygon recutting lattice map such that $\tilde{z}_{i,0}=0$ for all $i\in2\Z$. Assume we can apply the propagation map $T$ to $(\tilde{z}_0,\tilde{z}_1)$ at least $m-1$ times. Then, there is $z'\in\hC$ such that
\[
\tilde{z}_{k,m}=z'
\]
for all $k$ such that $[k+m]_2=0$. That is $\tilde{z}_{m}$ is constant and thus the singularity repeats after $m-1$ steps.
\end{theorem}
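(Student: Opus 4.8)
The plan is to deduce Theorem~\ref{theo:recutsing} from the general dSKP Devron result Theorem~\ref{theo:devron_sing} by recognising the relevant dSKP initial data as $(m,1)$-Devron. First I would fix a Bäcklund partner. Under the singular hypothesis $\tilde z_{i,0}=0$, equivalently $z_{i,-i}=0$ for all $i$, the proof of Lemma~\ref{lem:Bäck_partner_recutting} gives $\ell_{2i}=\ell_{2i+1}$, that is $\alpha_i=\beta_{-i-1}$; by that same lemma, taking $\gamma=1$, there is an integrable cross-ratio map $w$ forming a Bäcklund pair with $z$ such that $w_{i,-i}=x$ is constant along the diagonal $i+j=0$. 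In the $m$-closed case I would moreover choose $x$ to be a fixed point of the associated monodromy Möbius transformation, exactly as in Section~\ref{sec:sing_Backlund_pairs}, so that $w$ is itself $m$-closed. In the rotated coordinates of \eqref{eq:rotated_weights} this gives $\tilde z_0\equiv 0$ and $\tilde w_0\equiv x$, while $\tilde z_1,\tilde w_1$ are explicit.

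Next I would invoke Theorem~\ref{theo:explintcr}: the recutting propagation of $z$ is encoded by the dSKP solution whose face weights $(a_{i,j})$ are read off from $\tilde z_0,\tilde z_1,\tilde w_0,\tilde w_1$ according to $[i-j]_4$. The crux is then a short index check. The faces lying at dSKP-height $0$ are precisely those with $[i-j]_4\in\{0,2\}$, and the formula of Theorem~\ref{theo:explintcr} specialises to $a_{i,j}=\tilde z_{i+j,0}=0$ whenever $i-j\equiv 0\pmod 4$ and $a_{i,j}=\tilde w_{i+j,0}=x$ whenever $i-j\equiv 2\pmod 4$. Since each SW--NE diagonal at height $0$ consists of a single such residue class, every height-$0$ diagonal of $(a_{i,j})$ is constant, i.e. $a_{i,j}=a_{i+1,j+1}$ for all $(i,j)$ at height $0$. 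Combined with the fact that $m$-closedness of $z$ and $w$ makes $(a_{i,j})$ $m$-simply periodic, this shows that $(a_{i,j})$ is a $(m,1)$-Devron initial condition in the sense of Definition~\ref{def:sing}. (It is exactly the availability of a Bäcklund partner with $\tilde w_0$ constant — special to recutting, via $\ell_{2i}=\ell_{2i+1}$ — that upgrades the generic $(m,2)$-Devron behaviour of Theorem~\ref{theo:intcrsingular} to $(m,1)$-Devron here.)

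Finally I would apply Theorem~\ref{theo:devron_sing} with $p=1$, so $k=(m-2)\cdot 1+2=m$: for every $(i,j)$ with $[i-j-m]_2=0$ we get $x(i,j,m)=x(i+1,j+1,m)$. Translating back through the explicit solution \eqref{eq:proof_Backlund}, on the part of the height-$m$ layer with $[i-j+m]_4=0$ this reads $\tilde z_{i+j,m}=\tilde z_{i+j+2,m}$, and as $(i,j)$ varies $i+j$ ranges over all $k$ with $[k+m]_2=0$; hence $\tilde z_m$ is constant, equal to some $z'\in\hC$. Since $\tilde z_j$ sits at dSKP-height $j$ and the height-$m$ layer is obtained after $m-1$ applications of the recurrence — each of which is one application of the recutting map $T$ — this constancy is reached after exactly $m-1$ steps, which is the claim (the hypothesis that $T$ can be applied $m-1$ times guarantees $\tilde z_m$ is defined). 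I expect the main obstacle to be the bookkeeping of the middle paragraph: one must verify carefully that $w_{i,-i}$ can be taken constant \emph{and} that $w$ can simultaneously be made $m$-closed via the monodromy fixed point (with $x\neq 0$ generically, so that Proposition~\ref{prop:dimker2}'s obstruction and the stronger Dodgson-type hypothesis of Proposition~\ref{prop:Nmat} are not triggered), and that the index translations between $z$, $\tilde z$, the dSKP weights $a_{i,j}$, and the solution \eqref{eq:proof_Backlund} are done consistently.
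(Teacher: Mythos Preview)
Your proposal is correct and follows essentially the same route as the paper's proof: use Lemma~\ref{lem:Bäck_partner_recutting} to get $\tilde w_0$ constant, recognise the resulting dSKP initial data from Theorem~\ref{theo:explintcr} as $(m,1)$-Devron, apply Theorem~\ref{theo:devron_sing} with $p=1$, and translate back via \eqref{eq:proof_Backlund}. The one place you are over-cautious is the monodromy fixed point: since Lemma~\ref{lem:Bäck_partner_recutting} gives $w_{i,-i}\equiv x$ and $w_{i+1,-i}$ explicitly in terms of $m$-periodic data $\ell_{2i},z_{i+1,-i}$, the first two diagonals of $w$ (hence all of $w$, by propagation with periodic edge-labels) are automatically $m$-closed for \emph{any} choice of $x$, so no fixed-point argument is needed.
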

\begin{proof}
  By Corollary~\ref{cor:polyrec_dskp}, we know that the values
  of $z$ and a Bäcklund partner $w$ can be made into a solution of the
  dSKP recurrence, with initial values as in
  Figure~\ref{fig:backlund_ic}. When $\tilde{z}$ is $m$-closed and
  $\tilde{z}_0$ is constant, by
  Lemma~\ref{lem:Bäck_partner_recutting}, $\tilde{w}_0$ is constant as
  well. As a result, this initial condition is $(m,1)$-Devron. By
  Theorem~\ref{theo:devron_sing}, for the solution $x$ of the dSKP
  recurrence, whenever $[i-j-m]_2=0$, $x(i,j,m)=x(i+1,j+1,m)$; note
  that the condition $[i-j-m]_2=0$ is in fact automatic as
  $(i,j,m)\in \calL$. Using the expression \eqref{eq:proof_Backlund}
  for the solution, this gives that $\tilde{z}_m$ (as well as
  $\tilde{w}_m$) is constant.
\end{proof}

We provide a conjecture on the exact position of the singularity.

\begin{conjecture}\label{conj:recutsing} 
        Let $m\in \Z$ and let $\tilde z$ be an $m$-closed polygon recutting lattice map, such that $\tilde z_{i,0} = 0$ for all $i\in 2\Z$. Let $y_i = \tilde z_{2i+1,1}$ and $\varrho_i= \ell_{2i} = \ell_{2i+1}$. Let $\mathcal I_k$ be the set of $k$-subsets of $\{0,1,\dots,m-1\}$. Assume we can apply polygon recutting $T$ to $\tilde z$ at least $m-1$ times. For $m \in 2\Z+1$ and $k=\frac{m-1}{2}$ holds
        \begin{align}
        \tilde z_{i,m}= (-1)^k\frac{\sum\limits_{I \in \mathcal I_k }\left[\prod\limits_{i\in I}(-1)^i \prod\limits_{i\notin I}y_i \prod\limits_{i,j\in I, i < j} (\varrho_i^2-\varrho_j^2) \prod\limits_{i,j\notin I, i < j} (\varrho_i^2-\varrho_j^2)\right]}{\sum\limits_{I \in \mathcal I_k }\left[ \prod\limits_{i\in I}(-1)^i\prod\limits_{i\in I}y_i \prod\limits_{i,j\in I, i < j} (\varrho_i^2-\varrho_j^2) \prod\limits_{i,j\notin I, i < j} (\varrho_i^2-\varrho_j^2)\right]},
        \end{align}
        for all $i\in 2\Z+1$. For $m \in 2\Z$ and $k=\frac{m}{2}$ holds
        \begin{align}
                \tilde z_{i,m} = (-1)^{k+1}\frac{\sum\limits_{I \in \mathcal I_{k-1} }\left[ \prod\limits_{i\in I}(-1)^i\varrho_i^2 \prod\limits_{i\notin I}y_i \prod\limits_{i,j\in I, i < j} (\varrho_i^2-\varrho_j^2) \prod\limits_{i,j\notin I, i < j} (\varrho_i^2-\varrho_j^2)\right]}{\sum\limits_{I \in \mathcal I_k }\left[ \prod\limits_{i\in I}(-1)^i\varrho_i^2 \prod\limits_{i\in I}y_i \prod\limits_{i,j\in I, i < j} (\varrho_i^2-\varrho_j^2) \prod\limits_{i,j\notin I, i < j} (\varrho_i^2-\varrho_j^2)\right]},
        \end{align}
        for all $i\in 2\Z$.
\end{conjecture}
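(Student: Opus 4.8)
The plan is to follow the proof of Theorem~\ref{theo:intcrsingular} almost verbatim, replacing the $(m,2)$-Devron structure by the $(m,1)$-Devron structure of polygon recutting and inserting the edge-labels $\alpha_i=\varrho_i^2$, $\beta_j=\varrho_j^2$ supplied by Lemma~\ref{lem:icr_recutting}. First I would fix $\gamma=1$ and choose a Bäcklund partner $w$ of $z$ via Lemma~\ref{lem:Bäck_partner_recutting}, so that in the rotated coordinates of \eqref{eq:rotated_weights} one has $\tilde z_0\equiv 0$ and $\tilde w_0\equiv x$ for the free parameter $x$, with $\tilde w_{2i+1,1}=\frac{(\varrho_i^2-1)\,x\,y_i}{x\varrho_i^2-y_i}$. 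Exactly as in the proof of Theorem~\ref{theo:recutsing}, this makes the associated dSKP initial data (Figure~\ref{fig:backlund_ic}) of $(m,1)$-Devron type, so $\tilde z_m$ is constant by Theorem~\ref{theo:devron_sing}. To identify this constant I would evaluate it through Theorem~\ref{theo:D} on the Aztec diamond of size $m-1$, choosing the evaluation point $(i,m)$ so that the central face of the diamond is a $\tilde z$-face when $m$ is odd and a $\tilde w$-face when $m$ is even; tracing this through \eqref{eq:proof_Backlund} and Theorem~\ref{theo:explintcr} is precisely what produces both the dichotomy ``$i\in 2\Z+1$'' versus ``$i\in 2\Z$'' and the extra factor $\varrho_i^2$ appearing in the even case. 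Theorem~\ref{theo:D} then gives $\tilde z_{i,m}=\frac{\sum_{\ell=0}^{m-1}a_\ell v_\ell}{\sum_{\ell=0}^{m-1}v_\ell}$ for any nonzero $v\in\ker D^T$, where $a_0,\dots,a_{m-1}$ are the leftmost face weights and $v_0,\dots,v_{m-1}$ the corresponding entries.

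Next I would use the $m$-periodicity of the weights to quotient onto a cylinder, obtaining an operator $\bar D$ with black vertices $\bar B$ and faces $\bar F$, as in Figures~\ref{fig:Dbacklund}--\ref{fig:Dbacklundcyl}; summing $v$ over the fibres of the quotient produces $\bar v\in\ker\bar D^T$, and the closedness of $\tilde z_1$ lets one rewrite the two sums above as $\sum_{\ell=0}^{m-1}\bar v_\ell$ and $\sum_{\ell=0}^{m-1}y_\ell\,\bar v_\ell$. The key step is to pin down $\ker\bar D^T$ and the location of $\bar v$ inside it: I expect one ``trivial'' generator coming from the single column of zeros per period (from $\tilde z_0\equiv 0$), together with further generators whose entries are built from the three-legged form \eqref{eq:intcrstarform} for $w$ specialized to $\alpha_i=\beta_j=\varrho_i^2$, exactly in the spirit of the vector constructed in the proof of Theorem~\ref{theo:intcrsingular}; the verification that such a candidate lies in $\ker\bar D^T$ reduces, as there, to the closedness of the dual $1$-form. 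One must also check, as happened with the $\tilde w$-data in Theorem~\ref{theo:intcrsingular}, that the dependence on the free parameter $x$ disappears from the final ratio.

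The real obstacle is the closed-form evaluation of $\sum_\ell\bar v_\ell$ and $\sum_\ell y_\ell\,\bar v_\ell$. By Cramer's rule the entries of $\bar v$ are, up to a common factor, cofactors of $\bar D$, so each of these two sums is a determinant of a matrix built from the $y_\ell$'s and from the differences $\varrho_i^2-\varrho_j^2$; I expect this matrix to carry a two-block, Vandermonde-like structure in the variables $\varrho_i^2$, with the $y_\ell$'s and the signs $(-1)^\ell$ sitting in one of the columns. Expanding such a determinant by the Laplace rule along one block (equivalently, via Cauchy-Binet) produces a sum over $k$-subsets $I\in\mathcal{I}_k$ of $\prod_{i\in I}(-1)^i$ times the complementary Vandermonde product $\prod_{i<j,\,i,j\in I}(\varrho_i^2-\varrho_j^2)\prod_{i<j,\,i,j\notin I}(\varrho_i^2-\varrho_j^2)$ times $\prod_{i\in I}y_i$ or $\prod_{i\notin I}y_i$, which is exactly the shape claimed; the even case additionally requires tracking the $\varrho_i^2$ prefactors and the shift to $\mathcal{I}_{k-1}$ in the numerator. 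Getting the global signs $(-1)^k$, resp.\ $(-1)^{k+1}$, and all index conventions to line up exactly is where the bookkeeping concentrates. As in Remark~\ref{rk:alphabeta} the whole statement should be read as an identity of rational functions in the formal variables $(y_i)_i$ and $(\varrho_i)_i$, with Proposition~\ref{prop:dimker2} handling the degenerate configurations in which $T$ cannot in fact be iterated $m-1$ times.
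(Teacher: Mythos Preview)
The statement you are trying to prove is Conjecture~\ref{conj:recutsing}; the paper does \emph{not} prove it. It is introduced with ``We provide a conjecture on the exact position of the singularity'' and no argument follows. So there is no paper proof to compare against, and what you have written is a proposed attack on an open problem rather than a rederivation.

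As a strategy, your outline is the natural one and is consistent with how the paper handles the easier cases: reduce to $(m,1)$-Devron via Lemma~\ref{lem:Bäck_partner_recutting} (this is exactly the proof of Theorem~\ref{theo:recutsing}), then try to compute the constant value of $\tilde z_m$ through Theorem~\ref{theo:D} and a cylinder quotient as in the proof of Theorem~\ref{theo:intcrsingular}. But you should be aware that the paper's authors presumably tried this and did not succeed in closing it, so the ``real obstacle'' you flag is genuine. Concretely, there are two places where your sketch is not yet a proof. First, the kernel $\ker\bar D^T$ here is structurally different from the $(m,2)$-Devron case: now \emph{both} $\tilde z_0\equiv 0$ and $\tilde w_0\equiv x$ are constant, so the pattern of trivial and nontrivial generators changes, and the three-legged identity \eqref{eq:intcrstarform} with $\alpha_i=\varrho_i^2$, $\beta_j=\varrho_{-j-1}^2$ (not $\alpha_i=\beta_j$) has to be used with care; you have not exhibited the analogue of the explicit kernel vector that made the $(m,2)$ proof work. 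Second, and more seriously, the elimination of the auxiliary parameter $x$ and the identification of the resulting ratio with the claimed Vandermonde-type sums over $\mathcal I_k$ is asserted by analogy (``I expect this matrix to carry a two-block, Vandermonde-like structure'') rather than carried out. Until those determinants are actually written down and expanded, this remains a plan, not a proof.
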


\section{Circle intersection dynamics and integrable circle patterns}\label{sec:cid}
When Glick investigated the Devron property \cite[Section 9]{gdevron}, he also proposed a new dynamics called \emph{circle intersection dynamics}. It can be thought of as a loose generalization of the pentagram map, see Section~\ref{sec:pent}, replacing the process of intersecting lines through pairs of points, by the process of intersecting circles through triplets of points. We show in this section how to relate circle intersection dynamics to integrable cross-ratio maps, which enables us to give an explicit expression for the iteration of circle intersection dynamics and to prove Glick's conjecture on the Devron property.

\subsection{Definitions and explicit solution}

\begin{definition}
Consider points in the complex plane $(v_i)_{i\in\Z}$. For all $i\in\Z$, let $c_i$ denote the circle through $v_{i-1},v_i,v_{i+1}$, and let $t_i$ be the center of $c_i$. A step of \emph{(local) circle intersection dynamics} consists in fixing an index $k$, and replacing the vertex $v_k$ with the other intersection point of $c_{k-1}$ and $c_{k+1}$; note that this is an involution. Observe that only the circle $c_k$, and hence $t_k$ change, and all the other circles do not.
\end{definition}

Let $\tilde{z}_j=(\tilde{z}_{i,j})_{[i+j]_2=0}$, $\tilde{w}_j=(\tilde{w}_{i,j})_{[i+j]_2=0}$ be such that
\begin{align}
\forall\ i\in2\Z+1, \quad\quad \tilde z_{i+1,j+1}, \quad \tilde z_{i-1,j+1}, \quad \tilde z_{i-1,j-1}, \quad \tilde z_{i+1,j-1}, \quad \tilde w_{i,j},
\end{align}
are on a circle with center $\tilde z_{i,j}$, and such that
\begin{align}
\forall i\in2\Z,\quad\quad \tilde w_{i+1,j+1}, \quad \tilde w_{i-1,j+1}, \quad \tilde w_{i-1,j-1}, \quad \tilde w_{i+1,j-1}, \quad \tilde z_{i,j},
\end{align}
are on a circle with center $\tilde w_{i,j}$, see Figure \ref{fig:cidandintcp}.
We refer to $\tilde{z},\tilde{w}$ as a \emph{pair of circle intersection dynamics lattice maps.}

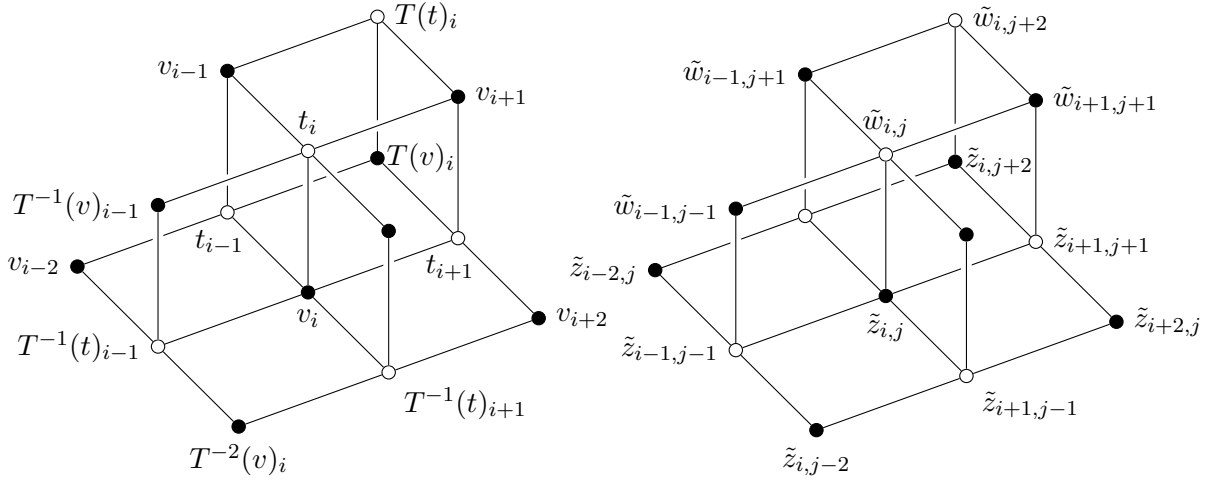
\begin{figure}[tb]
    \centering
    \begin{tikzpicture}[scale=1.5]
                \coordinate (e1) at (20:1.4);
                \coordinate (e2) at (135:1);
                \coordinate (e3) at (270:1.25);
                \node[wvert,label=above:$t_{i}$] (v) at (0,0) {};
                \node[bvert,label=right:$v_{i+1}$] (v1) at (e1) {};
                \node[bvert,label=left:$v_{i-1}$] (v2) at (e2) {};
                \node[bvert,label=below:$v_{i}$] (v3) at (e3) {};
                \node[wvert,label=right:$T(t)_{i}$] (v12) at ($(e1)+(e2)$) {};
                \node[wvert,label={below,xshift=-3:$t_{i-1}$}] (v23) at ($(e3)+(e2)$) {};
                \node[wvert,label={below,xshift=-3:$t_{i+1}$}] (v13) at ($(e1)+(e3)$) {};
                \node[bvert,label={right,xshift=-3:$T(v)_{i}$}] (v123) at ($(e1)+(e2)+(e3)$) {};
                \node[wvert,label=left:$T^{-1}(t)_{i-1}$] (v34) at ($(e3)-(e1)$) {};
                \node[wvert,label=below right:$T^{-1}(t)_{i+1}$] (v35) at ($(e3)-(e2)$) {};
                \node[bvert,label=left:$v_{i-2}$] (v234) at ($(e3)+(e2)-(e1)$) {};
                \node[bvert,label=right:$v_{i+2}$] (v135) at ($(e3)-(e2)+(e1)$) {};
                \node[bvert,label=below:$T^{-2}(v)_{i}$] (v345) at ($(e3)-(e2)-(e1)$) {};
                \node[bvert,label=left:$T^{-1}(v)_{i-1}$] (v4) at ($(0,0)-(e1)$) {};
                \node[bvert] (v5) at ($(0,0)-(e2)$) {};
                \draw[-]
                        (v) edge (v1) edge (v2) edge (v3) edge (v4) edge (v5)
                        (v12) edge (v1) edge (v2) edge (v123)
                        (v23) edge (v3) edge (v2) edge (v123)
                        (v13) edge (v1) edge (v3) edge (v123)
                        (v3) edge (v34) edge (v35)
                        (v23) -- (v234) -- (v34) -- (v345) -- (v35) -- (v135) -- (v13)
                ;
                \draw[-,white,line width=4pt]
                        (v) edge (v3) edge (v1) edge (v4) edge (v5)
                        (v4) -- (v34) (v5) -- (v35)
                ;
                \draw[-]
                        (v) edge (v1) edge (v3) edge (v4) edge (v5)
                        (v4) -- (v34) (v5) -- (v35)
                ;
        \end{tikzpicture}\hspace{-9mm}
        \begin{tikzpicture}[scale=1.5]
            \coordinate (e1) at (20:1.4);
            \coordinate (e2) at (135:1);
            \coordinate (e3) at (270:1.25);
            \node[wvert,label=above:$\tilde w_{i,j}$] (v) at (0,0) {};
            \node[bvert,label=right:$\tilde w_{i+1,j+1}$] (v1) at (e1) {};
            \node[bvert,label=left:$\tilde w_{i-1,j+1}$] (v2) at (e2) {};
            \node[bvert,label=below:$\tilde z_{i,j}$] (v3) at (e3) {};
            \node[wvert,label=right:$\tilde w_{i,j+2}$] (v12) at ($(e1)+(e2)$) {};
            \node[wvert] (v23) at ($(e3)+(e2)$) {}; 
            \node[wvert,label=right:$\tilde z_{i+1,j+1}$] (v13) at ($(e1)+(e3)$) {};
            \node[bvert,label={right,xshift=-3:$\tilde z_{i,j+2}$}] (v123) at ($(e1)+(e2)+(e3)$) {};
                        \node[wvert,label=left:$\tilde z_{i-1,j-1}$] (v34) at ($(e3)-(e1)$) {};
                        \node[wvert,label=below right:$\tilde z_{i+1,j-1}$] (v35) at ($(e3)-(e2)$) {};
                        \node[bvert,label=left:$\tilde z_{i-2,j}$] (v234) at ($(e3)+(e2)-(e1)$) {};
                        \node[bvert,label=right:$\tilde z_{i+2,j}$] (v135) at ($(e3)-(e2)+(e1)$) {};
                        \node[bvert,label=below:$\tilde z_{i,j-2}$] (v345) at ($(e3)-(e2)-(e1)$) {};
                        \node[bvert,label=left:$\tilde w_{i-1,j-1}$] (v4) at ($(0,0)-(e1)$) {};
                        \node[bvert] (v5) at ($(0,0)-(e2)$) {}; 
            \draw[-]
                    (v) edge (v1) edge (v2) edge (v3)
                    (v12) edge (v1) edge (v2) edge (v123)
                    (v23) edge (v3) edge (v2) edge (v123)
                    (v13) edge (v1) edge (v3) edge (v123)
                                (v3) edge (v34) edge (v35)
                                (v23) -- (v234) -- (v34) -- (v345) -- (v35) -- (v135) -- (v13)
            ;
            \draw[-,white,line width=4pt]
                   (v) edge (v3) edge (v1) edge (v4) edge (v5)
                   (v4) -- (v34) (v5) -- (v35)
            ;
            \draw[-]
                  (v) edge (v1) edge (v3) edge (v4) edge (v5)
                  (v4) -- (v34) (v5) -- (v35)
            ;
        \end{tikzpicture}
        \caption{ Here $i\in 2\Z$. Identifying the points of a
          Bäcklund pair of integrable circle patterns (left), with the
          points occurring in circle intersection dynamics
          (right), where we set $j=0$ on the left for improved readability.
          }
        \label{fig:cidandintcp}
\end{figure}

Consider the dynamics $T$ mapping
$(\tilde{z}_0,\tilde{w}_0,\tilde{z}_1,\tilde{w}_1)$ to
$(\tilde{z}_1,\tilde{w}_1,\tilde{z}_2,\tilde{w}_2)$. This map
corresponds to replacing $\tilde{z}_{i,0}$ by $\tilde{z}_{i,2}$, which is the other intersection point of the circles centered at $\tilde z_{i-1,1}$ and $\tilde z_{i+1,1}$, for every $i \in 2\Z$, and replacing the center $\tilde w_{i,0}$ by $\tilde{w}_{i,2}$, which is the center of the circle through $\tilde w_{i-1,1}, \tilde w_{i+1,1}$ and $\tilde z_{i,2}$. Note that we can apply circle intersection dynamics to points $(v_i)_{i\in\Z}$ and centers $(t_i)_{i\in\Z}$ by identifying $t,v$ with $\tilde z, \tilde w$ via
\begin{equation*}
v_i = \begin{cases}
\tilde{z}_{i,0} \text{ if $i$ is even},\\
\tilde{w}_{i,1} \text{ if $i$ is odd},
\end{cases}\quad
{t_i} = \begin{cases}
\tilde{w}_{i,0} \text{ if $i$ is even},\\
\tilde{z}_{i,1} \text{ if $i$ is odd}.
\end{cases}
\end{equation*}

The map $T$ naturally extends to
\[
T:\hat{\C}^{\Z_4\times\Z}\rightarrow\hat{\C}^{\Z_4\times\Z},\quad (\tilde{z}_j,\tilde{w}_j,\tilde{z}_{j+1},\tilde{w}_{j+1})\mapsto(\tilde{z}_{j+1},\tilde{w}_{j+1},\tilde{z}_{j+2},\tilde{w}_{j+2}),
\] and is referred to as the \emph{circle intersection dynamics}.
This also gives a dynamics acting on points. Using the same notation
$T$, and noting that half the points do not change in one application
of $T$, we have
\begin{align*}
\forall\ i\in2\Z,\ j\in2\N+2,\quad & T^j(v)_i = T^{j-1}(v)_i = \tilde
                                     z_{i,j},\quad  T^j(t)_i = T^{j-1}(t)_i = \tilde w_{i,j},\\
\forall\ i\in2\Z+1,\ j\in2\N+1,\quad & T^{j}(v)_i = T^{j-1}(v)_i =
                                       \tilde w_{i,j},\quad  T^j(t)_i
                                       = T^{j-1}(t)_i = \tilde z_{i,j}.
\end{align*}
Whether $T$ is used for the dynamics acting on points or quadruples of points should be clear from the context.

We now consider $z,w:\Z^2\rightarrow \hC$ obtained from $\tilde{z},\tilde{w}$ by the change of coordinates of Equation~\eqref{eq:rotated_weights}.

\begin{lemma}\label{lem:icd_Backlund}
The pair $z,w$ of circle intersection dynamics lattice maps is a Bäcklund pair of integrable cross-ratio maps with $\gamma=1$,
and edge-labels given by
\[
\forall\ i,j\in\Z,\quad \alpha_i = \cro(z_{i,-i},z_{i+1,-i},w_{i+1,-i},w_{i,-i}), \quad
\beta_j = \cro(z_{j,-j},z_{j,1-j},w_{j,1-j},w_{j,-j}).
\]
\end{lemma}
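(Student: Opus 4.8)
The plan is to verify the four cross-ratio conditions — the two integrable cross-ratio map conditions \eqref{eq:intcrmapdef} for $z$ and for $w$, and the two Bäcklund conditions \eqref{eq:Bäck_1}, \eqref{eq:Bäck_2} — by exploiting the fact that each of the circles appearing in the definition of $\tilde z,\tilde w$ comes equipped with a distinguished center, and that four concyclic points whose cross-ratio we want to compute can be handled by elementary inversive geometry. The key structural observation is the one already illustrated in Figure~\ref{fig:cidandintcp}: in the rotated coordinates, a quad of the form $(z_{i,j},z_{i+1,j},z_{i+1,j+1},z_{i,j+1})$ — equivalently, in tilde coordinates, the four points $\tilde z_{i+1,j+1},\tilde z_{i-1,j+1},\tilde z_{i-1,j-1},\tilde z_{i+1,j-1}$ for odd $i$ — all lie on a single circle, the one centered at $\tilde z_{i,j}$. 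Hence the cross-ratio of such a quad is real; moreover, since the data is (by the definition of the pair of circle intersection dynamics lattice maps) concyclic with a \emph{specified} center, the cross-ratio is determined by the arc-length data on that circle, and in particular is independent of $j$ by the translation-invariance of the construction in the $j$-direction. This immediately shows that $z$ (and symmetrically $w$) satisfies \eqref{eq:intcrmapdef} with \emph{some} edge-labels $\alpha_i,\beta_j$ depending only on $i$ resp.\ $j$; the same argument for the mixed quads $(z_{i,j},z_{i+1,j},w_{i+1,j},w_{i,j})$ and $(z_{i,j},z_{i,j+1},w_{i,j+1},w_{i,j})$ — whose four points are concyclic because $w_{i,j},w_{i+1,j}$ (resp.\ $w_{i,j},w_{i,j+1}$) lie on a circle through two consecutive $z$'s by the defining concyclicity relations — gives \eqref{eq:Bäck_1}, \eqref{eq:Bäck_2} with edge-labels $\alpha_i/\gamma$, $\beta_j/\gamma$.

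Concretely I would proceed as follows. First, fix the affine chart and recall that four points on a common circle have a real cross-ratio, and that if the circle has center $c$ the cross-ratio can be written in terms of the half-angles subtended at $c$; this is the only geometric input needed. Second, check the $z$-quad condition: for $(i,j)$ with $[i+j]_2=0$, the points $z_{i,j},z_{i+1,j},z_{i+1,j+1},z_{i,j+1}$ correspond in tilde coordinates to four of the five concyclic points in the definition centered at a suitable $\tilde z_{\cdot,\cdot}$ (resp.\ $\tilde w_{\cdot,\cdot}$), so the cross-ratio is real and, by the vertical periodicity of the lattice construction, depends only on $i$ through the numerator and only on $j$ through the denominator; call these $\alpha_i$ and $\beta_j$ respectively, matching Definition~\ref{def:intcrmap}. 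Third, repeat verbatim for the $w$-quads, obtaining the \emph{same} $\alpha_i,\beta_j$ (here one uses that the two families of circles interlock: the circle centered at $\tilde z_{i,j}$ passes through four $\tilde z$'s \emph{and} through $\tilde w_{i,j}$, and symmetrically, so the edge data is shared). Fourth, check the two mixed/Bäcklund quads: the point is that $(z_{i,j},z_{i+1,j},w_{i+1,j},w_{i,j})$ is a quad all of whose vertices lie on the circle centered at $\tilde w_{i,j}$ (or $\tilde z_{i,j}$, depending on parity — precisely the "side face" picture of Figure~\ref{fig:cidandintcp}), hence its cross-ratio is real; and since $\gamma$ has not yet been normalized, we are free to set $\gamma=1$ and then \emph{define} $\alpha_i,\beta_j$ by the formulas in the statement — what remains is to check that these are consistent with the $\alpha,\beta$ extracted from the $z$- and $w$-quads, which follows from the chain of concyclicities (each of the three circles through a given quad is one of the circles in the definition, and composing the three cross-ratio identities, exactly as in the proof of Lemma~\ref{lem:intcrdskp}, yields $-1$, forcing the edge-labels to be compatible). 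Finally, invoke the uniqueness part of the Bäcklund-pair construction from Section~\ref{subsec:Backlund_pairs_defi} to conclude that $z,w$ is genuinely a Bäcklund pair and not merely a pair satisfying the four local relations at one quad.

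The main obstacle I anticipate is the bookkeeping in the third and fourth steps: one must track which of the five concyclic points in each defining relation plays the role of "center" versus "boundary point", and this alternates with the parity of $i$, so the correspondence between the $\tilde z,\tilde w$ indices and the $z,w$ indices in the unrotated picture has to be set up carefully (this is exactly why Figure~\ref{fig:cidandintcp} is drawn with $j=0$ and separately for $i$ even). A second, milder subtlety is verifying that the edge-labels really are well-defined as functions of a single index — i.e.\ that the cross-ratio of a $z$-quad at $(i,j)$ and at $(i,j')$ agree; this is where the $j$-translation invariance of the construction of $\tilde z,\tilde w$ (they are defined by the same concyclicity relation for every $j$) is used, and it should be stated explicitly rather than left implicit. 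Once these correspondences are pinned down, each of the four cross-ratio computations is a one-line consequence of "four concyclic points, cross-ratio real, value read off the arcs", so I would not expect any genuinely hard estimate or algebraic identity — the content is entirely in organizing the concyclicities, much as in the proof of Lemma~\ref{lem:intcrdskp}.
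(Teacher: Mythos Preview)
Your plan rests on a geometric mis-identification. You claim that the four vertices of a $z$-quad $(z_{i,j},z_{i+1,j},z_{i+1,j+1},z_{i,j+1})$ are, in tilde coordinates, the four diagonal neighbours $\tilde z_{i\pm1,j\pm1}$ of some $\tilde z_{i,j}$, and hence lie on one of the defining circles. They do not: under the change of coordinates $z_{a,b}=\tilde z_{a-b,a+b}$, the four tilde points $\tilde z_{i\pm1,j\pm1}$ correspond to the four \emph{cross}-neighbours $z_{a\pm1,b},z_{a,b\pm1}$ of a single point, not to the four corners of a quad. In fact, in the circle-intersection picture (Figure~\ref{fig:cid}) a quad such as $(z_{i,j},z_{i+1,j},z_{i+1,j+1},z_{i,j+1})$ --- and likewise each mixed quad --- consists of two circle \emph{centres} and the two \emph{intersection points} of those circles, alternating. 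Such four points are not concyclic, and their cross-ratio is not real; it has modulus one, namely $e^{2\mathrm{i}\theta}$ where $\theta$ is the intersection angle of the two circles. So the step ``four concyclic points $\Rightarrow$ cross-ratio real $\Rightarrow$ read off arcs'' does not apply here (that argument is the one for polygon recutting, Lemma~\ref{lem:icr_recutting}, where the quad points genuinely are concyclic).

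What the paper does instead, and what your plan is missing, is the angle calculus for a four-circle configuration. Once you recognise each face of the cube as ``two centres, two intersections'', the cross-ratio of that face is $e^{2\mathrm{i}\theta}$ for the corresponding intersection angle. Two facts then drive the proof: (i) the three intersection angles meeting at any intersection vertex sum to $\pi$, which gives the product relation $\cro\cdot\cro\cdot\cro=1$ and hence the factorisation $\cro(z\text{-quad})=\alpha_i/\beta_j$; and (ii) in a four-circle configuration opposite intersection angles sum to $\pi$, which propagates the edge-labels to the opposite faces of the cube and makes them functions of a single index. Your ``$j$-translation invariance'' intuition is aiming at the right conclusion, but the mechanism is angle-sum identities, not concyclicity of quad vertices.
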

\proof{
    Consider the cube in $\Z_2 \times \Z$ involving the points
    \begin{align}
        z_{i,j}, w_{i,j}, z_{i+1,j}, w_{i+1,j}, z_{i,j+1}, w_{i,j+1}, z_{i+1,j+1}, w_{i+1,j+1},
    \end{align}
    for some fixed $i,j$ with $[i+j]_2=0$, see Figure \ref{fig:cid}. Then the four circles centered at $w_{i,j}, w_{i+1,j+1},$ $z_{i+1,j}, z_{i,j+1}$ have four intersection points, and these are exactly the four points $z_{i,j}, z_{i+1,j+1},$ $w_{i+1,j}, w_{i,j+1}$. Each quad, that is each face of the cube, consists of the centers of two circles and their two intersection points. Assume the factorization property holds with $\gamma=1$, that is
    \begin{align}
        \cro(z_{i,j},z_{i+1,j},w_{i+1,j},w_{i,j}) &= \alpha_i,\\
        \cro(z_{i,j},z_{i,j+1},w_{i,j+1},w_{i,j}) &= \beta_j.
    \end{align}
    The cross-ratio $\xi$ of each quad
    corresponds to the intersection-angle $\theta$ of the two circles
    via $\xi = \exp(2i\theta)$ \cite[Equation (8.1)]{ddgbook}.
    The three circle intersection angles around $z_{i,j}$
    sum to $\pi$, thus
    \begin{align}
        \cro(z_{i,j},z_{i+1,j},w_{i+1,j},w_{i,j})\cro(z_{i,j},w_{i,j},w_{i,j+1},z_{i,j+1})\cro(z_{i,j},z_{i,j+1},z_{i+1,j+1},z_{i+1,j}) = 1.
    \end{align}
    Therefore we have that
    \begin{align}
        \cro(z_{i,j},z_{i+1,j},z_{i+1,j+1},z_{i,j+1}) = \frac{\alpha_i}{\beta_j}.
    \end{align}
     It is well known that in a four-circle configuration, opposite intersection angles sum to $\pi$ \cite{vinberggeometry}. Therefore we obtain that
    \begin{align}
        \cro(z_{i,j+1},z_{i+1,j+1},w_{i+1,j+1},w_{i,j+1}) = \cro(z_{i,j},z_{i+1,j},w_{i+1,j},w_{i,j}) &= \alpha_i,\\
        \cro(z_{i+1,j},z_{i+1,j+1},w_{i+1,j+1},w_{i+1,j}) = \cro(z_{i,j},z_{i,j+1},w_{i,j+1},w_{i,j}) &= \beta_j,\\
        \cro(w_{i,j},w_{i+1,j},w_{i+1,j+1},w_{i,j+1}) = \cro(z_{i,j},z_{i+1,j},z_{i+1,j+1},z_{i,j+1}) &= \frac{\alpha_i}{\beta_j}.
    \end{align}
    Thus in this cube, the equations of Definition \ref{def:intcrmap}
    and Definition \ref{def:backlundpair} are satisfied. The argument
    proceeds analogously in the case where $[i+j]_2=1$, with the interchange $z \leftrightarrow w$. By induction over $i+j$ starting from $i+j=0$, the argument holds for all $i,j\in \Z$.\qed
}

Note that this special case of integrable cross-ratio maps in which half the points are circle centers and the other half intersection points is called \emph{integrable circle patterns} \cite[Section~10]{bmsanalytic}.

\begin{figure}[tb]
    \centering
    \frame{
        \begin{tikzpicture}[scale=1,text=blue]
        \useasboundingbox (-4.2,-3.2) rectangle (3.2,3.3);
        \clip (-4.2,-3) rectangle (3.2,3.3);
        \coordinate (p) at (0,0);
        \coordinate (pm) at (-1.8,-1.1);
        \coordinate (pp) at (2.2,-0.7);
        \coordinate (tp) at (0.2,1.6);
        \draw[circle through 3 points={p}{pm}{pp}];
        \draw[circle through 3 points={p}{tp}{pp}];
        \draw[circle through 3 points={p}{pm}{tp}];
        \draw[circle through 3 points={pp}{pm}{tp}];

        \coordinate[] (hm) at ($(p)!.5!(pm)$);
        \coordinate[] (hp) at ($(p)!.5!(pp)$);
        \coordinate[] (ht) at ($(p)!.5!(tp)$);
        \coordinate[] (am) at ($(hm)!1!90:(p)$);
        \coordinate[] (ap) at ($(hp)!1!90:(p)$);
        \coordinate[] (at) at ($(ht)!1!90:(p)$);

        \coordinate[wvert,label=right:$t_{i}$] (m) at (intersection of hm--am and hp--ap);
        \coordinate[wvert,label=left:$t_{i-1}$] (mm) at (intersection of hm--am and ht--at);
        \coordinate[wvert,label=left:$t_{i+1}$] (mp) at (intersection of ht--at and hp--ap);

        \coordinate[] (htm) at ($(pm)!.5!(tp)$);
                \coordinate[] (htp) at ($(pp)!.5!(tp)$);
        \coordinate[] (atm) at ($(htm)!1!90:(tp)$);
                \coordinate[] (atp) at ($(htp)!1!90:(tp)$);
        \coordinate[wvert,label={[fill=white, inner sep=1]below right:$T(t)_{i}$}] (tm) at (intersection of htm--atm and htp--atp);

        \draw[blue]
                (p) edge (m) edge (mm) edge (mp)
                (tp) -- (mm) -- (pm) -- (m) -- (pp) -- (mp) -- (tp)
                (tm) edge (tp) edge (pp) edge (pm)
        ;

        \coordinate[bvert,label={[fill=white, inner sep=0,xshift=-2]left:$v_i$}] (p) at (p);
        \coordinate[bvert,label={right,yshift=-1:$v_{i+1}$}] (pp) at (pp);
        \coordinate[bvert,label={below left,yshift=4:$v_{i-1}$}] (pm) at (pm);
        \coordinate[bvert,label={[fill=white, inner sep=1]right:$T(v)_i$}] (tp) at (tp);
    \end{tikzpicture}}\hspace{5mm}
          \frame{
            \begin{tikzpicture}[scale=1,text=blue]
                        \useasboundingbox (-4.2,-3.2) rectangle (3.4,3.3);
                \clip (-4.2,-3) rectangle (3.4,3.3);
                \coordinate (p) at (0,0);
                \coordinate (pm) at (-1.8,-1.1);
                \coordinate (pp) at (2.2,-0.7);
                \coordinate (tp) at (0.2,1.6);
                \draw[circle through 3 points={p}{pm}{pp}];
                \draw[circle through 3 points={p}{tp}{pp}];
                \draw[circle through 3 points={p}{pm}{tp}];
                \draw[circle through 3 points={pp}{pm}{tp}];

                \coordinate[] (hm) at ($(p)!.5!(pm)$);
                \coordinate[] (hp) at ($(p)!.5!(pp)$);
                \coordinate[] (ht) at ($(p)!.5!(tp)$);
                \coordinate[] (am) at ($(hm)!1!90:(p)$);
                \coordinate[] (ap) at ($(hp)!1!90:(p)$);
                \coordinate[] (at) at ($(ht)!1!90:(p)$);

                \coordinate[wvert,label=right:$w_{i,j}$] (m) at (intersection of hm--am and hp--ap);
                \coordinate[wvert,label=left:$z_{i,j+1}$] (mm) at (intersection of hm--am and ht--at);
                \coordinate[wvert,label=left:$z_{i+1,j}$] (mp) at (intersection of ht--at and hp--ap);

                \coordinate[] (htm) at ($(pm)!.5!(tp)$);
                \coordinate[] (htp) at ($(pp)!.5!(tp)$);
                \coordinate[] (atm) at ($(htm)!1!90:(tp)$);
                \coordinate[] (atp) at ($(htp)!1!90:(tp)$);
                \coordinate[wvert,label={[fill=white, inner sep=1,yshift=-2,xshift=-2]below right:$w_{i+1,j+1}$}] (tm) at (intersection of htm--atm and htp--atp);

                \draw[blue]
                (p) edge (m) edge (mm) edge (mp)
                (tp) -- (mm) -- (pm) -- (m) -- (pp) -- (mp) -- (tp)
                (tm) edge (tp) edge (pp) edge (pm)
                ;

                \coordinate[bvert,label={[fill=white, inner sep=1]left:$z_{i,j}$}] (p) at (p);
                \coordinate[bvert,label={right,yshift=-1,xshift=-1:$w_{i+1,j}$}] (pp) at (pp);
                \coordinate[bvert,label={below left,yshift=4:$w_{i,j+1}$}] (pm) at (pm);
                \coordinate[bvert,label={[fill=white, inner sep=1]right:$z_{i+1,j+1}$}] (tp) at (tp);
        \end{tikzpicture}}
    \caption{Labeling in terms of $t,v$ (left) and $z,w$ (right) in circle intersection dynamics, where we set $j=0$ on the left for improved readability.
    }
    \label{fig:cid}
\end{figure}
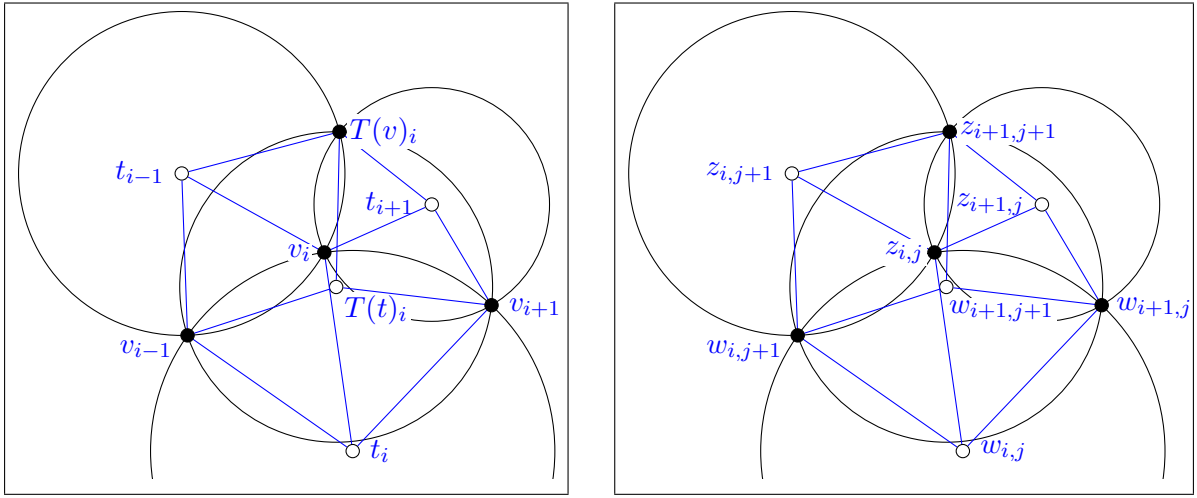

As a consequence of Lemma~\ref{lem:icd_Backlund} and Theorem~\ref{theo:explintcr}, we immediately obtain the following explicit expression for $z_{i,j},w_{i,j}$ when $i+j\geq 1$.

\begin{corollary}
Let $z,w:\Z^2\rightarrow\hC$ be a pair of integrable circle dynamics lattice maps. Then, for all $(i,j)\in\Z^2$ such that $i+j\geq 1$, $z_{i,j}$ (resp. $w_{i,j}$) is equal to the ratio function of oriented dimers of an Aztec diamond with face weights being a subset of $(z_{i,j})_{i+j\in\{0,1\}},(w_{i,j})_{i+j\in\{0,1\}}$ as described in Theorem~\ref{theo:explintcr}.
\end{corollary}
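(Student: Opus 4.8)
The plan is to recognize this corollary as a direct specialization of the Bäcklund-pair machinery, with all the geometric substance already absorbed into Lemma~\ref{lem:icd_Backlund}. First I would invoke Lemma~\ref{lem:icd_Backlund}: it states that the pair $z,w$ obtained from the pair of circle intersection dynamics lattice maps $\tilde z,\tilde w$ via the rotation \eqref{eq:rotated_weights} is a Bäcklund pair of integrable cross-ratio maps with $\gamma=1$, and it identifies the edge-labels $\alpha_i,\beta_j$ as the cross-ratios on the bottom row. This places $z,w$ squarely within the hypotheses of Theorem~\ref{theo:explintcr}.

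Second, I would apply Theorem~\ref{theo:explintcr} verbatim to this Bäcklund pair. The theorem attaches to $z,w$ the face weights $(a_{i,j})_{(i,j)\in\Z^2}$ built, according to its stated formula, from the restrictions $(z_{i,j})_{i+j\in\{0,1\}}$ and $(w_{i,j})_{i+j\in\{0,1\}}$; these weights are invariant under translation by $(2,-2)$, so only finitely many distinct values occur, each of them among the initial data on the two diagonals $i+j\in\{0,1\}$. For every $(i,j)$ with $i+j\geq 1$ the theorem then expresses $z_{i,j}$ (resp.\ $w_{i,j}$) as the ratio function $Y(A_{i+j-1}[\,\cdot\,],a)$ of an Aztec diamond whose central face weight is $z_{i',j'}$ or $w_{i',j'}$ according to the value of $[i+j]_4$. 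This is exactly the asserted description, and---as already recorded in the text preceding the corollary---these values coincide with the iterates $T^{j}(v)_i$ and $T^{j}(t)_i$ of circle intersection dynamics under the identifications $v_i=\tilde z_{i,0}$ or $\tilde w_{i,1}$ and $t_i=\tilde w_{i,0}$ or $\tilde z_{i,1}$, depending on the parity of $i$.

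I do not expect any real obstacle at this stage: the hard work---showing that the vertices and circle centers of circle intersection dynamics organize into a Bäcklund pair, and in particular that the cross-ratios factor globally as $\alpha_i/\beta_j$---was carried out in the proof of Lemma~\ref{lem:icd_Backlund}, using that the three intersection angles around each vertex sum to $\pi$ together with the opposite-angle relation in a four-circle configuration. The only remaining point is purely bookkeeping: that the rotated lattice maps $z,w$ genuinely encode the dynamics rather than abstract lattice values, which is immediate from the identifications already set up. Hence the proof is a two-sentence combination of Lemma~\ref{lem:icd_Backlund} and Theorem~\ref{theo:explintcr}.
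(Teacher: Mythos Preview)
Your proposal is correct and matches the paper's own argument exactly: the paper does not even write out a proof, stating only that the corollary is an immediate consequence of Lemma~\ref{lem:icd_Backlund} and Theorem~\ref{theo:explintcr}. Your two-step description---first invoke Lemma~\ref{lem:icd_Backlund} to recognize $z,w$ as a B\"acklund pair, then apply Theorem~\ref{theo:explintcr} verbatim---is precisely this, with some additional (accurate) commentary on the bookkeeping.
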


\subsection{Singularities}

To study singularities, we assume that the initial data consits of both the points $v=(v_i)_{i\in\Z}$, as well as the circle centers $t=(t_i)_{i\in\Z}$. Generically, the points determine the centers, but in the case of a singularity this is not necessarily true.

Let $m\geq 1$. The initial data $v,t$ is said to be \emph{$2m$-closed} if, for all $i\in\Z$, $v_{i+2m} = v_i$ and $u_{i+2m} = u_{i}$. This is equivalent to saying that the corresponding Bäcklund pair $z,w$ of integrable cross-ratio maps is $m$-closed.

There is a Devron-$(m,1)$ type singularity in circle intersection dynamics. This singularity has not appeared in the literature before. An example ($2m=8$) is provided in Figure \ref{fig:ciddodgson}.

\begin{figure}[tb]
        \centering
        \frame{\includegraphics[scale=0.93]{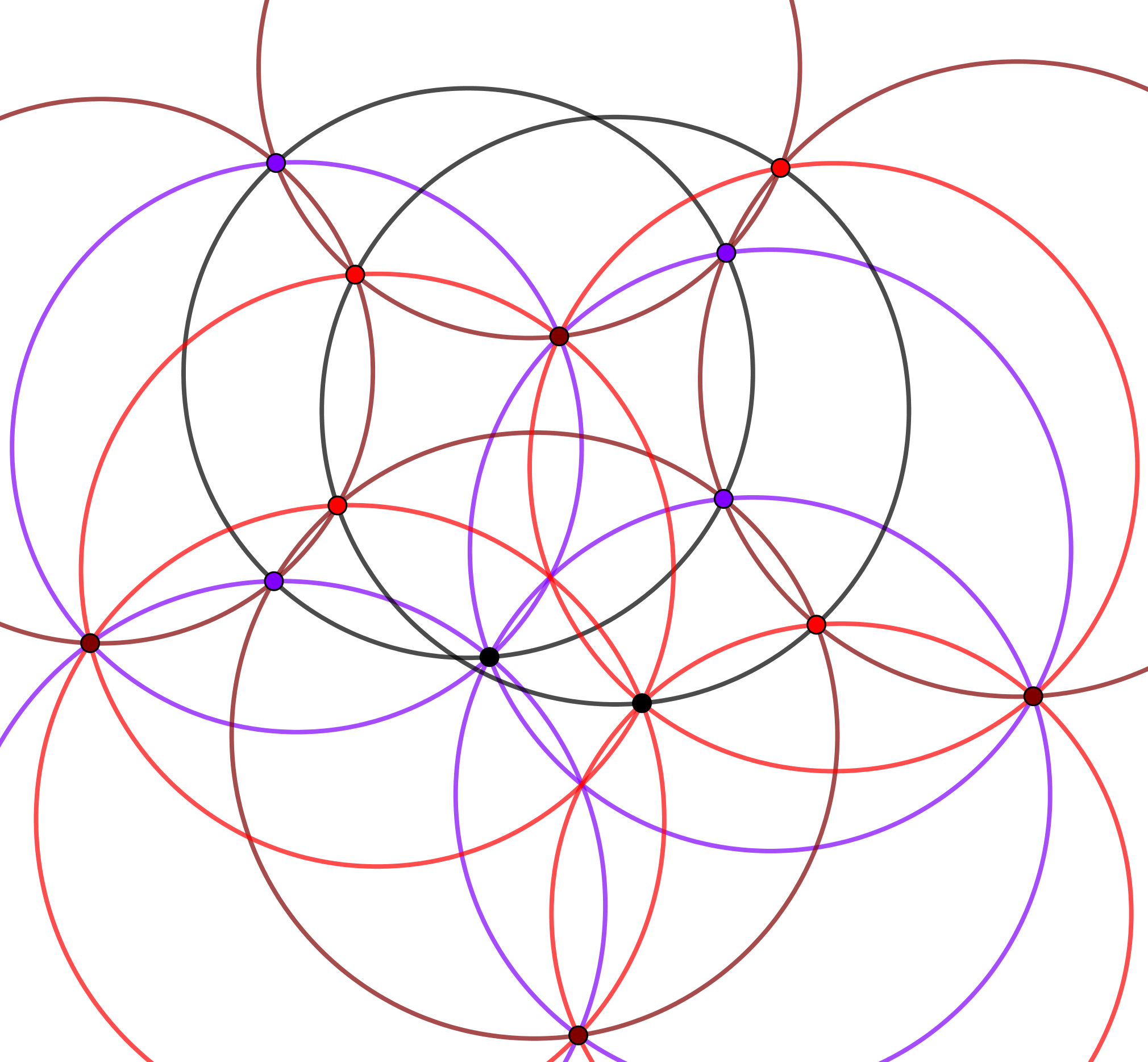}}
        \frame{\includegraphics[scale=0.93]{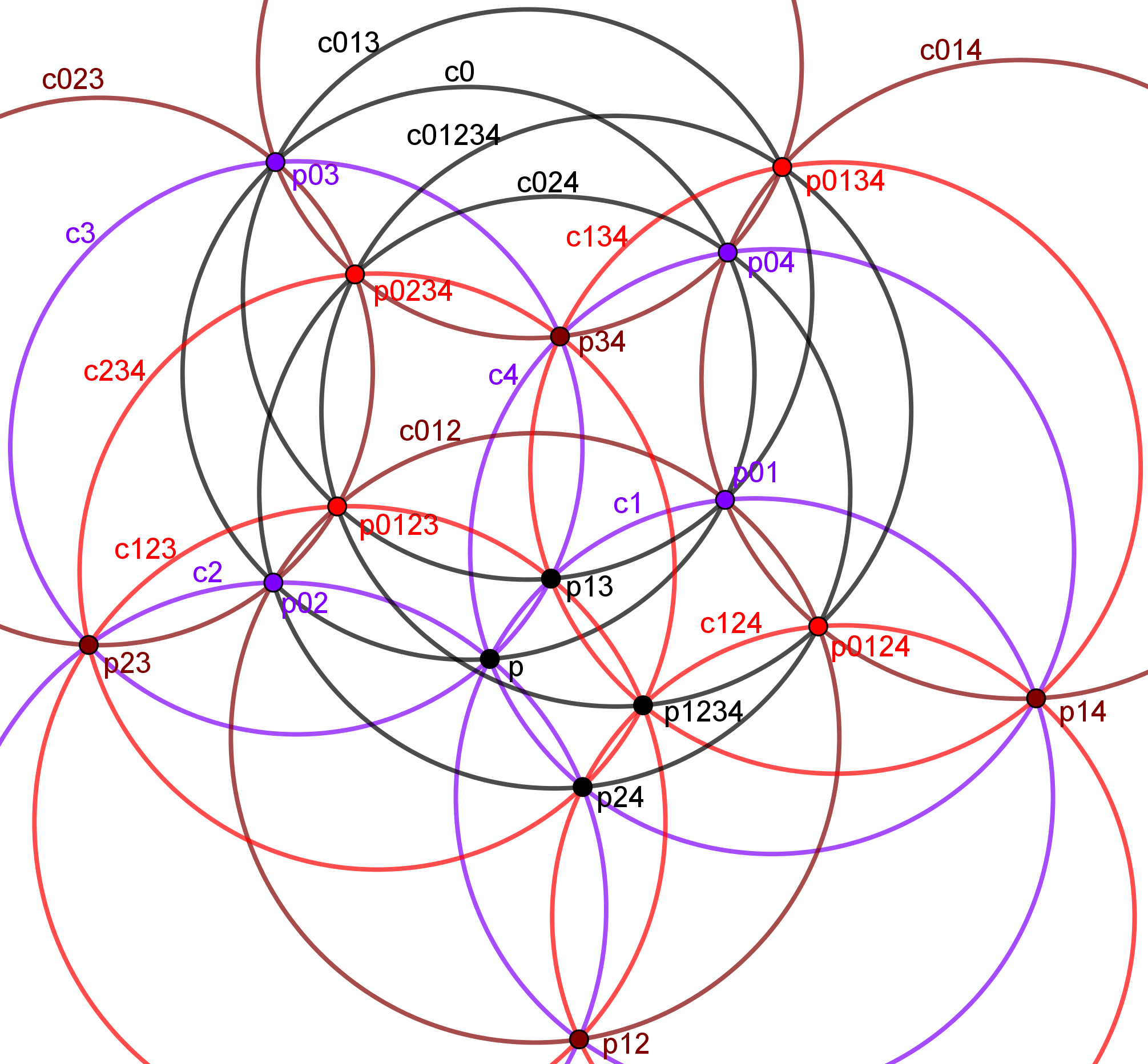}}
        \caption{Devron $(m,1)$-singularity (black to black) in circle intersection dynamics for the 8-closed case, as in Theorem \ref{theo:ciddodgson}. On the right as a subset of the Clifford 5-circle configuration with labels.}
        \label{fig:ciddodgson}
\end{figure}

\begin{theorem}\label{theo:ciddodgson}
        Let $m\geq 3$, $s,s'\in \C$ and let $v,u$ be $2m$-closed initial data such that $v_i = s$ and $t_i = s'$ for all $i\in 2\Z$. Assume we can apply the propagation map $T$ at least $m-1$ times. Then, there are $s'',s'''\in\hC$ such that
        \[
        \tilde z_m \equiv s'', \qquad \tilde w_{m} \equiv s'''.
        \]
       That is the singularity repeats after $m-1$ steps.
\end{theorem}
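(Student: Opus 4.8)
The proof should follow exactly the pattern of the earlier singularity theorems in this paper (for instance Theorem~\ref{theo:recutsing} for polygon recutting), which reduce a geometric singularity statement to the general dSKP Devron result. First I would invoke Lemma~\ref{lem:icd_Backlund}: the pair $z,w$ of circle intersection dynamics lattice maps obtained from $\tilde z,\tilde w$ via the change of coordinates \eqref{eq:rotated_weights} is a Bäcklund pair of integrable cross-ratio maps with $\gamma=1$. By the explicit solution Theorem~\ref{theo:explintcr} (equivalently, its rotated form for Bäcklund pairs), the whole lattice pair $z,w$, and hence the dynamics $T^j$, is encoded as the dSKP solution $x$ on $\calL$ with the initial data $(a_{i,j})$ described there, namely the interleaved $z$- and $w$-values along diagonals indexed by $[i-j]_4$.

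**Identifying the singular initial condition.** The next step is to check that the hypothesis "$v_i=s$, $t_i=s'$ for all $i\in 2\Z$" translates into $(m,1)$-Devron initial conditions for the dSKP data, in the sense of Definition~\ref{def:sing}. The $2m$-closedness of $v,t$ is, as noted in the text, precisely the statement that the Bäcklund pair $z,w$ is $m$-closed, which gives the $m$-simple-periodicity $a_{i,j}=a_{i+m,j+m}$ of the dSKP data (this can be read off Figure~\ref{fig:backlund_ic}). The content to verify is that $\tilde z_{i,0}$ and $\tilde w_{i,0}$ are \emph{constant} along their respective rows: under the identification $v_i=\tilde z_{i,0}$ for $i$ even, $t_i=\tilde w_{i,0}$ for $i$ even (and the odd-index analogues), the assumption forces the whole height-$0$ layer of the dSKP data to be constant on each of its two sublattices. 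Since for a Bäcklund pair the $z$- and $w$-values alternate down the SW-NE diagonals at height $0$, constancy of $\tilde z_0$ and $\tilde w_0$ makes every such diagonal at height $0$ constant, i.e. the $p=1$ case of the $(m,1)$-Devron condition $a_{i,j}=a_{i+1,j+1}$ whenever $[i-j]_{2}=0$. One should write this out carefully with the explicit index dictionary of Theorem~\ref{theo:explintcr}; this bookkeeping, together with justifying that a compatible constant $\tilde w_0$ actually exists (analogous to Lemma~\ref{lem:Bäck_partner_recutting} in the recutting case — one needs the Bäcklund partner to inherit the degeneracy, which follows because the side-quad cross-ratios with $\gamma=1$ are determined and force $\tilde w_{i,0}$ to be independent of $i$ when $\tilde z_{i,0}$ is), is where the real work lies.

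**Applying the general Devron theorem and concluding.** Once the data is recognized as $(m,1)$-Devron, Theorem~\ref{theo:devron_sing} with $p=1$ and $k=(m-2)\cdot 1+2=m$ gives $x(i,j,m)=x(i+1,j+1,m)$ for all $(i,j)$ with $[i-j-m]_{2}=0$; as in the proof of Theorem~\ref{theo:recutsing}, the congruence condition is automatic since $(i,j,m)\in\calL$. Reading this back through the explicit solution \eqref{eq:proof_Backlund} (or its rotated form), the equality $x(i,j,m)=x(i+1,j+1,m)$ says precisely that $\tilde z_m$ is constant and $\tilde w_m$ is constant, say $\tilde z_m\equiv s''$ and $\tilde w_m\equiv s'''$ for some $s'',s'''\in\hC$. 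This is the claimed reoccurrence of the singularity after $m-1$ steps, since $\tilde z_m,\tilde w_m$ correspond to $T^{m-1}$ applied to $(\tilde z_0,\tilde w_0,\tilde z_1,\tilde w_1)$. The hypothesis "$m\geq 3$" is used exactly as in the other theorems to ensure we stay away from the degenerate small cases, and "we can apply $T$ at least $m-1$ times" is the genericity assumption guaranteeing the dSKP solution, and in particular the $\ker D^T$ at the relevant step, stays one-dimensional so that Theorem~\ref{theo:devron_sing} is not vacuous (cf. Proposition~\ref{prop:dimker2}).

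**Main obstacle.** The only genuine obstacle is the middle step: translating the geometric closedness/degeneracy hypotheses on $v,t$ into the precise $(m,1)$-Devron pattern on the dSKP face-weights, and in particular arguing that the Bäcklund partner $\tilde w_0$ is forced to be constant along its row (so that \emph{both} height-$0$ sublattices degenerate). This requires chasing the index dictionary of Theorem~\ref{theo:explintcr} and Figure~\ref{fig:backlund_ic} together with the $\gamma=1$ side-quad cross-ratio identities from Definition~\ref{def:backlundpair}; everything downstream is a direct citation of Theorem~\ref{theo:devron_sing}.
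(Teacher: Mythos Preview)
Your overall strategy---identify the Bäcklund pair via Lemma~\ref{lem:icd_Backlund}, read the hypothesis as $(m,1)$-Devron initial data, apply Theorem~\ref{theo:devron_sing} with $p=1$ and $k=m$, and translate back through~\eqref{eq:proof_Backlund}---is exactly the paper's proof.

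However, you have manufactured an obstacle that is not there. In circle intersection dynamics, unlike polygon recutting, \emph{both} members of the Bäcklund pair are part of the given initial data: the identification displayed just before the definition of $T$ in Section~\ref{sec:cid} reads $v_i=\tilde z_{i,0}$ and $t_i=\tilde w_{i,0}$ for $i$ even. Hence the hypothesis ``$v_i=s$ and $t_i=s'$ for all $i\in 2\Z$'' says \emph{directly} that $\tilde z_0\equiv s$ and $\tilde w_0\equiv s'$. There is nothing to force, no analogue of Lemma~\ref{lem:Bäck_partner_recutting} to prove, and no need to invoke the $\gamma=1$ side-quad cross-ratio identities. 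The paper dispatches this in one sentence (``Since $v_i=s$ for all $i\in 2\Z$, $\tilde z_0$ is constant equal to $s$. Similarly, $\tilde w_0$ is constant equal to $s'$.'') and then cites Theorem~\ref{theo:devron_sing} exactly as you do. Your ``main obstacle'' paragraph should simply be deleted; what remains is the correct and complete argument.
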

\proof{
  Consider the Bäcklund pair $z,w$ corresponding to $v,t$. Since
  $v_i=s$ for all $i\in 2\Z$, $\tilde{z}_0$ is constant equal to
  $s$. Similarly, $\tilde{w}_0$ is constant equal to $s'$. Therefore, the
  corresponding initial condition for the dSKP solution $x$ is $(m,1)$-Devron, see
  Figure~\ref{fig:backlund_ic}. By
  Theorem~\ref{theo:devron_sing}, for any $(i,j)$ such that
  $[i+j+m]_2=0$, $x(i,j,m)=x(i+1,j+1,m)$.
 Using the expression~\eqref{eq:proof_Backlund} for the solution, this gives that both $\tilde{z}_m$ and $\tilde{w}_m$ are constant.
  \qed
}

We now prove a Devron-$(m,2)$ type singularity in circle intersection dynamics; an example ($2m=8$) is provided in Figure \ref{fig:cidsing}. This proves a conjecture of Glick \cite[Conjecture 9.1]{gdevron}.

\begin{figure}[tb]
        \centering
        \frame{\includegraphics[scale=0.6,angle=-90]{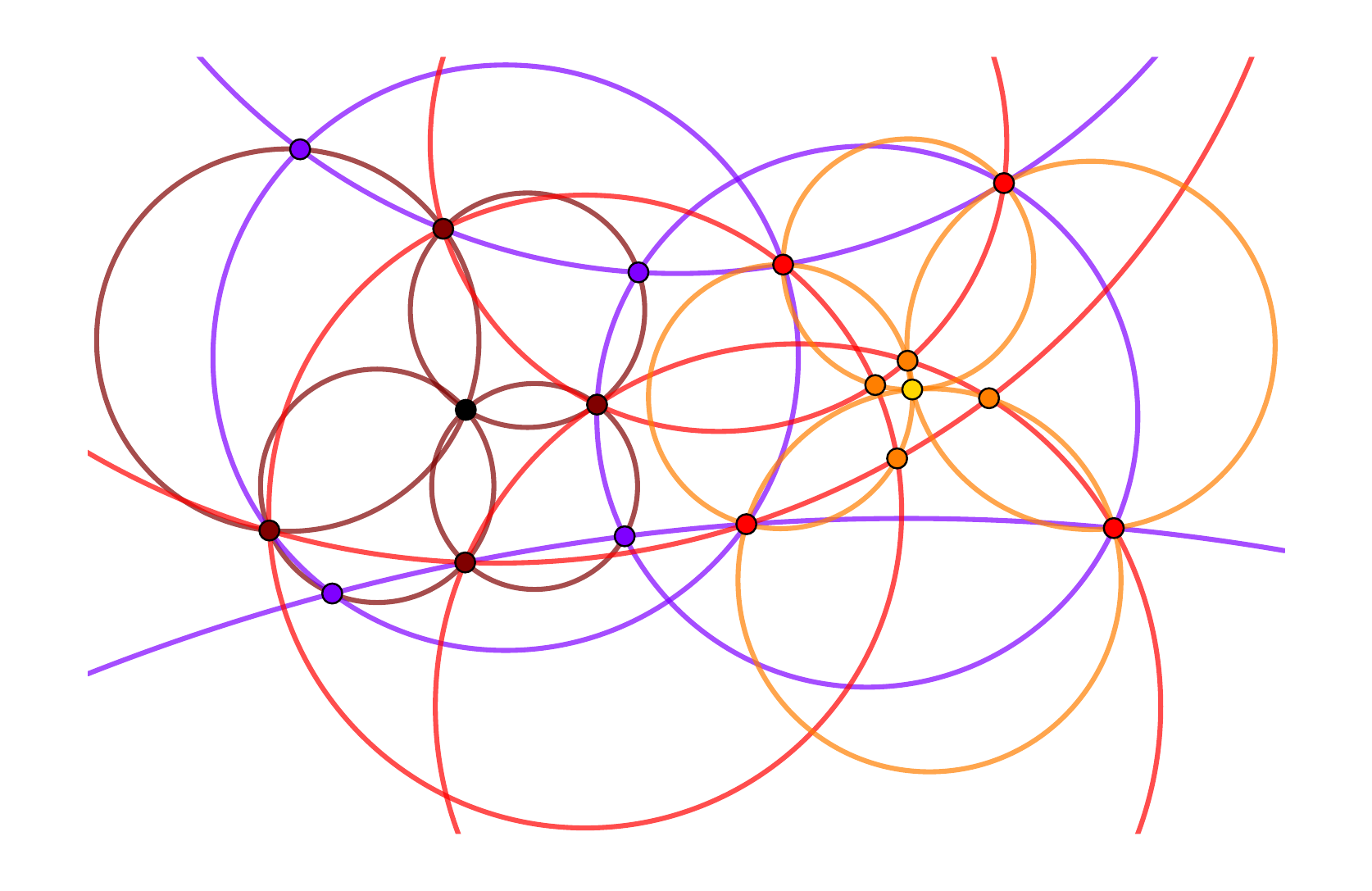}}
        \caption{Devron $(m,2)$-singularity (yellow to black) in circle intersection dynamics for the $8$-closed case, as in Theorem \ref{theo:cidsing}. Half of the initial and final circles are not drawn to improve clarity.}
        \label{fig:cidsing}
\end{figure}

\begin{theorem}\label{theo:cidsing}

        Let $m\geq 3$, and let $v,t$ be $2m$-closed initial data such that $v_i = 0$ for all $i\in 2\Z$. Assume we can apply the propagation map $T$ at least $2m-4$ times. Then, there is $s\in\hC$ such that, for all $i\in2\Z+1$,
        \[
        \tilde w_{2m-3} \equiv s.
        \]
       That is the singularity repeats after $2m-4$ steps. Note that $\tilde w_{2m-3}$ corresponds to a row of circle intersection points.
\end{theorem}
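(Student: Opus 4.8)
The plan is to follow the proof of Theorem~\ref{theo:intcrsingular}, but to reach the singularity two layers earlier, the one new ingredient being a geometric identity forced by the singular data. By Lemma~\ref{lem:icd_Backlund}, the pair $z,w$ built from $\tilde z,\tilde w$ is a Bäcklund pair of integrable cross-ratio maps with $\gamma=1$ and edge-labels $\alpha,\beta$; the hypothesis $v_i=0$ for $i\in 2\Z$ reads $\tilde z_{i,0}=0$ for all $i\in 2\Z$, and $2m$-closedness of $(v,t)$ makes this pair $m$-closed. Via the weights of Theorem~\ref{theo:explintcr} (Figure~\ref{fig:backlund_ic}) this is an $(m,2)$-Devron dSKP initial condition, so the mechanism of Theorem~\ref{theo:intcrsingular} governs the behaviour from layer $2m-2$ on (where $\tilde w_{2m-2}$, a row of circle centers, becomes constant); what we must show is the strictly earlier fact that the row of circle intersection points $\tilde w_{2m-3}$ is already constant, without assuming any propagation beyond $2m-4$ steps.

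The key observation is that, since every circle $c_k$ of circle intersection dynamics passes through the three consecutive points $v_{k-1},v_k,v_{k+1}$, one of whose indices is even, the hypothesis $v_i\equiv 0$ on the even parity forces \emph{every} circle of the configuration to pass through the common point $0$. The extra relations this imposes on the first-layer data --- each relevant circle centre lies on a perpendicular bisector through $0$ --- combined with the intersection-angle identities used in the proof of Lemma~\ref{lem:icd_Backlund}, should yield, after summing over one period, a single global relation among $\tilde z_1,\tilde w_1$ and the edge-labels: the circle-intersection analogue of $\sum_\ell(-1)^\ell p_{\ell,1}^{-1}=0$ from Lemma~\ref{lem:pnetsinganddhol}. (Inverting the picture at $0$ turns all circles into lines and should make this relation transparent.) Pinning down this identity, and checking it is genuinely new compared to the generic $(m,2)$-Devron case, is what I expect to be the main obstacle.

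With the relation in hand I would evaluate $\tilde w_{i,2m-3}$ for each admissible $i$ via Theorem~\ref{theo:D}, using an Aztec diamond of size $2m-4$ carrying the periodic weight block analogous to Figure~\ref{fig:Dbacklund} and its cylinder quotient operator $\bar D$ as in Figure~\ref{fig:Dbacklundcyl}. As in the proof of Theorem~\ref{theo:intcrsingular}, $\ker\bar D^T$ contains the indicator vectors of the columns of zeros and the three-legged-form vector built from $\tfrac{\alpha_\ell-\beta_{-\ell-1}}{\tilde z_{2\ell+1,1}}$; the global relation above provides the extra structure making the numerator and denominator sums of Theorem~\ref{theo:D} proportional to the all-ones pattern on the relevant entries, exactly as $N(1,0,1,0,\dots)^T=\lambda(1,\dots,1)^T$ in the proof of Theorem~\ref{theo:pnetpremature}, which forces $\tilde w_{i,2m-3}$ to be independent of $i$. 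The degenerate cases (a vanishing proportionality constant, or $\dim\ker\bar D^T$ exceeding the expected value, which by Proposition~\ref{prop:dimker2} makes the value undefined) are exactly those in which $T$ cannot be applied $2m-4$ times, hence are excluded; and since $2m-3$ is odd, $\tilde w_{2m-3}$ is, by the parity conventions, the row of circle intersection points $T^{2m-4}(v)$, as claimed. A possibly shorter route, paralleling Theorem~\ref{theo:dholsingularity}, would be to exhibit a P-net built directly from the singular circle intersection data that automatically satisfies the hypothesis of Theorem~\ref{theo:pnetpremature}, and apply that theorem; identifying the correct P-net is then the only thing left to settle.
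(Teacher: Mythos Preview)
Your plan is plausible but considerably more elaborate than what is needed, and its central step --- the ``extra global relation'' analogous to Lemma~\ref{lem:pnetsinganddhol} --- is left unidentified, so as it stands this is an outline rather than a proof. The paper bypasses this algebraic search entirely by exploiting the geometric meaning of the data at layer $2m-2$.

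Concretely, the paper first runs the argument of Theorem~\ref{theo:intcrsingular} to conclude that $\tilde w_{2m-2}$ is constant. Now the specific role of $\tilde w_{2m-2}$ in circle intersection dynamics is used: these are \emph{circle centres}, so the corresponding circles are concentric; but circles at adjacent even indices must intersect (they share the intermediate point of the previous layer), and concentric circles that intersect must coincide. Hence all circles in that row are one and the same circle. A short trichotomy finishes the argument: the intersection points $\tilde z_{2m-2}$ cannot be generic on this common circle (else $T^{-1}$ would fail there, contradicting the hypothesis), nor can they all coincide (else layer $2m-2$ would be $(m,1)$-Devron, and Theorem~\ref{theo:ciddodgson} applied backwards would force an earlier breakdown). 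The only remaining possibility is that the common circle has radius zero, so all the points $\tilde w_{i,2m-3}$ lying on it collapse to its centre.

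So the ``one new ingredient'' is not an algebraic identity to be fed into a $\bar D^T$ kernel computation, but the synthetic observation that constant circle centres force coinciding circles, followed by a geometric case analysis. Your route via an explicit kernel vector might well work, and your remark that inversion at $0$ turns all circles into lines is a promising way to look for the missing identity; but you would still have to actually find it, verify it, and redo the linear algebra of Theorem~\ref{theo:intcrsingular} one size smaller --- none of which is carried out --- whereas the paper's argument is a few sentences of geometry once Theorem~\ref{theo:intcrsingular} is available.
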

\begin{proof}
        The initial conditions here are similar to those of Theorem \ref{theo:intcrsingular}, that is $\tilde z_{i,0} = 0$ for all $i\in 2\Z$. With the same arguments as those in the proof of Theorem \ref{theo:intcrsingular}, we find that $\tilde w_{2m-2}$ is constant. We recall that $\tilde w_{2m-2}$ corresponds to a row of circle centers, therefore the corresponding circles are concentric. Moreover, for all $i\in 2\Z$ the circle centered at $\tilde w_{i, 2m-2}$ intersects the circle centered at $\tilde w_{i+2, 2m-2}$, therefore the corresponding circles coincide.
         This constricts the configuration after $2m-3$ steps of $T$ considerably. There are three possible cases. Case (i): the intersection points $\tilde z_{2m-2}$  are generic on a common circle. But this is not possible, because then $T^{-1} \circ T^{2m-3}$ would not be defined. Case (ii): the intersection points $\tilde z_{i,2m-2}$ coincide for all $i\in 2\Z$. In this case the circles centered at $\tilde z_{2m-3}$ do not need to coincide for all $i\in 2\Z+1$ and $T^{-1} \circ T^{2m-3}$ is defined. However, if both the circle centers $\tilde w_{2m-2}$ and the intersection points $\tilde z_{2m-2}$ are constant, then we are in the case of $(m,1)$-Devron initial conditions, as in Theorem \ref{theo:ciddodgson}. But then $T^{-m} \circ T^{2m-3}$ is not defined in contradiction to the assumption. Thus there is only case (iii): the circles centered at $\tilde w_{i,2m-2}$ are all the same circle of radius 0 for all $i\in 2\Z$. As a consequence, the intersection points $\tilde z_{i,2m-3}$ all coincide for $i\in 2\Z +1$.
\end{proof}

Note that Theorem \ref{theo:cidsing} is worded differently compared to Glick's conjecture, because we add the information of the circle centers when performing circle intersection dynamics. Therefore, there are no non-reversible steps in circle intersection dynamics in the case of a singularity. This is also the reason why in Glick's conjecture the number of iterations is $2m-6$ and not $2m-4$. Note that in the case of $2m=6$, Theorem \ref{theo:cidsing} is actually Miquel's theorem \cite{miquel}, and the case $2m=8$ is illustrated in Figure \ref{fig:cidsing}.

\begin{remark}\label{rem:cidclifford}
There is an alternative proof for Theorem \ref{theo:ciddodgson} that only relies on the multi-dimensional consistency of the integrable cross-ratio maps. Let us give a sketch of the argument. Let $n\geq 1$; a \emph{Clifford $n$-circle configuration} is a map from the $n$-hypercube to $\CP^1$ such that
        \begin{enumerate}
                \item every even vertex is mapped to a point,
                \item every odd vertex is mapped to a circle,
                \item the image of each even vertex is contained in the image of an odd vertex whenever the two vertices are adjacent.
        \end{enumerate}
Thus, a Clifford $n$-circle configuration consists of $2^{n-1}$ circles and $2^{n-1}$ points, such that through each point pass $n$ circles and on each circle there are $n$ points. Let us identify the $n$-hypercube with the set of subsets of $\{0,1,\dots,n-1\}$. \emph{Clifford's theorem} \cite[page 262]{coxeterintroduction} essentially states that if the images of $\emptyset$ and $\{0\},\{1\},\dots,\{n-1\}$ of a Clifford $n$-circle configuration are given, then the whole Clifford $n$-circle configuration exists and is uniquely determined. Let us write $P$ for the map that comprises a Clifford $n$-circle configuration. Consider the assumptions as in Theorem \ref{theo:ciddodgson}, and a Clifford $(m+1)$-circle configuration. Let $I^b_a = \{a,a+1,\dots,b-1,b\}$, where we understand the indices to be cyclic in $\{1,2,\dots,m\}$. Then we identify as follows, see Figure \ref{fig:cidclifford},
        \begin{align}
                v_{2i} &= P(\emptyset), & c_{2i} &= P(\{0\}),\\
                v_{2i+1} &= P(\{0,i\}), & c_{2i+1} &= P(\{i\}),\\
                T^{2\ell+1}(v)_{2i}  &= P(I^{i+\ell+1}_{i-\ell}), & T^{2\ell+1}(c)_{2i} &= P(0 \cup I^{i+\ell+1}_{i-\ell}),\\
                T^{2\ell}(v)_{2i+1} &= P(0 \cup I^{i+\ell+1}_{i-\ell+1}), & T^{2\ell}(c)_{2i+1} &= P(I^{i+\ell+1}_{i-\ell+1}).
        \end{align}
The initial conditions $(v_i)_{0\leq i<m}, (u_i)_{i<m}$ correspond to the initial conditions of Theorem \ref{theo:ciddodgson}, where $P(\emptyset) = s$ and $s'$ is the center of $P(\{0\})$. With this identification, Clifford's $(m+1)$-circle theorem guarantees that if $m$ is even then indeed $P(\{1,2,\dots,m\}) = T(v)^{m-1}_i = s''$ for all $i\in \Z$ is a single point, and if $m$ is odd then indeed $P(\{0,1,2,\dots,m\}) = T(v)^{m-1}_i = s''$ for all $i\in \Z$ is a single point, which concludes the alternate proof.

It is tempting to suspect that all recurrences of singularities are due to some version of multi-dimensional consistency. However, this is the only example for which we found such an argument. Another, similar argument appears in the case of polygon-recutting (another special case of integrable cross-ratio maps) in \cite[Section 7]{gdevron}.
\end{remark}

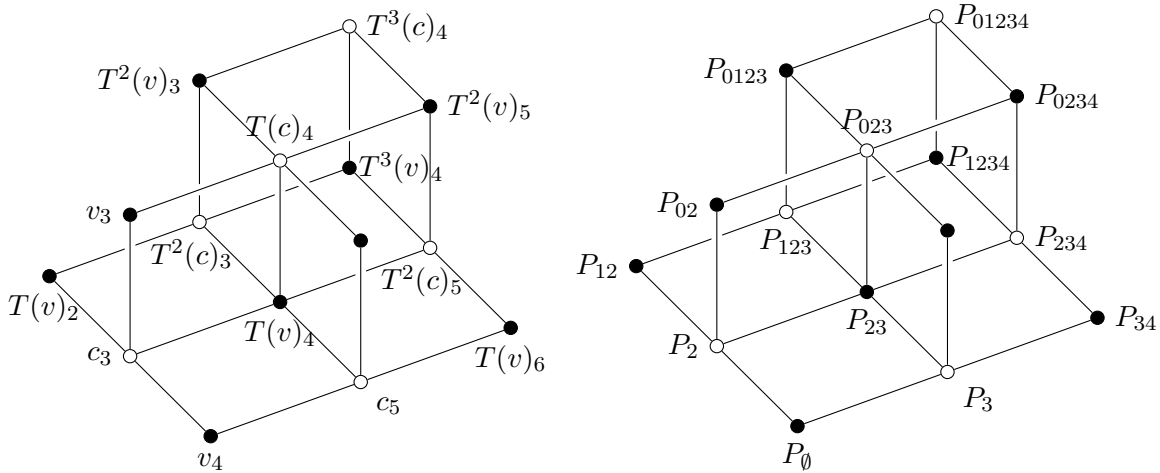
\begin{figure}
        \centering
        \begin{tikzpicture}[scale=1.5]
        \coordinate (e1) at (20:1.4);
        \coordinate (e2) at (135:1);
        \coordinate (e3) at (270:1.25);
        \node[wvert,label=above:$T(c)_{4}$] (v) at (0,0) {};
        \node[bvert,label=right:$T^2(v)_{5}$] (v1) at (e1) {};
        \node[bvert,label=left:$T^2(v)_{3}$] (v2) at (e2) {};
        \node[bvert,label=below:$T(v)_{4}$] (v3) at (e3) {};
        \node[wvert,label=right:$T^3(c)_{4}$] (v12) at ($(e1)+(e2)$) {};
        \node[wvert,label={below,xshift=-3:$T^2(c)_{3}$}] (v23) at ($(e3)+(e2)$) {};
        \node[wvert,label={below,xshift=-3:$T^2(c)_{5}$}] (v13) at ($(e1)+(e3)$) {};
        \node[bvert,label={right,xshift=-3:$T^3(v)_{4}$}] (v123) at ($(e1)+(e2)+(e3)$) {};
        \node[wvert,label=left:$c_{3}$] (v34) at ($(e3)-(e1)$) {};
        \node[wvert,label=below right:$c_{5}$] (v35) at ($(e3)-(e2)$) {};
        \node[bvert,label={[xshift=-2]below:$T(v)_{2}$}] (v234) at ($(e3)+(e2)-(e1)$) {};
        \node[bvert,label=below:$T(v)_{6}$] (v135) at ($(e3)-(e2)+(e1)$) {};
        \node[bvert,label=below:$v_{4}$] (v345) at ($(e3)-(e2)-(e1)$) {};
        \node[bvert,label=left:$v_{3}$] (v4) at ($(0,0)-(e1)$) {};
        \node[bvert] (v5) at ($(0,0)-(e2)$) {};
        \draw[-]
        (v) edge (v1) edge (v2) edge (v3) edge (v4) edge (v5)
        (v12) edge (v1) edge (v2) edge (v123)
        (v23) edge (v3) edge (v2) edge (v123)
        (v13) edge (v1) edge (v3) edge (v123)
        (v3) edge (v34) edge (v35)
        (v23) -- (v234) -- (v34) -- (v345) -- (v35) -- (v135) -- (v13)
        ;
        \draw[-,white,line width=4pt]
        (v) edge (v3) edge (v1) edge (v4) edge (v5)
        (v4) -- (v34) (v5) -- (v35)
        ;
        \draw[-]
        (v) edge (v1) edge (v3) edge (v4) edge (v5)
        (v4) -- (v34) (v5) -- (v35)
        ;
        \end{tikzpicture}\hspace{0mm}
        \begin{tikzpicture}[scale=1.5]
        \coordinate (e1) at (20:1.4);
        \coordinate (e2) at (135:1);
        \coordinate (e3) at (270:1.25);
        \node[wvert,label=above:$P_{023}$] (v) at (0,0) {};
        \node[bvert,label=right:$P_{0234}$] (v1) at (e1) {};
        \node[bvert,label=left:$P_{0123}$] (v2) at (e2) {};
        \node[bvert,label=below:$P_{23}$] (v3) at (e3) {};
        \node[wvert,label=right:$P_{01234}$] (v12) at ($(e1)+(e2)$) {};
        \node[wvert,label=below:$P_{123}$] (v23) at ($(e3)+(e2)$) {};
        \node[wvert,label=right:$P_{234}$] (v13) at ($(e1)+(e3)$) {};
        \node[bvert,label={right,xshift=-3:$P_{1234}$}] (v123) at ($(e1)+(e2)+(e3)$) {};
        \node[wvert,label={left:$P_{2}$}] (v34) at ($(e3)-(e1)$) {};
        \node[wvert,label={below right:$P_{3}$}] (v35) at ($(e3)-(e2)$) {};
        \node[bvert,label=left:$P_{12}$] (v234) at ($(e3)+(e2)-(e1)$) {};
        \node[bvert,label=right:$P_{34}$] (v135) at ($(e3)-(e2)+(e1)$) {};
        \node[bvert,label=below:$P_\emptyset$] (v345) at ($(e3)-(e2)-(e1)$) {};
        \node[bvert,label=left:$P_{02}$] (v4) at ($(0,0)-(e1)$) {};
        \node[bvert] (v5) at ($(0,0)-(e2)$) {}; 
        \draw[-]
        (v) edge (v1) edge (v2) edge (v3)
        (v12) edge (v1) edge (v2) edge (v123)
        (v23) edge (v3) edge (v2) edge (v123)
        (v13) edge (v1) edge (v3) edge (v123)
        (v3) edge (v34) edge (v35)
        (v23) -- (v234) -- (v34) -- (v345) -- (v35) -- (v135) -- (v13)
        ;
        \draw[-,white,line width=4pt]
        (v) edge (v3) edge (v1) edge (v4) edge (v5)
        (v4) -- (v34) (v5) -- (v35)
        ;
        \draw[-]
        (v) edge (v1) edge (v3) edge (v4) edge (v5)
        (v4) -- (v34) (v5) -- (v35)
        ;
        \end{tikzpicture}
        \caption{Identifying circle intersection dynamics with singular initial conditions (as in Theorem \ref{theo:ciddodgson}) with the vertices of a hypercube, as in Remark \ref{rem:cidclifford}.}
        \label{fig:cidclifford}
\end{figure}

\section{The pentagram map}\label{sec:pent}

\subsection{Prerequisites}

For the reader unfamiliar with real projective geometry, we give a short introduction. Consider the equivalence relation $\sim$ on $\R^{N+1}$, such that for $r,r'\in \R^{N+1}$ we have $r\sim r'$ if there is a $\lambda \in \R\setminus \{0\}$ such that $r = \lambda r'$. Let $\mathbf{0}$ denote the zero-vector in $\R^{N+1}$. Every point in the \emph{$n$-dimensional projective space $\RP^N$} is an equivalence class $[r] = \{r' : r' \sim r\}$ for some $r \in \R^{N+1} \setminus \{\mathbf{0}\}$, thus
\begin{align*}
        \RP^N= \{[r] : r \in \R^{N+1} \setminus \{\mathbf{0}\} \} = \left(\R^{N+1} \setminus \{\mathbf{0}\}\right)/\sim.
\end{align*}
Analogously to points in projective space, we consider $m$-dimensional subspaces in $\RP^N$ as \emph{projectivizations} of $(m+1)$-dimensional linear subspaces of $\R^{N+1}$. For any point $[r] \in \RP^N$ and $\ell\in \{0,1,\dots,N-1\}$, we consider the \emph{$\ell$-th (affine) coordinate}
\begin{align}
        \pi_\ell([r]) = \frac{r_\ell}{r_{N}} \in \hat \R,
\end{align}
where $\hat \R = \R \cup \{\infty\}$ and $\pi_\ell([r]) = \infty$ whenever $r_N = 0$. We say that points $[r]$ with $r_N = 0$ are \emph{at infinity}. Note that any
point $[r] \in \RP^N$ is uniquely defined by its $N$ coordinates, unless $r_N = 0$.

 \subsection{Definitions and explicit solution}

The pentagram map was introduced by Schwartz \cite{schwartz}. Subsequent results involve the discovery of Liouville-Arnold integrability \cite{ostpentagram}, of a cluster structure \cite{glickpentagram}, the relation to directed networks and generalizations \cite{gstv}, Liouville-Arnold integrability of higher pentagram maps \cite{kspent}, algebro-geometric integrability \cite{solovievpent,weinreich}, the relation between Liouville-Arnold integrability and the dimer cluster integrable system  \cite{gkdimers,izosimovnetworks}, and the limit points \cite{glicklimit}. This list is not exhaustive. We'll show in the following how to express the iteration of the pentagram map (and the corrugated generalizations) via ratios of oriented dimer partition functions. For future research, it would be interesting to see how the explicit expression relates to the results above, in particular to the Hamiltonians and the limit points.

\begin{definition}
Consider points $(v_i)_{i\in\Z}$ in the real projective plane $\RP^2$. The \emph{pentagram map dynamics} is the dynamics $T$ acting on $v$ such that, for all $i\in \Z$,
\[T(v)_i = v_{i-1}v_{i+1} \cap v_{i}v_{i+2},\]
where $v_iv_j$ denotes the line through $v_i$ and $v_j$, see Figure~\ref{fig:pentagram} for an example.
\end{definition}

Let $\tilde{f}_j=(\tilde{f}_{i,j})_{[i+j]_2=0}$ be points in $\RP^2$ such that, for all $i,j\in \Z$ with $[i+j]_2=0$
\[
\tilde f_{i,j}=\tilde f_{i-3,j-1}\tilde f_{i+1,j-1} \cap \tilde f_{i-1,j-1}\tilde f_{i+3,j-1}.
\]
We refer to $\tilde{f}$ as a \emph{pentagram lattice map}.

Consider the dynamics mapping $(\tilde{f}_0, \tilde{f}_1)$ to $(\tilde{f}_1, \tilde f_2)$. This dynamics coincides with the pentagram map $T$ applied to $(v_i)_{i\in\Z}$ by identifying $v$ with $\tilde f$ via
\begin{equation}~\label{equ:pentagram_relation}
v_i = \tilde f_{2i,0}, \quad \text{ and} \quad T(v)_i=\tilde{f}_{2i+1,1}.
\end{equation}
We then have, for all $i,j\in\Z$ such that $[i+j]_2=0$, $\tilde{f}_{i,j}=T^j(v)_{\frac{i-j}{2}}.$

The map $T$ naturally extends to
$T:(\RP^2)^{\Z_2\times\Z}\rightarrow(\RP^2)^{\Z_2\times\Z}$, mapping $(\tilde{f}_{j-1},\tilde f_{j})\mapsto (\tilde{f}_{j}, \tilde{f}_{j+1})$, and is also referred to as the \emph{pentagram map dynamics}. Note that we use the same notation $T$ for the dynamics acting on points or pairs of points; which one is used should be clear from the context and should not lead to confusion.
As usual, we denote by $f:\Z^2\rightarrow \RP^2$ the map obtained from $\tilde{f}$ by the change of coordinates of Equation~\eqref{eq:rotated_weights}:
\[
f_{i,j}=\tilde{f}_{i-j,i+j}.
\]

\begin{figure}[tb]
        \centering
        \small
        \begin{tikzpicture}[scale=1.3]
                \node[wvert,label={left:$v_{-1} = \tilde f_{-2,0}$}] (a1) at (0,.5) {};
                \node[wvert,label={left:$v_{0}=\tilde{f}_{0,0}$}] (a3) at (1,2) {};
                \node[wvert,label={above right:$v_{1}=\tilde{f}_{2,0}$}] (a5) at (3.5,3) {};
                \node[wvert,label={right:$v_{2}=\tilde{f}_{4,0}$}] (a7) at (6,2) {};
                \node[wvert,label={below right:$T(v)_{-2}=\tilde{f}_{-3,1}$}] (b0) at (1,0) {};
                \node[wvert,label={below:$T(v)_{2}=\tilde{f}_{5,1}$}] (b8) at (5.5,0) {};
                \node[wvert,label={below right:$T(v)_{-1}=\tilde{f}_{-1,1}$}] (b2) at (intersection of a1--a5 and b0--a3) {};
                \node[wvert,label={below right:$T(v)_0=\tilde{f}_{1,1}$}] (b4) at (intersection of a3--a7 and a1--a5) {};
                \node[wvert,label={[xshift=3]below right:$T(v)_1=\tilde{f}_{3,1}$}] (b6) at (intersection of a5--b8 and a3--a7) {};

                \draw[densely dotted]
                        (b0) -- (a1) -- (b2) -- (a3) -- (b4) -- (a5) -- (b6) -- (a7) -- (b8)
                ;
                \draw[-]
                        (b0) -- (b2) -- (b4) -- (b6) -- (b8)
                        (0,0) -- (a1) -- (a3) -- (a5) -- (a7) -- (6.2,0)
                ;
                \coordinate (s) at (0,-0.3);
        \end{tikzpicture}

\caption{Labeling of the pentagram map.
    }
    \label{fig:pentagram}
\end{figure}
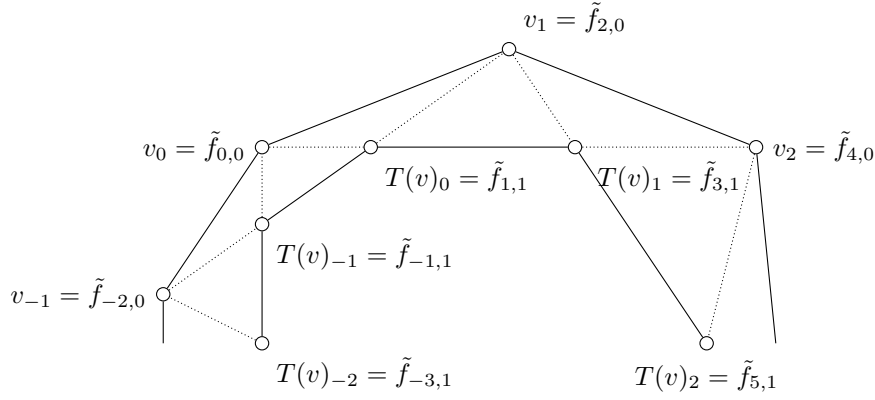

In the sequel, we will be working with affine coordinates. To that purpose, we introduce the following notation: for every $\ell \in\{0,1\}$,
\begin{equation}\label{equ:affine_coord}
\forall (i,j)\in\Z^2,\ g_{i,j}^\ell=\pi_\ell(f_{i,j}),\quad \forall\ i\in\Z,\ u_i^\ell=\pi_\ell(v_i),\quad T(u)_i^\ell=\pi_{\ell}(T(v)_i).
\end{equation}
Note that in the sequel, to simplify notation, we do not index the left-hand-sides with $\ell$, but the reader should keep in mind that whenever $g$ or $u$ is used, what is intended is `` for every $\ell\in\{0,1\}$, $g^\ell,u^\ell$ ''.

The following lemma is a consequence of Menelaus' theorem, and underlies the occurrence of dSKP in the pentagram map. This fact has not been published, but is outlined in the slides \cite{schieftalk} of a talk by Schief.

\begin{lemma}\label{lem:pentdskp}
Let $f:\Z^2\rightarrow \RP^2$ be a pentagram lattice map, and $g$ be its affine coordinates.
Then, for all $(i,j)\in\Z^2$, the following holds
\begin{align}
\frac{(g_{j+i,-i}-g_{j+i-1,-i+2})(g_{j+i,-i+1}-g_{j+i+1,-i+1})(g_{j+i+2,-i-1}-g_{j+i+1,-i})}{
(g_{j+i-1,-i+2}-g_{j+i,-i+1})(g_{j+i+1,-i+1}-g_{j+i+2,-i-1})(g_{j+i+1,-i}-g_{j+i,-i})}=-1.\label{eq:pentadskp}
\end{align}
\end{lemma}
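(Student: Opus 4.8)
The plan is to interpret the six affine coordinates appearing in \eqref{eq:pentadskp} as living along a zig-zag path in the pentagram lattice, and to recognize the left-hand side as a product of three Menelaus ratios, one for each of three collinear triples of points. The key geometric input is that a pentagram lattice map is built precisely by intersecting lines through prescribed points: by definition $\tilde f_{i,j} = \tilde f_{i-3,j-1}\tilde f_{i+1,j-1} \cap \tilde f_{i-1,j-1}\tilde f_{i+3,j-1}$, so each new point lies on two explicit lines, and after the change of coordinates $f_{i,j}=\tilde f_{i-j,i+j}$ this becomes a statement about four points of the form $f_{i\pm1,j\pm1}$ being collinear in pairs with $f_{i,j}$. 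Concretely, I would first unwind the indices: set $(I,J)=(j+i,-i)$ so that the six arguments of $g$ become $g_{I,J}, g_{I-1,J+2}, g_{I,J+1}, g_{I+1,J+1}, g_{I+2,J-1}, g_{I+1,J}$, and check (this is pure bookkeeping) that these are exactly the six vertices of an octahedral cell of $\calL$ under the identification $f_{i,j} \leftrightarrow (i,j,\cdot)$ used throughout the paper, so that the three collinearities we need are: $f_{I,J}, f_{I+1,J+1}, f_{I,J+1}$ are \emph{not} what we want — rather we want three genuinely collinear triples, which I expect to be $\{f_{I,J+1}, f_{I-1,J+2}, f_{I,J}\}$ is not collinear either; the correct collinear triples come from the defining incidences. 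So the first real step is to identify, from the defining relation of $\tilde f$ rewritten in $f$-coordinates, three lines each containing three of our six points.

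Second, I would recall Menelaus' theorem in the projective plane in its affine-coordinate form: if $A,B,C$ are three collinear points with affine coordinates $a,b,c$ (in either the $\pi_0$ or the $\pi_1$ chart), then the signed ratio $\frac{a-b}{b-c}$ (or a suitable cross-ratio-type combination) is controlled by the triangle through which the line passes, and multiplying such ratios around a closed loop of three lines forming a triangle gives $-1$ — this is exactly the Menelaus relation. Thus the plan is: write the left-hand side of \eqref{eq:pentadskp} as a product of three factors, group the numerator and denominator so that each factor is the Menelaus ratio attached to one of the three lines, and conclude that the product over the three sides of the relevant triangle equals $-1$. Because the statement of Lemma~\ref{lem:pentdskp} is phrased simultaneously for $\ell=0$ and $\ell=1$ (as flagged after \eqref{equ:affine_coord}), and Menelaus holds in each affine chart separately, it suffices to run the argument once with $g$ denoting $g^\ell$ for a fixed but arbitrary $\ell$.

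Third, I would verify that the sign works out to $-1$ and not $+1$: this is where care is needed, since Menelaus with unsigned ratios gives $+1$ and the signed version gives $-1$, and one must track the cyclic orientation of the three factors to make sure we are in the signed case with an odd number of "external" intersections. I expect \textbf{the main obstacle} to be precisely this combinatorial-geometric identification step: matching the six indices $g_{j+i,-i}, g_{j+i-1,-i+2}, g_{j+i,-i+1}, g_{j+i+1,-i+1}, g_{j+i+2,-i-1}, g_{j+i+1,-i}$ against the three collinear triples coming from the pentagram construction, and keeping the signs consistent, so that the three Menelaus factors assemble exactly into the displayed ratio rather than its inverse or a sign-flipped variant. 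Once the three collinearities and the orientation are pinned down, the conclusion is immediate from Menelaus. As a sanity check I would also confirm consistency with the dSKP recurrence \eqref{eq:dskp_x} by comparing with the analogous lemmas already proved in the paper (Lemmas~\ref{lem:miqueldskp}, \ref{lem:pnetdskp}, \ref{lem:intcrdskp}), all of which have the same shape, and with Schief's outline in \cite{schieftalk}.
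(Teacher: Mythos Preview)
Your instinct to use Menelaus' theorem is correct and is exactly what the paper does. However, you are making the bookkeeping much harder than necessary by trying to identify the collinearities directly in the $f$- and $g$-coordinates. The paper's proof is three lines, and the reason is that it works in the $v,\,T(v),\,T^2(v)$ labeling via the identification $\tilde f_{i,j}=T^j(v)_{(i-j)/2}$. Under this identification the six points in \eqref{eq:pentadskp} are precisely
\[
v_i,\quad T(v)_{i-2},\quad T(v)_{i-1},\quad T^2(v)_{i-1},\quad T(v)_{i+1},\quad T(v)_{i},
\]
and the collinearities are immediate from the very definition of the pentagram map: $v_i,T(v)_{i-2},T(v)_{i-1}$ lie on the line $v_{i-2}v_i$; $T(v)_{i-1},T^2(v)_{i-1},T(v)_{i+1}$ lie on $T(v)_{i-1}T(v)_{i+1}$; $T(v)_{i+1},T(v)_i,v_i$ lie on $v_iv_{i+2}$; and the remaining triple $T(v)_{i-2},T^2(v)_{i-1},T(v)_i$ lie on the transversal $T(v)_{i-2}T(v)_i$. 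This is a single classical Menelaus configuration (triangle $v_i,T(v)_{i-1},T(v)_{i+1}$ with one transversal), not three separate ones, and Menelaus immediately gives the multi-ratio equal to $-1$ for the points in $\RP^2$.

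The second point where the paper is sharper than your plan is the passage from $\RP^2$ to a single affine coordinate $g=\pi_\ell(f)$. You write that ``Menelaus holds in each affine chart separately'', but this is not quite the right justification: an individual simple ratio of $\pi_\ell$-coordinates along a line agrees with the affine ratio only when the line is not parallel to the $\ell$-th axis. The paper instead invokes the fact that multi-ratios (the full six-term expression) are invariant under projections \cite[Theorem 9.10]{ddgbook}, which handles the passage to a single coordinate in one stroke and without caveats. Once you switch to the $v,T(v)$ labeling and add this projection-invariance step, your obstacle (the index matching and the sign) disappears entirely.
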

\begin{proof}
By Menelaus's theorem \cite[Theorem 9.8]{ddgbook} we have that, for every $i\in\Z$,
\begin{align}
\frac{(v_i-T(v)_{i-2})(T(v)_{i-1}-T^2(v)_{i-1})(T(v)_{i+1}-T(v)_{i})}{
        (T(v)_{i-2}-T(v)_{i-1})(T^2(v)_{i-1}-T(v)_{i+1})(T(v)_{i}-v_i)} = -1.\label{eq:pentamenelaus}
\end{align}
Equation \eqref{eq:pentadskp} is nothing but Equation \eqref{eq:pentamenelaus} after coordinate projection. As multi-ratios (ratios of differences as on the left-hand side) are invariant under projections \cite[Theorem 9.10]{ddgbook}, the lemma is proven.
\end{proof}

A consequence of Lemma \ref{lem:pentdskp} is that $\tilde g_j$ for $j \geq 2$ depends only on $\tilde g_0, \tilde g_1$, no data of other coordinates is necessary.
The next theorem expresses the points $g_{i,j}$, $i+j\geq 1$ of the pentagram lattice map as a ratio function of oriented dimers of an Aztec diamond subgraph of $\Z^2$, with face weights being a subset of $(g_{i,j})_{i+j\in\{0,1\}}$, see also Figure~\ref{fig:pentagram_ic}. Note that since affine coordinates determine points, this theorem yields an explicit formula for the points $f$ as well.

\begin{figure}[t]
  \centering
  \includegraphics[width=15cm]{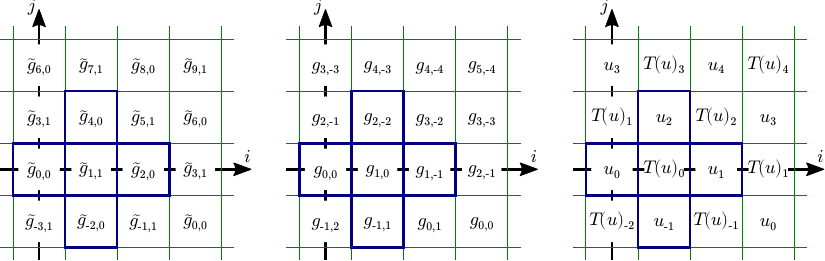}
  \caption{Face weights for the explicit solution of the pentagram
    map, in the three possible labelings. The shown Aztec diamond
    corresponds to the computation of $\tilde{g}_{1,3}=g_{2,1}$. Doing
  this for different affine coordinates $g$ of $f$ determines $f$.}
  \label{fig:pentagram_ic}
\end{figure}

\begin{theorem}\label{theo:explpent}
Let $f:\Z^2 \rightarrow \RP^2$ be a pentagram lattice map, $g$ be its affine coordinates, and $T$ be the corresponding pentagram map dynamics. Consider the graph $\Z^2$ with face-weights $(a_{i,j})_{(i,j)\in\Z^2}$ given by,
\begin{align}\label{eq:explpentweight}
a_{i,j}&=g_{\frac{i+3j+[i+j]_2}{2},\frac{-i-3j+[i+j]_2}{2}}=\tilde{g}_{i+3j,[i+j]_2},\nonumber\\
&=
\begin{cases}
u_{\frac{i+3j}{2}}& \text{ if $[i+j]_2=0$},\\
T(u)_{\frac{i+3j-1}{2}}& \text{ if $[i+j]_2=1$}.
\end{cases}
\end{align}
Then, for all $(i,j)\in\Z^2$ such that $i+j\geq 1$, we have
\begin{align}
g_{i,j}&=Y\Bigl(A_{i+j-1}\Bigl[g_{\frac{i-j+[i-j]_2}{2},\frac{-(i-j)+[i-j]_2}{2}}\Bigr],a\Bigr)
=Y\Bigl(A_{i+j-1}\bigl[\tilde{g}_{i-j,[i-j]_2}\bigr],a\Bigr)\nonumber\\
&=
\begin{cases}
Y\Bigl(A_{i+j-1}\bigl[u_{\frac{i-j}{2}}\bigr],a\Bigr)&\text{ if $[i-j]_2=0$},\\
Y\Bigl(A_{i+j-1}\bigl[T(u)_{\frac{i-j-1}{2}}\bigr],a\Bigr)&\text{ if $[i-j]_2=1$}.
\end{cases}
\end{align}

\end{theorem}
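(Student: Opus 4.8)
The plan is to run the same template used for P\nobreakdash-nets (Theorem~\ref{theo:explpnet}) and for Bäcklund pairs (Theorem~\ref{theo:explintcr}): upgrade the pentagram lattice map to a genuine dSKP solution on $\calL$, read off its initial layer, and quote Theorem~\ref{theo:expl_sol}. Fix an affine coordinate $\pi_\ell$ with $\ell\in\{0,1\}$, write $\tilde g=\tilde g^{\ell}$ for the rotated affine coordinates of $f$, and define $x:\calL\to\hC$ by $x(i,j,k):=\tilde g_{i+3j,k}$. This is well defined: $(i,j,k)\in\calL$ forces $(i+3j)+k$ to be even, which is exactly the parity condition under which $\tilde g$ is defined. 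The first step is to verify that $x$ satisfies the dSKP recurrence. For $p\in\Z^{3}\setminus\calL$ one computes the six neighbour values $x_{\pm e_{1}},x_{\pm e_{2}},x_{\pm e_{3}}$ of $x$, and after the index bookkeeping the relation~\eqref{eq:dskp_x} for $x$ at $p$ becomes precisely an instance of~\eqref{eq:pentadskp} (moving along $e_{3}$ shifts only the second index of $\tilde g$, moving along $e_{1}$ and $e_{2}$ shifts only the first index, by one and by three respectively); hence $x$ is a dSKP solution by Lemma~\ref{lem:pentdskp}.

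Next, evaluating $x$ on the initial layer gives $x(i,j,[i+j]_{2})=\tilde g_{i+3j,[i+j]_{2}}$, which is exactly the face weight $a_{i,j}$ of~\eqref{eq:explpentweight}; the reformulation in terms of $u,T(u)$ is immediate from $\tilde g_{2m,0}=\pi_{\ell}(v_{m})=u_{m}$ and $\tilde g_{2m+1,1}=\pi_{\ell}(T(v)_{m})=T(u)_{m}$, using~\eqref{equ:pentagram_relation}. With these two ingredients, Theorem~\ref{theo:expl_sol} yields $x(i,j,k)=Y(\az{k-1}{a_{i,j}},a)$ for $k\ge 1$. To translate this back to the pentagram map, recall $f_{i,j}=\tilde f_{i-j,i+j}$, so $g_{i,j}=\tilde g_{i-j,i+j}=x(i-j,0,i+j)$, and $(i-j,0,i+j)\in\calL$. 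Hence, for $i+j\ge 1$,
\[
g_{i,j}=Y\bigl(\az{i+j-1}{a_{i-j,0}},a\bigr),\qquad a_{i-j,0}=\tilde g_{i-j,[i-j]_{2}}=g_{\frac{i-j+[i-j]_{2}}{2},\frac{-(i-j)+[i-j]_{2}}{2}},
\]
and by parity $a_{i-j,0}$ equals $u_{(i-j)/2}$ or $T(u)_{(i-j-1)/2}$ according to $[i-j]_{2}$, which is the claimed formula (the case $i+j=1$ being covered by the convention $A_{0}[a]=a$). As in the proof of Theorem~\ref{theo:explintcr}, I would include the remark that only the weight of the \emph{central} face of the Aztec diamond matters and not its location in $\Z^{2}$: the weights $(a_{i,j})$ depend on $(i,j)$ only through $i+3j$ and the parity of $i+j$, hence are invariant under the translation $(i,j)\mapsto(i-3,j+1)$, so any Aztec diamond of the right size whose central face carries the required weight has the same weights and the same ratio of oriented-dimer partition functions. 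Finally, since a point of $\RP^{2}$ is determined by its affine coordinates $\pi_{0},\pi_{1}$, applying the displayed formula for $g=g^{\ell}$ with $\ell=0$ and $\ell=1$ determines $f_{i,j}$ itself.

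The only genuinely delicate point is the dSKP verification in the first paragraph: one must choose the linear index map $x$ so that~\eqref{eq:dskp_x} is literally~\eqref{eq:pentadskp}, tracking all six index shifts (and their signs) and checking that the parity constraint defining $\Z^{3}\setminus\calL$ is precisely the constraint under which Lemma~\ref{lem:pentdskp} applies. This is a finite but fiddly bookkeeping task; everything afterwards is the same routine translation already carried out for the other geometric systems in the paper, so no new ideas beyond Lemma~\ref{lem:pentdskp} and Theorem~\ref{theo:expl_sol} are needed.
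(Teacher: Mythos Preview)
Your proof is correct and follows essentially the same route as the paper's: the function $x(i,j,k)=\tilde g_{i+3j,k}$ is exactly the paper's $x(i,j,k)=g_{(i+3j+k)/2,\,(-i-3j+k)/2}$ written in rotated coordinates, and from there both arguments invoke Lemma~\ref{lem:pentdskp}, read off the initial layer, apply Theorem~\ref{theo:expl_sol}, and use the $(3,-1)$ translation invariance of the face weights. The one point worth a careful line when you write it up is your ``precisely an instance'' claim: after the index substitution the dSKP relation~\eqref{eq:dskp_x} and equation~\eqref{eq:pentadskp} differ by swapping the roles of $\tilde g_{n+3,k}$ and $\tilde g_{n-3,k}$, but the multi-ratio condition $=-1$ is invariant under this swap (a short polynomial identity), so the verification does go through.
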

\begin{proof}
Consider the function $x:\calL\rightarrow\RP^2$ given by
\begin{equation}\label{equ:proof_pentagram}
x(i,j,k)=g_{\frac{i+3j+k}{2},\frac{-i-3j+k}{2}}.
\end{equation}
As a consequence of Lemma~\ref{lem:pentdskp}, we have that, for $k\geq 1$, the function $x$ satisfies the dSKP recurrence. Note that the function $x$ satisfies the initial condition
\[
a_{i,j}=x(i,j,[i+j]_2)=g_{\frac{i+3j+[i+j]_2}{2},\frac{-i-3j+[i+j]_2}{2}},
\]
giving the face weights~\eqref{eq:explpentweight} of the statement. The versions of the face weights involving $\tilde{g}$, $T$ and $u$ are obtained using the change of coordinates~\eqref{eq:rotated_weights} and the identification~\eqref{equ:pentagram_relation}.

As a consequence of Theorem~\ref{theo:expl_sol}, we know that
$x(i,j,k)=Y(A_{k-1}[a_{i,j}],a)$. Using
Equation~\eqref{equ:proof_pentagram}, we have that for every $m$ such that
$[i-j-m]_3=0$,
$\bigl(m,\frac{i-j-m}{3},i+j\bigr) \in
\calL$ and
\[
x\Bigl(m,\frac{i-j-m}{3},i+j\Bigr)=g_{i,j}.
\]
Applying this at $m=i-j$, we get that $g_{i,j}=Y(A_{i+j-1}[a_{i-j,0}],a)$. An explicit computation of the face weight $a_{i-j,0}$ gives
\[
a_{i-j,0}=g_{\frac{i-j+[i-j]_2}{2},\frac{-(i-j)+[i-j]_2}{2}}.
\]

This shows that $g_{i,j}$ is the ratio of partition functions for
the Aztec diamond of size $i+j-1$ centered at $(i-j,0)$, whose
central face has weight $g_{\frac{i-j+[i-j]_2}{2},\frac{-(i-j)+[i-j]_2}{2}}$. However, note that the whole
solution \eqref{equ:proof_pentagram} is invariant under translations of
multiples of $(3,-1,0)$. Therefore, \emph{any} Aztec diamond of size
$i+j-1$ whose central face has weight $g_{\frac{i-j+[i-j]_2}{2},\frac{-(i-j)+[i-j]_2}{2}}$ is a translate of
the first one, and has the same face weights and
ratio of partition functions. Therefore, it is not important that the
Aztec diamond is centered at $(i-j,0)$, but only that it has the
announced central face weight.\qedhere
\end{proof}

\begin{remark}\label{rem:pent_expl_alternative} \leavevmode
\begin{itemize}
\item Equivalently, using the change of coordinates \eqref{eq:rotated_weights_1}, we have that for
$(i,j)\in\Z^2$ such that $[i+j]_2=0$ and $j\geq 1$,
\begin{align}
T^j(u)_{\frac{i-j}{2}}=\tilde{g}_{i,j}&=Y\Bigl(A_{j-1}\bigl[\tilde{g}_{i,[i]_2}\bigr],a\Bigr)
=
\begin{cases}
Y\Bigl(A_{j-1}\bigl[u_{\frac{i}{2}}\bigr],a\Bigr)&\text{ if $[i]_2=0$},\\
Y\Bigl(A_{j-1}\bigl[T(u)_{\frac{i-1}{2}}\bigr],a\Bigr)&\text{ if $[i]_2=1$}.
\end{cases}
\end{align}
\item
            Glick introduced a quiver of a cluster algebra which he associated to the pentagram map \cite{glickpentagram}. We do not need cluster algebras in this paper, but each local occurrence of the dSKP equation corresponds to a mutation in a cluster algebra \cite{athesis}. The weights of the Aztec diamond that we use to express the iterations of the pentagram map are in fact periodic, that is they are well defined on $\Z^2/\{(3,-1)\}$. In the case of $m$-closed initial conditions the weights are doubly periodic, that is well defined on $\Z^2/\{(3,-1), (m,-m)\}$. The quiver that appears in \cite{glickpentagram} has the combinatorics of $\Z^2/\{(3,-1), (m,-m)\}$ as well.
\end{itemize}
\end{remark}

\subsection{Singularities}

Let $m\geq 1$. A pentagram lattice map $f$ is said to be \emph{$m$-closed} if, for all $(i,j)\in \Z^2$, $f_{i+m,j-m} = f_{i,j}$. The initial data $v$ is called \emph{$m$-closed} if, for all $i\in\Z$, $v_{i + m} = v_i$; the two definitions are equivalent due to identification \eqref{equ:pentagram_relation}. The statement of the recurrence of the singularity in the next theorem is a variant of a theorem already given by Schwartz
\cite{schwartzaxisaligned}.
Moreover, the explicit expression in Theorem \ref{th:pentdodgson} was conjectured by Glick (unpublished) and proven by Yao \cite[Theorem 1.3]{yao}.

\begin{figure}[tb]
  \centering
  \frame{\includegraphics[height=6cm]{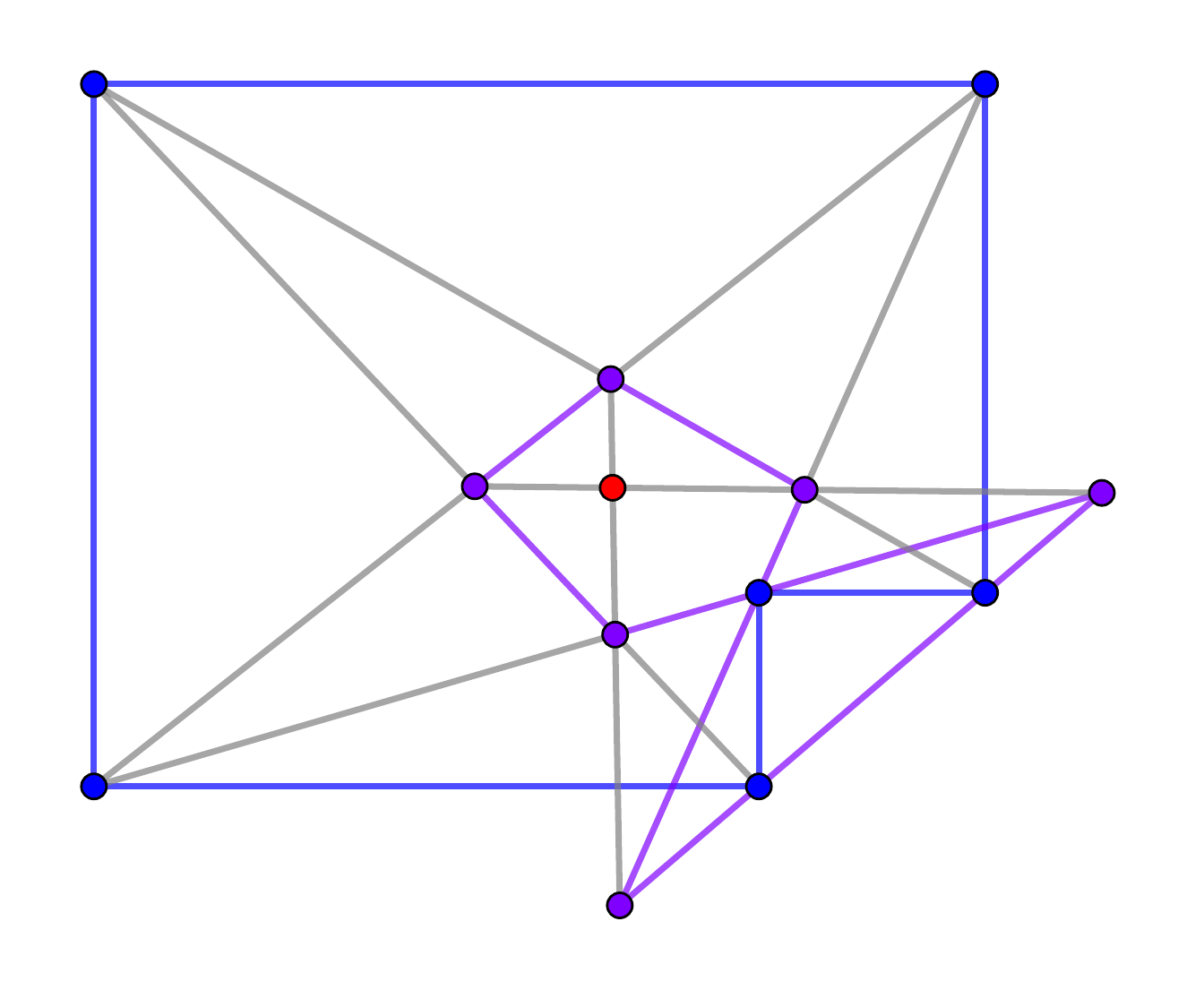}}\hspace{4mm}
  \frame{\includegraphics[width=6cm,angle=90]{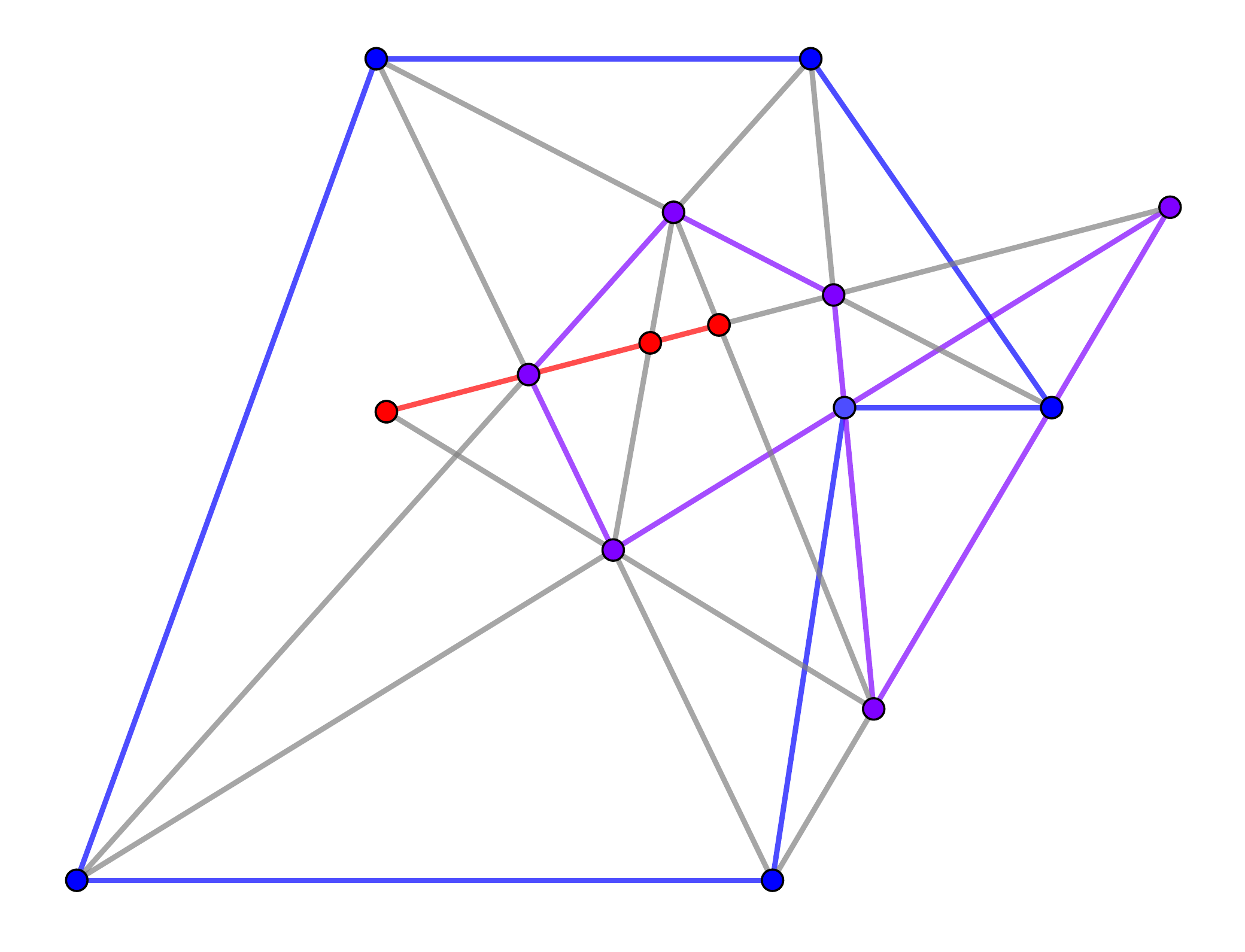}}
  \caption{A Dodgson singularity (left, Theorem~\ref{th:pentdodgson}) and an $(m,2)$-Devron singularity (right, Theorem \ref{theo:pentsing}) for two 6-closed curves by the pentagram map.}
  \label{fig:pentsing}
\end{figure}

\begin{theorem}\label{th:pentdodgson}
Let $m\geq 1$, and let $\tilde{f}:\Z^2\rightarrow\RP^2$ be a
$2m$-closed pentagram lattice map, or equivalently, let
$v:\Z\rightarrow\RP^2$ be $2m$-closed. Assume that the lines
$\tilde{f}_{2i,0}\tilde{f}_{2i+2,0}=v_{i}v_{i+1}$ are parallel to the
$x$-axis for $i\in 2\Z$, and are parallel to the $y$-axis for $i\in
2\Z+1$. Assume that we can apply the propagation map at least $m-1$
times. Then, for all $i\in\Z$ such that $[i+m-1]_2=0$,  we have
\[
\tilde{f}_{i,m-1}=\frac{1}{2m}\sum_{\ell=0}^{2m-1}\tilde{f}_{2\ell,0},\ \text{ otherwise stated, }\
T^{m-1}(v)\equiv \frac{1}{2m}\sum_{\ell=0}^{2m-1}v_\ell.
\]
In other words, $T^{m-1}(v)$ is constant equal to the center of mass of $v_0,\dots,v_{2m-1}$.
\end{theorem}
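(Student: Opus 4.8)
The plan is to mirror the other singularity proofs in the paper: realize the pentagram iteration as a solution of the dSKP recurrence, recognize the hypotheses as $m$-Dodgson initial data, and then quote the prerequisites of Section~\ref{sec:prereq} for both the recurrence of the singularity and its location. Concretely, I would fix the affine chart and argue one affine coordinate at a time, writing $g$ for $g^\ell$, $\ell\in\{0,1\}$. By Lemma~\ref{lem:pentdskp} and Theorem~\ref{theo:explpent} (through \eqref{equ:proof_pentagram}), the function $x(i,j,k)=\tilde g_{i+3j,k}$ solves the dSKP recurrence on \emph{all} of $\calL$ — in particular at negative levels $k$, which record the pentagram lattice map propagated backwards — and \eqref{equ:pentagram_relation} gives $x(\cdot,\cdot,k)=$ (the $\ell$-coordinates of) $T^{k}(v)$ for every $k\in\Z$.

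The key step is to identify what the axis-alignment does at level $-1$. Recall $v_iv_{i+1}=\tilde f_{2i,0}\tilde f_{2i+2,0}$, and observe that inside the backwards-extended pentagram lattice map this line equals the diagonal $\tilde f_{2i-1,-1}\tilde f_{2i+3,-1}$ of the row at level $-1$. The hypothesis forces consecutive such lines to be distinct but all horizontal (for $i$ even), resp. all vertical (for $i$ odd), so each vertex $\tilde f_{\cdot,-1}$ lies on two distinct parallel lines and is therefore one of the points at infinity $[1{:}0{:}0]$, $[0{:}1{:}0]$. Since every affine coordinate of either point equals $\infty$, the row $\tilde f_{\cdot,-1}$ is identically $\infty$ in the chart $g$; together with $2m$-closedness — which, combined with the axis-alignment, makes the face weights $m$-doubly periodic — this puts us exactly in the $m$-Dodgson situation of Definition~\ref{def:sing}, with the constant layer at level $-1$ and constant value $d=\infty$, which the Möbius transformation $z\mapsto z^{-1}$ of the target (under which dSKP is invariant) normalizes to $d=0$. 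I expect this step — pinning down which level-$(-1)$ vertex is which point at infinity, and verifying the $m$-fold double periodicity of the weights — to be the main, though essentially routine, obstacle.

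Granting the reduction, I would shift the time origin so that level $-1$ sits at height $0$ and $v$ at height $1$, so that $T^{m-1}(v)$ occupies height $m$. Corollary~\ref{thm:Dodgson_prerequ} then gives that $x(\cdot,\cdot,m)$ is independent of the spatial coordinates, i.e.\ (via \eqref{equ:pentagram_relation}, and using the hypothesis that the dynamics can be iterated $m-1$ times, so no earlier singularity intervenes) that $T^{m-1}(v)$ is a single point of $\RP^2$ — the recurring singularity, after which the dynamics terminates, recovering the variant of \cite{schwartzaxisaligned}.

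Finally, to identify this point, I would apply Corollary~\ref{cor:harm_mean} in the chart normalized so that $d=0$: there the $m$ relevant height-$1$ weights are cyclic permutations of one another (by periodicity of the weights, cf.\ Remark~\ref{rem:pent_expl_alternative}), so the height-$m$ value is their harmonic mean; undoing $z\mapsto z^{-1}$ turns this harmonic mean into the arithmetic mean of the corresponding coordinates of $v_0,\dots,v_{2m-1}$, and reassembling $\ell=0,1$ yields $T^{m-1}(v)=\tfrac{1}{2m}\sum_{\ell=0}^{2m-1}v_\ell$, the center of mass. Alternatively one can bypass Corollary~\ref{cor:harm_mean} and evaluate the height-$m$ value directly from Theorem~\ref{theo:D}, exhibiting the adapted element of $\ker D^T$ for these weights, which makes the ratio collapse to the arithmetic mean at once and also explains why the limiting point transforms affinely — rather than merely projectively — in the data.
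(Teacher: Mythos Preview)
Your setup matches the paper's: pass to one affine coordinate, observe that the backwards row $\tilde f_{-1}$ consists of points at infinity, and apply $z\mapsto z^{-1}$ so that the corresponding dSKP layer is identically~$0$. The gap is in the next step. You assert that ``$2m$-closedness, combined with the axis-alignment, makes the face weights $m$-doubly periodic'', and then invoke Corollary~\ref{thm:Dodgson_prerequ} and Corollary~\ref{cor:harm_mean}. This is not correct. With the shift used in the proof of Theorem~\ref{theo:explpent}, the height-$1$ weights are $a_{i,j}=1/u_n$ with $2n=i+3j-1$, so $a_{i+m,j-m}=1/u_{n-m}$; equality with $a_{i,j}$ would require $u_{n}=u_{n-m}$ for all~$n$, i.e.\ $m$-periodicity of $u$. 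But $u$ is only $2m$-periodic, and the axis-alignment relation $u^0_{2k+1}=u^0_{2k+2}$ does not force $m$-periodicity. Hence the data are only $2m$-doubly periodic, not $m$-Dodgson, and the two corollaries only yield a singularity at height~$2m$, which is too late. (A minor side issue: half of the points $\tilde f_{\cdot,-1}$ equal $[0{:}1{:}0]$, whose $x$-coordinate is $0/0$, not~$\infty$; the paper glosses over this as well.)

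The paper therefore does \emph{not} go through the Dodgson corollaries. It uses only Proposition~\ref{prop:Nmat}, which needs no periodicity, to write $\tilde g'_{i,m-1}$ as $\sum_{i',j'} N^{-1}_{i',j'}$ for an explicit $m\times m$ matrix $N$ whose entries are values of $\tilde g_0$. The axis-alignment (namely $\tilde g_{2\ell,0}=\tilde g_{2\ell+2,0}$ for $\ell$ odd) together with $2m$-closedness makes the all-ones vector an eigenvector of $N^T$ with eigenvalue $\tfrac12\sum_{\ell=0}^{2m-1}\tilde g_{2\ell,0}$, and the argument of Theorem~\ref{theo:pnetpremature} then gives $\tilde g'_{i,m-1}=2m/\sum_\ell \tilde g_{2\ell,0}$, i.e.\ $\tilde g_{i,m-1}$ equals the center of mass in that coordinate. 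Your closing ``alternative'' via Theorem~\ref{theo:D} points in this direction but would still need the specific eigenvector computation; the paper's route through Proposition~\ref{prop:Nmat} and the $N^T$-eigenvector is exactly what replaces your appeal to Corollaries~\ref{thm:Dodgson_prerequ}--\ref{cor:harm_mean}.
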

\begin{proof}
  Let $g = \pi_0(f)$ be the $x$-coordinate of $f$. Then we
  have that $\tilde g_{-1} \equiv \infty$, because all the points of
  $\tilde f_{-1}$ are at infinity.
  Additionally, let
  $\tilde g'_{i,j} = \tilde g_{i,j}^{-1}$ for all $i,j\in\Z$ with
  $[i+j]_2=0$. Note that $g'$ is $g$ after a projective
  transformation, so $g'$ satisfies dSKP in the sense of Lemma~\ref{lem:pentdskp} as well. Moreover, $\tilde g'_{-1} \equiv
  0$.
  We use $(g'_{-1},g'_0)$ as initial data, that is, we
    consider the dSKP solution adapted from the proof of
    Theorem~\ref{theo:explpent} with this offset:
    $x(i,j,k)=\tilde g'_{i+3j-1,k-1}$, corresponding to the initial
    data $a_{i,j}=\tilde g'_{i+3j-1,[i+j]_2-1}$. As
    $\tilde g'_{-1} \equiv 0$, the initial data satisfies the
    hypothesis of Proposition \ref{prop:Nmat} with $d=0$. For any $i$ such that
    $[i+m-1]_2=0$, we have $\tilde g'_{i,m-1}=x(i+1,0,m)$, and we get
    the following $m\times m$ matrix, with $I=i+3m-1$:
    \begin{equation*}
      N =
      \begin{pmatrix}
        \tilde g_{I,0} & \tilde g_{I-2,0} & \tilde
        g_{I-4,0} & \dots \\
        \tilde g_{I-4,0} & \tilde g_{I-6,0} & \tilde
        g_{I-8,0} & \dots \\
        \tilde g_{I-8,0} & \dots &  &  \\
        \vdots & & &
      \end{pmatrix}.
    \end{equation*}
    Since $v$ is $2m$-closed, we have for all
    $\ell\in \Z,\ \tilde g_{2\ell,0} = \tilde g_{2\ell+4m,0}$. By the alignment
    with the axes, we also have for all
    $\ell\in 2\Z+1,\ \tilde g_{2\ell,0} = \tilde g_{2\ell+2,0}$. We obtain that
    the $(1,1,\dots,1)^T$ vector is an eigenvector for $N^T$ with
    eigenvalue
    $\sum_{\ell=0}^{m-1} {\tilde{g}}_{4\ell,0} = \frac12\sum_{\ell=0}^{2m-1}
    {\tilde{g}}_{2\ell,0}$. Therefore, by the same argument as in
    Theorem~\ref{theo:pnetpremature} on $N^T$, we obtain that
    \begin{align}
    \tilde g'_{i,m-1} = \frac{2m}{\sum_{\ell=0}^{2m-1}
        {\tilde{g}}_{2\ell,0}},
    \end{align}
     so that $\tilde g_{m-1}$ is identically
    equal to center of mass of $\tilde g_0$. As the argument works
    analogously for $\pi_1(f)$, the claim is proven.
\end{proof}
Apart from this well known axis-aligned case, we also show another type of singularity that did not previously appear in the literature.

\begin{theorem}\label{theo:pentsing}
Let $m\geq 1$, and let $\tilde{f}:\Z^2\rightarrow\RP^2$ be a
$2m$-closed pentagram lattice map, or equivalently, let
$v:\Z\rightarrow\RP^2$ be $2m$-closed. Assume that the lines
$\tilde{f}_{2i,0}\tilde{f}_{2i+2,0}=v_{i}v_{i+1}$ are parallel to the
$x$-axis for $i\in 2\Z$. Assume that we can apply the propagation map
at least $2m-4$ times. Then, $\tilde{f}_{2m-4}$, or equivalently
$T^{2m-4}(v)$, is singular. That is $\tilde{f}_{2m-3}$, or
equivalently $T^{2m-3}(v)$, is not defined.
\end{theorem}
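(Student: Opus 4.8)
The plan is to reduce this statement to the $(m,2)$-Devron singularity result for dSKP, Theorem~\ref{theo:devron_sing}, exactly as in the proof of Theorem~\ref{theo:intcrsingular}. First I would pick affine coordinates: let $g=\pi_0(f)$ be the $x$-coordinate of $f$, so that the axis-alignment hypothesis on the even edges $v_iv_{i+1}$, $i\in 2\Z$, translates into $\tilde g_{2i,0}=\tilde g_{2i+2,0}$ for $i\in 2\Z$. In the rotated coordinates $\tilde g$ this says precisely that every other SW--NE diagonal of the initial data at height $0$ is constant. Combined with the $2m$-closedness of $v$ (hence $\tilde g_{i+4m,j}=\tilde g_{i,j}$), this is an instance of $(m,2)$-Devron initial data in the sense of Definition~\ref{def:sing}, for the dSKP solution $x(i,j,k)=\tilde g_{i+3j,k}$ coming from the proof of Theorem~\ref{theo:explpent} (with the appropriate index shifts coming from the $(3,-1)$-periodicity of the pentagram weights). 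One subtlety: Definition~\ref{def:sing} asks the constant diagonals to be literally equal, not merely that the difference vanishes; here the constant diagonals of $\tilde g$ carry the value $\infty$ is \emph{not} what happens — it is a finite constant (the common $x$-coordinate of the axis-aligned edge), so the hypothesis is genuinely of $(m,2)$-Devron type, and no degenerate convention is needed.

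Next I would invoke Theorem~\ref{theo:devron_sing} with $p=2$: for $(m,2)$-Devron data, at height $k=(m-2)\cdot 2+2=2m-2$ one has $x(i,j,k)=x(i+1,j+1,k)$ along the appropriate diagonal. Translating back through $x(i,j,k)=\tilde g_{i+3j,k}$, this yields $\tilde g_{\bullet,2m-2}=\tilde g_{\bullet+2,2m-2}$ — i.e.\ the $x$-coordinate of $\tilde f_{2m-2}$ is constant along the relevant diagonal. Doing the same with $g=\pi_1(f)$ gives the analogous statement for the $y$-coordinate; but here one must be slightly careful, because the hypothesis was only placed on the \emph{even} edges, and for the $y$-coordinate the odd edges are not assumed axis-aligned — so one does \emph{not} get that both affine coordinates of $\tilde f_{2m-2}$ are simultaneously constant. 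Rather, one gets that on $\tilde f_{2m-2}$ (restricted to one sublattice of diagonals) the $x$-coordinates agree between consecutive points; geometrically this means those consecutive points of $T^{2m-4}(v)$ lie on a common vertical line. Then the argument is: if $T^{2m-4}(v)$ has two consecutive vertices on a vertical line, the line through them is vertical, and the pentagram construction that would produce $T^{2m-3}(v)$ intersects two such parallel (or coincident) lines, so the next image $\tilde f_{2m-3}=T^{2m-3}(v)$ is undefined. This is the same mechanism invoked in the proof of Theorem~\ref{theo:intcrsingular} (``$\tilde w_{i,j}=\tilde w_{i+2,j}$ forces $\tilde w_{i+1,j+1}$ undefined'') transported to the projective/pentagram setting.

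The main obstacle I expect is bookkeeping the index shifts and the parity/diagonal conventions so that the axis-alignment hypothesis matches the \emph{correct} set of diagonals in Definition~\ref{def:sing}, and checking that the resulting collapsed configuration at step $2m-4$ genuinely obstructs the pentagram step rather than merely being a nongeneric-but-well-defined position. Concretely: after establishing $\tilde f_{\bullet,2m-2}=\tilde f_{\bullet+2,2m-2}$ in the $x$-coordinate, I need to argue that the pentagram map $T$ applied to $\tilde f_{2m-3}$ to obtain $\tilde f_{2m-2}$ is non-invertible precisely because of this coincidence — equivalently that $T^{-1}\circ T^{2m-3}$ fails, so $T$ cannot have been applied $2m-3$ times. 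A clean way to do this, paralleling the Bäcklund-pair proof, is to note that if $\tilde f_{2m-3}$ were well-defined then two of the lines used to construct $\tilde f_{2m-2}$ from it would have to be distinct yet produce coincident-in-$x$ image points, and a short genericity argument (the other two coordinates being generic, using that we \emph{can} apply $T$ that many times) shows this cannot happen. The rest is routine and follows the template of the integrable cross-ratio map case verbatim.
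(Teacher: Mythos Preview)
Your broad strategy---reduce to $(m,2)$-Devron initial data and invoke Theorem~\ref{theo:devron_sing}---is exactly what the paper does. But the specific reduction you propose fails, and the gap is not just bookkeeping.

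First a minor point: a line parallel to the $x$-axis has constant $y$-coordinate, not constant $x$-coordinate, so with $g=\pi_0$ the hypothesis does \emph{not} give $\tilde g_{2i,0}=\tilde g_{2i+2,0}$; you would need $g=\pi_1$ for that. But even after this correction the argument breaks.

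The real problem is the index arithmetic of the pentagram embedding into dSKP. With $x(i,j,k)=\tilde g_{i+3j,k}$, moving one step along a SW--NE diagonal of the Aztec diamond, $(i,j)\mapsto(i+1,j+1)$, changes the $\tilde g$-index by $4$, not by $2$. Thus on a height-$0$ diagonal the face weights run through $\tilde g_{\ell,0},\tilde g_{\ell+4,0},\tilde g_{\ell+8,0},\dots$. Your hypothesis (for $g=\pi_1$) gives only the pairwise identifications $\tilde g_{4k,0}=\tilde g_{4k+2,0}$---a shift by $2$---which does \emph{not} force these step-$4$ sequences to be constant. So the data at layers $(0,1)$ are not $(m,2)$-Devron, contrary to your claim; your ``one subtlety'' paragraph addresses the wrong issue.

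The paper sidesteps this by shifting one layer down and using $(\tilde g_{-1},\tilde g_0)$ as initial data. The point is that $T^{-1}(v)_j=v_{j-2}v_{j-1}\cap v_jv_{j+1}$, and for $j$ even both edges are horizontal, so $T^{-1}(v)_j$ is the point at infinity on the $x$-axis. Taking $g=\pi_0$ and inverting, $\tilde g'_{i,-1}:=\tilde g_{i,-1}^{-1}=0$ for $[i]_4=3$. Now in the shifted dSKP solution $x(i,j,k)=\tilde g'_{i+3j-1,k-1}$, the height-$0$ weights $a_{i,j}=\tilde g'_{i+3j-1,-1}$ vanish precisely on $[i-j]_4=0$, which \emph{is} $(m,2)$-Devron (the constant diagonal value being $0$). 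Theorem~\ref{theo:devron_sing} then gives a collapse at height $2m-2$, i.e.\ in $\tilde g_{2m-3}$, and the same works for the other affine coordinate, forcing half the \emph{points} of $\tilde f_{2m-3}$ to coincide---so all short diagonals of $T^{2m-4}(v)$ would pass through a single point, a contradiction with nonsingularity.

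So the missing idea is precisely the one you dismissed: you must pass to the singular layer $T^{-1}(v)$ and use the projective transformation $g\mapsto g^{-1}$ to turn $\infty$ into $0$; the finite pairwise coincidences at layer $0$ are not of Devron type.
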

\proof{
We use the same notation as in the proof of
  Theorem~\ref{th:pentdodgson}. In this case $\tilde g'_{i,-1}=0$ for
  any $i\in \Z$ such that $[i]_4=3$. This implies that the dSKP
  solution $x(i,j,k)=\tilde g'_{i+3j-1,k-1}$ now has initial data that
  are $(m,2)$-Devron. Theorem~\ref{theo:devron_sing} implies that
  every other diagonal at height $k=2m-2$ is constant, which means that
half the points of $\tilde g_{2m-3}$ coincide. The same argument works for $\pi_1(f)$ and thus the observation holds for $f$ itself. However, this means that all the diagonals of $T^{2m-4}(v)$ intersect in a single point, which is not possible if $T^{2m-4}(v)$ is non-singular, which proves the theorem.
\qed

Although Theorem \ref{theo:pentsing} predicts a singularity, it does not state the type of singularity. Numeric simulations suggest the following conjecture.

\begin{conjecture}\label{conj:pentsing}
Suppose the hypotheses of Theorem~\ref{theo:pentsing} are satisfied.
    Then
    \begin{align}
                T^{2m-4}(v)_i = T^{2m-4}(v)_{i+1},
    \end{align}
    for either all $i\in 2\Z$ or all $i\in 2\Z+1$. Moreover, there is a line $L\subset \RP^2$ such that $T^{2m-4}(v)_i\in L$ for all $i\in \Z$.
\end{conjecture}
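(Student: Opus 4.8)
The plan is to re-run, one layer deeper, the argument behind Theorem~\ref{theo:pentsing}. Recall from that proof that, after choosing an affine coordinate $g=\pi_\ell(f)$, passing to $g'=g^{-1}$ and shifting the height index, the pentagram lattice map is realized as the slice $x(i,j,k)=\tilde g'_{i+3j-1,k-1}$ of a dSKP solution whose initial data is $(m,2)$-Devron: it is $m$-simply periodic because $v$ is $2m$-closed, and every other SW--NE diagonal at height $0$ vanishes because the points $T^{-1}(v)_k$ with $k$ even lie at infinity in the $x$-direction. Theorem~\ref{theo:devron_sing} (with $p=2$) produces the re-singularity at height $(m-2)\cdot2+2=2m-2$, which is what drives Theorem~\ref{theo:pentsing}. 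Through the identifications \eqref{equ:pentagram_relation} and \eqref{equ:affine_coord}, the two assertions of the conjecture become statements about the layer $k=2m-3$ sitting one step below that height: assertion (a), that $T^{2m-4}(v)_i=T^{2m-4}(v)_{i+1}$ for one parity of $i$, says that every other value of $x(\cdot,\cdot,2m-3)$ coincides with a fixed neighbour; assertion (b), that all $T^{2m-4}(v)_i$ lie on a common line $L$, is genuinely projective and must couple the $\pi_0$-- and $\pi_1$--computations, since it is invisible in any single affine chart.

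For assertion (a), the plan is to sharpen Theorem~\ref{theo:devron_sing} by one layer through a direct study of the operator $D$ of Theorem~\ref{theo:D}, exactly in the style of the proof of Theorem~\ref{theo:intcrsingular}. One forms the Aztec diamond $A_{2m-4}$ carrying the $(m,2)$-Devron weights (the block of $m-2$ consecutive zero diagonals together with the surrounding data) and passes to the $m$-periodic cylinder quotient $\bar D$. The crux is to produce an explicit basis of $\ker\bar D^T$: the $m-2$ indicator vectors of the zero-diagonals, together with one or two distinguished vectors whose nonzero entries are manufactured from the Menelaus relation \eqref{eq:pentamenelaus} (the pentagram counterpart of the use of the three-legged form \eqref{eq:intcrstarform} in the proof of Theorem~\ref{theo:intcrsingular}). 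Feeding this basis into the formula of Theorem~\ref{theo:D} for two neighbouring leftmost faces then forces the corresponding layer-$(2m-3)$ values to agree, which is assertion (a); Proposition~\ref{prop:dimker2} handles the degenerate subcases, as in the proof of Theorem~\ref{theo:intcrsingular}. Alternatively, as hinted in the introduction, one could first establish Conjecture~\ref{conj:pairsing} --- which proposes precisely such a new, one-layer-early recurring singularity --- and then deduce (a) from it by specializing to $p=2$ through the dictionary above.

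For assertion (b), the idea is that the kernel data furnished by Theorem~\ref{theo:D} for the layer-$(2m-3)$ values depends only on the combinatorics of the Aztec diamonds and on the \emph{pattern} of vanishing weights, not on the numerical values of the nonzero initial data, because the nonzero entries of the distinguished kernel vectors are built from Menelaus relations that do not involve the chosen affine coordinate. Hence, for each index $i$, the homogeneous coordinates of $T^{2m-4}(v)_i$ are given, in any affine chart, by one and the same weighted sum $\sum_\ell v^{(i)}_\ell\,P_\ell$ of the homogeneous coordinates $P_\ell$ of a fixed, $i$-independent family of initial points, with an $i$-dependent kernel vector $v^{(i)}$. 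It then suffices to read off from the explicit basis of part (a) that the $v^{(i)}$ come in proportional pairs (one parity), and that the $m$ distinct ones all lie in a common two-dimensional subspace of $\ker\bar D^T$; then every $T^{2m-4}(v)_i$ lies in the span of two fixed points of $\RP^2$, so on a single line $L$, and the proportional pairs reproduce assertion (a). A complementary, more synthetic route is to dualize the entire pentagram iteration --- the pentagram map being self-dual up to a shift --- so that ``all points on a line'' turns into ``all short diagonals concurrent'', which is essentially the degeneracy already visible in the proof of Theorem~\ref{theo:pentsing}; one would then make that statement rigorous for the dual configuration and transport it back, after checking that the hypothesis on the directions of the lines $v_iv_{i+1}$ transforms as required.

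The main obstacle is the first step: the results imported from \cite{paper1}, in particular Theorem~\ref{theo:devron_sing}, are tuned to deliver the full re-singularity precisely at height $(m-2)p+2$ and say nothing about the preceding layer, so one genuinely needs new control of $\ker\bar D^T$ at height $(m-2)p+1$ --- its dimension and the exact support of the distinguished kernel vectors. This control is also what the collinearity (b) ultimately rests on: running the scalar computation twice, once per affine coordinate, is not enough; one must show that the one-parameter family of kernel data describing layer $2m-3$ spans a projective line in $\RP^2$. Since this is exactly the structural information a proof of Conjecture~\ref{conj:pairsing} should provide, the most economical plan is probably to prove that conjecture first and obtain Conjecture~\ref{conj:pentsing} as a corollary through the dSKP realization of the pentagram map.
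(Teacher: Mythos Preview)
This statement is presented in the paper as an \emph{open conjecture}, not a theorem: the paper gives no proof, only the remark that numerical simulations suggest it and that a resolution of Conjecture~\ref{conj:pairsing} ``would most likely help to also prove Conjecture~\ref{conj:pentsing}''. So there is no proof in the paper to compare your attempt against.

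Your proposal is honest about this: it is a \emph{strategy}, not a proof, and you explicitly flag the gap (``The main obstacle is the first step\dots one genuinely needs new control of $\ker\bar D^T$ at height $(m-2)p+1$''). Your overall plan --- realize the pentagram map as a dSKP solution with $(m,2)$-Devron initial data, then push the kernel analysis of Theorem~\ref{theo:D} one layer deeper than Theorem~\ref{theo:devron_sing} delivers, in the style of the proof of Theorem~\ref{theo:intcrsingular} --- is reasonable and is precisely the route the paper gestures at. Your final suggestion, to prove Conjecture~\ref{conj:pairsing} first and deduce the pentagram statement by specialization, matches the paper's own comment verbatim.

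Two cautions about the sketch as written. First, for assertion~(b) you assert that the distinguished kernel vectors ``do not involve the chosen affine coordinate''; this is not obvious, since the weights $a_{i,j}$ themselves are the affine coordinates $u^\ell$, and in the Bäcklund case the analogous kernel vectors in the proof of Theorem~\ref{theo:intcrsingular} \emph{do} depend on the numerical weight values. You would need a genuinely projective argument (your dualization idea is closer to the mark) rather than hoping the kernel is coordinate-free. Second, even for assertion~(a), the ``either $i\in 2\Z$ or $i\in 2\Z+1$'' dichotomy in the conjecture --- mirrored in Conjecture~\ref{conj:pairsing} --- suggests a genuine parity ambiguity that your kernel construction would have to reproduce; it is not clear from your outline how the Menelaus-built vector would select one parity over the other. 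These are the places where the real work lies, and the paper leaves them open.
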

So far, the type of singularity in Conjecture~\ref{conj:pentsing} has
been treated by considering initial data $T^{-1}(u)$ and $u$, which
are $(m,2)$-Devron. However, if we start instead from initial data
$u$ and $T(u)$, we get a different type of singularity (for a specific
projection), which is not $(m,p)$-Devron, and is described in the
following conjecture. Another
way to see this singularity appear is to consider the backwards
dynamics starting from $T^{2m-4}(u),T^{2m-5}(u)$.
Moreover, the occurrence of this new singularity in Conjecture
\ref{conj:pentsing} is specific to the pentagram map, and it occurs at a later step for generic dSKP initial data with the same periodicities. Instead, we have the following conjecture for dSKP.

\begin{conjecture}\label{conj:pairsing}
  Let $x:\calL \to \hC$ be a solution of the dSKP recurrence.
  Suppose that for some $m\in 2\N + 2$ and $p\in 2\N + 2$ the initial condition $a_{i,j}=x(i,j,[i+j]_2)$ are such
  that for all $(i,j)\in\Z^2,$
  \begin{equation*}
   a_{i,j}=a_{i+m,j+m}=a_{i+p-1,j+p+1},
 \end{equation*}
  and that for all $(i,j) \in \Z^2$ with $[i+j]_4=0,$
  \begin{equation*}
    a_{i,j}=a_{i+1,j+1}.
  \end{equation*}
  Then, either for all $(i,j) \in \Z^2$ such
  that $[i+j]_4=[m]_4$, or for all $(i,j)\in\Z^2$ such that $[i+j]_4=[m+2]_4$,
  \begin{equation*}
    x(i,j,m)=x(i+1,j+1,m).
  \end{equation*}
\end{conjecture}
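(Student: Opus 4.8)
The plan is to prove Conjecture~\ref{conj:pairsing} by the combinatorial route of \cite{paper1} used for Theorem~\ref{theo:devron_sing}: express $x$ at height $m$ through the ratio function of the Aztec diamond $A_{m-1}[a_{i,j}]$, pass to a finite quotient graph, and analyse the kernel of the operator $D^T$ of Theorem~\ref{theo:D}, very much as in the proof of Theorem~\ref{theo:intcrsingular}. Concretely, the two periodicities $a_{i,j}=a_{i+m,j+m}$ and $a_{i,j}=a_{i+p-1,j+p+1}$ generate a rank-two sublattice $\Lambda\subset\Z^2$; since $A_{m-1}[a_{i,j}]$ has finite size it wraps around $\Lambda$ only finitely many times, so it descends to a finite graph $\bar G$ on the torus $\Z^2/\Lambda$ with well-defined face weights, and every $v\in\ker D^T$ descends, by fibrewise summation, to $\bar v\in\ker\bar D^T$, from which the weighted sums appearing in Theorem~\ref{theo:D} can be recovered. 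A base point $(i,j)$ with $[i+j]_4$ equal to $[m]_4$ or to $[m+2]_4$ is fixed throughout.

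The heart of the argument is to exploit the constancy condition $a_{i,j}=a_{i+1,j+1}$ on the diagonals $[i+j]_4=0$. Along the $(1,1)$-diagonals of $\Z^2$ carrying the layer-$0$ data, the weights then organise into consecutive equal pairs $b_0,b_0,b_1,b_1,\dots$, so by Remark~\ref{rem:dholom_explicit} a whole $\Lambda$-periodic family of edges of $\bar G$, each separating two faces of equal weight, may be deleted; these deletions force a matching collection of dimers and collapse $\bar G$ onto a smaller weighted graph $\bar G'$. I expect $\bar G'$ to be (a torus quotient of) the graph attached to a \emph{genuine Devron initial condition} at the same height $m$ --- plausibly an $(m,1)$-Devron one, for which Theorem~\ref{theo:devron_sing} with $p=1$ already produces a singularity at height $(m-2)\cdot 1+2=m$ --- possibly after an extra identification of two families of faces as in the Bäcklund-pair proof. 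Granting this, Theorem~\ref{theo:devron_sing} (or a mild extension of it allowing periods of the non-standard shape $(p-1,p+1)$, which one would establish first) yields $x(i,j,m)=x(i+1,j+1,m)$ along the appropriate diagonal, and unwinding the quotient and the edge contractions gives the statement for all $(i,j)$ in one of the two classes.

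The disjunction in the conclusion is where I expect the work to concentrate. The forced dimers created by the edge deletions are dictated by the Kasteleyn orientation and by the relative parity of the Aztec-diamond size $m-1$ and the period $m$, and it is this data that should pin the collapsed structure $\bar G'$ onto one of the two diagonal classes rather than the other; a parity bookkeeping in $[m]_4$ and $[p]_4$ should decide which. Making this precise --- and in particular ruling out the degenerate alternative in which $\ker\bar D^T$ becomes too large, so that by Proposition~\ref{prop:dimker2} the ratio is \emph{undefined} at height $m$ instead of exhibiting the predicted coincidence --- is the main obstacle. A secondary difficulty is compatibility: the period $(p-1,p+1)$ is not of the $(1,1)$-type handled in Theorem~\ref{theo:devron_sing}, and one must check that quotienting by it commutes with the edge-removal collapse. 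If it does not, the fallback is a direct analysis of $\ker\bar D^T$: building an explicit spanning set of ``defect vectors'' adapted to the pair structure, in the spirit of the zero-column vectors used in the proof of Theorem~\ref{theo:intcrsingular}, rather than reducing to a previously known case.
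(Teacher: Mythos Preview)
The statement you are trying to prove is presented in the paper as a \emph{conjecture}, not a theorem: the authors explicitly leave it open and offer no proof. So there is nothing to compare your proposal against.

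As to your outline itself, it is a reasonable strategy memo in the spirit of the $(m,p)$-Devron proofs, but it is not a proof, and you flag this yourself with phrases like ``I expect'', ``granting this'', and ``the main obstacle''. Two points deserve emphasis. First, the disjunction in the conclusion --- the singularity occurs on one of two diagonal classes, \emph{but the paper does not say which} --- is a genuine structural feature, not an artefact of the formulation: numerics indicate that which class is selected depends on the specific initial data, not just on $m$ and $p$. This means any purely combinatorial collapse of $\bar G$ onto a fixed Devron model cannot succeed uniformly; the reduction must branch on some quantity computed from the data, and your ``parity bookkeeping in $[m]_4$ and $[p]_4$'' cannot by itself resolve it. Second, your hoped-for collapse onto an $(m,1)$-Devron picture is optimistic: the constancy condition here only pairs up \emph{half} of the height-$0$ diagonals (those with $[i+j]_4=0$ are glued to their neighbours at $[i+j]_4=2$), so after edge deletion you do not get a single constant layer but a layer with residual two-periodic structure; matching this to a known Devron case would still require new work. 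Your fallback --- building explicit kernel vectors for $\bar D^T$ adapted to the pair structure --- is the more honest route, but you would need to produce those vectors, and that is precisely where the conjecture is open.
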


\subsection{Corrugated pentagram maps}\label{sec:corrugated}
We keep this section brief, as we do not need to apply any new techniques compared to the standard pentagram map. There are various generalizations of the pentagram map. In this section we consider a generalization introduced by Gekhtman, Shapiro, Tabachnikov and Vainshtein \cite{gstv}, and we suppose throughout that $N\geq 2$.

\begin{definition}
        An \emph{$N$-corrugated polygon} is a map $v: \Z \rightarrow \RP^N$ if, for all $i\in \Z$, the points $v_i,v_{i+1},v_{i+N},v_{i+N+1}$ span a plane. The \emph{$N$-corrugated pentagram map dynamics} is the dynamics $T$ acting on $v$ such that, for all $i\in\Z$,
        \begin{align}
                T(v)_i &= v_{i-1}v_{i+N-1} \cap v_{i}v_{i+N}.
        \end{align}
\end{definition}
It is straightforward to verify, that generically $T(v)$ is an $N$-corrugated polygon again \cite{gstv}.

We use the same notation for affine coordinates as in Equation~\eqref{equ:affine_coord}, except that we now have $N$ coordinates, so that $\ell\in\{0,\dots,N-1\}$. Similarly to Lemma~\ref{lem:pentdskp} we obtain the following.

\begin{lemma}\label{lem:corpentdskp}
Let $v:\Z\rightarrow \RP^N$ be an $N$-corrugated polygon, and $u$ be its affine coordinates. Consider the associated corrugated pentagram map dynamics $T$.
Then, for all $i\in \Z$, the following holds
    \begin{align}
\frac{(u_{i-1+N}-T(u)_{i}) (T(u)_{i-1}-T^2(u)_{i}) (T(u)_{i-1+N} - T(u)_{i+N})}{(T(u)_{i}-T(u)_{i-1}) (T^2(u)_{i}-T(u)_{i-1+N}) (T(u)_{i+N} - u_{i-1+N})} = -1.
    \end{align}
\end{lemma}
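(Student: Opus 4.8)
The plan is to imitate exactly the proof of Lemma~\ref{lem:pentdskp}, replacing Menelaus's theorem for triangles with the higher-dimensional incidence statement that underlies it. First I would set up the geometry: the four points $v_{i-1+N}, T(v)_i, T(v)_{i-1}, T^2(v)_i$ together with $T(v)_{i-1+N}, T(v)_{i+N}$ all lie in a common plane. Indeed, by definition of the $N$-corrugated pentagram map, $T(v)_i = v_{i-1}v_{i+N-1}\cap v_iv_{i+N}$ lies on the line $v_{i-1}v_{i+N-1}$; applying $T$ once more, $T^2(v)_i$ lies on $T(v)_{i-1}T(v)_{i+N-1}$; and the corrugation condition (each consecutive quadruple spans a plane) propagates under $T$, so that the relevant six points are coplanar. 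On this plane we have a genuine hexagon/triangle configuration of the same combinatorial type as in the ordinary pentagram case ($N=2$), so the classical Menelaus relation \cite[Theorem~9.8]{ddgbook} applies verbatim, giving
\begin{align*}
\frac{(v_{i-1+N}-T(v)_{i}) (T(v)_{i-1}-T^2(v)_{i}) (T(v)_{i-1+N} - T(v)_{i+N})}{(T(v)_{i}-T(v)_{i-1}) (T^2(v)_{i}-T(v)_{i-1+N}) (T(v)_{i+N} - v_{i-1+N})} = -1,
\end{align*}
as an identity of multi-ratios of collinear (or coplanar, correctly interpreted) points.

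Next I would pass to affine coordinates. As in the proof of Lemma~\ref{lem:pentdskp}, the key point is that multi-ratios of the above form are invariant under projection \cite[Theorem~9.10]{ddgbook}; applying the projection $\pi_\ell$ for each $\ell\in\{0,\dots,N-1\}$ replaces each $v$-point by its $\ell$-th affine coordinate $u$-value and preserves the value $-1$ of the multi-ratio. This yields exactly the claimed relation for $u$. The only subtlety compared to the $N=2$ case is that now the six points are not all on a single line but on a common plane, so I would make sure the Menelaus-type multi-ratio identity is stated in the planar (rather than strictly collinear) form — this is still classical, since the three "Menelaus lines" through consecutive pairs of the triangle formed by $T(v)_{i-1}, T(v)_{i-1+N}, T^2(v)_i$ (or whichever triangle is the natural one) cut the plane configuration appropriately.

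The main obstacle I expect is bookkeeping: correctly identifying which triangle and which transversal line give the Menelaus relation in the stated index pattern, and checking that the coplanarity needed for that argument genuinely holds for $N$-corrugated polygons and is preserved by $T$. Concretely, I would verify that $v_{i-1}, v_i, v_{i+N-1}, v_{i+N}$ span a plane (definition of $N$-corrugated polygon), hence $T(v)_{i-1}$ and $T(v)_i$ lie in that plane, and then that the three points $T(v)_{i-1}, T(v)_i, T^2(v)_i$ together with $v_{i-1+N}, T(v)_{i-1+N}, T(v)_{i+N}$ all remain coplanar — this follows by unwinding the definitions of $T$ once more and using the corrugation condition for the shifted index. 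Once that coplanarity is in place, the proof is a direct transcription of the $N=2$ argument, with no new ideas required; this is why I would keep the write-up short and simply point to \cite{ddgbook} for the Menelaus and projection-invariance inputs.
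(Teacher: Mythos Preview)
Your proposal is correct and follows exactly the approach the paper intends: the paper omits the proof, stating only ``Similarly to Lemma~\ref{lem:pentdskp} we obtain the following,'' and your argument --- Menelaus's theorem applied to the coplanar configuration, followed by projection invariance of multi-ratios --- is precisely that. Your observation that the only additional ingredient compared to the $N=2$ case is the coplanarity check (automatic in $\RP^2$, to be verified from the corrugation condition in $\RP^N$) is exactly the right point to make explicit.
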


The $2$-corrugated pentagram map coincides with the standard pentagram map. The propagation of P-nets, that we discussed in Section \ref{sec:pnets}, can algebraically be understood as the 1-corrugated pentagram map \cite{gstv}, where by algebraically we mean that a P-net $p$ satisfies Lemma \ref{lem:corpentdskp} with $N=1$ and the identification $T^j(p)_i = p_{i,j}$.

Using the same arguments as in Theorem~\ref{theo:explpent}, see also Remark~\ref{rem:pent_expl_alternative}, and Theorem~\ref{th:pentdodgson}, we obtain the following results. Given that the proofs are so close, we choose to omit them.

\begin{theorem}\label{theo:explcorpent}
Let $v:\Z \rightarrow \RP^N$ be an $N$-corrugated polygon, $u$ be its affine coordinates, and $T$ be the associated corrugated pentagram map dynamics.
Consider the graph $\Z^2$ with face-weights $(a_{i,j})_{(i,j)\in\Z^2}$ given by,
       \begin{align}
                a_{i,j} = \begin{cases}
                u_{\frac{(N-1)i+(1+N)j}{2}} & \mbox{if } [i+j]_2 = 0, \\
                T(u)_{\frac{(N-1)(i-1)+(1+N)j}{2}} & \mbox{if } [i+j]_2 = 1.
        \end{cases}
    \end{align}
Then, for all $(i,j)\in\Z^2$ such that $j\geq 1$ we have
        \begin{align}
                T^j(u)_i &=
\begin{cases}
Y\left(\az{j-1}{u_{i+(N-1)\frac{j}{2}}},a\right)&\text{ if $[j]_2=0$},\\
Y\left(\az{j-1}{T(u)_{i+(N-1)\frac{j-1}{2}}},a\right)&\text{ if $[j]_2=1$}.
\end{cases}
\end{align}
\end{theorem}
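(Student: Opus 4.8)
The plan is to follow the proof of Theorem~\ref{theo:explpent} essentially verbatim, replacing the pentagram multi-ratio identity Lemma~\ref{lem:pentdskp} by Lemma~\ref{lem:corpentdskp} and carrying the $N$-dependent index shifts. Fix an affine coordinate $\ell\in\{0,\dots,N-1\}$ and write $u$ for $u^\ell$. First I would introduce the candidate function $x:\calL\to\hat{\R}$,
\[
x(i,j,k)=T^{k}(u)_{\frac{(N-1)(i-k)+(N+1)j}{2}},
\]
observing that on $\calL$ (where $i+j+k\in2\Z$) the subscript is always an integer, and that restricting to the layers $k=0,1$ gives exactly $x(i,j,[i+j]_2)=a_{i,j}$ with $(a_{i,j})$ the face-weight array of the statement — the two cases $[i+j]_2=0$ and $[i+j]_2=1$ producing the $u$ versus $T(u)$ dichotomy, and $T^{k}(u)$ being well defined for $k\ge0$ since $T$ is.

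The main step is to check that $x$ satisfies the dSKP recurrence~\eqref{eq:dskp_x} on $\calL$. Fix $p\in\Z^3\setminus\calL$ whose last coordinate $p_3$ satisfies $p_3\ge1$, and set $j:=p_3-1\ge0$; then the six values $x(p\pm e_i)$ sit at iteration levels $j$ (for $p-e_3$), $j+1$ (for $p\pm e_1$ and $p\pm e_2$) and $j+2$ (for $p+e_3$), and for the appropriate integer parameter $q$ they are precisely the six entries occurring in the identity of Lemma~\ref{lem:corpentdskp} applied to the $j$-th iterate $T^{j}(u)$. Substituting them into~\eqref{eq:dskp_x} and clearing denominators yields exactly that identity. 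This is the same manipulation already carried out for $N=2$ inside the proof of Theorem~\ref{theo:explpent} (via Lemma~\ref{lem:pentdskp}), now with $(N-1)$ and $(N+1)$ in place of $1$ and $3$, which is the reason the paper omits the proof.

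Once $x$ is known to solve dSKP with initial data $a_{i,j}=x(i,j,[i+j]_2)$, Theorem~\ref{theo:expl_sol} gives $x(i',j',k)=Y\bigl(A_{k-1}[a_{i',j'}],a\bigr)$ for all $(i',j',k)\in\calL$ with $k\ge1$. To read off $T^{j}(u)_i$ for $j\ge1$, choose $(i',j')$ with $(i',j',j)\in\calL$ and $\frac{(N-1)(i'-j)+(N+1)j'}{2}=i$; such a pair exists because $\gcd(N-1,N+1)$ divides $2$ and hence divides $2i+(N-1)j$, and one may arrange $[i'+j']_2=[j]_2$. Then $x(i',j',j)=T^{j}(u)_i$, while the central face weight $a_{i',j'}=x(i',j',[j]_2)$ evaluates to $u_{\,i+(N-1)j/2}$ when $j$ is even and to $T(u)_{\,i+(N-1)(j-1)/2}$ when $j$ is odd — precisely the weights claimed. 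Independence of the choice of $(i',j')$ is the same translation-invariance argument as in the pentagram case: the array $(a_{i,j})$ is invariant under $(i,j)\mapsto(i+N+1,\,j-(N-1))$, which preserves $[i+j]_2$ and leaves the subscript unchanged (the shift is $\tfrac{(N-1)(N+1)-(N+1)(N-1)}{2}=0$), so any two Aztec diamonds of size $j-1$ with the same central weight are translates of one another and carry the same ratio function of oriented dimers. Running the argument over all $\ell$ then recovers the points $T^{j}(v)_i$ as well. The only real obstacle is the index bookkeeping — matching the six factors of Lemma~\ref{lem:corpentdskp} to those of~\eqref{eq:dskp_x} and checking that the $N$-dependent shifts close up; there is no conceptual difficulty beyond the $N=2$ case the paper has already treated.
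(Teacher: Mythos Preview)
Your proposal is correct and is exactly the argument the paper has in mind: it explicitly omits the proof, saying ``Using the same arguments as in Theorem~\ref{theo:explpent}\dots\ Given that the proofs are so close, we choose to omit them.'' Your function $x(i,j,k)=T^{k}(u)_{\frac{(N-1)(i-k)+(N+1)j}{2}}$ specializes at $N=2$ to the paper's $x(i,j,k)=g_{\frac{i+3j+k}{2},\frac{-i-3j+k}{2}}=T^{k}(u)_{\frac{i+3j-k}{2}}$, the periodicity vector $(N+1,-(N-1))$ specializes to the paper's $(3,-1)$, and the six neighbours at a point of $\Z^3\setminus\calL$ are precisely the six entries of Lemma~\ref{lem:corpentdskp} applied to $T^{k-1}(u)$; so there is nothing to add.
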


The reocurrence of the singularity in the following theorem was already proven by Glick \cite[Theorem 6.12]{gdevron}, and the explicit formula by Yao \cite[Theorem 1.4]{yao}. In our setup, it follows as an immediate corollary.
\begin{theorem}\label{th:corpentdodgson}
        Let $v:\Z\rightarrow\RP^n$ be an $mN$-closed $N$-corrugated polygon, such that for all $i\in \Z_{mN}, \ell\in \Z_N$ all lines $v_{iN+\ell}v_{iN+\ell+1}$ are parallel to the $\ell$-th coordinate. If $T^{m-1}(v)$ exists then $T^{m-1}(v)$ is the center of mass $\frac1{mN}\sum_{\ell=0}^{mN-1}v_\ell$ of the points $v_0,v_1,\dots,v_{mN-1}$.
\end{theorem}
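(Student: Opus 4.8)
The plan is to mirror the proof of Theorem~\ref{th:pentdodgson} exactly, replacing the period-$2$ structure with the period-$N$ structure of $N$-corrugated polygons. The key observation is that Theorem~\ref{theo:explcorpent} realizes the iterated corrugated pentagram map as a dSKP solution on an Aztec diamond whose face-weights are (affine coordinates of) the initial data, so Proposition~\ref{prop:Nmat} applies once we put a suitable coordinate at infinity.

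First I would fix a coordinate index $\ell \in \{0,\dots,N-1\}$ and let $g = \pi_\ell(v)$, $g' = g^{-1}$. The axis-alignment hypothesis says that on the $mN$-closed polygon, the line $v_{iN+\ell'}v_{iN+\ell'+1}$ is parallel to the $\ell'$-th coordinate axis; hence for the fixed $\ell$ the points $v_{iN+\ell}$ and $v_{iN+\ell+1}$ have the same $\pi_{\ell'}$-coordinate for every $\ell' \neq \ell$, and in particular for $\ell' = \ell$ this line is the unique one \emph{not} parallel to the $\ell$-axis, while all the others are. Reusing the idea from the proof of Theorem~\ref{th:pentdodgson}, I would pick the previous time-step as initial data so that the row playing the role of ``$\tilde g_{-1}$'' is at infinity: concretely, look at $\pi_\ell$ evaluated one step before, which is identically $\infty$ because the relevant points lie on lines parallel to the $\ell$-axis — no wait, more carefully: one considers the lattice map $\tilde f$ for the corrugated pentagram map, takes $\tilde g' = \pi_\ell(\tilde f)^{-1}$, and checks that $\tilde g'_{-1} \equiv 0$ exactly as in Theorem~\ref{th:pentdodgson}, using that $T^{-1}$ of the current configuration has the $\ell$-coordinate at infinity along the appropriate diagonal. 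Then the dSKP solution built from $(\tilde g'_{-1},\tilde g'_0)$ has initial data of the form required by Proposition~\ref{prop:Nmat} with $d=0$, so $T^{m-1}(v)$ in the $\ell$-coordinate is $d + \sum_{i',j'} N^{-1}_{i',j'}$ for the associated $m\times m$ matrix $N$ whose entries are shifted copies of $\tilde g_{\cdot,0}^{-1}$.

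The heart of the argument is then a linear-algebra computation identical in spirit to the one at the end of the proof of Theorem~\ref{theo:pnetpremature} and Theorem~\ref{th:pentdodgson}: using $mN$-closedness ($\tilde g_{\cdot,0}$ is $2mN$-periodic in the lattice index, or $mN$-periodic after identifying the $(N-1,N+1)$-type translation) together with the axis-alignment identities $g_{\text{at }iN+\ell'} = g_{\text{at }iN+\ell'+1}$ for $\ell' \neq \ell$, one shows that $(1,1,\dots,1)^T$ is an eigenvector of $N^T$ with eigenvalue $\frac{1}{N}\sum_{\ell'=0}^{mN-1} \tilde g_{2\ell',0}$, i.e. the sum of the $mN$ distinct $\ell$-coordinates of $v_0,\dots,v_{mN-1}$ divided by $N$. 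Inverting, $T^{m-1}(v)$ in the $\ell$-coordinate equals $\bigl(\frac{1}{mN}\sum_{k=0}^{mN-1} \pi_\ell(v_k)\bigr)$, the $\ell$-th coordinate of the center of mass. Since this holds for every $\ell$, and a point of $\RP^N$ (not at infinity) is determined by its $N$ affine coordinates, $T^{m-1}(v)$ is constant equal to $\frac{1}{mN}\sum_{k=0}^{mN-1} v_k$.

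The main obstacle I anticipate is bookkeeping: getting the index arithmetic right for the $N$-corrugated case — the correct shift vector $((N-1),(N+1))$ (or its analogue) replacing $(3,-1)$, the correct offset so that the ``row at infinity'' really is the $(-1)$-row, and verifying that the axis-alignment hypothesis produces precisely the periodicity of the matrix entries needed to make $(1,\dots,1)^T$ an eigenvector. The analytic content is entirely contained in Proposition~\ref{prop:Nmat} and the projective-invariance of the dSKP relation (Lemma~\ref{lem:corpentdskp}); once the combinatorial dictionary of Theorem~\ref{theo:explcorpent} is in hand, there is no genuinely new difficulty, which is presumably why the authors omit the proof. One should also remark, as in Theorem~\ref{th:pentdodgson}, that the hypothesis ``$T^{m-1}(v)$ exists'' is exactly what licenses dividing by the eigenvalue and concluding the matrix $N$ is invertible.
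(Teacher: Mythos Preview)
Your proposal is correct and is precisely the approach the paper takes: the authors explicitly omit the proof of Theorem~\ref{th:corpentdodgson}, stating that it follows by the same arguments as Theorems~\ref{theo:explpent} and~\ref{th:pentdodgson} with the period-$2$ structure replaced by the period-$N$ structure of $N$-corrugated polygons. Your identification of the key ingredients---Lemma~\ref{lem:corpentdskp} for the dSKP structure, Proposition~\ref{prop:Nmat} with $d=0$ after inverting the $\ell$-th coordinate, and the eigenvector computation using $mN$-closedness together with the axis-alignment identities---is exactly right, and your assessment that the only difficulty is index bookkeeping matches the paper's own justification for omitting the argument.
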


\section{The short diagonal hyperplane map}
\label{sec:hyppent}

In this section we study the short diagonal hyperplane map, which was introduced by Khesin and Soloviev \cite{kspent} and Marí Beffa \cite{beffapent}.

Let $(v_i)_{i\in\Z}$ be a polygon in $\RP^3$. We say $(c_i)_{i\in\Z}$
is a \emph{companion polygon} if for all $i\in \Z$ the point $c_i$ is
on the line $v_{i-1}v_{i+1}$ and on the plane spanned by $v_{i-2}$,
$v_i$ and $c_{i-2}$. The idea of employing a companion polygon is due to Glick and Pylyavskyy \cite{gpymeshes}. It is not hard to see that generically, to each
polygon there exists a two parameter family of companion polygons.

\begin{definition}\label{def:hyppent}
        Consider a polygon $(v_i)_{i\in\Z}$ and a companion polygon $(c_i)_{i\in\Z}$ in $\RP^3$. The \emph{short diagonal hyperplane map dynamics} is the dynamics $T$ acting on $v, c$ such that, for all $i\in \Z$,
        \begin{align}
                T(v)_i &= v_{i-1}v_{i+1} \cap v_{i-2}v_{i}v_{i+2}, \label{eq:hypdyn}\\
                T(c)_i &= c_{i-1}c_{i+1} \cap T(v)_{i-1}T(v)_{i+1}, \label{eq:comphypdyn}
        \end{align}
        where $v_iv_jv_k$ denotes the plane through $v_i, v_j$ and $v_k$.
\end{definition}

Let $(u_i)_{i\in\Z}$ be affine coordinates of $(v_i)_{i\in\Z}$ and
$(b_i)_{i\in\Z}$ be the same affine coordinates of $(c_i)_{i\in\Z}$.

\begin{lemma}\label{lem:dskphyperpent}
        Consider the coordinates $(u_i)_{i\in\Z}$ of a polygon and $(b_i)_{i\in\Z}$ of a companion polygon. Then for all $i\in\Z$
        \begin{align}
                \frac{(u_{i}-b_{i+1})(T(u)_{i+1}-T(b)_{i})(T(u)_{i-1}-b_{i-1})}{(b_{i+1}-T(u)_{i+1})(T(b)_{i}-T(u)_{i-1})(b_{i-1}-u_{i})} &= -1,\\
                \frac{(T^{-1}(b)_{i}-b_{i+1})(u_{i+1}-T(u)_{i})(u_{i-1}-b_{i-1})}{(b_{i+1}-u_{i+1})(T(u)_{i}-u_{i-1})(b_{i-1}-T^{-1}(b)_{i})} &= -1.
        \end{align}
\end{lemma}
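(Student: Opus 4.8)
The plan is to follow the exact same template used throughout this paper for Lemma~\ref{lem:intcrdskp}, Lemma~\ref{lem:pentdskp} and Lemma~\ref{lem:corpentdskp}: express each of the two multi-ratios as a signed product of cross-ratios (or, since we are now in affine coordinates of $\RP^3$, as a projection of such a product), identify each individual cross-ratio with a geometrically meaningful quantity coming from the defining incidences of the short diagonal hyperplane map and its companion polygon, and check that the product of those quantities is exactly $-1$. The underlying principle, as in Lemma~\ref{lem:pentdskp}, is Menelaus' theorem \cite[Theorem 9.8]{ddgbook} together with the fact that multi-ratios of collinear points are invariant under projection \cite[Theorem 9.10]{ddgbook}; the companion polygon construction of Glick and Pylyavskyy \cite{gpymeshes} is precisely designed so that the relevant five points involved in each relation are coplanar, which is what makes a Menelaus-type identity available in the three-dimensional setting.

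First I would set up the geometry carefully. For the first equation, I would check that the six points $u_i, b_{i+1}, T(u)_{i+1}, T(b)_i, T(u)_{i-1}, b_{i-1}$ (as points of $\RP^3$, before taking affine coordinates) lie in a common plane: by \eqref{eq:hypdyn}, $T(v)_{i+1} = v_i v_{i+2} \cap v_{i-1}v_{i+1}v_{i+3}$, by the companion condition $c_{i+1} \in v_i v_{i+2}$, and $T(c)_i \in c_{i-1}c_{i+1} \cap T(v)_{i-1}T(v)_{i+1}$ by \eqref{eq:comphypdyn}; chasing these incidences shows the six points all lie on the plane spanned by, say, $v_i, v_{i+2}$ and an appropriate third vertex (this is exactly the coplanarity that the companion polygon is built to guarantee). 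Once coplanarity is established, the multi-ratio $\frac{(v_i - c_{i+1})(T(v)_{i+1} - T(c)_i)(T(v)_{i-1} - c_{i-1})}{(c_{i+1} - T(v)_{i+1})(T(c)_i - T(v)_{i-1})(c_{i-1} - v_i)}$ of these six coplanar points — arranged around a hexagon whose three ``diagonals'' are the three collinearity lines $v_{i-1}v_{i+1}$, $v_i v_{i+2}$, and $c_{i-1}c_{i+1}$ — equals $-1$ by Menelaus/Carnot applied in that plane, just as in \eqref{eq:pentamenelaus}. Finally, since affine coordinatewise projection $\pi_\ell$ sends this multi-ratio of collinear-within-lines points to the claimed multi-ratio in $u$ and $b$, and multi-ratios are projection-invariant \cite[Theorem 9.10]{ddgbook}, the first identity follows. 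The second identity is proved identically, replacing the forward step by the backward step: one uses $T^{-1}(c)_i$ and the analogous coplanarity of $T^{-1}(c)_i, b_{i+1}, u_{i+1}, T(u)_i, u_{i-1}, b_{i-1}$, which is just the first relation read for the pair $(T^{-1}(v), T^{-1}(c))$ mapped forward one step.

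The main obstacle I expect is the bookkeeping of the coplanarity claim and the precise hexagonal arrangement of the six points: unlike the planar pentagram case, here the relevant plane is not one of the coordinate data but is constructed from the dynamics, and one must verify that all three pairs forming the ``short diagonals'' $(v_i, T(v)_{i-1})$-line, etc., genuinely meet this plane in the configuration required for Menelaus. A clean way around this is to argue entirely inside $\RP^3$ using the companion-polygon incidences to first nail down a single plane $\Pi_i$ containing all six points, and then invoke the planar multi-ratio identity as a black box; alternatively, one can cite \cite{gpymeshes} for the coplanarity statement directly, which is how the surrounding sections (e.g.\ the proof of Theorem~\ref{theo:short_diagonal_sing}) use that reference. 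Either way, once coplanarity is in hand the algebra is routine, exactly parallel to the proof of Lemma~\ref{lem:pentdskp}, so I would keep the written proof short and mostly point to Menelaus and projection-invariance of multi-ratios.
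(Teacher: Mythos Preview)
Your overall strategy---Menelaus in a plane, then projection via \cite[Theorem~9.10]{ddgbook}---is exactly what the paper does. However, your identification of the Menelaus configuration is wrong, and this is the entire content of the proof.

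For the first equation, the six points are $v_i,\,c_{i+1},\,T(v)_{i+1},\,T(c)_i,\,T(v)_{i-1},\,c_{i-1}$. You list $v_{i-1}v_{i+1}$ as one of the three ``diagonals'', but that line contains $c_i$ and $T(v)_i$, neither of which is among the six points. The four lines that actually form the complete quadrilateral are
\[
v_i\,c_{i-1}\,T(v)_{i-1},\quad v_i\,c_{i+1}\,T(v)_{i+1},\quad T(c)_i\,c_{i-1}\,c_{i+1},\quad T(c)_i\,T(v)_{i-1}\,T(v)_{i+1}.
\]
The last two are immediate from \eqref{eq:comphypdyn}. The first two are the non-obvious step: since $c_j$ and $T(v)_j$ both lie on $v_{j-1}v_{j+1}$, one reads off $v_i=c_{i-1}T(v)_{i-1}\cap c_{i+1}T(v)_{i+1}$. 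This is the incidence the paper singles out, and without it you do not have a Menelaus configuration at all---mere coplanarity is not enough.

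For the second equation, your ``apply the first identity to $(T^{-1}(v),T^{-1}(c))$'' does not give the stated relation: shifting the first equation back one step produces an identity involving $T^{-1}(u)_i,\,T^{-1}(b)_{i\pm1},\,u_{i\pm1},\,b_i$, whereas the second equation involves $T^{-1}(b)_i,\,b_{i\pm1},\,u_{i\pm1},\,T(u)_i$. These are different six-tuples. The paper instead derives two further incidences: first, since $c_{i-1},c_{i+1}$ both lie in the plane $v_{i-2}v_iv_{i+2}$ (by the companion condition) and on the lines $v_{i-2}v_i$, $v_iv_{i+2}$ respectively, one gets $T(v)_i=c_{i-1}c_{i+1}\cap v_{i-1}v_{i+1}$; second, inverting the companion relation gives $T^{-1}(c)_i=v_{i-1}c_{i-1}\cap v_{i+1}c_{i+1}$. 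These, together with the obvious $u_{i\pm1},b_{i\pm1}\in v_{i-2}v_i$ or $v_iv_{i+2}$, assemble into a second complete quadrilateral, and Menelaus finishes as before.
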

\proof{
        Note that $c_i$ and $T(v)_i$ are on the line $v_{i-1}v_{i+1}$, because of the definitions of $c$ and Equation \eqref{eq:hypdyn} respectively. Therefore, for all $i\in \Z$
        \begin{align}
                v_i = c_{i-1} T(v)_{i-1} \cap c_{i+1} T(v)_{i+1}.
        \end{align}
        On the other hand Equation \eqref{eq:comphypdyn} holds. Therefore, the first equation in the claim is about the projection of Menelaus configurations, and the first equation of the lemma follows as in the proof of Lemma \ref{lem:pentdskp}.	Moreover, note that both $c_{i-1}$ and $c_{i+1}$ are in the plane $v_{i-2}v_{i}v_{i+2}$, and they are coplanar with $v_{i-1},v_{i-2}$, thus
        \begin{align}
                T(v)_i = c_{i-1}c_{i+1} \cap v_{i-1} v_{i+1}.
        \end{align}
        Therefore $T(v)_i$ is on the line $c_{i-1}c_{i+1}$ and so is $T(c)_i$ because of Equation \eqref{eq:comphypdyn}. Conversely, both $T^{-1}(c)_{i-1}$ and $T^{-1}(c)_{i+1}$ are on the line $v_ic_i$. As a consequence,
        \begin{align}
                T^{-1}(c)_i = v_{i-1}c_{i-1} \cap v_{i+1} c_{i+1}. \label{eq:reversecompanion}
        \end{align}
        Hence the second equation of the lemma is also about the projection of a Menelaus configuration, which proves the second equation.\qed
}

\begin{figure}
    \centering
  \includegraphics[width=7cm]{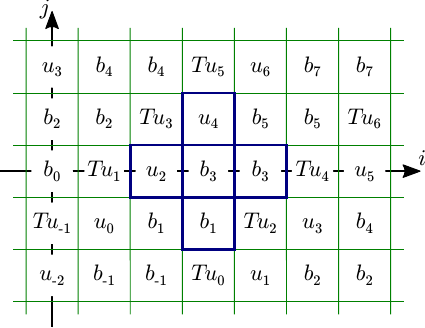}
  \caption{Face weights for the explicit solution of the short
    diagonal hyperplane map. The shown Aztec diamond
    corresponds to the computation of $T^2(u)_3$.}
  \label{fig:pent_hyp_ic}
\end{figure}

\begin{theorem}\label{theo:explhyperpent}
        Consider the coordinates $(u_i)_{i\in\Z}$ of a polygon and $(b_i)_{i\in\Z}$ of a companion polygon.
        Let $v:\Z \rightarrow \RP^3$ be a polygon and $c:\Z \rightarrow \RP^3$ a companion polygon, and let $u,b$ be the affine coordinates of $v,c$, and $T$ be the short diagonal hyperplane map dynamics.
        Consider the graph $\Z^2$ with face-weights $(a_{i,j})_{(i,j)\in\Z^2}$ given by,
               \begin{align}
                        a_{i,j} = \begin{cases}
                        b_{i+j-\lfloor\frac{i-j}{4}\rfloor} & \mbox{if } [i-j]_4 = 0, 3, \\
                        T(u)_{i+j-\lfloor\frac{i-j}{4}\rfloor} & \mbox{if } [i-j]_4 = 1, \\
                        u_{i+j-\lfloor\frac{i-j}{4}\rfloor} & \mbox{if } [i-j]_4 = 2.
                \end{cases}
            \end{align}
        Then, for all $(i,k)\in\Z^2$ such that $k\geq 1$ we have
                \begin{align}
                        T^k(u)_i &=
        \begin{cases}
        Y\left(A_{2k-2}(\frac{i}{2} + k + 2,\frac{i}{2} - k - 1),a\right)&\text{if } [i]_2 = 0,\\
        Y\left(A_{2k-2}(\frac{i-1}{2} + k,\frac{i-1}{2} - k + 1),a\right)&\text{if } [i]_2 = 1.
        \end{cases}
        \end{align}
\end{theorem}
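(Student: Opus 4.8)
The plan is to follow the template already used for Theorems~\ref{theo:explmiquel}, \ref{theo:explpnet}, \ref{theo:explintcr} and \ref{theo:explpent}: exhibit a solution of the dSKP recurrence on $\calL$ that encodes, after a change of indices, the iterates of the short diagonal hyperplane map, and then invoke Theorem~\ref{theo:expl_sol}. As in the pentagram section one works one affine coordinate at a time: fix $\ell\in\{0,1,2\}$, let $(u_i)_{i\in\Z}$ and $(b_i)_{i\in\Z}$ be the $\ell$-th affine coordinates of the polygon $v$ and the companion polygon $c$; since a point of $\RP^3$ is determined by its three affine coordinates, proving the formula for each $\pi_\ell$ proves it for $v$.

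The first and crucial step is to set up the identification. One step of $T$ must correspond to two dSKP steps: this is forced by the size $A_{2k-2}$ in the statement, since $x(\cdot,\cdot,h)=Y(A_{h-1}[\cdot],a)$ places $T^k(u)$ at dSKP-height $h=2k-1$. Accordingly I would define $x:\calL\to\hat{\R}$ by an explicit rule threading the families $u,\,b$ and all their $T$-iterates $T(u),T(b),T^2(u),T^2(b),\dots$ through successive layers of $\calL$, with the even/odd alternation inside one $T$-step separating $T^k(u)$ from $T^k(b)$, and an additional $4$-periodicity in one spatial direction (already visible in the case distinction $[i-j]_4=0,1,2,3$ and in the shift $\lfloor(i-j)/4\rfloor$ of the face weights) handling the polygon/companion/dynamics interleaving. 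The rule must be chosen so that the two layers $k\in\{0,1\}$ reproduce exactly the weights $a_{i,j}$ of the statement ($b$ on $[i-j]_4\in\{0,3\}$, $T(u)$ on $[i-j]_4=1$, $u$ on $[i-j]_4=2$), and so that every octahedral cell of $\calL$ carries, in its six vertices, the six points of one of the two relations of Lemma~\ref{lem:dskphyperpent}.

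Granting such an identification, the proof runs on rails. By Lemma~\ref{lem:dskphyperpent}, each octahedron of $\calL$ satisfies \eqref{eq:dskp_x} (possibly after reindexing the coordinate axes), so $x$ satisfies the dSKP recurrence for $k\geq 1$; by construction $x(i,j,[i+j]_2)=a_{i,j}$, so Theorem~\ref{theo:expl_sol} gives $x(i,j,k)=Y(A_{k-1}[a_{i,j}],a)$. To extract the formula for $T^k(u)_i$ I would locate the point $(i',j',2k-1)\in\calL$ at which $x$ equals $T^k(u)_i$, compute its central face label (which is $u_{\bullet}$ or $T(u)_{\bullet}$ according to the parity of $i$), and then use that $x$ is invariant under the lattice translation underlying the diagonal periodicity -- the analogue of the $(3,-1,0)$-invariance exploited in Theorem~\ref{theo:explpent} -- so that only the central face weight, not the precise center, matters; after the remaining index arithmetic this yields the two announced Aztec diamonds $A_{2k-2}(\tfrac i2+k+2,\tfrac i2-k-1)$ and $A_{2k-2}(\tfrac{i-1}2+k,\tfrac{i-1}2-k+1)$. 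Doing this for each $\ell$ and reassembling the coordinates finishes the proof.

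The main obstacle is pinning down the identification and verifying that it is compatible with \emph{both} relations of Lemma~\ref{lem:dskphyperpent} on every octahedron simultaneously. Compared with the pentagram map (one dSKP step per dynamics step, genuine two-row initial data), here each $T$-step is two dSKP-steps and the families $u$, $b$, $T(u)$ are distributed along $4$-periodic diagonals, so the bookkeeping of ``which point of which family sits at $(i,j,k)$'' is genuinely fiddly; I would use Figure~\ref{fig:pent_hyp_ic}, which displays the face weights and the size-$2$ Aztec diamond computing $T^2(u)_3$, to fix conventions and to sanity-check the floor functions and the offsets in the Aztec-diamond centers. Once the identification is correct, the remaining steps are routine and formally identical to the arguments in Sections~\ref{sec:Backlund_pairs} and \ref{sec:pent}.
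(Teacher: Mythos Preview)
Your approach is essentially that of the paper: define an $x:\calL\to\hat\R$ by threading the iterates $T^\ell(u)$ and $T^\ell(b)$ through the layers, verify the dSKP recurrence via Lemma~\ref{lem:dskphyperpent}, read off the initial condition $a_{i,j}$, and apply Theorem~\ref{theo:expl_sol}. Two points deserve correction.

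First, and more importantly: it is \emph{not} true that every octahedral cell carries one of the two Menelaus relations of Lemma~\ref{lem:dskphyperpent}. In the paper's identification (cases on $[i-j+k]_4\in\{0,2\}$ for points of $\calL$), the octahedra split into two types according to the parity of $k$ at their center $p\in\Z^3\setminus\calL$. On half of them---for instance those with $[i-j+k]_4=1$ and $[k]_2=1$, or $[i-j+k]_4=3$ and $[k]_2=0$---two pairs among the six vertices actually coincide (e.g.\ $x_{-e_3}(p)=x_{-e_1}(p)$ and $x_{e_1}(p)=x_{e_3}(p)$), so that \eqref{eq:dskp_x} holds \emph{trivially}. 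Only the remaining octahedra genuinely invoke one of the two relations of Lemma~\ref{lem:dskphyperpent}. If you try to match a Menelaus configuration to every octahedron you will not succeed; recognising this degeneracy is the real content of the ``fiddly bookkeeping'' you anticipate, and it is what makes one $T$-step correspond to two dSKP steps.

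Second, and minor: the translation-invariance argument is not needed here. Unlike Theorems~\ref{theo:explintcr} and \ref{theo:explpent}, the present statement already specifies the \emph{position} of the Aztec diamond via the notation $A_{2k-2}(i',j')$ rather than only its central weight. So once you have located the point $(i',j',2k-1)\in\calL$ with $x(i',j',2k-1)=T^k(u)_i$, Theorem~\ref{theo:expl_sol} gives the formula directly, with no further reduction.
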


\proof{
        Consider the function $x:\calL\rightarrow\hat \R$ given by
        \begin{equation}\label{eq:latticehyperpent}
                x(i,j,k)=\begin{cases}
                        T^{\lfloor\frac{k}2\rfloor}(b)_{i+j-\lfloor\frac{i-j-k}{4}\rfloor} & \mbox{if }[i-j+k]_4 = 0,\\T^{\lfloor\frac{k+1}2\rfloor}(u)_{i+j-\lfloor\frac{i-j-k}{4}\rfloor} & \mbox{if }[i-j+k]_4 = 2.

                \end{cases}
        \end{equation}

        This is a solution to the dSKP recurrence. Indeed, the relation
        \begin{align}
          \label{eq:dskpproofhyppent}
          \frac{(x_{-e_3}-x_{e_2})(x_{-e_1}-x_{e_3})(x_{-e_2}-x_{e_1})}{
          (x_{e_2}-x_{-e_1})(x_{e_3}-x_{-e_2})(x_{e_1}-x_{-e_3})} = -1,
        \end{align}
        at $p \in \Z^3\setminus \calL$ can be checked with some case
        handling. For instance, if $p=(i,j,k)$ with $[i-j+k]_4=1$ and
        $[k]_2=1$, one can check that
        $x_{-e_3}(p)=x_{-e_1}(p)$ while
        $x_{e_1}(p)=x_{e_3}(p)$, from which
        \eqref{eq:dskpproofhyppent} is trivial.
        If $[i-j+k]_4=1$ and $[k]_2=0$, then for some $n,\ell \in \Z$, we have
        \begin{alignat*}{3}
          &x_{-e_3}(p)=T^\ell(b)_n,& \
          &x_{e_2}(p)=T^{\ell+1}(b)_{n+1},& \
          &x_{-e_1}(p)=T^{\ell+1}(b)_{n-1}, \\
          &x_{e_3}(p)=T^{\ell+2}(u)_{n},& \
          &x_{-e_2}(p)=T^{\ell+1}(u)_{n-1},& \
          &x_{e_1}(p)=T^{\ell+1}(u)_{n+1},
        \end{alignat*}
        and \eqref{eq:dskpproofhyppent} is an application of the
        second relation of Lemma~\ref{lem:dskphyperpent}. Similarly
        for $[i-j+k]_4=3$, when $[k]_2=0$ the relation is trivial,
        while for $[k]_2=1$ it is an application of the first relation
        of Lemma~\ref{lem:dskphyperpent}.

       It is direct to check that the function $x$
        satisfies the initial condition $a_{i,j}=x(i,j,[i+j]_2)$ of
        the theorem, and that for $i$ even,
        $T^{k}(u)_i=x\left(\frac{i}{2}+k+2,\frac{i}{2}-k-1,2k-1\right)$
        while for $i$ odd,
        $T^{k}(u)_i=x\left(\frac{i-1}{2}+k,\frac{i-1}{2}-k-1,2k-1\right)$.
        Using Theorem~\ref{theo:expl_sol} yields the theorem. \qed
}

Next, we consider a conjecture of Glick \cite[Conjecture 9.2]{gdevron}. An \emph{$m$-closed polygon} is a polygon $(v_i)_{i\in\Z}$ such that $v_{i+m} = v_i$ for all $i\in \Z$.

\begin{theorem}\label{theo:short_diagonal_sing}
  
        Let $v$ be a $2m$-closed polygon and $P_0,P_1\in
        \RP^3$. Assume for all $i\in \Z$ that the plane
        $v_{i-1}v_iv_{i+1}$ contains $P_{[i]_2}$. If $T^{m-3}(v)$
        exists, then there are two planes $E_0,E_1\subset \RP^3$ such that for all $i\in \Z$ the point $T^{m-3}(v)_i$ is contained in $E_{[i]_2}$.
        
\end{theorem}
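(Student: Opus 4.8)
The plan is to reduce this statement to the general dSKP singularity result (Theorem~\ref{theo:devron_sing}), exactly as was done for the pentagram map itself in Theorem~\ref{theo:pentsing}. First I would fix a companion polygon $(c_i)_{i\in\Z}$ of $v$; since $v$ is $2m$-closed, generically one can choose $c$ to be $2m$-closed as well (the companion polygon is obtained by a linear propagation along $v$, and closedness can be forced by the usual fixed-point argument, as in Section~\ref{subsec:Backlund_pairs_defi}). Then by Theorem~\ref{theo:explhyperpent}, the affine coordinates of the short diagonal hyperplane map, built from $u,b$, form a solution $x$ of the dSKP recurrence with the initial data described by \eqref{eq:latticehyperpent}.

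The key step is to translate the geometric hypothesis ``$v_{i-1}v_iv_{i+1}$ contains $P_{[i]_2}$'' into a periodicity/constancy condition on the dSKP initial data. I would choose the affine chart so that $P_0$ and $P_1$ are suitably placed — more precisely, for a given coordinate $\pi_\ell$, pick the chart so that the relevant plane condition forces one affine coordinate of certain points to vanish (or to coincide along a diagonal). Concretely, imitating the proof of Theorem~\ref{theo:pentsing}: after a projective transformation turning one of the coordinates into its reciprocal, the condition that every $v_{i-1}v_iv_{i+1}$ passes through $P_{[i]_2}$ becomes the statement that $\tilde b_{i,-1}=0$ (or the analogous row/diagonal of zeros) for $i$ in a fixed residue class mod $4$. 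Combined with the $2m$-closedness, which gives $m$-simple periodicity of the dSKP data, this is precisely an $(m,2)$-Devron initial condition in the sense of Definition~\ref{def:sing}. I must be careful about the exact offsets: the weight function in Theorem~\ref{theo:explhyperpent} uses the pattern $b,T(u),u,b$ along diagonals of period $4$, so I would track through \eqref{eq:latticehyperpent} which diagonal of the layer $k=0$ (or $k=-1$, starting the dSKP propagation one step earlier as in the proof of Theorem~\ref{th:pentdodgson}) becomes constant.

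Once the data is identified as $(m,2)$-Devron, Theorem~\ref{theo:devron_sing} with $p=2$ gives $x(i,j,k)=x(i+1,j+1,k)$ for $k=2m-2$ on every other SW-NE diagonal of that layer. Reading this back through Theorem~\ref{theo:explhyperpent}, which places $T^k(u)_i$ at height $2k-1$, the value $k=2m-2=2(m-1)$ corresponds to the iterate $T^{m-2}$ at height $2m-3$; pushing one more reversible step (or accounting for the one-step offset of the initial data) yields the singularity at $T^{m-3}(v)$. The conclusion ``half of the points $T^{m-3}(v)_i$ coincide and lie in a hyperplane'' then follows: constancy of $x$ along alternating diagonals at that height means, in each affine coordinate $\pi_\ell$ ($\ell=0,1,2$), half the coordinates agree, and assembling the three coordinate statements shows that $T^{m-3}(v)_i$ for $i$ of a fixed parity all coincide, hence trivially lie in a common hyperplane $E_{[i]_2}$; the two parities give $E_0,E_1$. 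I expect the main obstacle to be the bookkeeping of indices and chart choices — matching the $\lfloor (i-j)/4\rfloor$ shifts and the $b/T(u)/u$ pattern in Theorem~\ref{theo:explhyperpent} against Definition~\ref{def:sing} to be sure the hypothesis really lands on $[i-j]_{2p}=0$ with $p=2$ — together with justifying the existence of a $2m$-closed companion polygon and handling the degenerate sub-cases (as in the trichotomy argument of Theorem~\ref{theo:cidsing}, one must rule out that the map $T$ fails to be applicable the required number of times, invoking Proposition~\ref{prop:dimker2} if needed). The geometric translation itself, borrowing the companion-polygon trick of Glick--Pylyavskyy~\cite{gpymeshes} and the Menelaus-based Lemma~\ref{lem:dskphyperpent}, should then be routine.
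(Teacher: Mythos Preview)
Your outline has the right overall shape (closed companion polygon, dSKP solution from Theorem~\ref{theo:explhyperpent}, $(m,2)$-Devron, Theorem~\ref{theo:devron_sing}), but the final geometric step contains a genuine gap, and two of the intermediate reductions are not set up correctly.

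First, the clean way to read the hypothesis: the condition ``$v_{i-1}v_iv_{i+1}$ contains $P_{[i]_2}$'' says exactly that the three consecutive planes $v_{i-3}v_{i-2}v_{i-1}$, $v_{i-1}v_iv_{i+1}$, $v_{i+1}v_{i+2}v_{i+3}$ have the common point $P_{[i]_2}$, i.e.\ $T^{-1}(v)_i=P_{[i]_2}$. No chart-choice trick is needed; this directly makes the height $k=-1$ layer of the dSKP solution two-valued, and after a shift $y(i,j,k)=x(i,j,k-2)$ the initial data are $(m,2)$-Devron.

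Second, look at which variables the Devron theorem actually constrains. In the solution \eqref{eq:latticehyperpent}, at the height produced by Theorem~\ref{theo:devron_sing} the diagonals that become constant carry the \emph{companion} coordinates: you get that $T^{m-2}(b)$ takes only two values, hence $T^{m-2}(c)_i=B_{[i]_2}$ for two points $B_0,B_1\in\RP^3$. You do not get any direct statement about $T^{m-3}(u)$, and in particular the claim that ``$T^{m-3}(v)_i$ for $i$ of a fixed parity all coincide'' is false (and stronger than what the theorem asserts). The passage from $T^{m-2}(c)$ to $T^{m-3}(v)$ is a separate geometric argument: since $T^{m-2}(c)_i\in T^{m-2}(v)_{i-1}T^{m-2}(v)_{i+1}$, the points $T^{m-2}(v)$ lie on two lines $L_0,L_1$; and since $T^{m-2}(v)_i\in T^{m-3}(v)_{i-1}T^{m-3}(v)_{i+1}$, each plane $T^{m-3}(v)_{i-2}T^{m-3}(v)_iT^{m-3}(v)_{i+2}$ contains $L_{[i+1]_2}$, forcing these planes to coincide by parity. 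That is the missing idea.

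One more point: to guarantee a $2m$-closed companion polygon you iterate a linear map on a $2$-dimensional space and take an eigenvector, which requires working over $\C$. The paper does the whole argument in $\CP^3$ and then observes at the end that if the $v_i$ are real, coplanarity in $\CP^3$ implies coplanarity in $\RP^3$.
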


\proof{ We first prove the theorem for $(v_i)_{i\in \Z}$ a polygon in
  $\CP^3$, with all the other objects defined on $\C$ as well. One can
easily check that all the previous results in this section hold for
complex points.

 First, let us show that for any $2m$-closed polygon there is
 a $2m$-closed companion polygon. Let us denote by $\hat
 v_i$, $\hat c_i$ a homogeneous lift to $\C^4$ of
 $v_i, c_i$ for any $i\in \Z$. Assume $\hat c_{i-2}$ is given. Then
 any lift $\hat c_{i}$ can be expressed as a function of $\hat
 c_{i-2}$: by definition, it has to be in span $\langle
 \hat{v}_{i-1},\hat{v}_{i+1} \rangle$ and in the span $\langle
 \hat v_{i-2},\hat v_i,\hat c_{i-2} \rangle$, which gives the formula
        \begin{align}
                \hat c_{i} = \det(\hat v_{i-2}, \hat c_{i-2}, \hat v_i, \hat v_{i+1}) \hat v_{i-1} - \det(\hat v_{i-2}, \hat c_{i-2}, \hat v_i, \hat v_{i-1}) \hat v_{i+1}.
        \end{align}

        By iterating, we can express $\hat c_{2m}$ as a linear
        function of $\hat c_0$. This linear function takes values in the span
        $\langle  \hat v_{-1}, \hat v_1 \rangle$, so by taking $\hat c_0$
        in this subspace, we get a linear transformation of this
        $2$-dimensional subspace. This transformation has at least one eigenvector
        (this is where we use the fact that the base field is $\C$),
        so taking $\hat c_0$ to be this eigenvector, we get that $\hat
        c_{2m}$ is proportional to $\hat c_0$, therefore
        $c_{2m}=c_0$. We may do the same to get that $c_1=c_{2m+1}$ as
        well in $\CP^3$. This gives the desired $2m$-closed
        companion polygon.
        
  A consequence of Equation
    \eqref{eq:hypdyn} is that
  \begin{align}
    T^{-1}(v)_i = v_{i-3}v_{i-2}v_{i-1} \cap v_{i-1}v_iv_{i+1} \cap v_{i+1}v_{i+2}v_{i+3},
  \end{align}
  therefore $T^{-1}(v)_i = P_{[i]_2}$. Let $c$ be a companion
  polygon of $v$, and let $u,b$ be affine coordinates of
  $v,c$. $T^{-1}(u)$ is singular, so if $x$ is
  defined by Equation \eqref{eq:latticehyperpent}, we can define the new
  solution to dSKP by $y(i,j,k)=x(i,j,k-2)$ and check that $y$ is
  $(m,2)$-Devron. Applying Theorem~\ref{theo:devron_sing} to $y$, we get that $T^{m-2}(b)$ takes only two values
  (one for even indices and one for odd indices). This being true for
  any affine coordinates of $v$ and $c$ implies that there are points $B_0,B_1 \in \CP^3$ such that
  $T^{m-2}(c)_i = B_{[i]_2}$ for all $i\in \Z$. By definition of the
  companion polygon, each line $T^{m-2}(v)_{i-1}T^{m-2}(v)_{i+1}$
  contains $B_{[i]_2}$. Therefore, there are two lines $L_0,L_1
  \subset \CP^3$ such that $T^{m-2}(v)_i \in L_{[i]_2}$ for all $i\in
  \Z$.
  Additionally, by Equation \eqref{eq:hypdyn}, $T^{m-2}(v)_i \in
  T^{m-3}(v)_{i-1}T^{m-3}(v)_{i+1}$. As a consequence, each plane $E_i
  =T^{m-3}(v)_{i-2}T^{m-3}(v)_{i}T^{m-3}(v)_{i+2}$ contains the two
  points $T^{m-2}(v)_{i-1}$ and $T^{m-2}(v)_{i+1}$ and therefore also
  the line $L_{[i+1]_2}$. This in turn implies that all the even $E_i$
  coincide and that all the odd $E_i$ coincide, or that each plane
  $E_i$ is just a line, specifically the line $L_{[i+1]_2}$. In either
  case, the claim is proven, for points in $\CP^3$.

  Going back to the statement of the theorem, if points are defined
  over $\R$, then the previous reasoning applies and shows that there
  exist planes $E'_0, E'_1$ in $\CP^3$ such that for all $i\in \Z$,
  the point $T^{m-3}(v)_i$ is contained in $E'_{[i]_2}$. So for
  instance the points
  $(T^{m-3}_i)_{[i]_2=0}$ are coplanar in $\CP^3$, but also have real
  coordinates, as the dynamics \eqref{eq:hypdyn} preserves real
  coordinates. This implies that they are coplanar in $\RP^3$. To see
  this, one can consider lifts of $(T^{m-3}_i)_{[i]_2=0}$ in $\R^4$:
  these points belong to a common $3$-dimensional complex subspace, so
  there is a non-zero vector $v\in\C^4$ orthogonal to all of them, but
  then both $\RE(v)$ and $\IM(v)$ are orthogonal to all of them, and
  one of these has to be non-zero. Of course the same argument applies
  to $(T^{m-3}_i)_{[i]_2=1}$. \qed
}

\bibliographystyle{alpha}
\bibliography{references}

\end{document}